\newtheorem{theorem}{Theorem}[section]
\newtheorem{corollary}[theorem]{Corollary}
\newtheorem{lemma}[theorem]{Lemma}
\newtheorem{proposition}[theorem]{Proposition}
\newtheorem{conjecture}[theorem]{Conjecture}
\theoremstyle{definition}
\newtheorem{example}[theorem]{Example}
\numberwithin{equation}{section}
\newcommand{\R}{\mathbb{R}}
\newcommand{\Rd}{\R^d}
\newcommand{\weakstar}{{\overset{\ast}{\rightharpoonup}}}
\newcommand{\mbf}[1]{{\mathbf{#1}}}
\newcommand{\opnorm}[1]{{\left\vert\kern-0.25ex\left\vert\kern-0.25ex\left\vert #1 
		\right\vert\kern-0.25ex\right\vert\kern-0.25ex\right\vert}}
\def\O{\Omega}
\newcommand{\Ob}{{\overline{\Omega}}}
\newcommand{\bO}{{\partial\Omega}}
\newcommand{\argu}{{\,\cdot\,}}
\newcommand{\Mes}{{\mathcal{M}}}
\newcommand{\pairing}[1]{{\left \langle #1 \right \rangle}}
\newcommand{\norm}[1]{\Arrowvert #1 \Arrowvert}
\newcommand{\abs}[1]{{\left \lvert #1 \right \rvert}}
\newcommand{\eps}{\varepsilon}
\newcommand{\one}{{{\bf 1}
		\kern-0,28em \rm l}}
\begin{document}

	\title{The Isotropic Material Design methods with the cost expressed by the $L^p$-norm}

	

	\author{Karol Bo{\l}botowski, S{\l}awomir Czarnecki, Tomasz Lewi\'{n}ski}


	\date{\today}
	
	\begin{abstract} 
		The paper concerns the problem of minimization of the compliance of linear elastic structures made of an isotropic material. The bulk and shear moduli are the design variables, both viewed as non-negative fields on the design domain. The design variables are subject to the isoperimetric condition which is the upper bound on the two kinds of  $L^p$-norms of the elastic moduli. The case of $p=1$ corresponds to the original concept of the Isotropic Material Design (IMD) method proposed in the paper: S. Czarnecki, Isotropic material design, Computational Methods in Science and Technology, 21 (2), 49–64, 2015. In the present paper the IMD method will be extended by assuming the $L^p$-norms-based cost conditions. In each case the optimum design problem is reduced to the pair of mutually dual problems of the mathematical structure of a theory of elasticity of an isotropic body with non-linear power-law type constitutive equations. The state of stress determines the optimal layouts of the bulk and shear moduli of the least compliant structure. The new methods proposed deliver the upper estimates for the optimal compliance predicted by the original IMD method.
\end{abstract}

\maketitle

\noindent \textbf{Keywords:}  isotropy, optimal moduli of isotropy, compliance minimization, isotropic material design, free material design, topology optimization.

	\dedicatory{}
	
	
	\maketitle

	\vskip1cm
	
	
\section{Introduction}

\subsection{On  designing  isotropic inhomogeneity in stiffness optimization}

The contemporary 3D printing methods make it possible to fabricate structural elements of smoothly graded elastic properties, see \cite{zhang2022,feng2021,xu2020,khan2024,montemurro2023}. The designed inhomogeneity of elastic moduli serves for attaining appropriate stiffness characteristics of the structure keeping the prescribed lightness condition. The designs made of latticed materials working in the stretching dominated mode can be fabricated such that they possess macroscopically isotropic properties. The material of the planar structural elements can be made macroscopically isotropic by introducing of the lattice having the rotational centrosymmetry with respect to the $120\degree$ angle keeping the same areas of cross sections of the ligaments (i.e. interconnecting bars) within a unit cell or representative volume element (RVE).  The material of the spatial elements can also be made isotropic by tiling their domain by the lattice RVE discovered in \cite{gurtner2014stiffest,gurtner2014cor} composed of two families of ligaments of different axial stiffnesses. It is worth noting that, till now, 3D isotropic lattices of constant axial stiffness of the ligaments have not been discovered.

The effective stiffnesses of lattices are proportional to the relative density $c$ of the material, see \cite{christensen1986}. If the ligaments are connected by thin membranes then the effective stiffnesses are weaker and are of order $c^a$, where  the parameter $a$ assumes values between 2 and 3,  see \cite{christensen1986}. Thus, the lattices are characterized by the best stiffness to weight ratio or can be viewed as the minimizers of the problem of maximization of stiffness with the cost condition expressed by the given volume condition. Thus, the mentioned theoretical results by \cite{christensen1986} justify assuming the unit cost as a linear combination of the effective elastic moduli of the latticed material; such a unit cost will measure the density of mass of the lattice material.
By virtue of lattice realizations of the RVE in modelling  effective isotropy, one can assume that the optimal designs are macroscopically isotropic. Due to isotropy the orientation of RVE will not be designed by additional angular variables which would determine the directions of the ligaments. Thus, the assumption of isotropy of the lattice material simplifies both the optimization process as well as the process of fabrication. This is done at the sacrifice of increasing the cost of the design, since the anisotropic models are better designs as directing the ligaments along trajectories of principal stresses. On the other hand, the isotropic designs will be better suited to respond to multiple loads.
The isotropy is characterized by two fields: the bulk modulus $k(x)$  and  the shear modulus $\mu (x)$,    $x$ is a point of the feasible domain $\Omega $; its dimension $d$ equals 2 or 3. The bulk and shear moduli are closely related to the eigenvalues of the Hooke tensor $\bf{C}$. In the planar problem ($d=2$) the modulus $2k$ is the single eigenvalue (of multiplicity equal 1) and $2\mu $ is the eigenvalue of multiplicity equal 2. In the spatial case ($d=3$) the modulus $3k$ is a single  eigenvalue (i.e. of multiplicity equal 1) while $2\mu $ is an eigenvalue of multiplicity equal 5. Thus, the eigenvalues of tensor $\mbf{C}$ are:
\begin{enumerate} [label={\alph*)}]
	\item 	Case of $d=2$:  $2k$, $2\mu $, $2\mu$,
	\item	 Case of $d=3$:  $3k$, $2\mu $, $2\mu $, $2\mu $, $2\mu $, $2\mu $.
\end{enumerate}
Each eigenvalue corresponds to an eigenstate. The first eigenvalue corresponds to the hydrostatic state. The eigenvalue $2\mu $ corresponds to:
\begin{enumerate} [label={\alph*)}]
	\item 	Case of $d=2$:  two eigenmodes of pure shear,
	\item	 Case of $d=3$:   five eigenmodes of pure shear.
\end{enumerate}
The structure of the corresponding eigentensors is discussed in \cite{rychlewski1984} and \cite{walpole1984}. 
Assume that all the stress modes are given the same values of the weights. Then the trace of Hooke tensor, or $\rm{tr} \,\bf{C}$, can be assumed as the unit cost of the design:
\begin{enumerate} [label={\alph*)}]
	\item 	Case of $d=2$:  $2k + 2\mu + 2\mu = \rm{tr} \,\bf{C}$,
	\item	 Case of $d=3$:   $3k+2\mu +2\mu +2\mu +2\mu +2\mu = \rm{tr}\, \bf{C}$,
\end{enumerate}
or, equivalently,
\begin{equation}
	\label{eq:beta}
	{\rm{tr}}\,{\bf{C}} = dk + 2{\beta ^2}\mu ,  \qquad	  \beta  = {\left( {\frac{1}{2}d(d + 1) - 1} \right)^{\frac{1}{2}}}
\end{equation}   		 

The fields $k, \mu $ are natural design variables since they only obey the independent conditions $k \ge 0$, $\mu  \ge 0$ assuring the density of elastic energy being non-negative, the same for both: $d=2$ and $d=3$ cases. 

\subsection{On the Free Material -- and Isotropic Material Design methods}

The aim of the present paper is to set and solve the problem of constructing the optimum distribution of both the elastic moduli of isotropy within a given design domain $\Omega $ to minimize  the compliance of the body subject to a given static load. The compliance is the work done by the load. Minimization of the compliance means maximization of the stiffness of the structure.  For simplicity, the kinematic loads are absent. The unit cost of the design will be at each point $x$ of the domain $\Omega $ assumed as the value of the trace of the Hooke tensor, see \eqref{eq:beta}. 
Let us recall now the formulation of the Isotropic Material Design (IMD) problem: 
construct the non-negative fields $k,\mu $ within a given feasible $d$-dimensional domain $\Omega $  to minimize the compliance under the cost condition
\begin{equation}
	\label{eq:classical_cost}
	\int_\Omega  {\left[ {dk + 2{\beta ^2}\mu } \right]dx \le \Lambda }
\end{equation}                                                                            
where  $\Lambda $ represents the given highest possible total cost of the design. Note that here $dk$ means $2k$ if  $d=2$  and $3k$ if $d=3$, whilst $dx$ stands for the integration with respect to the Lebesgue measure.
The IMD method has been originally proposed in \cite{czarnecki2015a} and \cite{czarnecki2015b}. The IMD method has been then developed in the papers \cite{czarnecki2020recovery,czarnecki2024} and \cite{czarnecki2017a} in which also the underlying microstructures have been pointwise constructed by solving the inverse homogenization problems. Its extension towards multiple load problem has been given in \cite{czarnecki2017b,lewinski2021}. The IMD method can also be extended to the elasto-plasticity within the Ilyushin-Hencky-Nadai setting, see \cite{czarnecki2021isotropic}.

The IMD method is an isotropic counterpart of the Free Material Design (FMD) method proposed in \cite{ringertz1993} and developed in \cite{bendsoe1994,zowe1997,kocvara2008}, see the review paper by \cite{haslinger2010}. Within FMD all the components of the Hooke tensor $\bf{C}$ are design variables, while the unit cost is either given as trace or as the Frobenius norm of the Hooke tensor. In case of a single load case, both FMD approaches with the mentioned unit costs lead to the same result. 

The present work refers to those papers on FMD in which no additional pointwise conditions on the unit cost are imposed. Thus, it can degenerate to zero as well as can blow up in some regions. In terms of functional analysis, the cost condition \eqref{eq:classical_cost} alone implies that the set of feasible fields $k,\mu$ is merely bounded in $L^1$. Since the latter space is not reflexive, there is no compactness result to claim existence of a solution in a function space. As a consequence, the search of the optimal moduli must be extended to the space of positive measures; the material can concentrate on certain surfaces ($d=3$) or curves ($d=2$ and $d=3$). This version of the FMD method needs the measure-theoretic tools to justify the setting as well as to interpret the final results, see \cite{bolbotowski2022b}.

The solutions to the FMD method, corresponding to the case of a single load, are singular: only one component of the Hooke tensor becomes non-zero, irrespective of the form of the cost condition, i.e. whether the trace of $\mbf{C}$ or the Frobenius norm of $\mbf{C}$ is taken as the unit cost. If the cubic symmetry is imposed, then also one of the three elastic moduli vanishes everywhere, see \cite{czubacki2015}.  The above inevitable singularity is eliminated only by imposing isotropy, i.e. it is not present within the IMD method. However, due to lack of any constraints apart from the conditions of non-negativity of the bulk and shear moduli there appear subdomains where one of the moduli vanishes and, in some problems, there appear subdomains where both the moduli vanish, hence these domains are simply voids, which changes the feasible domain into a material design domain. Developed in \cite{czarnecki2024} the inverse homogenization methods make it possible to design smoothly graded latticed 2D materials in all the subdomains, even there where one of the moduli vanishes. One can construct latticed microstructures whose effective Poisson ratio varies within the whole range: $[-1,1]$ in 2D and $[-1,1/2]$ in 3D case, see Figs. 5-13  in \cite{czarnecki2024} showing the relevant RVEs. Let us stress: for the inhomogeneous layouts of the moduli $k(x), \mu(x)$ one can construct an underlying microstructure whose effective properties are isotropic and smoothly, with appropriate accuracy, model the optimal solutions, see Fig.  17  in \cite{czarnecki2024} showing such underlying structure. Nowadays such structures can be printed, which makes the IMD-like optimum design methods directly applicable.

The weak assumptions $k \ge 0,{\rm{ }}\mu  \ge 0$ are the great virtue of the IMD method since the domains where both the moduli vanish are cut out from the feasible domain, which makes the IMD method one of the methods of optimization of structural topology. On the other hand, the IMD method requires solving rather difficult problem of minimization of a functional with linear growth, which necessitates special numerical approaches, see \cite{czarnecki2015a,czarnecki2015b} and \cite{bolbotowski2021}.

The fundamental mathematical problem to be solved in the IMD method is stress-based, with the functional of linear growth. Consequently, its dual involves a non-smooth functions and the optimality conditions linking both the formulations are not invertible. Although the dual problem involves displacements, it cannot be solved with using the relevant finite element codes, since they require the constitutive equations expressed by certain functions. One of the motivation of the present work is to introduce modifications into the original IMD method to make it solvable with using available FEM codes for non-linear elasticity.

\subsection{Two modifications of the cost conditions  of the IMD method}

The present paper puts forward two new versions of the IMD method. Both the modifications proposed concern the formula for the cost of the design. In place of the formula \eqref{eq:classical_cost} we propose the following alternative cost definitions.
\begin{enumerate} [label={\Roman*.}]
	\item 	Bounding the $L^p$-norm of the vector function $(2k, 2\mu ,2\mu)$, if $d=2$,  and of the vector function
$(3k, 2\mu ,2\mu,2\mu,2\mu,2\mu )$   if $d=3$. The respective optimum design method will be named: vp-IMD.
	\item	Bounding the $L^p$-norm of the scalar function $dk + 2{\beta ^2}\mu $. The respective optimum design method will 
	be named: sp-IMD.
\end{enumerate}

It occurs that the mathematical structure of the problems of the methods vp-IMD and sp-IMD are similar to those known from the non-linear elasticity with the power law constitutive equations. Thus, these problems can be solved with using the FEM codes for non-linear elasticity. In this paper, however, the special stress-based numerical method will be developed. On the other hand, the methods do not give any tools to create the voids, the predicted fields of the moduli cannot vanish identically in some subdomain od the feasible domain; the cutting out property of the IMD method is lost.

It is worth mentioning that recently a wide family of convex costs were studied in \cite{buttazzo2023} where the material distribution of a heat conductor is modeled by a single scalar non-negative function. The two costs proposed above are a particular case of the costs studied therein. The novelty here consists in addressing the problem of optimal design in elasticity, as well as in working with a vectorial variable $(k,\mu)$.

It will be shown that the optimal compliances predicted by the  vp-IMD method estimate the IMD results from the lower side. In the examples discussed the results of the sp-IMD occur to be the upper estimates of the IMD results.

The proposed methods: vp-IMD and sp-IMD lead to specific  results in the extreme cases of $p$ tending to infinity. The optimal moduli predicted by vp-IMD occur to satisfy the equality $d\hat{k} = 2\hat{\mu}$ and become homogeneous within the feasible domain. Thus, for both cases of $d=2$ and $d=3$  the optimal Poisson ratio  $\hat{\nu}$ vanishes identically, which is a known property of a cork. The optimal moduli predicted by sp-IMD occur to satisfy the equality $d\hat{k} + 2{\beta ^2}\hat{\mu} = \rm{const}$ which means that the field of the trace of Hooke’s tensor becomes homogeneous within the feasible domain. Thus, even in this extreme case one of the optimal moduli (bulk or shear) can vanish (but not both of them). Then, the optimal Poisson ratio attains either the lower bound $-1$  or the upper bound:  $\frac{1}{2}$ in case of $d=3$ and 1 in case of $d=2$.

Let us display the modifications I, II  corresponding to the vp-IMD and sp-IMD methods   explicitly.

\noindent \textit{I.	Formulation of the cost condition in the method vp-IMD}

The condition \eqref{eq:classical_cost} is modified to the form
\begin{equation}
	\label{eq:vp_cost}
	{\left( {\int_\Omega  {\left[ {{{\left( {dk} \right)}^{p}} + \left( {\tfrac{1}{2}d(d + 1) - 1} \right){{\left( {2\mu } \right)}^{p}}} \right]dx} } \right)^{\frac{1}{{p}}}} \le \Lambda 
\end{equation}                                                                      
 
Consider a scalar function {$f\in {L^p}\left( \Omega  \right)$} and a vector function:  {${\bf{f}} = \left( {{f_1},...,{f_n}} \right) \in {\left( {{L^p}\left( \Omega  \right)} \right)^n}$}.  The expressions 
\begin{equation}
	{\left\| f \right\|_p} = {\bigg( {\int_\Omega  {{{\left| {{f_{}}} \right|}^p}dx} } \bigg)^{1/p}},     \qquad                {\left\| {\bf{f}} \right\|_p} = {\bigg( {\int_\Omega  {\sum\limits_{i = 1}^n {{{\left| {{f_i}} \right|}^p}dx} } } \bigg)^{1/p}}
\end{equation}                                         
are the  $L^p$-norms of these functions  {if $p \in [1,\infty)$ (case $p=\infty$ requires the notion of essential supremum)}. The left hand side of \eqref{eq:vp_cost} can be interpreted as the $L^p$-norm of the vector function ${\bf{f}} = \left( {dk,\beta^{2/p} 2\mu } \right)$ with $n=2$. The condition \eqref{eq:vp_cost} may be re-written as
\begin{equation}
	{{\left\| {\left( {dk,\beta^{2/p} 2\mu } \right)} \right\|_{p}} \le \Lambda.}
\end{equation}

\noindent \textit{II.	Formulation of the cost condition in the method sp-IMD}

The condition \eqref{eq:classical_cost} is modified to the form
\begin{equation}
	\label{eq:sp_cost}
	{\left( {\int_\Omega  {{{\left[ {\left( {dk} \right) + \left( {\frac{1}{2}d(d + 1) - 1} \right)\left( {2\mu } \right)} \right]}^{p}}dx} } \right)^{\frac{1}{{p}}}} \le \Lambda
\end{equation}
 The left hand side of \eqref{eq:sp_cost} can be interpreted as the $L^p$-norm of the scalar function $f = dk + {\beta ^2}2\mu $, where $\beta $  is given by \eqref{eq:beta}, and written as
\begin{equation}
	{\left\| {\left( {dk + {\beta ^2}2\mu } \right)} \right\|_{p}} \le \Lambda 
\end{equation}

It will occur that the  generalized IMD problems with the cost condition \eqref{eq:vp_cost}  will lead to a static problem of an isotropic body with the power law type constitutive equations.  These models of elasticity are the subject of the study \cite{castaneda1998}. The cost condition (1.6) will lead to a different setting of non-linear elasticity.

\subsection{Adopted conventions and notation}
\label{ssec:notation}

Throughout the paper a conventional notation is applied. The design domain $\Omega$ will be a bounded open subset of $\Rd$ with a Lipschitz continuous boundary. In case of $d=3$ the domain is parametrized by the Cartesian system $\left( {{x_1},{x_2},{x_3}} \right)$ with the orthogonal basis ${{\bf{e}}_i}$, $i=1,2,3$;  ${{\bf{e}}_i} \cdot {{\bf{e}}_j} = {\delta _{ij}}$, where $ \cdot $  is the scalar product in \({\mathbb{R}^d}\). The Euclidean norm of \({\bf{p}} \in {\mathbb{R}^d}\) is defined by $\left\| {\bf{p}} \right\| = \sqrt {{\bf{p}} \cdot {\bf{p}}} $.  The set of second rank symmetric tensors is denoted by $E_s^2$. 
The identity tensor in $E_s^2$ is ${\bf{I}} = \sum_{i=1}^d {{\bf{e}}_i} \otimes {{\bf{e}}_i}$; repetition of indices implies summation. The scalar product of ${\boldsymbol{\sigma}},\;{\boldsymbol{\eps}} \in E_s^2$ is defined by: ${\boldsymbol{\sigma}} \cdot {\boldsymbol{\eps}} =\sum_{i,j=1}^d {\sigma _{ij}}\,{\varepsilon _{ij}}$. The already introduced Euclidean norm of ${\boldsymbol{\sigma}} \in E_s^2$ is defined by $\left\| {\boldsymbol{\sigma}} \right\| = \sqrt {{\boldsymbol{\sigma}} \cdot {\boldsymbol{\sigma}}} $. Any tensor from the set $E_s^2$ can  be decomposed as follows: 
\begin{equation}
	{\boldsymbol{\sigma}} = \frac{1}{d}\left( {{\rm{tr}}\,{\boldsymbol{\sigma}}} \right){\bf{I}}\, + {\rm{dev}}{\boldsymbol{\sigma}}
\end{equation}      						
where ${\rm{tr}}\,{\boldsymbol{\sigma}} = \sum_{i=1}^d {\sigma _{ii}}$. Note that for ${\boldsymbol{\sigma}},\;{\boldsymbol{\eps}} \in E_s^2$

\begin{equation}
	\label{eq:dot_decomp}
	{\boldsymbol{\sigma}} \cdot {\boldsymbol{\eps}} = \left( {{\rm{Tr }}\,{\boldsymbol{\sigma}}} \right)\left( {{\rm{Tr }}\,{\boldsymbol{\eps}}} \right) + {\rm{dev}}\;{\boldsymbol{\sigma}} \cdot {\rm{dev}}\,{\boldsymbol{\eps}}
\end{equation}
where we agree that $${\rm{Tr}}\,{\boldsymbol{\sigma}} := \frac{{\rm{tr}}\,{\boldsymbol{\sigma}}}{\sqrt d}. $$

By $L^p(\Omega)$ we will understand the Lebesgue space of $p$-integrable measurable function on $\Omega$;  its vector valued and symmetric tensor valued variants will be denoted by $L^p(\Omega;\R^n) \equiv (L^p(\Omega))^n$ and $L^p(\Omega;E_s^2)$, respectively. Let us recall the H\"{o}lder inequality. Let  ${\bf{f}} = \left( {{f_1},...,{f_n}} \right) $ , ${\bf{g}} = \left( {{g_1},...,{g_n}} \right)$ be any measurable vectorial functions  on the domain \(\Omega \). For any $p \in [1,\infty]$ let $p'$ be the H\"{o}lder conjugate exponent, i.e. the one satisfying 
\begin{equation}
	\label{eq:Holder_conj}
	\frac{1}{p} + \frac{1}{{p'}} = 1
\end{equation}  						
If $p =1$, then by convention $p'=\infty$ and \textit{vice versa}. The H\"{o}lder inequality reads
\begin{equation}
	\label{eq:Holder_basic}
	\int_\Omega  {\sum\limits_{i = 1}^n {\abs{{{f_i}{g_i}}} \,dx} }  \le {\left\| {\bf{f}} \right\|_p}{\left\| {\bf{g}} \right\|_{p'}}.
\end{equation}
Assuming that $p\in (1,\infty)$, the above inequality is an equality if and only if for some numbers $\alpha,\beta \geq 0$, not both zero, we have $\alpha \abs{f_i(x)}^p = \beta \abs{g_i(x)}^{p'}$ for all $i$ and a.e. $x$.

For $s \in [1,\infty]$ the vector valued Sobolev space will be denoted by $W^{1,s}(\Omega;\Rd)$. For any function $\mbf{v} \in W^{1,s}(\Omega;\Rd)$ (virtual displacement function) the strain tensor field $\boldsymbol{\eps}(\mbf{v}):\O \to E_s^2$ will be defined as a symmetrical part of the weak gradient, i.e.
\begin{equation}
    \boldsymbol{\eps}(\mbf{v}) := \frac{1}{2} \big( \nabla \mbf{v} + (\nabla \mbf{v})^\top \big),
\end{equation}
being a function in $L^s(\O;\Rd)$.

We conclude with some basic notions in convex analysis.
For an extended real convex functional $F:X \to \R \cup \{+\infty\}$, where $X$ is a Banach space, by the Fenchel-Legendre transform $F^*$ we understand the functional on the dual space $X^*$ given by the formula
\begin{equation}
	F^*(x^*) := \sup_{x \in X} \Big\{ \pairing{x^*,x} - F(x) \Big\} \qquad \forall \,x^*\in X^*,
\end{equation}
where $\pairing{\argu,\argu}$ is the duality bracket. If $Y$ is another Banach space, then, for a continuous linear operator $\Lambda: X \to Y$, by $\Lambda^*: Y^* \to X^*$ we understand the adjoint operator that is defined through the equality $\pairing{x,\Lambda^* y^*} :=\pairing{\Lambda x, y^*}$. Finally, if $\opnorm{\argu}$ is a norm on $X$, by $\opnorm{\argu}_*$ we denote the dual norm:
\begin{equation}
	\opnorm{x^*}_* = \sup_{x \in X} \Big\{ \pairing{x,x^*} \ \Big| \ \opnorm{x} \leq 1 \Big\}.
\end{equation}

\bigskip

\section{Useful analytical results}

Minimization over the design variables will be performed analytically with using the minimization results given in the present section. Assume that $a_i(x) \geq 0$, $i=1,\ldots,n$, are given non-negative measurable functions on $\Omega$. We will be looking for   non-negative  functions ${\bf{u}} = \left( {{u_1},...,{u_n}} \right)$ on \(\Omega \), i.e. ${u_i}(x) \geq 0, \ i = 1,...,n$, which minimize the convex functional
\begin{equation}
	\label{eq:fun_au}
	\int_\Omega  {\sum\limits_{i = 1}^n {\frac{{{a_i}( x )}}{{{u_i}( x )}} \, dx} }  
\end{equation}
under a constraint on an $L^p$-norm of $\mbf{u}$ for $p \in [1,\infty)$. In the two subsections two variants of these norms will be considered, each suited to different variants of the cost condition defining the methods vp-IMD or sp-IMD.

Beforehand, we should make sense of the ratio $\frac{{{a_i}( x )}}{{{u_i}( x )}}$ when one or both functions are zero:
\begin{equation}
	\label{eq:quotient_convention}
	\frac{{{a_i}( x )}}{{{u_i}( x )}} := \begin{cases}
		\frac{{{a_i}( x )}}{{{u_i}( x )}} & \text{if } u_i(x) >0, \\
		0 & \text{if } u_i(x) =0, \ a_i(x) =0,\\
		+\infty & \text{if } u_i(x) =0, \ a_i(x) >0.
	\end{cases}
\end{equation}
Naturally,  for the functional \eqref{eq:fun_au} to be finite, the last case cannot hold for a subset of $\Omega$ with positive measure.

\subsection{An auxiliary minimization result for the vp-IMD method}

In this subsection we solve a problem of minimizing with respect to  non-negative  functions ${\bf{u}} = \left( {{u_1},...,{u_n}} \right)$ on \(\Omega \), i.e. ${u_i}(x) \geq 0$, $i = 1,...,n$, which satisfy
\begin{equation}
	\label{eq:up_Lambda}
	\norm{\mbf{u}}_p \leq \Lambda.
\end{equation}
In particular, $\mbf{u}$ must be an element of $(L^p(\O))^n$. Note that  in case of $p = 1$ the result below has been given and proved in \cite{lewinski2021}. 

\begin{lemma}
	\label{lem:vp}
	For $p\in [1,\infty)$ and a  vector $\mbf{a}=\mbf{a}(x)$ of non-negative measurable functions $a_i(x) \geq 0$, $i=1,\ldots,n$, the following variational equality holds true:
	\begin{equation}
		\label{eq:analytical_I}
		\mathop {\min }_{\substack{{\bf{u}} \geq \mbf{0} \\ \norm{\mbf{u}}_p \leq \Lambda}} \int_\Omega  {\sum\limits_{i = 1}^n {\frac{{{a_i}\left( x \right)}}{{{u_i}\left( x \right)}}dx} }  = \frac{1}{\Lambda }  \norm{\sqrt{\mbf{a}}}_r^2,
	\end{equation} 
	where $\sqrt{\mbf{a}}(x) = \big(\sqrt{a_1(x)}, \ldots , \sqrt{a_n(x)} \big)$, and $$r= \frac{2p}{p+1}  \in (1,2).$$
	
	Moreover, assuming that $\sqrt{a_i} \in L^r(\Omega)$ for every $i$ and that at least one function $a_i$ is not a.e. zero, the problem \eqref{eq:analytical_I} admits a unique minimizer
	\begin{equation}
		\label{eq:uopt_I}
		\hat{u}_i ( x ) = \Lambda \left(\frac{\sqrt{a_i(x)}}{\norm{\sqrt{\mbf{a}}}_r} \right)^{2-r}
	\end{equation}
	which satisfies the condition \eqref{eq:up_Lambda} sharply.
\end{lemma}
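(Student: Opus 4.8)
The plan is to obtain the lower bound ``$\ge$'' in \eqref{eq:analytical_I} from a single application of the H\"older inequality \eqref{eq:Holder_basic}, to exhibit \eqref{eq:uopt_I} as an explicit minimizer for the reverse bound, and to read off uniqueness from the equality case in H\"older (equivalently, from strict convexity of $t \mapsto 1/t$); the exponent $r=\tfrac{2p}{p+1}$ is forced by the bookkeeping in the first step. First I would fix any admissible $\mbf{u}\ge\mbf{0}$ with $\norm{\mbf{u}}_p\le\Lambda$ and use the pointwise splitting $\sqrt{a_i}=(a_i/u_i)^{1/2}u_i^{1/2}$ on $\{u_i>0\}$; on $\{u_i=0\}$ the $i$-th term is harmless, as it drops out of both sides of \eqref{eq:analytical_I} where $a_i=0$ and renders the left side $+\infty$ if $a_i>0$ on a positive-measure part. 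Applying \eqref{eq:Holder_basic} with the conjugate pair $q_1=2/r$, $q_2=2p/r$ --- conjugate precisely because $\tfrac1{q_1}+\tfrac1{q_2}=\tfrac r2\bigl(1+\tfrac1p\bigr)=1$ pins down $r=\tfrac{2p}{p+1}$ --- gives
\[
\norm{\sqrt{\mbf{a}}}_r^{\,r}=\int_\Omega\sum_{i=1}^n a_i^{r/2}\,dx\ \le\ \Bigl(\int_\Omega{\textstyle\sum_i}\tfrac{a_i}{u_i}\,dx\Bigr)^{\!r/2}\Bigl(\int_\Omega{\textstyle\sum_i} u_i^{p}\,dx\Bigr)^{\!r/(2p)}\le\Bigl(\int_\Omega{\textstyle\sum_i}\tfrac{a_i}{u_i}\,dx\Bigr)^{\!r/2}\Lambda^{r/2}.
\]
Raising to the power $2/r$ and dividing by $\Lambda$ yields $\int_\Omega\sum_i a_i/u_i\,dx\ge\tfrac1\Lambda\norm{\sqrt{\mbf{a}}}_r^2$ for every admissible $\mbf{u}$ (also when $\norm{\sqrt{\mbf{a}}}_r=\infty$, both sides being $+\infty$), hence after passing to the infimum.

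For attainment and the reverse bound I would then assume $\sqrt{a_i}\in L^r(\Omega)$ for all $i$ with $\mbf{a}\not\equiv\mbf{0}$, take $\hat u_i$ as in \eqref{eq:uopt_I}, and record the elementary identities $2-r=\tfrac2{p+1}$, $(2-r)\tfrac p2=\tfrac r2$, and $(2-r)p=r$. Two direct substitutions then close the argument: $\sum_i\hat u_i^{\,p}=\Lambda^p\norm{\sqrt{\mbf{a}}}_r^{-r}\sum_i a_i^{r/2}$, which integrates to $\Lambda^p$, so $\hat{\mbf{u}}\in(L^p(\Omega))^n$ and the constraint \eqref{eq:up_Lambda} holds with equality; and $a_i/\hat u_i=\tfrac1\Lambda\norm{\sqrt{\mbf{a}}}_r^{\,2-r}a_i^{r/2}$, which integrates to $\tfrac1\Lambda\norm{\sqrt{\mbf{a}}}_r^2$, matching the lower bound. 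Hence $\hat{\mbf{u}}$ is a minimizer and \eqref{eq:analytical_I} is an equality.

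Uniqueness I would obtain by tracing back equality in the H\"older step: since $q_1,q_2\in(1,\infty)$, any minimizer $\mbf{u}$ must satisfy $u_i^{\,p+1}=c\,a_i$ a.e. on $\{a_i>0\}$ with a single constant $c$, i.e. $u_i\propto(\sqrt{a_i})^{2-r}$ there, the constant being fixed by $\norm{\mbf{u}}_p=\Lambda$; and on $\{a_i=0\}$ any positive mass could be deleted at no cost and its $L^p$-budget handed to a component with $a_j\not\equiv 0$, strictly lowering the objective --- so $u_i=0$ there. Thus $\mbf{u}=\hat{\mbf{u}}$; the same conclusion also follows from strict convexity of $t\mapsto1/t$ applied to the midpoint of two minimizers. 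I expect the only genuine delicacy to be the careful treatment of the degenerate sets --- vanishing $u_i$ or $a_i$, and the possibility $\norm{\sqrt{\mbf{a}}}_r=+\infty$ --- together with checking the borderline $p=1$, where $r=1$ and H\"older degenerates into Cauchy--Schwarz, recovering the result of \cite{lewinski2021}.
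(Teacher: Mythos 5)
Your proposal is correct and follows essentially the same route as the paper: the identical splitting $\sqrt{a_i}=(a_i/u_i)^{1/2}u_i^{1/2}$, Hölder with the same conjugate pair (your $q_2=2p/r$ equals the paper's $s'=p+1$), and uniqueness read off from the equality case. The only cosmetic difference is that you verify the candidate $\hat{\mbf{u}}$ by direct substitution while the paper extracts both attainment and uniqueness at once from the saturation conditions of Hölder; your extra care on the sets $\{a_i=0\}$ and the $p=1$ borderline is consistent with, and slightly more explicit than, the paper's treatment.
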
 
\begin{proof}
	Let us define the functions
	\begin{equation}
		{f_i} = {\left( {\frac{{{a_i}}}{{{u_i}}}} \right)^\frac{r}{2}}, \qquad {g_i} = {\left( {{u_i}} \right)^{\frac{r}{2}}}.
	\end{equation} 
	Since $u_i$ is to be a competitor in \eqref{eq:analytical_I}, it is not restrictive to assume that $\frac{{{a_i}( x )}}{{{u_i}( x )}} < +\infty$ for a.e. $x$. Thus,  H\"{o}lder inequality \eqref{eq:Holder_basic} can be applied, and we choose the exponents $s = \frac{2}{r} = \frac{p+1}{p} > 1$ and $s' = \frac{2}{2-r} =  p+1$. We find that:
	\begin{equation}
		{f_i}{g_i} = (\sqrt{a_i})^r,  \qquad  {\left( {{f_i}} \right)^{s}} = \frac{a_i}{u_i} , \qquad  {\left( {{g_i}} \right)^{s'}} = {\left( {{u_i}} \right)^{p}}.
	\end{equation}  
	Then,
	\begin{equation}
		{\left\| {\bf{f}} \right\|_s} = {\left( {\int_\Omega  {\sum\limits_{i = 1}^n {\frac{{{a_i}}}{{{u_i}}}dx} } } \right)^{\frac{r}{2}}},  \qquad
		{\left\| {\bf{g}} \right\|_{s'}} = {\left( {\int_\Omega  {\sum\limits_{i = 1}^n {{{\left( {{u_i}} \right)}^p}dx} } } \right)^\frac{1}{p+1}} =\norm{\mbf{u}}_p^{p/(p+1)}.
	\end{equation}
	and, after raising the H\"{o}lder inequality $	\int_\Omega  {\sum_{i = 1}^n {\left| {{f_i}{g_i}} \right|dx} }  \le {\left\| {\bf{f}} \right\|_s}{\left\| {\bf{g}} \right\|_{s'}}$ to the power $s =\frac{2}{r} = \frac{p+1}{p}$ on both sides, we get the inequalities
	\begin{equation}
		 {\left( {\int_\Omega  {\sum_{i = 1}^n {{{\left( {\sqrt{a_i}\left( x \right)} \right)}^{r}}} dx} } \right)^{2/r}}  \leq  \norm{\mbf{u}}_p  {\int_\Omega  {\sum\limits_{i = 1}^n {\frac{{{a_i}}}{{{u_i}}}dx} } } \leq \Lambda\, {\int_\Omega  {\sum\limits_{i = 1}^n {\frac{{{a_i}}}{{{u_i}}}dx} } }.
	\end{equation}
	Assuming that $\mbf{a}$ is not identically zero, the first inequality is an equality if and only if for some $\alpha> 0 $ there holds $g_i^{s'} =\alpha f_i^s$ or, equivalently, $u_i = \alpha (\sqrt{a_i})^{2/(p+1)}$. The second inequality is an equality if and only if $\norm{\mbf{u}}_p = \Lambda$ which determines the constant $\alpha$, and ultimately leads to the formula \eqref{eq:uopt_I}.
\end{proof}

\subsection{An auxiliary minimization result for the sp-IMD method}

Let  $p > 1$ and let  ${a_i}(x) > 0,{\rm{ }}i = 1,...,n$ be given on the domain  \(\Omega \). This time we are looking for the vector functions  ${\bf{u}} = \left( {{u_1},...,{u_n}} \right)$ on \(\Omega \)  such that  ${u_i}(x) > 0$, $i = 1,...,n$ and
\begin{equation}
	\label{eq:sumup_Lambda}
	{\Big\| {\sum_{i = 1}^n {{u_i}} } \Big\|_p} \le \Lambda 
\end{equation}

\begin{lemma}
	\label{lem:sp}
	For $p\in [1,\infty)$ and a  vector  $\mbf{a}=\mbf{a}(x)$ of non-negative measurable functions $a_i(x) \geq 0$, $i=1,\ldots,n$, the following variational equality holds true:
	\begin{equation}
		\label{eq:analytical_II}
		\mathop {\min }_{\substack{{\bf{u}} \geq \mbf{0} \\ \norm{\sum_i u_i}_p \leq \Lambda}} \int_\Omega  {\sum_{i = 1}^n {\frac{{{a_i}\left( x \right)}}{{{u_i}\left( x \right)}}dx} }  = \frac{1}{\Lambda }  {\Big\| {\sum_{i = 1}^n {\sqrt {{a_i}} } } \Big\|_{r}^2}
	\end{equation} 
	where again $r= \frac{2p}{p+1}$.
	Moreover, assuming that $\sqrt{a_i} \in L^r(\Omega)$ for every $i$ and that at least one function $a_i$ is not a.e. zero, the problem \eqref{eq:analytical_I} admits a unique minimizer
	\begin{equation}
		\label{eq:uopt_II}
		\hat{u}_i ( x ) =  \frac{\Lambda }{{{{\left\| {\sum\nolimits_{i = 1}^n {\sqrt {{a_i}} } } \right\|_{r}^{2-r}}}}}\frac{{\sqrt {{a_i}\left( x \right)} }}{{{{\Big( {\sum_{i = 1}^n {\sqrt{{a_i}\left( x \right)} } } \Big)}^{r-1}}}}
	\end{equation}
	which satisfies the condition \eqref{eq:sumup_Lambda} sharply.
\end{lemma}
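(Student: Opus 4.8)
The plan is to decouple the minimisation into a pointwise optimisation over the ``direction'' of $\mbf{u}(x)$ and a scalar optimisation over the total mass $t(x):=\sum_{i=1}^{n}u_i(x)$, the latter being precisely the one--variable instance ($n=1$) of Lemma~\ref{lem:vp}. Write $\phi:=\sum_{i=1}^{n}\sqrt{a_i}$; since each $\sqrt{a_i}\in L^{r}(\O)$ and $n<\infty$, one has $\phi\in L^{r}(\O)$ and $\phi$ is not a.e.\ zero. First I would observe that for a feasible $\mbf{u}$ one may assume $\sum_i a_i/u_i<+\infty$ a.e.\ (otherwise the objective is $+\infty$), so $u_i(x)>0$ wherever $a_i(x)>0$, and then apply the Cauchy--Schwarz inequality at a.e.\ $x$ to the vectors $\big(\sqrt{a_i/u_i}\big)_i$ and $\big(\sqrt{u_i}\big)_i$ (over the indices with $u_i(x)>0$), obtaining
\begin{equation*}
	\Big(\sum_{i=1}^{n}\sqrt{a_i(x)}\,\Big)^{2}\le\Big(\sum_{i=1}^{n}\frac{a_i(x)}{u_i(x)}\Big)\Big(\sum_{i=1}^{n}u_i(x)\Big),
\end{equation*}
i.e.\ $\sum_i a_i/u_i\ge\phi^{2}/t$ a.e., with equality iff $u_i(x)=\psi(x)\sqrt{a_i(x)}$ for all $i$ and a.e.\ $x$ for some scalar $\psi\ge0$ (whence $t=\psi\,\phi$). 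Integrating over $\O$, the problem is bounded below by the scalar problem of minimising $\int_{\O}\phi^{2}/t\,dx$ over $t\ge0$ with $\norm{t}_{p}\le\Lambda$, the constraint $\norm{\sum_i u_i}_p\le\Lambda$ being literally $\norm{t}_p\le\Lambda$.

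\noindent Next I would invoke Lemma~\ref{lem:vp} with $n=1$ and single weight $a_1:=\phi^{2}$ (so $\sqrt{a_1}=\phi$): by \eqref{eq:analytical_I} and \eqref{eq:uopt_I} the scalar minimum equals $\tfrac1\Lambda\norm{\phi}_{r}^{2}=\tfrac1\Lambda\big\|\sum_{i}\sqrt{a_i}\big\|_{r}^{2}$, attained uniquely at $\hat t=\Lambda\big(\phi/\norm{\phi}_{r}\big)^{2-r}$, which saturates $\norm{t}_p=\Lambda$. Together with the first step this gives the asserted value \eqref{eq:analytical_II}. To produce the minimiser I would set $\psi:=\hat t/\phi$ on $\{\phi>0\}$ and $\psi:=0$ on $\{\phi=0\}$, so that $\hat u_i:=\psi\sqrt{a_i}$ makes both inequalities equalities; using $2-r=\tfrac{2}{p+1}$ and $r-1=\tfrac{p-1}{p+1}$ this is exactly \eqref{eq:uopt_II} (with $\hat u_i:=0$ where $\phi=0$, the numerator vanishing there too), and $\sum_i\hat u_i=\hat t$ saturates the constraint. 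Uniqueness would follow by noting that any minimiser must realise equality in the pointwise Cauchy--Schwarz a.e.\ (hence $u_i=\psi\sqrt{a_i}$), whence its total $t$ minimises the scalar problem and equals $\hat t$ by the uniqueness part of Lemma~\ref{lem:vp}, forcing $\psi=\hat t/\phi$ on $\{\phi>0\}$; on $\{\phi=0\}$ a perturbation argument (moving any mass off this set onto $\{\phi>0\}$ would require decreasing $t$ there and strictly raise the objective) shows $\mbf u=\mbf 0$.

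\noindent\textbf{Expected main obstacle.} I do not anticipate any genuine analytic difficulty; the care will lie in the bookkeeping — reconciling the Cauchy--Schwarz step with the convention \eqref{eq:quotient_convention} at coordinates where $a_i(x)=0$, and, for the uniqueness claim, treating the (possibly positive--measure) set $\{\sum_i\sqrt{a_i}=0\}$, on which the objective is insensitive to $\mbf u$ while the $L^p$ budget is not. The exponent identities $2-r=\tfrac{2}{p+1}$, $r-1=\tfrac{p-1}{p+1}$ and $(2-r)p=r$ are exactly what make the scalar reduction land on the norm $\big\|\sum_i\sqrt{a_i}\big\|_{r}$ with $r=\tfrac{2p}{p+1}$.
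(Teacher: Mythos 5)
Your proposal is correct and follows essentially the same route as the paper's proof: the pointwise Cauchy--Schwarz inequality in $\R^n$ reduces the integrand to $\big(\sum_i\sqrt{a_i}\big)^2\big/\sum_i u_i$, and the remaining scalar step (which you package as the $n=1$ case of Lemma~\ref{lem:vp}) is exactly the scalar H\"{o}lder inequality with exponents $s=2/r$, $s'=p+1$ that the paper applies directly. Your treatment of uniqueness, in particular on the set $\{\sum_i\sqrt{a_i}=0\}$, is actually more explicit than the paper's one-line remark, but the substance is the same.
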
 
\begin{proof}
	The proof will be similar in spirit, yet instead of using H\"{o}lder inequality for vectorial functions, we will combine H\"{o}lder for scalar functions, and Schwarz inequality (H\"{o}lder for $p=2$) in $\R^n$:
	\begin{align}
		 &{\int_\Omega \Big(  {\sum_{i = 1}^n {{{{\sqrt{a_i(x)}} }}}} \Big)^r dx }  = \int_\Omega \Big( \sum_{i = 1}^n \sqrt{\tfrac{a_i(x)}{u_i(x)}} \sqrt{u_i}(x)  \Big)^r dx \leq \int_\Omega \bigg(\sum_{i = 1}^n \frac{a_i(x)}{u_i(x)}\bigg)^{\frac{r}{2}} \bigg(\sum_{i = 1}^n u_i(x)\bigg)^{\frac{r}{2}} dx \\
		 & \qquad  \leq \bigg( \int_\Omega \sum_{i = 1}^n \frac{a_i(x)}{u_i(x)}\, dx\bigg)^{\frac{r}{2}}  \bigg( \int_\Omega \Big(\sum_{i = 1}^n u_i(x) \Big)^p\, dx\bigg)^{\frac{1}{p+1}} \leq \bigg( \int_\Omega \sum_{i = 1}^n \frac{a_i(x)}{u_i(x)}\, dx\bigg)^{\frac{r}{2}}  \Lambda^{\frac{p}{p+1}}.
	\end{align}
	The first inequality is Schwarz inequality in $\R^n$. The second one is
	 H\"{o}lder inequality for scalar functions on $\O$ written for
	 exponents $s = \frac{2}{r} = \frac{p+1}{p} > 1$ and $s' = \frac{2}{2-r} =  p+1$; note that $s' r/2 = p$. Finally, the third inequality is simply the constraint \eqref{eq:sumup_Lambda}.
	 
	After raising this chain of inequalities to the power $\frac{2}{r} = \frac{p+1}{p}$ we arrive at the inequality $\leq$ in the place of equality \eqref{eq:analytical_II}. We can easily check that $u_i = \hat{u}_i$ satisfies it as equality, and that ${\left\| {\sum\nolimits_{i = 1}^n {{\hat{u}_i}} } \right\|_p} = \Lambda$. This approves of  \eqref{eq:analytical_II} and of optimality of $\hat{u}_i$. The uniqueness of $\hat{u}_i$ can be showed by studying the conditions under which the Schwarz and the H\"{o}lder inequality become equalities.
\end{proof}

\section{The problem of compliance minimization and its stress based reformulation}

\subsection{Preliminaries on linear elasticity}

Let us fix a real number $p \in (1,\infty)$, with $p=1$ and $p=\infty$ excluded. Apart from $p$, the crucial role will be played by the  exponent
\begin{equation}
	r = \frac{2p}{p+1}
\end{equation}
and its H\"{o}lder conjugate $r' = \frac{r}{r-1} = \frac{2p}{p-1}$. Note that $r \in (1,2)$, whilst $r'$ ranges between 2 and $+\infty$.

For a bounded domain $\Omega \subset \R^d$ of Lipschitz regular boundary $\bO$, consider an elastic isotropic body of moduli  $k,\mu$ being non-negative functions in $L^p(\Omega)$. 
The body is clamped on a subset $\Gamma_0$ of the boundary $\bO$; the set $\Gamma_0$ is assumed to be of non-zero $d-1$ dimensional measure. The definitions of the functional spaces to be given below are dictated by the mathematical framework of the herein considered optimization problem, rather than by the framework of the classical linear elasticity. Accordingly, by the space of virtual displacements we will understand
\begin{equation}
	\mbf{V}^{r'}\!(\Omega) := \Big\{ \mbf{v} \in  W^{1,r'}\!(\Omega;\Rd) \ \Big| \ \mbf{v} = 0 \ \ \text{$ds$-a.e. on  $\Gamma_0$}  \Big\}
\end{equation}
where $ds$-a.e. means almost everywhere with respect to the $d-1$ dimensional measure on the boundary $\bO$. $\mbf{V}^{r'}\!(\Omega)$ is a closed subspace of  $W^{1,r'}\!(\Omega;\Rd)$, and thus a reflexive Banach space, see Theorem 6.3-4 in \cite{ciarlet2000a} (the proof easily extends to any exponent larger than two). It is worth noting that the functions $\mbf{v} \in \mbf{V}^{r'} \!(\Omega)$ exhibit higher regularity than generic functions in $H^1(\Omega;\Rd)$ owing to $r' >2$. In particular, due to the Morrey embedding theorem, $\mbf{v}$ is continuous in dimension $d=2$ for any $p \in (1,\infty)$ and in dimension $d=3$ provided that $p <3$ (note that then $r' > 3$).

The load  will be represented by the virtual work functional ${f}$ being an element in the  dual space of $W^{1,r'}(\Omega;\Rd)$. In the view of the foregoing remarks we can assert that:
\begin{enumerate}
	\item [a)] in the case $d=3$, $p \geq 3$ for the load one can take, for instance, the functionals of the form
	\begin{equation}
		{f}(\mbf{v}) = \int_\Omega \mbf{q}(x) \cdot\mbf{v}(x) \, dx + \int_{\bO \backslash \Gamma_0} \mbf{t}(x) \cdot \mbf{v}(x)\, ds
	\end{equation}
	where $\mbf{q} \in L^r(\Omega;\Rd)$ and $\mbf{t} \in L^r(\bO;\Rd)$ are the intensities of the body forces and, respectively, the tractions on the boundary;
	\item[b)] whenever $d=2$, $p \in (1,\infty)$ or $d=3$, $p <3$, we can add the point forces $\mbf{F}_i \in \Rd$; namely,
		\begin{equation}
		{f}(\mbf{v})= \int_\Omega \mbf{q}(x) \cdot\mbf{v}(x) \, dx + \int_{\bO \backslash \Gamma_0} \mbf{t}(x) \cdot \mbf{v}(x)\, ds + \sum_{i=1}^n \mbf{F}_i \cdot \mbf{v}(x_i),
	\end{equation}
	where $x_i \in \Ob$.
\end{enumerate}
The above examples of loads ${f}$ do not saturate all the possibilities, see \cite[Theorem 3.9]{adams2003} for the full characterization of the space $\big(W^{1,r'}(\Omega;\Rd)\big)^*$.

By the virtual stress fields we will understand functions ${\boldsymbol{\tau}} = ({\tau _{ij}})$ in the Lebesgue space $L^{r}(\Omega;E_s^2)$ which equilibrate the load ${f}$ in the weak sense, i.e.
\begin{equation}
		\label{eq:stat_adm}
		\int_\Omega  {{\boldsymbol{\tau}} \cdot \,} {\boldsymbol{\eps}}\left( {\bf{v}} \right)dx  = {f}(\mbf{v}) \qquad \forall \,  \mbf{v} \in \mbf{V}^{r'}\!(\Omega),
\end{equation}
where ${\boldsymbol{\eps}}\left( {\bf{v}} \right) = \left( {{\varepsilon _{ij}}\left( {\bf{v}} \right)} \right)$, $i,j = 1,..,d$, see Section \ref{ssec:notation}. The set of statically admissible  stress fields $\boldsymbol{\tau}\in L^{r}(\Omega;E_s^2)$ satisfying \eqref{eq:stat_adm}  is denoted by $\Sigma_{{f}} ( \Omega )$. It is straightforward to show that $\Sigma_{{f}} ( \Omega )$ is weakly closed in $L^r(\Omega;E_s^2)$.
Under the assumption that $\Gamma_0$ is of non-zero $d-1$ dimensional measure, the set  $\Sigma_{{f}} ( \Omega )$ is non-empty. This will be demonstrated via duality arguments in Section \ref{sec:duality}. 

We say that displacements $\mbf{u} \in \mbf{V}^{r'}\!(\Omega)$ and stresses $\boldsymbol{\sigma} \in \Sigma_{{f}} ( \Omega )$ solve the linear elasticity problem for the body of the elastic moduli $k,\mu \in L^p(\Omega;\R_+)$ if the constitutive law of isotropy is met:
\begin{equation}
	\label{eq:const}
	{\rm{Tr}} \,{\boldsymbol{\sigma}} = dk \,{\rm{Tr }}\,{\boldsymbol{\eps}}({\bf{u}}), \qquad {\rm{dev}}\,{\boldsymbol{\sigma}} = 2\mu\, {\rm{dev}}\,{\boldsymbol{\eps}}({\bf{u}}) \qquad \text{a.e. in $\O$}.
\end{equation}
The compliance of such a body is classically understood by the virtual work of the load, i.e. $\mathcal{C}(k,\mu) = {f}(\mbf{u})$. However, for general $k,\mu \in L^p(\Omega;\R_+)$, which may not be uniformly bounded from below and above,  there is no guarantee that solution $\mbf{u}$ exists in $W^{1,r'}\!(\O;\Rd)$ or in any other reasonable functional space.

To by-pass this issue, an equivalent definition of compliance through elastic energy can be employed.  By using the formulae \eqref{eq:const} the density of energy reads $W = \frac{1}{2}\boldsymbol\sigma\cdot \boldsymbol{\eps}(\mbf{u})$ can be expressed in stresses only, i.e. $	W({\boldsymbol{\sigma}} ) = \frac{{\rm{1}}}{{\rm{2}}}\big( {\frac{1}{{dk}}{{\left( {{\rm{Tr }}{\boldsymbol{\sigma}}} \right)}^2} + \frac{1}{{2\mu }}{{\left\| {{\rm{dev}}\;{\boldsymbol{\sigma}}} \right\|}^2}} \big)$. The Castigliano theorem states that the unknown stress $\boldsymbol\sigma$  is the minimizer of the functional $\Sigma_{{f}}(\O) \ni \boldsymbol{\tau} \mapsto \int_\O W(\boldsymbol{\tau})\, dx$ and the minimum  equals half the compliance, see \cite{duvant1976}. Ultimately, the stress-based formula for the compliance can be written down:
\begin{equation}
	\label{eq:comp}
	\mathcal{C}(k,\mu ) := \inf_{{\boldsymbol{\tau}} \in \Sigma_{{f}} \left( \Omega  \right)} \int_\Omega  {\left( {\frac{1}{{dk}}{{\left( {{\rm{Tr }}{ \, \boldsymbol{\tau}}} \right)}^2} + \frac{1}{{2\mu }}{{\left\| {{\rm{dev}}\,{\boldsymbol{\tau}}} \right\|}^2}} \right)dx}.
\end{equation} 
This formula is meaningful for any functions $k,\mu \in L^p(\Omega;\R_+)$. Note that it allows singular $k,\mu$, e.g. that are zero on subdomains of $\Omega$, see the convention \eqref{eq:quotient_convention}. It may result in an infinite value $\mathcal{C}(k,\mu ) = + \infty$, but these scenarios will be naturally avoided by optimization. The next result warrants the well posedness of the optimization problem stated in the subsequent sections:  
\begin{proposition}
	\label{prop:functional}
	The compliance functional $\mathcal{C}: (L^p(\Omega;\R_+))^2\to \R_+ \cup \{+\infty\}$ is convex and weakly lower semi-continuous.
\end{proposition}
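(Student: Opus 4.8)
The plan is to give a self-contained proof resting on the joint convexity of the stress energy density together with a compactness argument for near-optimal stresses. The first ingredient is the perspective function $(a,s)\mapsto s^2/a$, extended by $0$ at $(a,s)=(0,0)$ and by $+\infty$ when $a=0$, $s\neq 0$ (in accordance with the convention \eqref{eq:quotient_convention}): it is convex and lower semicontinuous on $[0,\infty)\times\R$, and so is its tensorial version $(a,\boldsymbol{\xi})\mapsto\norm{\boldsymbol{\xi}}^2/a$ on $[0,\infty)\times E_s^2$. Composing with the linear maps $\boldsymbol{\tau}\mapsto\mathrm{Tr}\,\boldsymbol{\tau}$ and $\boldsymbol{\tau}\mapsto\mathrm{dev}\,\boldsymbol{\tau}$ and rescaling by the positive constants $1/d$ and $1/2$, one gets that
\[
	j(\boldsymbol{\tau};k,\mu):=\frac{1}{dk}\,(\mathrm{Tr}\,\boldsymbol{\tau})^2+\frac{1}{2\mu}\,\norm{\mathrm{dev}\,\boldsymbol{\tau}}^2
\]
is a nonnegative, jointly convex function of $(\boldsymbol{\tau},k,\mu)\in E_s^2\times\R_+\times\R_+$. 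Consequently the integral functional $\mathcal{I}(\boldsymbol{\tau},k,\mu):=\int_\Omega j(\boldsymbol{\tau};k,\mu)\,dx$ is convex on $L^r(\Omega;E_s^2)\times\bigl(L^p(\Omega;\R_+)\bigr)^2$ and, being nonnegative, it is lower semicontinuous for strong convergence (pass to an a.e.\ convergent subsequence and apply Fatou's lemma); being convex, it is then also weakly sequentially lower semicontinuous.

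Convexity of $\mathcal{C}$ follows at once, since $\mathcal{C}(k,\mu)=\inf_{\boldsymbol{\tau}\in\Sigma_f(\Omega)}\mathcal{I}(\boldsymbol{\tau},k,\mu)$ is the marginal of the jointly convex functional $\mathcal{I}$ over the affine set $\Sigma_f(\Omega)$, which does not depend on $(k,\mu)$. Explicitly, fix $(k_0,\mu_0),(k_1,\mu_1)\in\bigl(L^p(\Omega;\R_+)\bigr)^2$ and $t\in[0,1]$, put $(\bar k,\bar\mu):=(1-t)(k_0,\mu_0)+t(k_1,\mu_1)$, and take arbitrary $\boldsymbol{\tau}_0,\boldsymbol{\tau}_1\in\Sigma_f(\Omega)$; then $\boldsymbol{\tau}_t:=(1-t)\boldsymbol{\tau}_0+t\boldsymbol{\tau}_1\in\Sigma_f(\Omega)$ and
\[
	\mathcal{C}(\bar k,\bar\mu)\ \le\ \mathcal{I}(\boldsymbol{\tau}_t,\bar k,\bar\mu)\ \le\ (1-t)\,\mathcal{I}(\boldsymbol{\tau}_0,k_0,\mu_0)+t\,\mathcal{I}(\boldsymbol{\tau}_1,k_1,\mu_1);
\]
taking the infimum over $\boldsymbol{\tau}_0$ and over $\boldsymbol{\tau}_1$ yields $\mathcal{C}(\bar k,\bar\mu)\le(1-t)\,\mathcal{C}(k_0,\mu_0)+t\,\mathcal{C}(k_1,\mu_1)$.

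For weak lower semicontinuity, let $(k_n,\mu_n)\rightharpoonup(k,\mu)$ in $\bigl(L^p(\Omega)\bigr)^2$; passing to a subsequence we may assume $\mathcal{C}(k_n,\mu_n)\to L:=\liminf_n\mathcal{C}(k_n,\mu_n)$, and if $L=+\infty$ there is nothing to show. Pick $\boldsymbol{\tau}_n\in\Sigma_f(\Omega)$ with $\mathcal{I}(\boldsymbol{\tau}_n,k_n,\mu_n)\le\mathcal{C}(k_n,\mu_n)+\tfrac1n$. The decisive step is an a priori bound on $\boldsymbol{\tau}_n$ in $L^r$: since $\norm{\boldsymbol{\tau}_n}^2=(\mathrm{Tr}\,\boldsymbol{\tau}_n)^2+\norm{\mathrm{dev}\,\boldsymbol{\tau}_n}^2$, it suffices to control $\int_\Omega|\mathrm{Tr}\,\boldsymbol{\tau}_n|^r\,dx$ and $\int_\Omega\norm{\mathrm{dev}\,\boldsymbol{\tau}_n}^r\,dx$; writing, a.e.\ and with the conventions in force, $|\mathrm{Tr}\,\boldsymbol{\tau}_n|^r=\bigl(\tfrac{(\mathrm{Tr}\,\boldsymbol{\tau}_n)^2}{dk_n}\bigr)^{r/2}(dk_n)^{r/2}$ and applying H\"{o}lder's inequality with exponents $\tfrac2r$ and $\tfrac{2}{2-r}$ --- here the identity $\tfrac{r}{2-r}=p$, which is precisely the definition of $r$, is used --- one obtains
\[
	\int_\Omega|\mathrm{Tr}\,\boldsymbol{\tau}_n|^r\,dx\ \le\ \mathcal{I}(\boldsymbol{\tau}_n,k_n,\mu_n)^{r/2}\,\bigl(d^p\,\norm{k_n}_p^p\bigr)^{(2-r)/2},
\]
and symmetrically $\int_\Omega\norm{\mathrm{dev}\,\boldsymbol{\tau}_n}^r\,dx$ is controlled by $\mathcal{I}(\boldsymbol{\tau}_n,k_n,\mu_n)$ and $\norm{\mu_n}_p$. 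Since $\norm{k_n}_p$ and $\norm{\mu_n}_p$ are bounded (a weakly convergent sequence is bounded) and $\mathcal{I}(\boldsymbol{\tau}_n,k_n,\mu_n)\le L+1$ for $n$ large, the sequence $(\boldsymbol{\tau}_n)$ is bounded in the reflexive space $L^r(\Omega;E_s^2)$; extract a weakly convergent subsequence $\boldsymbol{\tau}_n\rightharpoonup\boldsymbol{\tau}$, and note $\boldsymbol{\tau}\in\Sigma_f(\Omega)$ because $\Sigma_f(\Omega)$ is weakly closed. Since strong convergence implies weak convergence, $(\boldsymbol{\tau}_n,k_n,\mu_n)\rightharpoonup(\boldsymbol{\tau},k,\mu)$ in $L^r\times(L^p)^2$, so by the weak lower semicontinuity of $\mathcal{I}$,
\[
	\mathcal{C}(k,\mu)\ \le\ \mathcal{I}(\boldsymbol{\tau},k,\mu)\ \le\ \liminf_n\mathcal{I}(\boldsymbol{\tau}_n,k_n,\mu_n)\ \le\ L,
\]
which is the claimed inequality.

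The one genuinely delicate point is this a priori $L^r$-estimate for the near-optimal stresses: coercivity of $\mathcal{I}(\,\cdot\,,k_n,\mu_n)$ by itself is useless here, since the moduli $k_n,\mu_n$ need not be bounded away from zero and may even vanish on subsets of $\Omega$. It is the interplay of the bound on $\mathcal{I}(\boldsymbol{\tau}_n,k_n,\mu_n)$, the $L^p$-bound on $(k_n,\mu_n)$, and the exponent identity $\tfrac{r}{2-r}=p$ that delivers the needed compactness in $L^r$. Everything else is soft convex analysis --- perspective and marginal functions, and the principle that a convex functional lower semicontinuous for the strong topology is also weakly lower semicontinuous --- combined with the weak closedness of $\Sigma_f(\Omega)$ already recorded in the text.
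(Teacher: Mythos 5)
Your proof is correct, but it takes a genuinely different route from the paper's. The paper disposes of the proposition in a few lines by invoking the classical duality of Ekeland--Temam to rewrite $\mathcal{C}(k,\mu)$ in the displacement-based form \eqref{eq:comp_disp}, i.e.\ as a supremum over $\mbf{v}\in\mbf{V}^{r'}\!(\Omega)$ of functionals that are affine and weakly continuous in $(k,\mu)$ (continuity follows from H\"older, since $\boldsymbol{\eps}(\mbf{v})\in L^{2p'}$); convexity and weak lower semi-continuity are then inherited by the supremum. You instead work entirely with the primal stress-based definition \eqref{eq:comp}: joint convexity of the perspective integrand gives convexity of the marginal $\mathcal{C}$, and weak lower semi-continuity is obtained by a compactness argument for near-optimal stresses, whose crux is the a priori $L^r$-bound exploiting the identity $r/(2-r)=p$. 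Both arguments are sound. The paper's is shorter but leans on the zero-gap duality between \eqref{eq:comp} and \eqref{eq:comp_disp} for possibly degenerate moduli, which it delegates to \cite{ekeland1999}; yours is self-contained, does not presuppose that duality, and as a by-product explains why $L^r$ with $r=2p/(p+1)$ is the natural space for the stresses --- an observation the paper only exploits later. The one point worth polishing is the sentence ``since strong convergence implies weak convergence'': what you actually use there is that $\mathcal{I}$, being convex and strongly lower semi-continuous on the product space, is weakly sequentially lower semi-continuous (Mazur), together with the fact that componentwise weak convergence gives weak convergence in the product; the estimate itself is fine.
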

\begin{proof}
	By the classical duality \cite{ekeland1999} the displacement-based definition of compliance can be derived: 
	\begin{equation}
        \label{eq:comp_disp}
		\mathcal{C}(k,\mu ) = \sup\limits_{\mbf{v} \in \mbf{V}^{r'} \!(\Omega)} \left\{ 2{f}(\mbf{v}) - \int_\Omega  {\left( {{{dk}}{{\big( {{\rm{Tr }}{\, \boldsymbol{\eps}(\mbf{v})}} \big)}^2} + {{2\mu }}{{\left\| {{\rm{dev}}\,{\boldsymbol{\eps}(\mbf{v})}} \right\|}^2}} \right)dx} \right\}.
	\end{equation} 
	Since $\mbf{v} \in W^{1,r'}\!(\Omega;\Rd)$, the function $\boldsymbol{\eps}(\mbf{v})$ is $r' = \frac{2p}{p-1} = 2p'$ -integrable. As a result, for  any fixed $\mbf{v}$, the mapping $(k,\mu) \mapsto 2{f}(\mbf{v})- \int_\Omega  {\big( {{{dk}}{{\big( {{\rm{Tr }}{\, \boldsymbol{\eps}(\mbf{v})}} \big)}^2} + {{2\mu }}{{\left\| {{\rm{dev}}\,{\boldsymbol{\eps}(\mbf{v})}} \right\|}^2}} \big)dx}$ is a linear continuous functional on $(L^p(\Omega;\R))^2$. In particular, it is convex and weakly lower semi-continuous, and the latter properties are preserved under taking the supremum with respect to $\mbf{v}$.
\end{proof}

\subsection{The vp-IMD method}

The vp-IMD variant of the compliance minimization method herein developed entails the admissible set of moduli that satisfy the cost condition \eqref{eq:vp_cost}, that is:
\begin{equation}
	{M_{vp}}(\Omega ) := \Big\{ (k,\mu) \in (L^p(\Omega;\R_+))^2 \ \Big\vert \ \| (dk,\beta^{\frac{2}{p}}2\mu) \|_p \leq \Lambda \Big\},
\end{equation}
recall that $p \in (1,\infty)$. Note that ${M_{vp}}(\Omega)$ is the intersection of the half-space $(L^p(\Omega;\R_+))^2$ and $B_{L^p}(0,\Lambda)$ being the standard (up to the scaling parameters $d, \beta$) closed ball in the Banach space $L^p(\Omega;\R^2)$ of radius $\Lambda$.

The isotropic material design problem in its vectorial variant can be readily posed:
\begin{equation}
	\label{eq:vp-IMD}
	(\mathrm{\text{vp-IMD}}) \qquad \qquad\qquad  \qquad\qquad  \hat{\mathcal{C}}_{vp} =  \mathop {\min }\limits_{(k,\mu ) \in {M_{vp}}\left( \Omega  \right)} \mathcal{C}(k,\mu ). \qquad \qquad  \qquad\qquad  \qquad\qquad 
\end{equation}
It is a convex optimization problem that is well posed: solution $(\hat{k},\hat{\mu}) \in M_{vp}(\Omega)$ exists, and it is unique. Existence follows directly by virtue of the weak lower semi-continuity of $\mathcal{C}$ established in Proposition \ref{prop:functional} and  by the weak compactness of ${M_{vp}}(\Omega)$, which  for $p \in (1,\infty)$ is due to the reflexivity of $L^p$. To show uniqueness, we observe that $\mathcal{C}(t k, t\mu) = \frac{1}{t} \, \mathcal{C}(k, \mu)$ for $t>0$, see  the definition \eqref{eq:comp}. Therefore, assuming that $\hat{\mathcal{C}}_{vp} \neq 0$, any solution $(\hat{k},\hat{\mu})$ must lie on the sphere $\partial B_{L^p}(0,\Lambda)$. Since the ball $B_{L^p}(0,\Lambda)$ is a strictly convex set, the uniqueness can be thus deduced from the convexity of $\mathcal{C}$.

Observe that, since the reflexivity of $L^1$ is lacking, the existence of solutions in (vp-IMD) fails to hold in general for $p=1$, which calls for a measure-theoretic relaxation mentioned in the introduction, see \cite{bolbotowski2022b} and Section \ref{ssec:IMD_recovery} below in this paper.

Let us now proceed to reformulating the problem to its stress-based form. Incorporating the formula \eqref{eq:comp} for the compliance $\mathcal{C}(k,\mu)$ into the problem (vp-IMD) we arrive at a double minimization problem.  Since the set $\Sigma_{{f}} \left( \Omega  \right)$ does not depend on $(k,\mu)\in {M_{vp}}(\Omega)$, the order of the minimization may be swapped; hence,
\begin{equation}
	\label{eq:min_swap}
	\hat{\mathcal{C}}_{vp} = \inf_{(k,\mu ) \in M_{vp}(\Omega)} \inf_{{\boldsymbol{\tau}} \in \Sigma_{{f}} \left( \Omega  \right)} \int_\Omega  {\left( {\frac{1}{{dk}}{{\left( {{\rm{Tr }}{ \, \boldsymbol{\tau}}} \right)}^2} + \frac{1}{{2\mu }}{{\left\| {{\rm{dev}}\,{\boldsymbol{\tau}}} \right\|}^2}} \right)dx} = \mathop {\inf }\limits_{{\boldsymbol{\tau}} \in \Sigma_{{f}} \left( \Omega  \right)} {F_{vp}}\left( {\boldsymbol{\tau}} \right),
\end{equation}   					
where
\begin{equation}
	\label{eq:Fp}
	{F_{vp}}({\boldsymbol{\tau}}) := \inf \limits_{(k,\mu ) \in {M_{vp}}\left( \Omega  \right)} \int_\Omega  {\left( {\frac{1}{{dk}}{{\left( {{\rm{Tr }}\,{\boldsymbol{\tau}}} \right)}^2} + \frac{1}{{2\mu }}{{\left\| {{\rm{dev}}\,{\boldsymbol{\tau}}} \right\|}^2}} \right)dx}.
\end{equation}

Assuming that $\boldsymbol{\tau} \in \Sigma_f(\Omega) \subset  L^r(\Omega;E_s^2)$ is fixed, Lemma \ref{lem:vp} can be now employed to explicitly solve the problem  defining $F_{vp}(\boldsymbol{\tau})$.
To fit into the framework of the lemma, for $n=2$  let us introduce the non-negative  functions 
\begin{equation}
	\label{eq:a1a2}
	{a_1} = {\left( {{\rm{Tr }}\,{\boldsymbol{\tau}}} \right)^2}, \qquad {a_2} = \beta^{2/p}{\left\| {{\rm{dev}}\;{\boldsymbol{\tau}}} \right\|^2}
\end{equation}
and perform the change of variables:
\begin{equation}
	{u_1} = dk{\rm{ }}, \qquad {u_2} = 2\beta^{2/p}\mu.
\end{equation}
Since $\boldsymbol{\tau} \in L^r(\Omega;E_s^2)$, it is clear that $\sqrt{a_i} \in  L^r(\Omega)$. Applying Lemma \ref{lem:vp} directly yields 
\begin{equation}
	\label{eq:Fvp_formula}
	{F_{vp}}\left( {\boldsymbol{\tau}} \right) = \frac{1}{\Lambda }  \norm{\sqrt{\mbf{a}}}_r^2= \frac{1}{\Lambda }{\Big(r 	{\widetilde{F}_{vp}}\left( {\boldsymbol{\tau}} \right) \Big)^{\frac{2}{r}}}, 
\end{equation}
where we defined:
\begin{equation}
{\widetilde{F}_{vp}}\left( {\boldsymbol{\tau}} \right) := \frac{1}{r} \int_\Omega  {\Big( {{{\left| {{\rm{Tr }}{\boldsymbol{\tau}}} \right|}^{r}} +\beta^{2-r} {{\left\| {{\rm{dev}}\,{\boldsymbol{\tau}}} \right\|}^{r}}} \Big)dx}
\end{equation}
(the factor $\frac{1}{r}$ is introduced for the sake of the forthcoming duality arguments).
Moreover, assuming that $\boldsymbol{\tau} \neq 0$, the moduli 
$(k,\mu)$ solve \eqref{eq:Fp} if and only if
\begin{equation}
	\label{eq:kmu_tau_vp}
	d{{k} } = \frac{\Lambda }{{{{\big( r{{{\widetilde{F}_{vp}}\left( {\boldsymbol{\tau}} \right) }} \big)}^{{1}/{{p}}}}}}{\left| {{\rm{Tr }}\,{{\boldsymbol{\tau}} }} \right|^{\frac{2}{p+1}}}, \qquad 2\beta^{\frac{2}{p}}{{\mu}} =  \frac{\Lambda }{{{{\big( r{{{\widetilde{F}_{vp}}\left( {\boldsymbol{\tau}} \right) }} \big)}^{{1}/{{p}}}}}}{\left\| {{\beta^{\frac{1}{p}}}{\rm{dev}}\,{{\boldsymbol{\tau}}}} \right\|^{\frac{2}{p+1}}} \qquad \text{a.e. in $\Omega$}.
\end{equation}	  
Note that, several times  above, we have made manipulations using the relation between $p$ and $r$.

Going back to the chain of equalities \eqref{eq:min_swap}, the formula for minimal compliance can be recast by exploiting \eqref{eq:Fvp_formula}:
\begin{equation}
	\label{eq:Cvp}
	\hat{\mathcal{C}}_{vp} = \frac{1}{\Lambda }{\left( {{r Z_r}} \right)^{\frac{2}{r}}},
\end{equation}
where 
\begin{align}
	\label{eq:Zr}
	{Z_r} := \mathop {\min }\limits_{{\boldsymbol{\tau}} \in \Sigma_{{f}} \left( \Omega  \right)} \frac{1}{r} \int_\Omega  {\Big( {{{\left| {{\rm{Tr }}\,{\boldsymbol{\tau}}} \right|}^{r}} + \beta^{2-r}{{\left\| {{\rm{dev}}\,{\boldsymbol{\tau}}} \right\|}^{r}}} \Big)dx}.
\end{align}	
Note that $({\widetilde{F}_{vp}}\left( \argu \right))^{1/r}$ is a norm on $L^r(\Omega;E_s^2)$ that is equivalent to its standard norm. Moreover, it can be checked that ${\widetilde{F}_{vp}}\left( \argu \right)$ is strictly convex. Since $\Sigma_{{f}} \left( \Omega  \right)$ is a closed affine subspace of $L^r(\Omega;E_s^2)$,  the existence in \eqref{eq:Zr} follows together with uniqueness.

The problem \eqref{eq:Zr} may be deemed the stress-based formulation of the problem (vp-IMD). Indeed, we have showed that the formulae \eqref{eq:kmu_tau_vp} constitute a one-to-one  correspondence between the unique solution $(\hat{k},\hat{\mu})$ of the problem \eqref{eq:vp-IMD} and the unique solution $\hat{\boldsymbol\tau}$ of the problem \eqref{eq:Zr}. The results of this subsection are summarized below:

\begin{theorem}
	\label{thm:stress_vp}
	Let $\hat{\boldsymbol{\tau}} \in \Sigma_f(\Omega)$ be the unique solution of the stress-based problem \eqref{eq:Zr}, which always exists. Then, a pair $(\hat{k},\hat{\mu})\in M_{vp}(\Omega)$ is the unique solution of the (vp-IMD) problem \eqref{eq:vp-IMD} if and only if
	\begin{equation}
	\label{eq:opt_kmu_tau_vp}
	d{\hat{k}} = \frac{\Lambda }{{{{\left( {{r Z_r}} \right)}^{\frac{1}{{p}}}}}}{\left| {{\rm{Tr }}\,{\hat{\boldsymbol{\tau}} }} \right|^{\frac{2}{p+1}}}, \qquad 2{\hat\mu} = \frac{\Lambda }{{{{\left( {{r Z_r}} \right)}^{\frac{1}{{p}}}}}}{\Big\| { \frac{1}{\beta} \,{\rm{dev}}\,{\hat{\boldsymbol{\tau}} }} \Big\|^{\frac{2}{p+1}}} \qquad \text{a.e. in $\Omega$}.
\end{equation}	 
\end{theorem}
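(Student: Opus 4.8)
The plan is to read this theorem as a synthesis of the computations already carried out in this subsection, so that essentially the only new ingredient is a short logical argument closing the ``if and only if''. I would begin with existence and uniqueness for the stress problem \eqref{eq:Zr}: as noted above, $\boldsymbol{\tau}\mapsto\big(\widetilde{F}_{vp}(\boldsymbol{\tau})\big)^{1/r}$ is, thanks to $\beta^{2-r}>0$ and the orthogonal splitting \eqref{eq:dot_decomp}, a norm on $L^r(\Omega;E_s^2)$ equivalent to the standard one, so $\widetilde{F}_{vp}$ is coercive and weakly lower semicontinuous, and it is strictly convex since $t\mapsto|t|^r$ is strictly convex for $r>1$ while $\boldsymbol{\tau}\mapsto(\mathrm{Tr}\,\boldsymbol{\tau},\mathrm{dev}\,\boldsymbol{\tau})$ encodes the orthogonal decomposition. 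As $\Sigma_f(\Omega)$ is a nonempty (Section \ref{sec:duality}), convex, weakly closed subset of the reflexive space $L^r(\Omega;E_s^2)$, the direct method yields a unique minimizer $\hat{\boldsymbol{\tau}}$; put $Z_r=\widetilde{F}_{vp}(\hat{\boldsymbol{\tau}})$. Combining \eqref{eq:min_swap}, \eqref{eq:Fvp_formula} with the fact that $t\mapsto\frac1\Lambda(rt)^{2/r}$ is strictly increasing on $[0,\infty)$ then shows that $\hat{\boldsymbol{\tau}}$ is simultaneously the unique minimizer of $F_{vp}$ on $\Sigma_f(\Omega)$ and that $\hat{\mathcal{C}}_{vp}=F_{vp}(\hat{\boldsymbol{\tau}})=\frac1\Lambda(rZ_r)^{2/r}$, i.e. \eqref{eq:Cvp}.

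Next I would check that the pair defined by \eqref{eq:opt_kmu_tau_vp} actually solves (vp-IMD). Assume $f\neq 0$, equivalently $\hat{\boldsymbol{\tau}}\neq 0$ and $\hat{\mathcal{C}}_{vp}\neq0$ (the case $f=0$ is degenerate: $\hat{\boldsymbol{\tau}}=0$, $\hat k=\hat\mu=0$, and \eqref{eq:vp-IMD} loses uniqueness). Apply Lemma \ref{lem:vp} with $\boldsymbol{\tau}=\hat{\boldsymbol{\tau}}$, $n=2$, $a_1=(\mathrm{Tr}\,\hat{\boldsymbol{\tau}})^2$, $a_2=\beta^{2/p}\|\mathrm{dev}\,\hat{\boldsymbol{\tau}}\|^2$ and the substitution $u_1=dk$, $u_2=2\beta^{2/p}\mu$; then $\|\mathbf{u}\|_p=\|(dk,\beta^{2/p}2\mu)\|_p$, so $\mathbf{u}\geq0$, $\|\mathbf{u}\|_p\leq\Lambda$ is exactly $(k,\mu)\in M_{vp}(\Omega)$, and $\sqrt{a_i}\in L^r(\Omega)$ since $\hat{\boldsymbol{\tau}}\in L^r$. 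The ``moreover'' clause of Lemma \ref{lem:vp} produces a unique $(k^{\sharp},\mu^{\sharp})\in M_{vp}(\Omega)$ attaining the infimum defining $F_{vp}(\hat{\boldsymbol{\tau}})$; unwinding the substitution with $2-r=\tfrac{2}{p+1}$ and $\beta^{-2/p}\beta^{2/(p(p+1))}=\beta^{-2/(p+1)}$ identifies $(k^{\sharp},\mu^{\sharp})$ with the pair in \eqref{eq:opt_kmu_tau_vp}. By the min-swap \eqref{eq:min_swap},
\begin{equation*}
\mathcal{C}(k^{\sharp},\mu^{\sharp})=\inf_{\boldsymbol{\tau}\in\Sigma_f(\Omega)}\int_\Omega\Big(\tfrac{1}{dk^{\sharp}}(\mathrm{Tr}\,\boldsymbol{\tau})^2+\tfrac{1}{2\mu^{\sharp}}\|\mathrm{dev}\,\boldsymbol{\tau}\|^2\Big)dx\leq F_{vp}(\hat{\boldsymbol{\tau}})=\hat{\mathcal{C}}_{vp},
\end{equation*}
while $(k^{\sharp},\mu^{\sharp})\in M_{vp}(\Omega)$ forces the reverse inequality, so $(k^{\sharp},\mu^{\sharp})$ realizes the minimum in \eqref{eq:vp-IMD}.

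To conclude I would invoke the uniqueness of the (vp-IMD) solution already proved in this subsection: since $(k^{\sharp},\mu^{\sharp})$ is a solution of \eqref{eq:vp-IMD} and the solution is unique, it is \emph{the} solution, which yields the implication ``$\Leftarrow$''; and by the same uniqueness any $(\hat k,\hat\mu)$ solving \eqref{eq:vp-IMD} must equal $(k^{\sharp},\mu^{\sharp})$ and hence satisfy \eqref{eq:opt_kmu_tau_vp}, which yields ``$\Rightarrow$''. I do not anticipate a genuine obstacle: all the analysis sits in Lemma \ref{lem:vp} and in the derivation of \eqref{eq:min_swap}--\eqref{eq:Cvp}; the one point requiring care is purely the bookkeeping of the powers of $\beta$ and of the identities between $p$ and $r$ when transporting the explicit minimizer \eqref{eq:uopt_I} back through the substitution $u_2=2\beta^{2/p}\mu$ so as to land on the stated normalization $2\hat\mu=\frac{\Lambda}{(rZ_r)^{1/p}}\big\|\tfrac1\beta\,\mathrm{dev}\,\hat{\boldsymbol{\tau}}\big\|^{2/(p+1)}$.
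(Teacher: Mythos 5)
Your proposal is correct and follows essentially the same route as the paper, which presents Theorem \ref{thm:stress_vp} as a summary of the preceding derivations: swapping the two infima as in \eqref{eq:min_swap}, applying Lemma \ref{lem:vp} with $a_1=(\mathrm{Tr}\,\hat{\boldsymbol{\tau}})^2$, $a_2=\beta^{2/p}\|\mathrm{dev}\,\hat{\boldsymbol{\tau}}\|^2$ and $u_1=dk$, $u_2=2\beta^{2/p}\mu$, and invoking the uniqueness of the (vp-IMD) solution and of the minimizer of \eqref{eq:Zr} to close the equivalence. Your exponent bookkeeping (in particular $r/p=2-r=\tfrac{2}{p+1}$ and the cancellation of the powers of $\beta$) checks out, and the explicit treatment of the degenerate case $f=0$ is a small but welcome addition.
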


\bigskip

The optimal moduli ${\hat{k}},{\hat{\mu}}$   given by (3.14)   determine the optimal Young modulus $\hat{E}$  and the optimal Poisson ratio $\hat{\nu}$ according to the rules (see Sec. 7.2.4 in \cite{Lewinski2019}):

a)	Case of  $d=2$:
\begin{equation}
	\label{eq:Enu2D}
	\hat{E} = 2{\left( {\frac{1}{{2\hat{k}}} + \frac{1}{{2\hat{\mu}}}} \right)^{ - 1}}, \qquad \hat{\nu} = \frac{{\hat{k} - \hat{\mu}}}{{\hat{k} + \hat{\mu}}};
\end{equation}  

b)	Case of  $d=3$:
\begin{equation}
	\label{eq:Enu3D}
	\hat{E} = 3{\left( {\frac{1}{{3\hat{k}}} + \frac{1}{{2\hat{\mu}}} + \frac{1}{{2\hat{\mu}}}} \right)^{ - 1}}, \qquad \hat{\nu} = \frac{{3\hat{k} - 2\hat{\mu}}}{{2(3\hat{k} + \hat{\mu})}}.
\end{equation}	
Let us note that the Young modulus vanishes if one of the moduli  ${\hat{k}},{\hat\mu}$ vanishes, since this modulus is proportional to the harmonic mean of $2\hat{k},2\hat{\mu}$ ($d=2$) or  $3\hat{k},2\hat{\mu },2\hat{\mu}$ ($d=3$).
The conditions $\hat{k} \ge 0, \hat{\mu} \ge 0$ introduce the known bounds on the Poisson ratio $\hat{\nu}$:   in case of $d=2$ we have: $ - 1 \le \hat{\nu} \le 1$, while in case of $d=3$ the range is narrower: $ - 1 \le \hat{\nu} \le 1/2$. Thus, we expect that  the optimal material representing the optimum design  will be partially auxetic, i.e. in certain subdomains the Poisson ratio will assume negative values; the corresponding microstructures assume then chiral shapes, see \cite{czarnecki2024}.

\bigskip

\subsection{The sp-IMD method}

In the sp-IMD variant of the optimization method the set of admissible moduli distributions involves the cost condition \eqref{eq:sp_cost}:
\begin{equation}
	{M_{sp}}(\Omega ) := \Big\{ (k,\mu) \in (L^p(\Omega;\R_+))^2 \ \Big\vert \ \|d k+ \beta^2 2\mu \|_p \leq \Lambda \Big\},
\end{equation}
and the associated minimum compliance problem follows:
\begin{equation}
	\label{eq:sp-IMD}
	(\mathrm{\text{sp-IMD}}) \qquad \qquad\qquad  \qquad\qquad  \hat{\mathcal{C}}_{sp} =  \mathop {\min }\limits_{(k,\mu ) \in {M_{sp}}\left( \Omega  \right)} \mathcal{C}(k,\mu ). \qquad \qquad  \qquad\qquad  \qquad\qquad 
\end{equation}
Note that, for $(k,\mu) \in L^p(\Omega;\R^2)$ (the functions are not necessarily non-negative), $\big\|d\abs{k}+ \beta^2 2\abs{\mu} \big\|_p$  is a norm that induces the ball $\widetilde{B}_{L^p}(0,\Lambda)$. This new norm is equivalent to the norm $\| (dk,\beta^{\frac{2}{p}}2\mu) \|_p$ that defines the set $M_{vp}(\Omega)$. Therefore, the compactness arguments from the previous subsection can be reproduced, and existence of solutions of the problem (sp-IMD) follows. However, it can be easily checked that the ball $\widetilde{B}_{L^p}(0,\Lambda)$ is not a strictly convex set. As a result, the uniqueness in (sp-IMD) cannot be claimed.

The passage to the stress-based form of the (sp-IMD) problem can be carried out similarly. Swapping the order of minimization leads to
\begin{equation}
	\hat{\mathcal{C}}_{sp} = \mathop {\inf }\limits_{{\boldsymbol{\tau}} \in \Sigma_{{f}} \left( \Omega  \right)} {F_{sp}}\left( {\boldsymbol{\tau}} \right),
\end{equation}   					
where
\begin{equation}
	\label{eq:Fsp}
	{F_{sp}}({\boldsymbol{\tau}}) := \inf \limits_{(k,\mu ) \in {M_{sp}}\left( \Omega  \right)} \int_\Omega  {\left( {\frac{1}{{dk}}{{\left( {{\rm{Tr }}\,{\boldsymbol{\tau}}} \right)}^2} + \frac{1}{{2\mu }}{{\left\| {{\rm{dev}}\,{\boldsymbol{\tau}}} \right\|}^2}} \right)dx}.
\end{equation}
The next step is solving the above problem analytically, this time by employing Lemma \ref{lem:sp} for  ${a_1} = {\left( {{\rm{Tr }}{\boldsymbol{\tau}}} \right)^2}, \  {a_2} = \beta^{2}{\left\| {{\rm{dev}}\;{\boldsymbol{\tau}}} \right\|^2}$ and ${u_1} = dk, \ {u_2} = 2\beta^{2}\mu$. We find that
\begin{equation}
	\label{eq:Fsp_formula}
	{F_{sp}}\left( {\boldsymbol{\tau}} \right) = \frac{1}{\Lambda }  \Big\|\sum_{i=1}^2\sqrt{{a_i}} \Big\|_r^2= \frac{1}{\Lambda }{\Big(r	{\widetilde{F}_{sp}}( {\boldsymbol{\tau}} ) \Big)^{\frac{2}{r}}}, 
\end{equation} 
where
\begin{equation}
		\widetilde{F}_{sp}( \boldsymbol{\tau}) :=\frac{1}{r} \int_\Omega  {{{\Big( {{{\left| {{\rm{Tr }}\,{{\boldsymbol{\tau}}}} \right|}^{}} + \beta {{\left\| {{\rm{dev}}\,{{\boldsymbol{\tau}} }} \right\|}^{}}} \Big)}^{r}}dx}. 
\end{equation}
For non-zero $\boldsymbol{\tau}$, the moduli $(k,\mu)$ solve \eqref{eq:Fsp} if and only if a.e. in $\Omega$ the equalities below hold true:
\begin{equation*}
	d{k } = \frac{\Lambda }{{{{\big( r{{{\widetilde{F}_{vp}}\left( {\boldsymbol{\tau}} \right) }} \big)}^{\frac{1}{{p}}}}}}\frac{{{{\left| {{\rm{Tr }}{{\boldsymbol{\tau}} }} \right|}^{}}}}{{{{\left( {\left| {{\rm{Tr }}{{\boldsymbol{\tau}} }} \right| + \beta \left\| {\,{\rm{dev}}\;{{\boldsymbol{\tau}} }} \right\|} \right)}^{\frac{p-1}{{p+ 1}}}}}},{\rm{           }} \qquad 2\beta^2 {\mu  } = \frac{\Lambda }{{{{\big( r{{{\widetilde{F}_{vp}}\left( {\boldsymbol{\tau}} \right) }} \big)}^{\frac{1}{{p}}}}}}\frac{{\beta \left\| {{\rm{dev}}\;{{\boldsymbol{\tau}} }} \right\|}}{{{{\left( {\left| {{\rm{Tr }}{{\boldsymbol{\tau}}}} \right| + \beta \left\| {{\rm{dev}}\;{{\boldsymbol{\tau}} }} \right\|} \right)}^{\frac{p-1}{{p + 1}}}}}}.
\end{equation*}  
The formula for the minimum compliance follows:
\begin{equation}
	\hat{\mathcal{C}}_{sp} = \frac{1}{\Lambda }{\left( {{r Y_r}} \right)^{\frac{2}{r}}},
\end{equation}
where $Y_r$ is the minimal energy in the stress-based reformulation of the (sp-IMD) problem:
\begin{equation}
	\label{eq:Yr}
	{Y_r} :=  \mathop {\min }\limits_{{\boldsymbol{\tau}} \in \Sigma_{{f}} \left( \Omega  \right)} \frac{1}{r} \int_\Omega  {{{\Big( {{{\left| {{\rm{Tr }}\,{{\boldsymbol{\tau}}}} \right|}^{}} + \beta {{\left\| {{\rm{dev}}\,{{\boldsymbol{\tau}} }} \right\|}^{}}} \Big)}^{r}}dx}.
\end{equation}   
Similarly as in the previous subsection, the functional $({\widetilde{F}_{sp}}\left( \argu \right))^{1/r}$ is equivalent to the  standard norm in $L^r(\Omega;E_s^2)$, and existence of solutions in \eqref{eq:Yr} can be readily deduced. Nonetheless, the ${\widetilde{F}_{sp}}\left( \argu \right)$ is not strictly convex; hence, we cannot be sure that solutions of \eqref{eq:Yr} are unique. To sum up:

\begin{theorem}
	\label{thm:stress_sp}
	Let $\hat{\boldsymbol{\tau}} \in \Sigma_f(\Omega)$ be a (possibly non-unique) solution of the stress-based problem \eqref{eq:Yr}, which always exists. Then, the following $(\hat{k},\hat{\mu})$ is a (possibly non-unique) solution of the (sp-IMD) problem \eqref{eq:sp-IMD}:
\begin{equation}
	\label{eq:opt_kmu_tau_sp}
	d{\hat{k} } = \frac{\Lambda }{{{{\left( {{r Y_r}} \right)}^{\frac{1}{{p}}}}}} \frac{{{{\left| {{\rm{Tr }}\,{\hat{{\boldsymbol{\tau}}} }} \right|}^{}}}}{{{{\left( {\left| {{\rm{Tr }}\,{\hat{{\boldsymbol{\tau}}} }} \right| + \beta \left\| {{\rm{dev}}\,{\hat{{\boldsymbol{\tau}}} }} \right\|} \right)}^{\frac{p-1}{{p+ 1}}}}}},\qquad 2 \hat{\mu  } = \frac{\Lambda }{{{{\left( {{r Y_r}} \right)}^{\frac{1}{{p}}}}}}\frac{{\frac{1}{\beta} \left\| {{\rm{dev}}\,{\hat{{\boldsymbol{\tau}}} }} \right\|}}{{{{\left( {\left| {{\rm{Tr }}{\hat{{\boldsymbol{\tau}}}}} \right| + \beta \left\| {{\rm{dev}}\,{\hat{{\boldsymbol{\tau}}} }} \right\|} \right)}^{\frac{p-1}{{p + 1}}}}}}.
\end{equation}  
Conversely, for every pair $(\hat{k},\hat{\mu})$ that solves the problem (sp-IMD), there exists a solution $\hat{\boldsymbol{\tau}}$ of \eqref{eq:Yr} that satisfies \eqref{eq:opt_kmu_tau_sp} a.e. in $\Omega$. 
\end{theorem}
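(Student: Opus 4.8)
The plan is to mirror the structure of the proof of Theorem \ref{thm:stress_vp}, keeping careful track of where the lack of strict convexity breaks the one-to-one correspondence and leaves us only with the two-sided (but non-bijective) statement. First I would record the chain of equalities already established in the text: swapping the two infima gives $\hat{\mathcal{C}}_{sp} = \inf_{\boldsymbol{\tau} \in \Sigma_f(\Omega)} F_{sp}(\boldsymbol{\tau})$, and Lemma \ref{lem:sp} applied to the functions $a_1 = (\mathrm{Tr}\,\boldsymbol{\tau})^2$, $a_2 = \beta^2 \|\mathrm{dev}\,\boldsymbol{\tau}\|^2$ and the change of variables $u_1 = dk$, $u_2 = 2\beta^2\mu$ evaluates $F_{sp}(\boldsymbol{\tau}) = \frac{1}{\Lambda}(r\,\widetilde{F}_{sp}(\boldsymbol{\tau}))^{2/r}$. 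Since $t \mapsto \frac{1}{\Lambda}(rt)^{2/r}$ is strictly increasing on $[0,\infty)$, minimizing $F_{sp}$ over $\Sigma_f(\Omega)$ is equivalent to minimizing $\widetilde{F}_{sp}$, i.e. to solving \eqref{eq:Yr}; combining this with the value $\frac{1}{\Lambda}(rY_r)^{2/r}$ gives the formula for $\hat{\mathcal{C}}_{sp}$ and shows existence of a stress minimizer $\hat{\boldsymbol{\tau}}$ (from equivalence of $(\widetilde{F}_{sp})^{1/r}$ to the $L^r$-norm and closedness of the affine subspace $\Sigma_f(\Omega)$).

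Next I would prove the forward implication: given any minimizer $\hat{\boldsymbol{\tau}}$ of \eqref{eq:Yr}, the pair $(\hat{k},\hat{\mu})$ defined by \eqref{eq:opt_kmu_tau_sp} is admissible and optimal. Admissibility and the sharp saturation $\|d\hat{k} + \beta^2 2\hat{\mu}\|_p = \Lambda$ come straight from the ``moreover'' part of Lemma \ref{lem:sp} (the explicit minimizer \eqref{eq:uopt_II}, after unwinding the substitution and using $r/2 = p/(p+1)$, $2-r = 2/(p+1)$, $r-1 = (p-1)/(p+1)$, and $\widetilde{F}_{sp}(\hat{\boldsymbol{\tau}}) = Y_r$). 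For optimality one checks $\mathcal{C}(\hat{k},\hat{\mu}) \le F_{sp}(\hat{\boldsymbol{\tau}}) = \frac{1}{\Lambda}(rY_r)^{2/r} = \hat{\mathcal{C}}_{sp}$, where the first inequality holds because $\hat{\boldsymbol{\tau}} \in \Sigma_f(\Omega)$ is one competitor in the infimum \eqref{eq:comp} defining $\mathcal{C}(\hat{k},\hat{\mu})$ and, by the ``if and only if'' clause of Lemma \ref{lem:sp}, this particular $(\hat{k},\hat{\mu})$ in fact attains $F_{sp}(\hat{\boldsymbol{\tau}})$; the reverse inequality $\mathcal{C}(\hat{k},\hat{\mu}) \ge \hat{\mathcal{C}}_{sp}$ is just the definition of the minimum. (The degenerate case $\hat{\boldsymbol{\tau}} = 0$, i.e. $f = 0$, is trivial and can be dispatched in one line: then $\hat{\mathcal{C}}_{sp} = 0$ and any admissible $(k,\mu)$ works.)

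For the converse, let $(\hat{k},\hat{\mu})$ be an arbitrary solution of (sp-IMD). Re-examining the swapped double infimum, $\hat{\mathcal{C}}_{sp} = \mathcal{C}(\hat{k},\hat{\mu}) = \inf_{\boldsymbol{\tau} \in \Sigma_f(\Omega)} \int_\Omega W_{\hat k,\hat\mu}(\boldsymbol{\tau})\,dx$, and this last infimum is attained by some $\hat{\boldsymbol{\tau}} \in \Sigma_f(\Omega)$ — here I would invoke the direct method: the integrand is a positive-definite quadratic form in $(\mathrm{Tr}\,\boldsymbol{\tau}, \mathrm{dev}\,\boldsymbol{\tau})$ with weights $\frac{1}{d\hat k}, \frac{1}{2\hat\mu}$ that may degenerate where $\hat k$ or $\hat\mu$ vanish, but on the subregion where they are finite the functional is coercive and strictly convex on the relevant quotient, and finiteness of $\hat{\mathcal{C}}_{sp}$ forces $\mathrm{Tr}\,\boldsymbol{\tau} = 0$ a.e. where $\hat k = 0$ and $\mathrm{dev}\,\boldsymbol{\tau} = 0$ a.e. where $\hat\mu = 0$, so a minimizer exists by lower semicontinuity. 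Then $F_{sp}(\hat{\boldsymbol{\tau}}) \le \int_\Omega W_{\hat k,\hat\mu}(\hat{\boldsymbol{\tau}})\,dx = \hat{\mathcal{C}}_{sp} = \inf_{\boldsymbol{\tau}} F_{sp}(\boldsymbol{\tau})$, so $\hat{\boldsymbol{\tau}}$ solves \eqref{eq:Yr}; and since $(\hat k,\hat\mu)$ attains the infimum $F_{sp}(\hat{\boldsymbol{\tau}})$ in \eqref{eq:Fsp}, the ``if and only if'' characterization of minimizers in Lemma \ref{lem:sp} forces \eqref{eq:opt_kmu_tau_sp} to hold a.e. in $\Omega$ (on $\{\hat{\boldsymbol{\tau}} \ne 0\}$; on $\{\hat{\boldsymbol{\tau}} = 0\}$ both sides of \eqref{eq:opt_kmu_tau_sp} are zero, consistent with the convention \eqref{eq:quotient_convention}).

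\textbf{Main obstacle.} The delicate point is the converse direction, specifically the assertion that the \emph{same} arbitrary optimal $(\hat k,\hat\mu)$ can be matched to \emph{some} stress solution via \eqref{eq:opt_kmu_tau_sp}. Because $\widetilde{F}_{sp}$ is not strictly convex, the equality conditions in Lemma \ref{lem:sp} (the Schwarz equality in $\mathbb{R}^n$ together with the Hölder equality on $\Omega$) pin down $\hat{u}_i$ only relative to the fixed data $a_i$; so the argument must be run in the order ``first fix the optimal moduli, then produce a compatible stress,'' rather than the reverse. One must also handle with care the set where $\hat{\boldsymbol{\tau}}$ (or $\mathrm{Tr}\,\hat{\boldsymbol{\tau}}$, or $\mathrm{dev}\,\hat{\boldsymbol{\tau}}$) vanishes, checking that formulas \eqref{eq:opt_kmu_tau_sp} degrade gracefully in accordance with the convention \eqref{eq:quotient_convention} and that the case analysis in Lemma \ref{lem:sp}'s proof indeed covers $a_i \equiv 0$ on a positive-measure set. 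Establishing existence of the matching $\hat{\boldsymbol{\tau}}$ for a possibly singular $(\hat k,\hat\mu)$ via the direct method is the one place where a genuinely new (though routine) argument beyond citing the two lemmas is needed.
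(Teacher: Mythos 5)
Your proposal is correct and follows essentially the same route as the paper: swap the two infima, apply Lemma \ref{lem:sp} with $a_1=(\mathrm{Tr}\,\boldsymbol{\tau})^2$, $a_2=\beta^2\|\mathrm{dev}\,\boldsymbol{\tau}\|^2$, $u_1=dk$, $u_2=2\beta^2\mu$ to evaluate $F_{sp}$ and identify the optimal moduli, and read off \eqref{eq:Yr}; the paper in fact states Theorem \ref{thm:stress_sp} as a summary of this inline derivation without a separate proof. Your treatment of the converse (producing, for a fixed optimal $(\hat k,\hat\mu)$, an attained minimizer of the compliance integral and then invoking the uniqueness clause of Lemma \ref{lem:sp}) supplies a detail the paper leaves implicit, and your coercivity claim is most cleanly justified by the uniform bound $\int_\Omega W_{\hat k,\hat\mu}(\boldsymbol{\tau})\,dx \ge F_{sp}(\boldsymbol{\tau}) \ge \frac{1}{\Lambda}\bigl(r\widetilde{F}_{sp}(\boldsymbol{\tau})\bigr)^{2/r}$, which controls $\|\boldsymbol{\tau}\|_{L^r}^2$.
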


The rules of retrieving the optimal Young modulus $\hat{E}$ and the optimal Poisson ration $\hat{\nu}$ are identical to those described in the previous subsection.

\section{Displacement-based dual problems and optimality conditions }
\label{sec:duality}

Both from the theoretical point of view  as well as from the point of view of forming numerical methods it is indispensable not to consider the  problems  (3.19), (3.30) solely, but complement them by the dual formulations along with the relevant optimality conditions. The present section is aimed at deriving them. By analogy with elasticity problems these dual settings will be named: displacement-based formulations.

\subsection{The general duality result}

The result to be put forward will provide the common duality framework for both the problems (vp-IMD) and (sp-IMD). It employs classical duality arguments on reflexive Banach spaces.

\begin{theorem}
	\label{thm:general_duality}
	Let $r \in (1,\infty)$, and $\frac{1}{r} + \frac{1}{r'} = 1$.
	For any norm $\opnorm{\argu}$ on the space of symmetric matrices $E_s^2$ the following zero-gap equality holds true:
	\begin{equation}
		\label{eq:duality}
		\min_{\boldsymbol{\tau} \in \Sigma_{{f}}(\Omega)} \int_\Omega \frac{1}{r} \opnorm{\boldsymbol{\tau}}^r  dx = \max_{\mbf{v} \in \mbf{V}^{r'}\!(\Omega) } \left\{ {f}(\mbf{v})- \int_\Omega \frac{1}{r'} \opnorm{\boldsymbol{\eps}(\mbf{v})}_*^{r'} \!dx\right\},
	\end{equation}
        where $\opnorm{\argu}_*$ is the dual norm, see Section \ref{ssec:notation}.
	In particular, the minimum and the maximum are finite and attained.  
	
	Moreover, the functions $\hat{\mbf{v}} \in	\mbf{V}^{r'}\!(\Omega)$ and $\hat{\boldsymbol{\tau}} \in \Sigma_{{f}}(\Omega)$ solve the respective problems if and only if
	\begin{equation}
		\label{eq:opt_cond_norm}
			\hat{\boldsymbol{\tau}}\cdot \boldsymbol{\eps}(\hat{\mbf{v}}) = 
	\frac{1}{r} \opnorm{\hat{\boldsymbol{\tau}}}^r +\frac{1}{r'} \opnorm{\boldsymbol{\eps}(\hat{\mbf{v}})}_*^{r'} \qquad \text{a.e. in $\Omega$}.
	\end{equation}
\end{theorem}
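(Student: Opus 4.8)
The plan is to recognize \eqref{eq:duality} as an instance of Fenchel--Rockafellar duality on the reflexive Banach space $X = L^r(\Omega;E_s^2)$, with dual $X^* = L^{r'}(\Omega;E_s^2)$ (using $r \in (1,\infty)$). First I would introduce the continuous linear operator $\Lambda : \mbf{V}^{r'}\!(\Omega) \to L^{r'}(\Omega;E_s^2)$ given by $\Lambda \mbf{v} = \boldsymbol{\eps}(\mbf{v})$, whose adjoint $\Lambda^* : L^r(\Omega;E_s^2) \to (\mbf{V}^{r'}\!(\Omega))^*$ sends $\boldsymbol{\tau}$ to the functional $\mbf{v} \mapsto \int_\Omega \boldsymbol{\tau} \cdot \boldsymbol{\eps}(\mbf{v})\, dx$. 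With this notation the constraint $\boldsymbol{\tau} \in \Sigma_f(\Omega)$ reads exactly $\Lambda^* \boldsymbol{\tau} = {f}$ in $(\mbf{V}^{r'}\!(\Omega))^*$. Setting $G(\boldsymbol{\tau}) := \int_\Omega \tfrac1r \opnorm{\boldsymbol{\tau}}^r\, dx$ on $X$ and $H(\mbf{v}^*) := \pairing{\mbf{v}^*, \cdot}$-type indicator, more precisely working on the dual side with the primal-variable $\mbf{v} \in \mbf{V}^{r'}\!(\Omega)$, the right-hand side of \eqref{eq:duality} is $\sup_{\mbf{v}} \big\{ \pairing{{f},\mbf{v}} - \Psi(\Lambda \mbf{v}) \big\}$ where $\Psi(\boldsymbol{\eta}) := \int_\Omega \tfrac{1}{r'} \opnorm{\boldsymbol{\eta}}_*^{r'}\, dx$ on $L^{r'}(\Omega;E_s^2)$.

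The key computation is the pointwise Fenchel identity: for the (finite-dimensional, hence continuous) norm $\opnorm{\argu}$ on $E_s^2$, the convex function $\phi(\boldsymbol{\eta}) = \tfrac{1}{r'}\opnorm{\boldsymbol{\eta}}_*^{r'}$ has Fenchel conjugate $\phi^*(\boldsymbol{\tau}) = \tfrac{1}{r}\opnorm{\boldsymbol{\tau}}^r$ — this is the standard duality between the $r'$-th power of a norm and the $r$-th power of its dual norm, valid because $(\opnorm{\argu}_*)_* = \opnorm{\argu}$ in finite dimensions and $(1/r)+(1/r')=1$. I would then invoke the standard interchange theorem for integral functionals (e.g.\ Rockafellar) to conclude $\Psi^* = G$ as functionals on $L^r(\Omega;E_s^2)$. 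Next I would apply the Fenchel--Rockafellar duality theorem: the value $\inf_{\mbf{v}} \{ \Psi(\Lambda\mbf{v}) - \pairing{{f},\mbf{v}}\}$ equals $-\sup_{\boldsymbol{\tau} \in X}\{ -\Psi^*(\boldsymbol{\tau}) : \Lambda^*\boldsymbol{\tau} = {f}\} = -\min_{\boldsymbol{\tau}\in\Sigma_f(\Omega)} G(\boldsymbol{\tau})$, provided a qualification condition holds and provided the primal infimum is finite; rearranging gives \eqref{eq:duality}. The optimality condition \eqref{eq:opt_cond_norm} is then precisely the equality case in the Fenchel--Young inequality $\boldsymbol{\tau}\cdot\boldsymbol{\eta} \le \phi(\boldsymbol{\eta}) + \phi^*(\boldsymbol{\tau})$, combined with the statical admissibility relation $\int_\Omega \hat{\boldsymbol{\tau}}\cdot\boldsymbol{\eps}(\hat{\mbf{v}})\,dx = {f}(\hat{\mbf{v}})$: saddle-point theory says $(\hat{\boldsymbol{\tau}},\hat{\mbf{v}})$ are jointly optimal iff the Fenchel--Young inequality holds with equality a.e., which is \eqref{eq:opt_cond_norm}.

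I expect two points to require care. The first — and the main obstacle — is verifying attainment and the zero duality gap rigorously: the cleanest route is to prove attainment on the \emph{stress} side directly, since $\boldsymbol{\tau} \mapsto \int_\Omega \tfrac1r\opnorm{\boldsymbol{\tau}}^r\,dx$ is coercive and strictly convex (the power $r>1$) on the weakly closed affine set $\Sigma_f(\Omega) \subset L^r(\Omega;E_s^2)$, which is nonempty because $\Gamma_0$ has positive measure — and this nonemptiness is itself what the duality is being used to establish, so I would instead first show $\Sigma_f(\Omega)\neq\varnothing$ as a byproduct: the dual problem $\sup_{\mbf{v}}\{{f}(\mbf{v}) - \Psi(\Lambda\mbf{v})\}$ has a finite supremum by the Korn-type / Poincaré inequality on $\mbf{V}^{r'}\!(\Omega)$ (here $\Gamma_0$ of positive measure is exactly what makes $\mbf{v}\mapsto\norm{\boldsymbol{\eps}(\mbf{v})}_{L^{r'}}$ an equivalent norm on $\mbf{V}^{r'}\!(\Omega)$), and this finiteness together with reflexivity yields, via Fenchel--Rockafellar, both $\Sigma_f(\Omega)\neq\varnothing$ and the zero gap. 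The second point is the Rockafellar interchange $\left(\int \phi\right)^* = \int \phi^*$ for the non-reflexive-looking but actually fine pairing $(L^{r'},L^r)$ with $r'<\infty$: this is standard once one checks $\phi$ is a normal convex integrand, which is immediate since $\opnorm{\argu}$ is continuous and deterministic in $x$. The Korn inequality subtlety — needed because $\opnorm{\boldsymbol{\eps}(\mbf{v})}_*$ controls only the symmetrized gradient — I would either cite or note that for $r'>2$ it follows from the classical $L^q$-Korn inequality of Nečas/Temam.
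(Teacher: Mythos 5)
Your proposal is correct and follows essentially the same route as the paper: the Fenchel--Rockafellar duality of Ekeland--Temam with $\Lambda\mbf{v}=\boldsymbol{\eps}(\mbf{v})$, the Rockafellar interchange theorem to identify the conjugate of the integral functional with the integral of the pointwise conjugate $\tfrac1r\opnorm{\argu}^r$, the indicator of $\Sigma_f(\Omega)$ arising as $F^*(-\Lambda^*\boldsymbol{\tau})$, Korn's inequality for coercivity and attainment of the maximum, and the pointwise Fenchel--Young equality for the optimality condition. You also correctly identified the logical order the paper uses — finiteness of the displacement-based supremum (via Korn and reflexivity) is what delivers both the zero gap and the non-emptiness of $\Sigma_f(\Omega)$ as a byproduct — so no gaps to report.
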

\begin{proof} Consider lower semi-continuous convex functionals 
	$F:\mbf{V}^{r'}\!(\Omega) \to \R \cup \{+\infty\}$,  $G: L^{r'}\!(\Omega;E_s^2) \to \R \cup \{+\infty\}$ and a linear continuous operator $\Lambda:\mbf{V}^{r'}\!(\Omega) \to L^{r'}\!(\Omega;E_s^2)$. Then, if there exists $\mbf{v}_0$ such that $F(\mbf{v}_0) < +\infty$ and $G$ is continuous at $\Lambda \mbf{v}_0$, the equality holds:	
	\begin{equation}
		\label{eq:abstract_duality}
		\sup _{\mbf{v} \in \mbf{V}^{r'}\!(\Omega)} \Big \{ - F (\mbf{v}) - G (\Lambda \mbf{v})   \Big \} = \inf
		_{\boldsymbol{\tau} \in L^r(\Omega;E_s^2)} \Big \{ F^* (- \Lambda ^* \boldsymbol{\tau})+ G^* (\boldsymbol{\tau}) \Big \},
	\end{equation}
	while the minimum is attained provided that the value function is finite. For the proof see  Chapter III of \cite{ekeland1999}. Let us put
	$F (\mbf{v}) = -{f}(\mbf{v})$, \ $G(\boldsymbol{\epsilon}) = \int_\Omega g(\boldsymbol{\epsilon}) dx $, \ $\Lambda \mbf{v} = \boldsymbol{\eps}(\mbf{v})$, where the integrand $g: E_s^2 \to \R_+$ reads $g(\boldsymbol{\epsilon})= \frac{1}{r'} \opnorm{\boldsymbol{\epsilon}}_*^{r'}$. The prerequisites for \eqref{eq:abstract_duality} is trivial to check as $F,G$ are continuous functionals on the respective Banach spaces. It is easy to check that
	\begin{equation}
		F^* (- \Lambda ^* \boldsymbol{\tau}) = \begin{cases}
			0 & \text{if} \ \ \int_\Omega  {{\boldsymbol{\tau}} \cdot \,} \boldsymbol{\eps}\left( {\bf{v}} \right)dx = f(\mbf{v})  \quad \forall\, \mbf{v} \in \mbf{V}^{r'}\!(\Omega), \\
			+\infty & \text{otherwise}.
		\end{cases}
	\end{equation}	
	Therefore, the finiteness of the minimized functional implies that $\boldsymbol{\tau} \in \Sigma_f(\Omega)$. 
	Meanwhile, by the Rockafellar theorem \cite{rockafellar1968},
	$G^*(\boldsymbol{\tau}) = \int_\Omega g^*(\boldsymbol{\tau}) \,dx$. Moreover, $g^*(\boldsymbol{\tau})  = \frac{1}{r} \opnorm{\boldsymbol{\tau}}^r$, see Corollary 15.3.1 in \cite{rockafellar1970}. Readily, the abstract duality \eqref{eq:abstract_duality} furnishes the equality \eqref{eq:duality}.
	
	Checking existence of the minimizer goes as in the case of problems \eqref{eq:Zr} or \eqref{eq:Yr}. To see that the maximum is attained, we use Korn's inequality to show that $\big( \int_\Omega  \opnorm{\boldsymbol{\eps}(\mbf{v})}_*^{r'} \!dx \big)^{1/r'}$ is a norm on the space $\mbf{V}^{r'}\!(\Omega)$ that is equivalent to the standard norm on $W^{1,r'}\!(\Omega;\Rd)$, see Theorem 6.3-4 in \cite{ciarlet2000a}. Then, thanks to the continuity of the linear functional $f$, the maximized functional is coercive on the reflexive space $\mbf{V}^{r'}\!(\Omega)$. The existence of the maximizer follows, see \cite{ekeland1999}.
	
	To show the optimality condition \eqref{eq:opt_cond_norm} let us take any functions $\mbf{v} \in \mbf{V}^{r'}\!(\Omega)$ and $\boldsymbol{\tau} \in \Sigma_f(\Omega)$. Since $f(\mbf{v}) =  \int_\Omega  {{\boldsymbol{\tau}} \cdot \,} \boldsymbol{\eps}( {\bf{v}} )dx$, from the equality \eqref{eq:duality} follows that $\mbf{v}, \boldsymbol{\tau}$ are optimal if and only if $	\int_\Omega  {{\boldsymbol{\tau}} \cdot \,} \boldsymbol{\eps}( {\bf{v}} )dx  =  \int_\Omega g( \boldsymbol{\eps}( {\bf{v}} )) \,dx  + \int_\Omega g^*(\boldsymbol{\tau}) \,dx.$
	Meanwhile, ${{\boldsymbol{\tau}} \cdot \,} \boldsymbol{\eps}( {\bf{v}} )\leq g^*(\boldsymbol{\tau})+ g ( \boldsymbol{\eps}( {\bf{v}} ))$ pointwisely in $\Omega$ by the very definition of the Fenchel-Legendre conjugate $g^*$. Therefore, $\boldsymbol{\tau}$, $\mbf{v}$ are optimal if and only if \eqref{eq:opt_cond_norm} holds true a.e. in $\Omega$.
\end{proof}

\subsection{Duality in the vp-IMD variant}

To develop the duality scheme for the stress-based formulation of the (vp-IMD) problem, the general duality result in Theorem \ref{thm:general_duality} needs to be particularized for the norm 
\begin{equation}
    \label{eq:norm_vp}
	\opnorm{\boldsymbol{\tau}} = \Big( {{{\left| {{\rm{Tr }}\,{\boldsymbol{\tau}}} \right|}^{r}} +\beta^{2-r} {{\left\| {{\rm{dev}}\,{\boldsymbol{\tau}}} \right\|}^{r}}} \Big)^{\frac{1}{r}}.
\end{equation}
Indeed, the minimization problem in \eqref{eq:duality} becomes \eqref{eq:Zr} exactly. To compute the dual norm $\opnorm{\argu}_*$ and to characterize the extremality condition \eqref{eq:opt_cond_norm}, we exploit the formula \eqref{eq:dot_decomp} to provide the following chain of estimates for arbitrary $\boldsymbol{\tau}, \boldsymbol{\eps} \in E_s^2$:
\begin{align*}
	\boldsymbol{\tau} \cdot \boldsymbol{\eps} & = \mathrm{Tr}\,\boldsymbol{\tau} \, \mathrm{Tr}\,\boldsymbol{\eps} + \mathrm{dev} \, \boldsymbol{\tau} \cdot \mathrm{dev} \, \boldsymbol{\eps}\\
	& \leq\abs{\mathrm{Tr}\,\boldsymbol{\tau}} \, \abs{\mathrm{Tr}\,\boldsymbol{\eps}} + \norm{\mathrm{dev} \, \boldsymbol{\tau}}  \norm{\mathrm{dev} \, \boldsymbol{\eps}}\\
	&  = \abs{\mathrm{Tr}\,\boldsymbol{\tau}} \, \abs{\mathrm{Tr}\,\boldsymbol{\eps}} + \beta^{\frac{r-2}{r}}\norm{\mathrm{dev} \, \boldsymbol{\tau}}  \,\beta^{\frac{2-r}{r}}\norm{\mathrm{dev} \, \boldsymbol{\eps}}\\
	& \leq \frac{1}{r}\abs{\mathrm{Tr}\,\boldsymbol{\tau}}^r +  \frac{1}{r'} \abs{\mathrm{Tr}\,\boldsymbol{\eps}}^{r'} + \frac{1}{r}  \beta^{{r-2}}\norm{\mathrm{dev} \, \boldsymbol{\tau}}^r+ \frac{1}{r'} \beta^{{r'-2}}\norm{\mathrm{dev} \, \boldsymbol{\eps}}^{r'}\\
	& = \frac{1}{r} \opnorm{\boldsymbol{\tau}}^{r} + \frac{1}{r'}\Big( \abs{\mathrm{Tr}\,\boldsymbol{\eps}}^{r'} +\beta^{{r'-2}}\norm{\mathrm{dev} \, \boldsymbol{\eps}}^{r'}  \Big)
\end{align*}
The second inequality combines two Young inequalities. The first inequality above is an equality if and only if $\alpha_1\mathrm{Tr}\, \boldsymbol{\tau} = \beta_1\mathrm{Tr}\, \boldsymbol{\eps}$  and $\alpha_2 \,\mathrm{dev}\, \boldsymbol{\tau} = \beta_2 \,\mathrm{dev}\, \boldsymbol{\eps}$ for some $\alpha_i,\beta_i \geq 0$. The Young inequalities are sharp if and only if  $\abs{\mathrm{Tr}\,\boldsymbol{\tau}}^r  = \abs{\mathrm{Tr}\,\boldsymbol{\eps}}^{r'}$ and $\beta^{{r-2}}\norm{\mathrm{dev} \, \boldsymbol{\tau}}^r = \beta^{{r'-2}}\norm{\mathrm{dev} \, \boldsymbol{\eps}}^{r'}$. Combining the two conditions, above we have equalities if and only if the power law-type equations hold true:
\begin{equation}
	\label{eq:saturatuing_tau_vp}
	\mathrm{Tr}\,\boldsymbol{\eps} = \abs{\mathrm{Tr}\,\boldsymbol{\tau}}^{r-2} \mathrm{Tr}\,\boldsymbol{\tau}, \qquad  \mathrm{dev}\,\boldsymbol{\eps} = \big(\beta\,\norm{\mathrm{dev}\,\boldsymbol{\tau}}\big)^{r-2} \mathrm{dev}\,\boldsymbol{\tau}
\end{equation}
or, equivalently, $		\mathrm{Tr}\,\boldsymbol{\tau} = \abs{\mathrm{Tr}\,\boldsymbol{\eps}}^{r'-2} \mathrm{Tr}\,\boldsymbol{\eps},$ and $\mathrm{dev}\,\boldsymbol{\tau} = \big(\beta\,\norm{\mathrm{dev}\,\boldsymbol{\eps}}\big)^{r'-2} \mathrm{dev}\,\boldsymbol{\eps}$.

Next, based on the chain above, we observe that the convex function $\boldsymbol{\eps} \mapsto  \frac{1}{r'}\big( \abs{\mathrm{Tr}\,\boldsymbol{\eps}}^{r'} +\beta^{{r'-2}}\norm{\mathrm{dev} \, \boldsymbol{\eps}}^{r'}  \big)$ is an upper bound on the function  $\boldsymbol{\eps} \mapsto \boldsymbol{\tau} \cdot \boldsymbol{\eps}  -  \frac{1}{r} \opnorm{\boldsymbol{\tau}}^{r} $ whatever is $\boldsymbol{\tau} \in E_s^2$. Since this bound is saturated for $\boldsymbol{\tau}$ as in \eqref{eq:saturatuing_tau_vp}, the bound must be non-other than the Fenchel-Legendre transform of $ \frac{1}{r} \opnorm{\argu}^{r}$, i.e. it must be the function $ \frac{1}{r'} \opnorm{\argu}_*^{r'}$. We thus find that
\begin{equation}
	 \opnorm{\boldsymbol{\eps}}_* = \Big( \abs{\mathrm{Tr}\,\boldsymbol{\eps}}^{r'} +\beta^{{r'-2}}\norm{\mathrm{dev} \, \boldsymbol{\eps}}^{r'}  \Big)^{\frac{1}{r'}}
\end{equation}
and, as a result, \eqref{eq:saturatuing_tau_vp} characterizes the optimality condition in \eqref{eq:opt_cond_norm} when $\boldsymbol{\eps} = \boldsymbol{\eps}(\mbf{v})$. Readily, duality for the (vp-IMD) follows as a particular case of Theorem \ref{thm:general_duality}:

\begin{corollary}
	\label{cor:dual_vp}
	Consider the stress-based formulation of the (vp-IMD) problem:
	\begin{align}
		\label{eq:primal_vp}
		{Z_r} = \mathop {\min }\limits_{{\boldsymbol{\tau}} \in \Sigma_{{f}} \left( \Omega  \right)} \frac{1}{r} \int_\Omega  {\Big( {{{\left| {{\rm{Tr }}\,{\boldsymbol{\tau}}} \right|}^{r}} + \beta^{2-r}{{\left\| {{\rm{dev}}\,{\boldsymbol{\tau}}} \right\|}^{r}}} \Big)dx}.
	\end{align}	
	Then, $Z_r < +\infty$, and its  dual displacement-based problem reads
	\begin{equation}
		\label{eq:dual_vp}
		Z_r = \max_{\mbf{v} \in \mbf{V}^{r'}\!(\Omega) } \left\{ {f}(\mbf{v})- \frac{1}{r'}  \int_\Omega \Big( \abs{\mathrm{Tr}\,\boldsymbol{\eps}(\mbf{v})}^{r'} +\beta^{{r'-2}}\norm{\mathrm{dev} \, \boldsymbol{\eps}(\mbf{v})}^{r'}  \Big) dx\right\}
	\end{equation}
	and attains a unique solution.
	Moreover, admissible functions $\hat{\boldsymbol{\tau}} \in \Sigma_f(\Omega)$ and $\hat{\mbf{v}} \in \mbf{V}^{r'}\!(\Omega)$ solve the respective problems if and only if the power law holds true:
	\begin{equation}
			\label{eq:eps_tau_vp}
			\mathrm{Tr}\,\boldsymbol{\eps}(\hat{\mbf{v}}) = \abs{\mathrm{Tr}\,\hat{\boldsymbol{\tau}}}^{r-2} \mathrm{Tr}\,\hat{\boldsymbol{\tau}}, \qquad
			\mathrm{dev}\,\boldsymbol{\eps}(\hat{\mbf{v}}) = \big(\beta\,\norm{\mathrm{dev}\,\hat{\boldsymbol{\tau}}}\big)^{r-2} \mathrm{dev}\,\hat{\boldsymbol{\tau}} \qquad \text{a.e. in } \Omega.
	\end{equation}
\end{corollary}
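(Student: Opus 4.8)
The plan is to obtain everything as a specialization of Theorem~\ref{thm:general_duality} to the norm $\opnorm{\argu}$ defined in \eqref{eq:norm_vp}, using the chain of pointwise estimates displayed just before the statement to make the abstract ingredients explicit. First I would check that \eqref{eq:norm_vp} genuinely defines a norm on $E_s^2$: the linear map $\boldsymbol{\tau} \mapsto (\abs{\mathrm{Tr}\,\boldsymbol{\tau}}, \norm{\mathrm{dev}\,\boldsymbol{\tau}}) \in \R^2$ is injective because the orthogonal splitting $\boldsymbol{\tau} = \frac1d(\mathrm{tr}\,\boldsymbol{\tau})\mbf{I} + \mathrm{dev}\,\boldsymbol{\tau}$ has both summands vanishing only when $\boldsymbol{\tau} = 0$, and composing with the weighted $\ell^r$-norm on $\R^2$ yields a norm. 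With this identification, the minimization problem on the left of \eqref{eq:duality} is exactly $Z_r$ from \eqref{eq:Zr}, i.e. \eqref{eq:primal_vp}; hence Theorem~\ref{thm:general_duality} immediately provides $Z_r < +\infty$, the zero-gap equality $Z_r = \max_{\mbf{v}}\{f(\mbf{v}) - \frac1{r'}\int_\Omega \opnorm{\boldsymbol{\eps}(\mbf{v})}_*^{r'}\,dx\}$, attainment of both extrema, and the pointwise optimality condition \eqref{eq:opt_cond_norm}.

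The remaining task is to replace $\opnorm{\argu}_*$ and \eqref{eq:opt_cond_norm} by the explicit expressions in the statement, and here I would simply read off the conclusion from the chain of estimates preceding the Corollary. That chain shows $\boldsymbol{\tau}\cdot\boldsymbol{\eps} \le \frac1r \opnorm{\boldsymbol{\tau}}^r + \frac1{r'}\big(\abs{\mathrm{Tr}\,\boldsymbol{\eps}}^{r'} + \beta^{r'-2}\norm{\mathrm{dev}\,\boldsymbol{\eps}}^{r'}\big)$ for all $\boldsymbol{\tau},\boldsymbol{\eps} \in E_s^2$, with equality precisely when the power-law relations \eqref{eq:saturatuing_tau_vp} (equivalently \eqref{eq:eps_tau_vp}) hold — the first inequality being a pair of Cauchy–Schwarz estimates on hydrostatic and deviatoric parts, sharp iff those parts are parallel, and the second a pair of Young inequalities, sharp iff the magnitudes match. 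Since for every $\boldsymbol{\eps}$ the bound is attained by the $\boldsymbol{\tau}$ solving that power law, passing to $\sup_{\boldsymbol{\tau}}$ and invoking the identity $\big(\tfrac1r\opnorm{\argu}^r\big)^* = \tfrac1{r'}\opnorm{\argu}_*^{r'}$ (Corollary~15.3.1 in \cite{rockafellar1970}) forces $\opnorm{\boldsymbol{\eps}}_* = \big(\abs{\mathrm{Tr}\,\boldsymbol{\eps}}^{r'} + \beta^{r'-2}\norm{\mathrm{dev}\,\boldsymbol{\eps}}^{r'}\big)^{1/r'}$. Substituting this into the two outputs of Theorem~\ref{thm:general_duality} gives the dual problem \eqref{eq:dual_vp} and shows that \eqref{eq:opt_cond_norm}, with $\boldsymbol{\eps} = \boldsymbol{\eps}(\hat{\mbf{v}})$, is equivalent to \eqref{eq:eps_tau_vp}.

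For uniqueness of the dual maximizer I would not argue strict concavity directly but instead exploit the uniqueness of $\hat{\boldsymbol{\tau}}$ already established for \eqref{eq:Zr} (via strict convexity of $\widetilde{F}_{vp}$): the power law \eqref{eq:eps_tau_vp} expresses $\boldsymbol{\eps}(\hat{\mbf{v}})$ as an explicit function of the unique $\hat{\boldsymbol{\tau}}$, so $\boldsymbol{\eps}(\hat{\mbf{v}})$ is uniquely determined, and since $\Gamma_0$ has positive $(d-1)$-dimensional measure, Korn's inequality (Theorem~6.3-4 in \cite{ciarlet2000a}) makes $\mbf{v}\mapsto \boldsymbol{\eps}(\mbf{v})$ injective on $\mbf{V}^{r'}\!(\Omega)$, whence $\hat{\mbf{v}}$ is unique. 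I do not anticipate a genuine obstacle here; the only point demanding care is verifying that the dual norm equals the claimed expression rather than merely being dominated by it, and that is exactly what the attainment remark in the second paragraph settles.
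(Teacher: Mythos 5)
Your proposal is correct and follows essentially the same route as the paper: specialize Theorem \ref{thm:general_duality} to the norm \eqref{eq:norm_vp}, identify the dual norm by showing the chain of Cauchy--Schwarz and Young estimates is saturated exactly under the power law and invoking $\big(\tfrac{1}{r}\opnorm{\argu}^r\big)^* = \tfrac{1}{r'}\opnorm{\argu}_*^{r'}$, and deduce uniqueness of $\hat{\mbf{v}}$ from the uniqueness of $\hat{\boldsymbol{\tau}}$ combined with the power law and Korn's inequality (the second of the two alternatives the paper itself indicates). The only cosmetic slip is calling $\boldsymbol{\tau}\mapsto(\abs{\mathrm{Tr}\,\boldsymbol{\tau}},\norm{\mathrm{dev}\,\boldsymbol{\tau}})$ a linear map; it is not, but the norm property of \eqref{eq:norm_vp} still follows since it is the $\ell^r$-combination of two seminorms with trivially intersecting kernels.
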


It should be noted that uniqueness of solutions $\hat{\mbf{v}}$ does not follow directly from Theorem \eqref{thm:general_duality}, as it is not guaranteed for any choice of the norm $\opnorm{\argu}$ (the induced unit sphere must be smooth). Here the uniqueness may be deduced from the strict concavity of the maximized functional. Alternatively, since we know that $\hat{\boldsymbol{\tau}}$ is unique (see Theorem \ref{thm:stress_vp}), uniqueness of $\hat{\mbf{v}}$ is a consequence of the power law \eqref{eq:eps_tau_vp} and of the fact that $\boldsymbol{\eps}(\mbf{v}) \equiv 0 $ implies $\mbf{v} \equiv 0$ for $\mbf{v} \in \mbf{V}^{r'}\!(\Omega)$.

It should be stressed that only in the result above we have a proof that $Z_r$ is finite or, equivalently, $\Sigma_f(\Omega)$ is non-empty. This  also implies finiteness of the minimum compliance $\hat{\mathcal{C}}_{vp}$ through the formula \eqref{eq:Cvp}. This was a missing piece of information in all the existence results hitherto put forward. They are all now validated.

\subsection{Duality in the sp-IMD variant}

We reproduce the same steps in the sp-IMD setting, where the norm should be  
\begin{equation}
    \label{eq:norm_sp}
	\opnorm{\boldsymbol{\tau}} =  {{{\left| {{\rm{Tr }}\,{\boldsymbol{\tau}}} \right|}} +\beta {{\left\| {{\rm{dev}}\,{\boldsymbol{\tau}}} \right\|}}}.
\end{equation}
The chain of estimates must be handled differently:
\begin{align*}
	\boldsymbol{\tau} \cdot \boldsymbol{\eps} 
	& \leq\abs{\mathrm{Tr}\,\boldsymbol{\tau}} \, \abs{\mathrm{Tr}\,\boldsymbol{\eps}} + \norm{\mathrm{dev} \, \boldsymbol{\tau}}  \norm{\mathrm{dev} \, \boldsymbol{\eps}}\\
	&  = \abs{\mathrm{Tr}\,\boldsymbol{\tau}} \, \abs{\mathrm{Tr}\,\boldsymbol{\eps}} + \beta\norm{\mathrm{dev} \, \boldsymbol{\tau}}  \,\tfrac{1}{\beta}\norm{\mathrm{dev} \, \boldsymbol{\eps}}\\
	& \leq \Big( \abs{\mathrm{Tr}\,\boldsymbol{\tau}} +\beta\norm{\mathrm{dev} \, \boldsymbol{\tau}}   \Big) \max\Big\{ \abs{\mathrm{Tr}\,\boldsymbol{\eps}} , \tfrac{1}{\beta}\norm{\mathrm{dev} \, \boldsymbol{\eps}} \Big\} \\
	& \leq \frac{1}{r} \Big( \abs{\mathrm{Tr}\,\boldsymbol{\tau}} +\beta\norm{\mathrm{dev} \, \boldsymbol{\tau}}   \Big)^{r} + \frac{1}{r'}\Big(  \max\Big\{ \abs{\mathrm{Tr}\,\boldsymbol{\eps}} , \tfrac{1}{\beta}\norm{\mathrm{dev} \, \boldsymbol{\eps}} \Big\}  \Big)^{r'},
\end{align*}
where to pass to the last line we used the Young inequality. The three inequalities are equalities if and only if, respectively,
\begin{itemize}
	\item[(1)] $\alpha_1\mathrm{Tr}\, \boldsymbol{\tau} = \beta_1\mathrm{Tr}\, \boldsymbol{\eps}$  and $\alpha_2 \,\mathrm{dev}\, \boldsymbol{\tau} = \beta_2 \,\mathrm{dev}\, \boldsymbol{\eps}$ for some $\alpha_i,\beta_i \geq 0$;
	\item[(2)] $\mathrm{Tr}\,\boldsymbol{\tau} = 0$ unless  $\abs{\mathrm{Tr}\,\boldsymbol{\eps}} \geq \tfrac{1}{\beta}\norm{\mathrm{dev} \, \boldsymbol{\eps}} 
    $, and $\mathrm{dev}\,\boldsymbol{\tau} = 0$ unless  $\abs{\mathrm{Tr}\,\boldsymbol{\eps}} \leq \tfrac{1}{\beta}\norm{\mathrm{dev} \, \boldsymbol{\eps}} 
    $;
	\item [(3)] the equality  $\big( \abs{\mathrm{Tr}\,\boldsymbol{\tau}} +\beta\norm{\mathrm{dev} \, \boldsymbol{\tau}}   \big)^{r}  = \big(  \max\big\{ \abs{\mathrm{Tr}\,\boldsymbol{\eps}} , \tfrac{1}{\beta}\norm{\mathrm{dev} \, \boldsymbol{\eps}} \big\}  \big)^{r'}$ holds true.
\end{itemize}
The system of conditions (1),\,(2),\,(3) can be rewritten as a pair of inclusions
\begin{equation}
	\label{eq:relations_sp}
	\mathrm{Tr}\,\boldsymbol{\eps} \in \big(\abs{\mathrm{Tr}\,\boldsymbol{\tau}}+\beta\,\norm{\mathrm{dev}\,\boldsymbol{\tau}}\big)^{r-1} \mathcal{N} (\mathrm{Tr}\,\boldsymbol{\tau}), \qquad \mathrm{dev}\,\boldsymbol{\eps} \in  \big(\abs{\mathrm{Tr}\,\boldsymbol{\tau}}+\beta\,\norm{\mathrm{dev}\,\boldsymbol{\tau}}\big)^{r-1} \beta\, \mathcal{N}(\mathrm{dev}\,\boldsymbol{\tau}).
\end{equation}
where, for a scalar $t\in \R$ and a tensor $\boldsymbol{\tau} \in E_s^2$, we introduce the multivalued normalization operators:
\begin{equation}
    \label{eq:N}
	{\mathcal{N}}(t) := \begin{cases}
		\big\{\! \frac{t}{\abs{t}}\! \big\} & \text{if} \quad t \neq 0, \\
		[-1,1] & \text{if} \quad t = 0,
	\end{cases}
	\qquad\qquad
	\mathcal{N}(\boldsymbol{\tau}) := \begin{cases}
		\big\{\! \frac{\boldsymbol{\tau}}{\norm{\boldsymbol{\tau}}} \!\big\} & \text{if} \quad \boldsymbol{\tau} \neq 0, \\
		B_{E_s^2}(0,1) & \text{if} \quad \boldsymbol{\tau} = 0.
	\end{cases}
\end{equation}
Above, $B_{E_s^2}(0,1)$ stands for the closed unit  ball in  $E_s^2$ (with respect to the Euclidean norm $\norm{\argu}$).

Arguing as in the previous subsection, the dual norm is recovered as
\begin{equation}
	\opnorm{\boldsymbol{\eps}}_* = \max\Big\{ \abs{\mathrm{Tr}\,\boldsymbol{\eps}} , \tfrac{1}{\beta}\norm{\mathrm{dev} \, \boldsymbol{\eps}} \Big\},
\end{equation}
see also \cite{czarnecki2015a,bolbotowski2022b} where this formula has also been found.  Moreover, the relations \eqref{eq:relations_sp} characterize the extremality condition \eqref{eq:opt_cond_norm}. The duality framework for the problem (sp-IMD) can be now stated:
\begin{corollary}
	Consider the stress-based formulation of the (sp-IMD) problem:
	\begin{align}
		\label{eq:primal_sp}
		{Y_r} =  \mathop {\min }\limits_{{\boldsymbol{\tau}} \in \Sigma_{{f}} \left( \Omega  \right)} \frac{1}{r} \int_\Omega  {{{\Big( {{{\left| {{\rm{Tr }}\,{{\boldsymbol{\tau}}}} \right|}^{}} + \beta {{\left\| {{\rm{dev}}\,{{\boldsymbol{\tau}} }} \right\|}^{}}} \Big)}^{r}}dx}.
	\end{align}	
	Then, $Y_r < +\infty$, and its the dual displacement-based problem reads
	\begin{equation}
		\label{eq:dual_sp}
		Y_r = \max_{\mbf{v} \in \mbf{V}^{r'}\!(\Omega) } \left\{ {f}(\mbf{v})- \frac{1}{r'}  \int_\Omega \Big(\max\big\{ \abs{\mathrm{Tr}\,\boldsymbol{\eps}(\mbf{v})} , \tfrac{1}{\beta}\norm{\mathrm{dev} \, \boldsymbol{\eps}(\mbf{v})} \big\} \Big)^{r'} dx\right\}.
	\end{equation}
	and attains a (possibly non-unique) solution.
	Moreover, admissible functions $\hat{\boldsymbol{\tau}} \in \Sigma_f(\Omega)$ and $\hat{\mbf{v}} \in \mbf{V}^{r'}\!(\Omega)$ solve the respective problems if and only if the optimality relations hold true a.e. in $\Omega$:
	\begin{equation*}
			\mathrm{Tr}\,\boldsymbol{\eps}(\hat{\mbf{v}}) \in \big(\abs{\mathrm{Tr}\,\hat{\boldsymbol{\tau}}}+\beta\,\norm{\mathrm{dev}\,\hat{\boldsymbol{\tau}}}\big)^{r-1} \mathcal{N} (\mathrm{Tr}\,\hat{\boldsymbol{\tau}}), \qquad \mathrm{dev}\,\boldsymbol{\eps}(\hat{\mbf{v}}) \in  \big(\abs{\mathrm{Tr}\,\hat{\boldsymbol{\tau}}}+\beta\,\norm{\mathrm{dev}\,\hat{\boldsymbol{\tau}}}\big)^{r-1} \beta\, \mathcal{N}(\mathrm{dev}\,\hat{\boldsymbol{\tau}}).
	\end{equation*}
\end{corollary}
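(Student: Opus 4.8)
The plan is to obtain the Corollary as a direct specialization of Theorem~\ref{thm:general_duality} to the norm $\opnorm{\argu}$ of \eqref{eq:norm_sp}, using the chain of pointwise estimates displayed just above the statement to pin down both the dual norm and the extremality relation. First I would check that \eqref{eq:norm_sp} genuinely defines a norm on $E_s^2$: it is a sum of the two seminorms $\boldsymbol{\tau}\mapsto\abs{\mathrm{Tr}\,\boldsymbol{\tau}}$ and $\boldsymbol{\tau}\mapsto\beta\norm{\mathrm{dev}\,\boldsymbol{\tau}}$, which, by the orthogonal splitting \eqref{eq:dot_decomp}, vanish simultaneously only for $\boldsymbol{\tau}=0$; positive homogeneity and the triangle inequality are immediate. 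With this in hand, the functional $\frac1r\int_\Omega\opnorm{\boldsymbol{\tau}}^r\,dx$ is precisely the integrand appearing in \eqref{eq:primal_sp}, so the stress-based problem $Y_r$ is exactly the left-hand minimization in \eqref{eq:duality}. Invoking Theorem~\ref{thm:general_duality} then delivers at once: finiteness $Y_r<+\infty$ (equivalently $\Sigma_f(\Omega)\neq\emptyset$), the zero-gap equality, attainment of both the minimum and the maximum, and the abstract optimality condition \eqref{eq:opt_cond_norm}, namely $\hat{\boldsymbol{\tau}}\cdot\boldsymbol{\eps}(\hat{\mbf v})=\frac1r\opnorm{\hat{\boldsymbol{\tau}}}^r+\frac1{r'}\opnorm{\boldsymbol{\eps}(\hat{\mbf v})}_*^{r'}$ a.e.\ in $\Omega$.

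The remaining content is to convert this abstract output into the concrete statements \eqref{eq:dual_sp} and the displayed optimality relations, and for that I would rely on the estimate chain already written down. Its last line exhibits, for every fixed $\boldsymbol{\tau}\in E_s^2$, the convex function $\boldsymbol{\eps}\mapsto\frac1{r'}\big(\max\{\abs{\mathrm{Tr}\,\boldsymbol{\eps}},\tfrac1\beta\norm{\mathrm{dev}\,\boldsymbol{\eps}}\}\big)^{r'}$ as an upper bound for $\boldsymbol{\eps}\mapsto\boldsymbol{\tau}\cdot\boldsymbol{\eps}-\frac1r\opnorm{\boldsymbol{\tau}}^r$; since, given $\boldsymbol{\eps}$, one can choose $\boldsymbol{\tau}$ making cases (1),(2),(3) all equalities, this bound is saturated, hence it must coincide with the Fenchel--Legendre transform of $\frac1r\opnorm{\argu}^r$, which by Corollary~15.3.1 of \cite{rockafellar1970} equals $\frac1{r'}\opnorm{\argu}_*^{r'}$. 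This identifies $\opnorm{\boldsymbol{\eps}}_*=\max\{\abs{\mathrm{Tr}\,\boldsymbol{\eps}},\tfrac1\beta\norm{\mathrm{dev}\,\boldsymbol{\eps}}\}$ (consistently with \cite{czarnecki2015a,bolbotowski2022b}), and substituting it into \eqref{eq:duality} produces exactly \eqref{eq:dual_sp}.

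For the optimality relations I would read off, from \eqref{eq:opt_cond_norm} applied with $\boldsymbol{\eps}=\boldsymbol{\eps}(\hat{\mbf v})$, that equality must hold throughout the estimate chain a.e., i.e.\ conditions (1), (2) and (3) hold simultaneously; then I would check that this conjunction is equivalent to the pair of inclusions in \eqref{eq:relations_sp} written with the normalization operators \eqref{eq:N}. Concretely: the proportionality (1) fixes the directions of $\mathrm{Tr}\,\hat{\boldsymbol{\tau}}$ and $\mathrm{dev}\,\hat{\boldsymbol{\tau}}$ relative to $\mathrm{Tr}\,\boldsymbol{\eps}(\hat{\mbf v})$ and $\mathrm{dev}\,\boldsymbol{\eps}(\hat{\mbf v})$; condition (2), phrased as the "unless" clauses, is exactly the statement that $\mathrm{Tr}\,\boldsymbol{\eps}(\hat{\mbf v})$ and $\tfrac1\beta\norm{\mathrm{dev}\,\boldsymbol{\eps}(\hat{\mbf v})}$ select which of the two subgradient branches of the $\max$ is active, which is encoded by $\mathcal N(\mathrm{Tr}\,\hat{\boldsymbol{\tau}})$ and $\mathcal N(\mathrm{dev}\,\hat{\boldsymbol{\tau}})$; and condition (3) fixes the common magnitude $\big(\abs{\mathrm{Tr}\,\hat{\boldsymbol{\tau}}}+\beta\norm{\mathrm{dev}\,\hat{\boldsymbol{\tau}}}\big)^{r-1}$ that multiplies these multivalued directions. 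I expect this translation of the three equality cases — especially handling the multivaluedness coming from the $\max$ and from the vanishing of $\mathrm{Tr}\,\hat{\boldsymbol{\tau}}$ or $\mathrm{dev}\,\hat{\boldsymbol{\tau}}$ — to be the main technical obstacle; it is essentially the computation of the subdifferential of $\frac1{r'}\opnorm{\argu}_*^{r'}$. Finally, attainment of the maximum follows as in Theorem~\ref{thm:general_duality} (coercivity via Korn's inequality on the reflexive space $\mbf V^{r'}\!(\Omega)$), while uniqueness of $\hat{\mbf v}$ cannot be asserted: the dual norm $\opnorm{\argu}_*$ is of $\max$-type, so its unit ball is not strictly convex and $\opnorm{\argu}_*^{r'}$ is not strictly convex; this matches the non-uniqueness of $\hat{\boldsymbol{\tau}}$ already recorded in Theorem~\ref{thm:stress_sp}.
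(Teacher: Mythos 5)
Your proposal follows essentially the same route as the paper: specialize Theorem \ref{thm:general_duality} to the norm \eqref{eq:norm_sp}, use the displayed chain of estimates (Cauchy--Schwarz on the orthogonal Tr/dev splitting, the $\max$ bound, then Young) to identify the dual norm by the saturation argument, and translate the equality cases (1),\,(2),\,(3) into the inclusions \eqref{eq:relations_sp}; this is exactly what the paper does, including the observation that non-uniqueness cannot be excluded. The one step you flag as the "main technical obstacle" --- verifying that the conjunction of (1),\,(2),\,(3) is equivalent to the pair of inclusions --- is also left as a direct check in the paper, so your plan is complete and correct.
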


\section{Solutions via a system  of non-linear elasticity}

The original IMD problem [16] reduces to a linear constrained problem composed of two mutually dual problems linked by an optimality condition. In this section we show that the  vp-IMD and sp-IMD methods reduce to solving certain problems which can be interpreted as static problems of elastic bodies with specific non-linear constitutive equations.

\subsection{The equations of non-linear elasticity in the vp-IMD method}

Set any exponent $p \in (1,\infty)$ and then define $r = \frac{2p}{p+1} \in (1,2)$ and its H\"{older} conjugate exponent $r' = \frac{r}{r-1} = \frac{2p}{p-1}$. Consider the following problem: find a virtual stress $\hat{\boldsymbol{\tau}} \in L^r(\O;E_s^2)$ and virtual displacement $\hat{\mbf{v}} \in W^{1,r'}\!(\Omega;\Rd)$ which satisfy the system of equations
\begin{equation}
	\label{eq:non-linear_system_vp}
	\begin{cases}
		(1) & \int_\Omega  {\hat{\boldsymbol{\tau}} \cdot \,} {\boldsymbol{\eps}}\left( {\bf{v}} \right)dx  = {f}(\mbf{v}) \qquad \forall \,  \mbf{v} \in \mbf{V}^{r'}\!(\Omega),\\
		(2) & \hat{\mbf{v}} \in \mbf{V}^{r'}\!(\Omega),\\
		(3.a) & \mathrm{Tr}\,\boldsymbol{\eps}(\hat{\mbf{v}}) = \abs{\mathrm{Tr}\,\hat{\boldsymbol{\tau}}}^{r-2} \mathrm{Tr}\,\hat{\boldsymbol{\tau}} \qquad \text{a.e. in $\Omega$},\\
		(3.b) & \mathrm{dev}\,\boldsymbol{\eps}(\hat{\mbf{v}}) = \big(\beta\,\norm{\mathrm{dev}\,\hat{\boldsymbol{\tau}}}\big)^{r-2} \mathrm{dev}\,\hat{\boldsymbol{\tau}} \qquad \text{a.e. in $\Omega$}.
	\end{cases}
\end{equation}
The above formulation can be deemed a system of equations of non-linear elasticity with the power law-type constitutive equations. The forthcoming result shows that this problem is, in fact, equivalent to the (vp-IMD) optimal design problem.

\begin{theorem}
	\label{thm:non-linear_sys_vp}
	The system of equations of non-linear elasticity \eqref{eq:non-linear_system_vp}  has a unique solution in the form of the virtual stress $\hat{\boldsymbol{\tau}} \in L^r(\Omega;E_s^2)$ and the virtual displacement $\hat{\mbf{v}} \in W^{1,r'}\!(\Omega;\Rd)$. For any such a solution, the following statements hold true.
	\begin{enumerate}[label={(\roman*)}]
		\item $\hat{\boldsymbol{\tau}}$ is the unique solution of the stress-based minimization problem \eqref{eq:primal_vp}, and  $\hat{\mbf{v}}$ is the unique solution of the displacement-based maximization problem \eqref{eq:dual_vp}.
		\item For $Z_r = \frac{1}{r} \int_\Omega  {\big( {{{\left| {{\rm{Tr }}\,\hat{\boldsymbol{\tau}}} \right|}^{r}} + \beta^{2-r}{{\left\| {{\rm{dev}}\,\hat{\boldsymbol{\tau}}} \right\|}^{r}}} \big)dx}$ define the functions below:
		\begin{alignat}{2}
			&\hat{\boldsymbol{\sigma}} = \hat{\boldsymbol{\tau}}, \qquad
			&&\hat{\mbf{u}} = \frac{(r Z_r)^{\frac{1}{p}}}{\Lambda} \, \hat{\mbf{v}},\\
			&{\hat{k}} = \frac{\Lambda }{{{{d( {{r Z_r}} )}^{\frac{1}{{p}}}}}}{\left| {{\rm{Tr }}\,{\hat{\boldsymbol{\tau}} }} \right|^{\frac{2}{p+1}}}, \qquad 
			&&{\hat\mu} = \frac{\Lambda }{{{{2( {{r Z_r}} )}^{\frac{1}{{p}}}}}}{\left\| { \tfrac{1}{\beta} \,{\rm{dev}}\,{\hat{\boldsymbol{\tau}} }} \right\|^{\frac{2}{p+1}}}.
		\end{alignat}
		Then, $(\hat{k},\hat{\mu})$ uniquely  solves the (vp-IMD) problem in \eqref{eq:vp-IMD}, while $\hat{\boldsymbol{\sigma}}$ and $\hat{\mbf{u}}$ are, respectively, the stress and displacement fields in the optimal body. Namely, $\hat{\boldsymbol{\sigma}} \in \Sigma_f(\Omega)$ and the constitutive law of linear isotropy is met:
		\begin{equation}
			\label{eq:const_opt}
			{\rm{Tr}} \,\hat{\boldsymbol{\sigma}} = d\hat{k} \,{\rm{Tr }}\,{\boldsymbol{\eps}}(\hat{\bf{u}}), \qquad {\rm{dev}}\,\hat{\boldsymbol{\sigma}} = 2\hat\mu\, {\rm{dev}}\,{\boldsymbol{\eps}}(\hat{\bf{u}}) \qquad \text{a.e. in $\O$}.
		\end{equation}
	\end{enumerate}
\end{theorem}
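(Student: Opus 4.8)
The plan is to read the system \eqref{eq:non-linear_system_vp} as an equivalent repackaging of three facts that are already in hand: that $\hat{\boldsymbol{\tau}}$ minimizes the primal problem \eqref{eq:primal_vp}, that $\hat{\mbf{v}}$ maximizes the dual problem \eqref{eq:dual_vp}, and that the two are tied by the power-law extremality relation of Corollary~\ref{cor:dual_vp}; this then gets transported, via Theorem~\ref{thm:stress_vp}, to the (vp-IMD) problem and to the physical fields of the optimal body. Concretely, I would first observe that equation $(1)$ together with $(2)$ says exactly that $\hat{\boldsymbol{\tau}}\in\Sigma_f(\Omega)$ and $\hat{\mbf{v}}\in\mbf{V}^{r'}\!(\Omega)$, while $(3.a)$--$(3.b)$ are \emph{verbatim} the power law \eqref{eq:eps_tau_vp}. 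Hence, by Corollary~\ref{cor:dual_vp}, a pair $(\hat{\boldsymbol{\tau}},\hat{\mbf{v}})\in\Sigma_f(\Omega)\times\mbf{V}^{r'}\!(\Omega)$ solves \eqref{eq:non-linear_system_vp} if and only if $\hat{\boldsymbol{\tau}}$ solves \eqref{eq:primal_vp} and $\hat{\mbf{v}}$ solves \eqref{eq:dual_vp}. Since the primal minimizer is unique (Theorem~\ref{thm:stress_vp}, through strict convexity of $\widetilde{F}_{vp}$) and the dual maximizer is unique (Corollary~\ref{cor:dual_vp}), the system has precisely one solution, namely that very pair. This disposes of existence and uniqueness for \eqref{eq:non-linear_system_vp} and of item~(i) in one stroke.

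For item~(ii), note that $\hat{\boldsymbol{\sigma}}:=\hat{\boldsymbol{\tau}}\in\Sigma_f(\Omega)$ by $(1)$, and $\hat{\mbf{u}}$, being a scalar multiple of $\hat{\mbf{v}}$, lies in $\mbf{V}^{r'}\!(\Omega)$. Rearranging the displayed formulas for $\hat{k},\hat{\mu}$ gives $d\hat{k}=\frac{\Lambda}{(rZ_r)^{1/p}}\,|\mathrm{Tr}\,\hat{\boldsymbol{\tau}}|^{2/(p+1)}$ and $2\hat{\mu}=\frac{\Lambda}{(rZ_r)^{1/p}}\,\big\|\tfrac1\beta\mathrm{dev}\,\hat{\boldsymbol{\tau}}\big\|^{2/(p+1)}$, which are exactly the optimality conditions \eqref{eq:opt_kmu_tau_vp}; Theorem~\ref{thm:stress_vp} then yields that $(\hat{k},\hat{\mu})$ is the unique solution of the (vp-IMD) problem \eqref{eq:vp-IMD}. (One reads this under $f\not\equiv0$, so that $\hat{\boldsymbol{\tau}}\neq0$ and $Z_r>0$; the case $f\equiv0$ is degenerate and trivial.)

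It remains to verify the constitutive law \eqref{eq:const_opt}. I would insert $\boldsymbol{\eps}(\hat{\mbf{u}})=\frac{(rZ_r)^{1/p}}{\Lambda}\boldsymbol{\eps}(\hat{\mbf{v}})$, use $(3.a)$ and $(3.b)$ to rewrite $\mathrm{Tr}\,\boldsymbol{\eps}(\hat{\mbf{v}})$ and $\mathrm{dev}\,\boldsymbol{\eps}(\hat{\mbf{v}})$ as the corresponding power-law expressions in $\hat{\boldsymbol{\tau}}$, and then multiply by $d\hat{k}$ and $2\hat{\mu}$ respectively. The constants $(rZ_r)^{1/p}/\Lambda$ cancel immediately, and what is left is to check that the residual exponents of $|\mathrm{Tr}\,\hat{\boldsymbol{\tau}}|$, of $\|\mathrm{dev}\,\hat{\boldsymbol{\tau}}\|$, and of $\beta$ all collapse to zero. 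This is precisely where the identity $r=\tfrac{2p}{p+1}$ (equivalently $r-2=-\tfrac{2}{p+1}$ and $\tfrac{r}{p}=2-r$) does the work; collecting exponents yields $d\hat{k}\,\mathrm{Tr}\,\boldsymbol{\eps}(\hat{\mbf{u}})=\mathrm{Tr}\,\hat{\boldsymbol{\tau}}=\mathrm{Tr}\,\hat{\boldsymbol{\sigma}}$ and $2\hat{\mu}\,\mathrm{dev}\,\boldsymbol{\eps}(\hat{\mbf{u}})=\mathrm{dev}\,\hat{\boldsymbol{\tau}}=\mathrm{dev}\,\hat{\boldsymbol{\sigma}}$, which is \eqref{eq:const_opt}. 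I expect this exponent matching — and in particular making sure that the power of $\beta$ appearing in the deviatoric power law $(3.b)$ and the one entering the formula for $\hat{\mu}$ are genuinely reciprocal — to be the one delicate point of the whole argument; everything else is a direct reading of Corollary~\ref{cor:dual_vp} and Theorem~\ref{thm:stress_vp}.
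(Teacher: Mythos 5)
Your overall route is the same as the paper's: part (i) and the solvability of the system are read off from Theorem~\ref{thm:stress_vp} and Corollary~\ref{cor:dual_vp}, and part (ii) is reduced to matching the formulas against \eqref{eq:opt_kmu_tau_vp} and then verifying \eqref{eq:const_opt} by ``collecting exponents''. That first portion is fine. The problem is precisely the step you single out as delicate and then wave through: with the formulas as printed, the $\beta$-exponents in the deviatoric equation are \emph{not} reciprocal, and the cancellation you assert does not happen. Indeed, since $\tfrac{2}{p+1}=2-r$, the stated formula gives $2\hat\mu=\tfrac{\Lambda}{(rZ_r)^{1/p}}\,\beta^{\,r-2}\norm{\mathrm{dev}\,\hat{\boldsymbol{\tau}}}^{2-r}$, while $(3.b)$ of \eqref{eq:non-linear_system_vp} gives $\mathrm{dev}\,\boldsymbol{\eps}(\hat{\mbf{v}})=\beta^{\,r-2}\norm{\mathrm{dev}\,\hat{\boldsymbol{\tau}}}^{r-2}\mathrm{dev}\,\hat{\boldsymbol{\tau}}$; both carry the \emph{same} factor $\beta^{\,r-2}$, so
\begin{equation*}
2\hat\mu\,\mathrm{dev}\,\boldsymbol{\eps}(\hat{\mbf{u}})=\beta^{\,2(r-2)}\,\mathrm{dev}\,\hat{\boldsymbol{\tau}}\neq \mathrm{dev}\,\hat{\boldsymbol{\tau}}
\end{equation*}
whenever $r\neq 2$ (e.g.\ for $d=2$, $p=2$ the spurious factor is $2^{-2/3}$). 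The trace equation closes correctly; the deviatoric one does not.

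The discrepancy is not in the formula for $\hat\mu$ (that one is forced by Lemma~\ref{lem:vp} and saturates the cost constraint), but in the power law. Tracing it back: in the Young-inequality chain of Section~4.2 the product is split as $\beta^{\frac{r-2}{r}}\norm{\mathrm{dev}\,\boldsymbol{\tau}}\cdot\beta^{\frac{2-r}{r}}\norm{\mathrm{dev}\,\boldsymbol{\eps}}$, which produces $\tfrac1r\beta^{\,r-2}\norm{\mathrm{dev}\,\boldsymbol{\tau}}^{r}$ — not the term $\beta^{\,2-r}\norm{\mathrm{dev}\,\boldsymbol{\tau}}^{r}$ appearing in $\opnorm{\boldsymbol{\tau}}^{r}$ of \eqref{eq:norm_vp}. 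With the split reversed one obtains the dual norm with weight $\beta^{\,2-r'}$ (not $\beta^{\,r'-2}$) and the extremality relation $\mathrm{dev}\,\boldsymbol{\eps}(\hat{\mbf{v}})=\big(\beta^{-1}\norm{\mathrm{dev}\,\hat{\boldsymbol{\tau}}}\big)^{r-2}\mathrm{dev}\,\hat{\boldsymbol{\tau}}$, and with that version the product of $\beta$-powers is $\beta^{\,r-2}\cdot\beta^{\,2-r}=1$ and \eqref{eq:const_opt} follows. So your plan is the right one and identical to the paper's, but the verification you defer to ``the identity $r=\tfrac{2p}{p+1}$ does the work'' is exactly where a sign error in the $\beta$-exponent sits; you must actually write the exponents out, and doing so shows that $(3.b)$ (equivalently \eqref{eq:eps_tau_vp} and the dual integrand in \eqref{eq:dual_vp}) needs $\beta^{-1}$ in place of $\beta$ for the theorem to be internally consistent.
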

\begin{proof}
	Existence and uniqueness of minimizer $\hat{\boldsymbol{\tau}}$ and maximizer $\hat{\mbf{v}}$ in problems \eqref{eq:primal_vp} and, respectively, \eqref{eq:dual_vp}, have been already established in Theorem \ref{thm:stress_vp} and Corollary \ref{cor:dual_vp}. Then, the equations (3.a), (3.b) in \eqref{eq:non-linear_system_vp} must be satisfied as optimality conditions, see the "moreover part" in Corollary \ref{cor:dual_vp}. It is thus left to show the assertion (ii), i.e. the equations \eqref{eq:const_opt}. This amounts to simply plugging the formulae for $\hat{\boldsymbol{\sigma}}, \hat{\mbf{v}}, \hat{k}, \hat{\mu}$ into the constitutive law \eqref{eq:const_opt}. After using relations between $p$ and $r$, this leads to equations  (3.a), (3.b) in \eqref{eq:non-linear_system_vp} exactly.
\end{proof}

\subsection{The inclusions of non-linear elasticity in the sp-IMD method}

Similarly, we will show that the (sp-IMD) method is equivalent to the system where the constitutive law are two non-linear inclusions:
\begin{equation}
	\label{eq:non-linear_system_sp}
	\begin{cases}
		(1) & \int_\Omega  {\hat{\boldsymbol{\tau}} \cdot \,} {\boldsymbol{\eps}}\left( {\bf{v}} \right)dx  = {f}(\mbf{v}) \qquad \forall \,  \mbf{v} \in \mbf{V}^{r'}\!(\Omega),\\
		(2) & \hat{\mbf{v}} \in \mbf{V}^{r'}\!(\Omega),\\
		(3.a) & \mathrm{Tr}\,\boldsymbol{\eps}(\hat{\mbf{v}}) \in \big(\abs{\mathrm{Tr}\,\hat{\boldsymbol{\tau}}}+\beta\,\norm{\mathrm{dev}\,\hat{\boldsymbol{\tau}}}\big)^{r-1} \mathcal{N} (\mathrm{Tr}\,\hat{\boldsymbol{\tau}}) \qquad \text{a.e. in $\Omega$},\\
		(3.b) & \mathrm{dev}\,\boldsymbol{\eps}(\hat{\mbf{v}}) \in  \big(\abs{\mathrm{Tr}\,\hat{\boldsymbol{\tau}}}+\beta\,\norm{\mathrm{dev}\,\hat{\boldsymbol{\tau}}}\big)^{r-1} \beta\, \mathcal{N}(\mathrm{dev}\,\hat{\boldsymbol{\tau}}) \qquad \text{a.e. in $\Omega$}.
	\end{cases}
\end{equation}
The proof of the theorem below runs in full analogy to the proof of Theorem \ref{thm:non-linear_sys_vp}.

\begin{theorem}
	The system of inclusions of non-linear elasticity \eqref{eq:non-linear_system_sp}  has a (possibly non-unique) solution in the form of the virtual stress $\hat{\boldsymbol{\tau}} \in L^r(\Omega;E_s^2)$ and the virtual displacement $\hat{\mbf{v}} \in W^{1,r'}\!(\Omega;\Rd)$. For such a solution, the following statements hold true.
	\begin{enumerate}[label={(\roman*)}]
		\item $\hat{\boldsymbol{\tau}}$ is a solution of the stress-based minimization problem \eqref{eq:primal_sp}, and  $\hat{\mbf{v}}$ is a solution of the displacement-based maximization problem \eqref{eq:dual_sp}.
		\item For $Y_r =\frac{1}{r} \int_\Omega  {{{\big( {{{\left| {{\rm{Tr }}\,{\hat{\boldsymbol{\tau}}}} \right|}^{}} + \beta {{\left\| {{\rm{dev}}\,{\hat{\boldsymbol{\tau}} }} \right\|}^{}}} \big)}^{r}}dx}$ define the functions below:
		\begin{alignat}{2}
			&\hat{\boldsymbol{\sigma}} = \hat{\boldsymbol{\tau}}, \qquad
			&&\hat{\mbf{u}} = \frac{(r Y_r)^{\frac{1}{p}}}{\Lambda} \, \hat{\mbf{v}},\\
			&{\hat{k}} = \frac{\Lambda }{{{{d( {{r Y_r}} )}^{\frac{1}{{p}}}}}}\frac{{{{\left| {{\rm{Tr }}\,{\hat{{\boldsymbol{\tau}}} }} \right|}^{}}}}{{{{\left( {\left| {{\rm{Tr }}\,{\hat{{\boldsymbol{\tau}}} }} \right| + \beta \left\| {{\rm{dev}}\,{\hat{{\boldsymbol{\tau}}} }} \right\|} \right)}^{\frac{p-1}{{p+ 1}}}}}}, \qquad 
			&&{\hat\mu} = \frac{\Lambda }{{{{2( {{r Y_r}} )}^{\frac{1}{{p}}}}}}\frac{{\frac{1}{\beta} \left\| {{\rm{dev}}\,{\hat{{\boldsymbol{\tau}}} }} \right\|}}{{{{\left( {\left| {{\rm{Tr }}{\hat{{\boldsymbol{\tau}}}}} \right| + \beta \left\| {{\rm{dev}}\,{\hat{{\boldsymbol{\tau}}} }} \right\|} \right)}^{\frac{p-1}{{p + 1}}}}}}.
		\end{alignat}
		Then, $(\hat{k},\hat{\mu})$ solves the (sp-IMD) problem in \eqref{eq:sp-IMD}, while $\hat{\boldsymbol{\sigma}}$ and $\hat{\mbf{u}}$ are, respectively, the stress and displacement function in the optimal body.
	\end{enumerate}
\end{theorem}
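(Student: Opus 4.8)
The argument is a verbatim transcription of the proof of Theorem~\ref{thm:non-linear_sys_vp}, the only genuinely new feature being that the constitutive \emph{equations} are now the multivalued \emph{inclusions} (3.a)--(3.b) governed by the normalization operators \eqref{eq:N}. (Throughout one assumes $f\not\equiv 0$; in the trivial case $f\equiv 0$ one takes $\hat{\boldsymbol{\tau}}=\mbf{0}$, $\hat{\mbf{v}}=\mbf{0}$, $Y_r=0$ and all objects vanish.) First I would invoke the duality corollary for the (sp-IMD) variant (the one containing \eqref{eq:dual_sp}): the stress problem \eqref{eq:primal_sp} attains a, generally non-unique, minimizer $\hat{\boldsymbol{\tau}}$; the displacement problem \eqref{eq:dual_sp} attains a maximizer $\hat{\mbf{v}}$; and a pair $(\hat{\boldsymbol{\tau}},\hat{\mbf{v}})$ is jointly optimal for these two problems \emph{if and only if} the inclusions (3.a)--(3.b) hold a.e.\ in $\Omega$ --- while (1) merely encodes $\hat{\boldsymbol{\tau}}\in\Sigma_f(\Omega)$ and (2) encodes $\hat{\mbf{v}}\in\mbf{V}^{r'}\!(\Omega)$. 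This simultaneously furnishes existence of a solution of the system \eqref{eq:non-linear_system_sp} and assertion (i): every solution of \eqref{eq:non-linear_system_sp} is exactly a pair of optimizers of \eqref{eq:primal_sp}--\eqref{eq:dual_sp}.

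For assertion (ii) I would set $\hat{\boldsymbol{\sigma}}=\hat{\boldsymbol{\tau}}$, so that $\hat{\boldsymbol{\sigma}}\in\Sigma_f(\Omega)$ is precisely equation (1). Next I would note that the displayed formulas for $\hat{k},\hat{\mu}$ coincide (after multiplication by the scalars $d$ and $2$) with the relations \eqref{eq:opt_kmu_tau_sp} of Theorem~\ref{thm:stress_sp}; since $\hat{\boldsymbol{\tau}}$ solves \eqref{eq:Yr}$=$\eqref{eq:primal_sp} and $Y_r$ is its value, Theorem~\ref{thm:stress_sp} gives at once that $(\hat{k},\hat{\mu})$ solves (sp-IMD). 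That $\hat{\boldsymbol{\sigma}},\hat{\mbf{u}}$ are the stress and displacement of the optimal body I would establish by direct substitution: writing $c:=(rY_r)^{1/p}/\Lambda$ so that $\hat{\mbf{u}}=c\,\hat{\mbf{v}}$, and using the algebraic identity $r-1=\frac{p-1}{p+1}$, one multiplies inclusion (3.a) by $c$ and compares with $d\hat{k}$; on the set where $\mathrm{Tr}\,\hat{\boldsymbol{\tau}}\neq 0$ this reads
\[
d\hat{k}\,\mathrm{Tr}\,\boldsymbol{\eps}(\hat{\mbf{u}})
= d\hat{k}\, c\,\mathrm{Tr}\,\boldsymbol{\eps}(\hat{\mbf{v}})
= \frac{\big(\abs{\mathrm{Tr}\,\hat{\boldsymbol{\tau}}}+\beta\norm{\mathrm{dev}\,\hat{\boldsymbol{\tau}}}\big)^{\,r-1}}{\big(\abs{\mathrm{Tr}\,\hat{\boldsymbol{\tau}}}+\beta\norm{\mathrm{dev}\,\hat{\boldsymbol{\tau}}}\big)^{\frac{p-1}{p+1}}}\;\mathrm{Tr}\,\hat{\boldsymbol{\tau}}
= \mathrm{Tr}\,\hat{\boldsymbol{\sigma}},
\]
and analogously $\mathrm{dev}\,\hat{\boldsymbol{\sigma}}=2\hat{\mu}\,\mathrm{dev}\,\boldsymbol{\eps}(\hat{\mbf{u}})$ from (3.b). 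This is the constitutive law \eqref{eq:const_opt}; combined with the Castigliano formula \eqref{eq:comp} and the min-swap together with Lemma~\ref{lem:sp} (the reduction that produced \eqref{eq:Fsp_formula} and \eqref{eq:Yr}), it identifies $\hat{\boldsymbol{\sigma}}$ as the energy-minimizing stress and $\hat{\mbf{u}}$ as the corresponding displacement in the body $(\hat{k},\hat{\mu})$.

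The step demanding the most care --- and the only real departure from the vp-case --- is the degenerate regime where $\mathrm{Tr}\,\hat{\boldsymbol{\tau}}=0$ or $\mathrm{dev}\,\hat{\boldsymbol{\tau}}=0$ on a set of positive measure, so that $\mathcal{N}$ is the interval $[-1,1]$ or the closed unit ball rather than a singleton. There the corresponding modulus ($\hat{k}$, resp.\ $\hat{\mu}$) vanishes by its defining formula, while at the same time $\mathrm{Tr}\,\hat{\boldsymbol{\sigma}}=0$ (resp.\ $\mathrm{dev}\,\hat{\boldsymbol{\sigma}}=0$); hence, under the convention \eqref{eq:quotient_convention} the elastic energy density stays finite, the relations \eqref{eq:const_opt} continue to hold (both sides being zero), and the multivalued freedom in $\boldsymbol{\eps}(\hat{\mbf{v}})$ allowed by (3.a)--(3.b) is exactly what makes $\hat{\mbf{v}}$ dual-optimal. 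This same absence of strict convexity of $\widetilde{F}_{sp}$ is what forces the ``possibly non-unique'' qualifiers, so no converse uniqueness is asserted. Once this bookkeeping is carried out, the proof closes in complete parallel with Theorem~\ref{thm:non-linear_sys_vp}.
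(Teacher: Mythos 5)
Your proposal is correct and follows essentially the same route as the paper, which simply states that the proof "runs in full analogy" to that of Theorem \ref{thm:non-linear_sys_vp}: existence and assertion (i) come from the sp-IMD duality corollary together with its optimality ("moreover") part, and assertion (ii) is the direct substitution of the formulae into the constitutive law using $r-1=\frac{p-1}{p+1}$. Your explicit treatment of the degenerate sets where $\mathrm{Tr}\,\hat{\boldsymbol{\tau}}=0$ or $\mathrm{dev}\,\hat{\boldsymbol{\tau}}=0$ (where $\mathcal{N}$ is multivalued and the convention \eqref{eq:quotient_convention} applies) is a welcome elaboration of a point the paper leaves implicit, and your algebra checks out.
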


\bigskip

\section{On the choice of the exponent $p$}

Thus far, the two methods vp-IMD and sp-IMD where analysed for a fixed exponent $p \in (1,\infty)$. This section is to study how the value of $p$ impacts the optimal design problem and its solutions. For instance, we have seen that the optimal moduli, stresses, and displacements lie in the following functional spaces:
\begin{equation}
    (\hat{k},\hat{\mu}) \in L^p(\O;\R^2), \qquad \hat{\boldsymbol{\tau}} \in L^r(\O;E_s^2), \qquad \hat{\mbf{v}} \in W^{1,r'}\,(\O;\R^d).
\end{equation}
It is thus clear that integrability of both the moduli and the stresses increases with $p$, recall that $r =\frac{2p}{p+1}$, which increasingly varies in the range $(1,2)$. On the other hand, $r' = \frac{2p}{p-1}$ which decreases with $p$: it tends to infinity when $p \to 1$ and to the value 2 when $p \to \infty$. Therefore, the regularity of the displacement $\hat{\mbf{v}}$ drops when $p$ increases.

Below we will show that the minimal compliances $\hat{\mathcal{C}}_{vp}$ and $\hat{\mathcal{C}}_{sp}$ also vary monotonically with the exponent $p$. To achieve that, the dependence of the upper bound $\Lambda$ in the cost condition on $p$ must be introduced: 
\begin{equation}
    \Lambda = \Lambda_p := \abs{\Omega}^{\frac{1}{p}} E_0.
\end{equation}
Accordingly, the cost conditions takes the form of the bound on the $p$-mean: in the case of the vp-IMD method,
\begin{equation}
    \label{eq:vpcost_E0}
    {\bigg( {\frac{1}{{\left| \Omega  \right|}}\int_\Omega  {\left[ {{{\left( {dk} \right)}^{p}} + {{\left( {{\beta^{2/p}}2\mu } \right)}^{p}}} \right]dx} } \bigg)^{\frac{1}{{p}}}} \le {E_0}	 
\end{equation}
and in the case of the sp-IMD method,
\begin{equation}
	\label{eq:spcost_E0}
	{\bigg( {\frac{1}{{\left| \Omega  \right|}}\int_\Omega  {{{\left( {dk + {\beta ^2}2\mu } \right)}^{p}}dx} } \bigg)^{\frac{1}{{p}}}} \le {E_0}.
\end{equation}

Furthermore, we will investigate the limit case when $p$ approaches one or infinity. For $p=1$ the original Isotropic Material Design problem will be recast.

\subsection{Monotonicity of the optimal compliance with respect to $p$}

In this subsection we shall show that the optimal compliance in the (sp-IMD) problem is non-decreasing with respect to the exponent $p$. For the (vp-IMD) problem, similar results hold true upon a proper rescaling. 

Beforhand, we recall a simple inequality holding in any probability space.
Let $\lambda$ be a probability on a space $X$, and let
$\mbf{f} : X \mapsto \R^n$ be a $\lambda$-measurable vector function. Then, for any $1 \leq p \leq q < \infty$, H\"{o}lder inequality yields monotonicity of the $L^p$ norm, i.e.
\begin{equation}
	\label{eq:Holder_mu}
	{\bigg( {\int_X  {{{\norm{\bf{f}}}^p} d\lambda} } \bigg)^{1/p}} \le {\bigg( {\int_X  {{{\norm{\bf{f}}}^q} d\lambda} } \bigg)^{1/q}}.
\end{equation}

\subsubsection{Monotonicity for the sp-IMD method}
Upon the choice  $X = \Omega$ and $d\lambda = \frac{1}{\abs{\Omega}} dx$,  \eqref{eq:Holder_mu} becomes
\begin{equation}
	\label{eq:Holder}
	{\bigg( {\frac{1}{{\left| \Omega  \right|}}\int_\Omega  {{{\norm{\bf{f}}}^p} dx} } \bigg)^{1/p}} \le {\bigg( {\frac{1}{{\left| \Omega  \right|}}\int_\Omega  {{{\norm{\bf{f}}}^q} dx} } \bigg)^{1/q}}.
\end{equation}  
If for $\mbf{f}$ we chose the real-valued function $d{k} + 2{\beta ^2}{\mu}$, what we get is 
\begin{equation}
	\label{eq:cost_comp_II}
	{\left( {\frac{1}{{\left| \Omega  \right|}}\int_\Omega  {\left[ {{{dk}} +  \beta^2{{ {2\mu } }}} \right]^p dx} } \right)^{\frac{1}{{p}}}}  \leq {\left( {\frac{1}{{\left| \Omega  \right|}}\int_\Omega  {\left[ {{{dk}} +  \beta^2{{ {2\mu } }}} \right]^q dx} } \right)^{\frac{1}{{q}}}}.
\end{equation}
With the above inequality at hand one can easily show the monotonicity result:
\begin{proposition}
    \label{prop:mono_sp}
    For any two exponents $1< p_1 \leq  p_2 < \infty $ the minimal compliances in the (sp-IMD) problem satisfy the inequality
    \begin{equation}
        \hat{\mathcal{C}}_{sp_1} \leq \hat{\mathcal{C}}_{sp_2}.
    \end{equation}
\end{proposition}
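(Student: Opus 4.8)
\medskip
\noindent\textbf{Proof proposal.}
The plan is to reduce the statement to an elementary nesting of the feasible sets of moduli, exploiting the normalized form of the cost condition. Recall that, upon the choice $\Lambda = \Lambda_p = \abs{\Omega}^{1/p} E_0$, a pair $(k,\mu)\in (L^p(\Omega;\R_+))^2$ lies in $M_{sp}(\Omega)$ precisely when the $p$-mean estimate \eqref{eq:spcost_E0} holds, i.e. when $E_0$ dominates the $L^p$-norm of the scalar function $dk+\beta^2 2\mu$ computed with respect to the probability measure $\frac{1}{\abs{\Omega}}dx$.

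First I would establish the inclusion $M_{sp_2}(\Omega)\subseteq M_{sp_1}(\Omega)$ for $1<p_1\le p_2<\infty$. Take $(k,\mu)\in M_{sp_2}(\Omega)$. Since $\Omega$ is bounded, $L^{p_2}(\Omega)\subseteq L^{p_1}(\Omega)$, so $(k,\mu)\in (L^{p_1}(\Omega;\R_+))^2$; and applying the monotonicity of means \eqref{eq:cost_comp_II} with $p=p_1$, $q=p_2$ to $dk+\beta^2 2\mu$, the bound on its $p_2$-mean forces the bound on its $p_1$-mean, i.e. $(k,\mu)\in M_{sp_1}(\Omega)$.

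Second, I would record the (mild) monotonicity of the compliance functional itself. For $(k,\mu)$ admissible for both exponents, write $\mathcal{C}_p(k,\mu)$ for its compliance computed within the $p$-framework. Since $p\mapsto r'=\frac{2p}{p-1}$ is decreasing, $r'_{p_1}\ge r'_{p_2}$, whence $\mbf{V}^{r'_{p_1}}\!(\Omega)\subseteq \mbf{V}^{r'_{p_2}}\!(\Omega)$ (again because $\Omega$ is bounded, the clamping condition on $\Gamma_0$ being preserved by traces). The displacement-based formula \eqref{eq:comp_disp} exhibits $\mathcal{C}_p(k,\mu)$ as a supremum of one and the same functional over $\mbf{V}^{r'_p}\!(\Omega)$; enlarging the feasible set only raises the supremum, so $\mathcal{C}_{p_1}(k,\mu)\le \mathcal{C}_{p_2}(k,\mu)$.

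Combining the two steps, let $(\hat k_2,\hat\mu_2)$ attain the minimum in the (sp-IMD) problem for $p_2$. By the inclusion it is feasible for $p_1$, hence $\hat{\mathcal{C}}_{sp_1}\le \mathcal{C}_{p_1}(\hat k_2,\hat\mu_2)\le \mathcal{C}_{p_2}(\hat k_2,\hat\mu_2)=\hat{\mathcal{C}}_{sp_2}$, which is the claim. The only point requiring care is exactly this $p$-dependence of the ambient spaces entering the definition of $\mathcal{C}$; if one instead regards $\mathcal{C}(k,\mu)$ as a single $p$-independent number (natural for a physical compliance, and justifiable by a truncation/density argument in \eqref{eq:comp}), the second step is vacuous and the result is immediate from the feasible-set inclusion alone. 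The analogous assertion for (vp-IMD), stated after the proposition, would be obtained in the same way after rescaling the vectorial cost so that the bound again reads as a $p$-mean.
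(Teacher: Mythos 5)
Your proof is correct and follows essentially the same route as the paper's: the feasible-set inclusion $M_{sp_2}(\Omega)\subseteq M_{sp_1}(\Omega)$ obtained from the monotonicity of $p$-means \eqref{eq:cost_comp_II}, followed by testing the $p_1$-problem with the minimizer for $p_2$. Your additional second step, addressing the implicit $p$-dependence of the spaces $\Sigma_f(\Omega)$ and $\mbf{V}^{r'}\!(\Omega)$ entering the definition of $\mathcal{C}$, is a legitimate point of care that the paper's proof silently glosses over, and your resolution (nesting of the Sobolev spaces on the bounded domain, or a density argument making $\mathcal{C}$ effectively $p$-independent) is sound.
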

\begin{proof}
    For $i=1,2$ take the minimizers $(\hat{k}_i,\hat{\mu}_i)$ for the (sp-IMD) problem for the exponents $p=p_i$, that is $\hat{\mathcal{C}}_{sp_i} = \mathcal{C}(\hat{k}_i,\hat{\mu}_i)$, and $(\hat{k}_i,\hat{\mu}_i) \in{M_{sp_i}}(\Omega)$. After plugging $k = \hat{k}_2$, $\mu=\hat\mu_2$ into  \eqref{eq:cost_comp_II} with $p=p_1$, $q=p_2$, it turns out that also $(\hat{k}_2,\hat{\mu}_2) \in{M_{sp_1}}(\Omega)$. Indeed, the right hand side of the inequality is not larger than $E_0$ due to  $(\hat{k}_2,\hat{\mu}_2) \in{M_{sp_2}}(\Omega)$. By the admissibility of $(\hat{k}_2,\hat{\mu}_2)$ for $p=p_1$, there must hold $\hat{\mathcal{C}}_{sp_1} \leq \mathcal{C}(\hat{k}_2,\hat{\mu}_2) = \hat{\mathcal{C}}_{sp_2}$.
\end{proof}

\subsubsection{Monotonicity for the vp-IMD method}
This variant necessitates using \eqref{eq:Holder_mu} twice. If we take the finite space $X=\{1,2\}$ and then set $\lambda(\{1\}) = \frac{1}{1+ \beta^2 }$, $\lambda(\{2\}) =\frac{\beta^2}{1+\beta^2}$ and $f(1) = d k$, $f(2) = 2\lambda$, we get
\begin{equation}
	\label{eq:holder}
	\rho_p(dk,2\mu) \leq \rho_q(dk,2\mu),
\end{equation}
where the norm  $\rho_p$ on $\R^2$ is defined as
\begin{equation}
	\rho_p(dk,2\mu) :=	\left[ \frac{1}{1+ \beta^2 } {{{\left( {dk} \right)}^{p}} + \frac{\beta^2}{1+\beta^2}{{\left( {2\mu } \right)}^{p}}} \right]^{1/p}.
\end{equation}
Now let $k,\mu$ be functions on $\Omega$ rather than numbers. The chain of inequality follows:
\begin{align*}
	&{\left( {\frac{1}{(1+\beta^2){\left| \Omega  \right|}}\int_\Omega  {\left[ {{{\left( {dk} \right)}^{p}} +  \beta^2{{\left( {2\mu } \right)}^{p}}} \right]dx} } \right)^{\frac{1}{{p}}}} \\ 
	& \qquad = 		{\left( {\frac{1}{{\left| \Omega  \right|}}\int_\Omega  {\big(\rho_p(dk,2\mu)\big)^p dx} } \right)^{\frac{1}{{p}}}}
	\leq 	{\left( {\frac{1}{{\left| \Omega  \right|}}\int_\Omega  {\big(\rho_q(dk,2\mu)\big)^p  dx} } \right)^{\frac{1}{{p}}}} \leq 	{\left( {\frac{1}{{\left| \Omega  \right|}}\int_\Omega  {\big(\rho_q(dk,2\mu)\big)^q  dx} } \right)^{\frac{1}{{q}}}} \\
	& \qquad ={\left( {\frac{1}{(1+\beta^2){\left| \Omega  \right|}}\int_\Omega  {\left[ {{{\left( {dk} \right)}^{q}} +  \beta^2{{\left( {2\mu } \right)}^{q}}} \right]dx} } \right)^{\frac{1}{{q}}}}.
\end{align*}
Above, the first inequality is simply \eqref{eq:holder}.
The second inequality is \eqref{eq:Holder} for $\mbf{f} = (dk,2\mu)$ and for $\norm{\argu} = \norm{\argu}_q$. What follows is monotonicity not of the minimal compliance itself but of the minimal compliance scaled by the factor $1/(1+\beta^2)^{1/p}$. Indeed, with the help of the fact that
$\mathcal{C}(tk,t\mu) = \frac{1}{t}\,\mathcal{C}(k,\mu)$ for $t>0$, one can now reproduce the arguments of the proof of Proposition \ref{prop:mono_sp} to deduce the following:
\begin{proposition}
    \label{prop:mono_vp}
    For any two exponents $1< p_1 \leq  p_2 < \infty $ the minimal compliances in the (vp-IMD) problem satisfy the inequality
    \begin{equation}
        \frac{\hat{\mathcal{C}}_{vp_1}}{(1+\beta^2)^{1/p_1}}   \leq \frac{\hat{\mathcal{C}}_{vp_2}}{(1+\beta^2)^{1/p_2}}.
    \end{equation}
\end{proposition}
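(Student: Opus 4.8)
The plan is to follow the proof of Proposition~\ref{prop:mono_sp} almost verbatim; the one genuine difference is that the weighted $\ell^p$-norm $\rho_p$ on $\R^2$ truly depends on $p$, so a pair of moduli admissible for the exponent $p_2$ need not be admissible for $p_1$ until it is rescaled by an explicit $p$-dependent constant. To organize this, abbreviate by
\[
   A_p(k,\mu) := \bigg( \frac{1}{\abs{\Omega}}\int_\Omega \big[ (dk)^p + \beta^2 (2\mu)^p \big]\,dx \bigg)^{1/p}
\]
the left-hand side of the cost condition \eqref{eq:vpcost_E0}, so that $(k,\mu)\in M_{vp}(\Omega)$ (with $\Lambda=\Lambda_p$) is precisely the inequality $A_p(k,\mu)\le E_0$, and observe that $A_p$ is positively $1$-homogeneous. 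The chain of inequalities displayed just before the statement, read off for the vector $(dk,2\mu)$ and exponents $p\le q$, is exactly
\[
   \frac{A_p(k,\mu)}{(1+\beta^2)^{1/p}} \;\le\; \frac{A_q(k,\mu)}{(1+\beta^2)^{1/q}},
   \qquad\text{equivalently}\qquad
   A_p(k,\mu) \le (1+\beta^2)^{1/p-1/q}\, A_q(k,\mu).
\]

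First I would invoke the well-posedness established earlier (existence of minimizers for every exponent in $(1,\infty)$) to pick $(\hat k_2,\hat\mu_2)\in M_{vp_2}(\Omega)$ with $\hat{\mathcal{C}}_{vp_2}=\mathcal{C}(\hat k_2,\hat\mu_2)$; thus $A_{p_2}(\hat k_2,\hat\mu_2)\le E_0$. Then I would set $t:=(1+\beta^2)^{1/p_2-1/p_1}$ (note $t\le 1$ since $p_1\le p_2$) and test with the rescaled pair $(t\hat k_2,t\hat\mu_2)$, which is still non-negative. Using $1$-homogeneity of $A_{p_1}$ together with the second form of the inequality above (with $p=p_1$, $q=p_2$),
\[
   A_{p_1}(t\hat k_2,t\hat\mu_2) = t\,A_{p_1}(\hat k_2,\hat\mu_2) \le t\,(1+\beta^2)^{1/p_1-1/p_2}\,A_{p_2}(\hat k_2,\hat\mu_2) \le t\,(1+\beta^2)^{1/p_1-1/p_2}\,E_0 = E_0,
\]
so $(t\hat k_2,t\hat\mu_2)\in M_{vp_1}(\Omega)$ is an admissible competitor in \eqref{eq:vp-IMD} for the exponent $p_1$.

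Finally I would use the scaling $\mathcal{C}(tk,t\mu)=\tfrac{1}{t}\,\mathcal{C}(k,\mu)$, read off from \eqref{eq:comp}, to conclude
\[
   \hat{\mathcal{C}}_{vp_1} \le \mathcal{C}(t\hat k_2,t\hat\mu_2) = \tfrac{1}{t}\,\mathcal{C}(\hat k_2,\hat\mu_2) = (1+\beta^2)^{1/p_1-1/p_2}\,\hat{\mathcal{C}}_{vp_2},
\]
and dividing both sides by $(1+\beta^2)^{1/p_1}$ yields precisely $\hat{\mathcal{C}}_{vp_1}/(1+\beta^2)^{1/p_1}\le \hat{\mathcal{C}}_{vp_2}/(1+\beta^2)^{1/p_2}$.

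I do not expect a serious obstacle here: the only point requiring care is keeping the exponent of $(1+\beta^2)$ in the scaling factor $t$ consistent throughout — this is exactly the reason the statement is phrased for the \emph{rescaled} compliance rather than the compliance itself — and no analytic input beyond the Hölder chain already established in the text is needed.
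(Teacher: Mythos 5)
Your proof is correct and follows essentially the same route as the paper: the Hölder chain gives monotonicity of the normalized cost $A_p/(1+\beta^2)^{1/p}$, the $p_2$-optimal moduli are rescaled by $t=(1+\beta^2)^{1/p_2-1/p_1}$ to become admissible for $p_1$, and the homogeneity $\mathcal{C}(tk,t\mu)=\tfrac{1}{t}\mathcal{C}(k,\mu)$ converts this into the stated inequality. The paper only sketches this step ("reproduce the arguments of the proof of Proposition \ref{prop:mono_sp}"), and your write-up supplies the details, with the exponent bookkeeping for $(1+\beta^2)$ handled correctly.
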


\subsection{Recovering the original IMD method as $p \to 1$}
\label{ssec:IMD_recovery}

As mentioned in the Introduction, the forgoing theory cannot be directly extended to $p=1$, which would correspond to the original Isotropic Material Design problem (IMD) initiated in the work \cite{czarnecki2015a}, see also \cite{czarnecki2015b,bolbotowski2021,bolbotowski2022b}. For $p=1$ the set $M_{vp}(\Omega)$ is no longer weakly compact as a subset of the space $L^1(\Omega;\R^2)$. The same goes for  $M_{sp}(\Omega)$, note that the two sets are equal for $p=1$. According to the work \cite{bolbotowski2022b} the IMD problem calls for a measure-theoretic approach. We recapitulate the formulation and main results below, the reader is referred to \cite{czarnecki2015a} and \cite{bolbotowski2022a} for details.

The set of admissible moduli for $p = 1$ consists of pairs of positive measure whose total mass is controlled:
\begin{equation}
	M_1(\Omega ) := \left\{ (k,\mu) \in (\Mes_+(\Ob))^2 \ \left\vert \ \int_\Ob d\,k(dx) + \int_\Ob \beta^2 2\mu(dx) \leq \Lambda_1 \right. \right\},
\end{equation}
where $\Lambda_1 =  E_0 \abs{\Omega}$.
Note that $M_1(\Omega)$ contains elements of $M_{vp}(\Omega)$ for $p=1$ in the following sense. To every function $(k_1,\mu_1) \in L^1(\Omega;\R_+)^2$ one can assign the measures $k=k_1 dx$, $\mu = \mu_1 dx$ where $dx$ stands for the Lebesgue measure on $\Rd$. We say that such measures $k,\mu$ are absolutely continuous. The set $M_1(\Omega)$, however, is much bigger. In particular, it includes lower dimensional geometrical measures that give non-zero measure to lower-dimensional sets such as curves ($d=2,3$) or surfaces ($d=3$). Such measures correspond to elastic stiffeners in the form of bars and shells.

The Isotropic Material Design problem is similar to (vp-IMD) or (sp-IMD):
\begin{equation}
    \label{eq:IMD}
    (\mathrm{\text{IMD}}) \qquad \qquad\qquad  \qquad\qquad  \hat{\mathcal{C}}_1 =  \mathop {\min }\limits_{(k,\mu ) \in M_1\left( \Omega  \right)} \mathcal{C}(k,\mu ). \qquad \qquad  \qquad\qquad  \qquad\qquad 
\end{equation}
The definition \eqref{eq:comp} of the compliance functional $\mathcal{C}(k,\mu)$ requires adjustment to measures, see Eq. (3.9) in \cite{bolbotowski2022b}. In fact, one can use the dual definition \eqref{eq:comp_disp} above directly, provided that the supremum is taken with respect to smooth functions. 

The (IMD) problem also admits its stress-based formulation that is a natural extension to $p=1$ of the problems \eqref{eq:primal_vp} or \eqref{eq:primal_sp} introduced in this paper:
\begin{equation}
\label{eq:Z1}
	{Z}_1 :=  \mathop {\min }\limits_{{\boldsymbol{\tau}} \in \Sigma\Mes_{{f}} \left( \Omega  \right)}  \int_\Ob  {{{\Big( {{{\left| {{\rm{Tr }}\,{{\boldsymbol{\tau}(dx)}}} \right|}^{}} + \beta {{\left\| {{\rm{dev}}\,{{\boldsymbol{\tau}(dx)} }} \right\|}^{}}} \Big)}}}.
\end{equation}
Here,
$\Sigma\Mes_f(\Omega)$ is the set of tensor-valued measures $\boldsymbol\tau \in \Mes(\Ob;E_s^2)$ such that $\int_\Ob  {\hat{\boldsymbol{\tau}}(dx) \cdot \,} {\boldsymbol{\eps}}\left( {\bf{v}} \right) = f(\mbf{v})$ for any smooth function $\mbf{v}$ that is zero on $\Gamma_0$. The integral above is understood in the sense of convex positively one-homogeneous functionals on measures \cite{goffman1964}. Optimal solutions of (IMD) and of \eqref{eq:Z1} are linked by the formulae
\begin{equation}
	\label{eq:opt_kmu_tau_IMD}
	d{\hat{k} } = \frac{\Lambda_1 }{{{{ {{Z_1}}}}}} \left| {{\rm{Tr }}\,{\hat{{\boldsymbol{\tau}}} }} \right|
    ,\qquad 2 \hat{\mu  } = \frac{\Lambda_1 }{{{{ {{Z_1}}}}}} \frac{1}{\beta} \left\| {{\rm{dev}}\,{\hat{{\boldsymbol{\tau}}} }} \right\|.
\end{equation} 
Note that these equations ought to be understood in the sense of measures. Up to this point, we can see that the above formulae can be deduced from either of Theorems \ref{thm:stress_vp} or \ref{thm:stress_sp} simply by putting $p=1$.

Guessing the displacement-based formulation of (IMD) is less straightforward. First, it requires introduction of
$\mbf{V}^{\infty}(\Omega)$, the space of vectorial functions $\mbf{v}$ that belong to $W^{1,q}(\Omega;\Rd)$ for any $q \in [1,\infty)$ and which are zero on $\Gamma_0$. It is important to emphasize that we cannot imply that $\mbf{v} \in W^{1,\infty}(\Omega;\Rd)$, i.e. that $\mbf{v}$ is Lipschitz continuous. This is related to the lack of Korn's inequality for $L^\infty$-norms, see \cite{demengel1985}. The dual displacement-based formulation involves a locking condition:
\begin{equation}
    \label{eq:dual_Z1}
	 Z_1 =  \max_{\mbf{v} \in \mbf{V}^{\infty}(\Omega) } \left\{ {f}(\mbf{v}) \ \bigg\vert \ \max\big\{ \abs{\mathrm{Tr}\,\boldsymbol{\eps}(\mbf{v})} , \tfrac{1}{\beta}\norm{\mathrm{dev} \, \boldsymbol{\eps}(\mbf{v})} \big\} \leq 1 \ \ \text{a.e. in $\Omega$} \right\}.
\end{equation}

All three variational problems: \eqref{eq:IMD}, \eqref{eq:Z1}, \eqref{eq:dual_Z1} have solutions, and in general they are non-unique. One comes across a difficulty when writing down the counterpart of the systems \eqref{eq:non-linear_system_vp} or \eqref{eq:non-linear_system_sp}. Formally, it reads:
\begin{equation}
	\label{eq:non-linear_system_IMD}
	\begin{cases}
		(1) & \int_\Ob  {\hat{\boldsymbol{\tau}}(dx) \cdot \,} {\boldsymbol{\eps}}\left( {\bf{v}} \right)  = {f}(\mbf{v}) \qquad \forall \,  \mbf{v} \in C^1(\Ob;\Rd), \ \ \mbf{v} = 0 \text{ on } \Gamma_0,\\
		(2) & \hat{\mbf{v}} \in \mbf{V}^{\infty}(\Omega),\\
		(3.a) & " \ \mathrm{Tr}\,\boldsymbol{\eps}(\hat{\mbf{v}}) \in  \mathcal{N} (\mathrm{Tr}\,\hat{\boldsymbol{\tau}}) \quad \text{in $\Ob$} \ ",\\
		(3.b) &  " \ \mathrm{dev}\,\boldsymbol{\eps}(\hat{\mbf{v}}) \in  \beta\, \mathcal{N}(\mathrm{dev}\,\hat{\boldsymbol{\tau}}) \quad \text{in $\Ob$} \ ".
	\end{cases}
\end{equation}
The stress-strain relations (3.a), (3.b) are  meaningful in a rigorous manner only if $\hat{\mbf{v}}$ is of class $C^1$, hence the quotation marks. First, since $\hat{\boldsymbol{\tau}}$ is a measure, let us explain that the quotients $\frac{\mathrm{Tr}\,\hat{\boldsymbol{\tau}}}{\abs{\mathrm{Tr}\,\hat{\boldsymbol{\tau}}}}$, $\frac{\mathrm{dev}\,\hat{\boldsymbol{\tau}}}{\norm{\mathrm{dev}\,\hat{\boldsymbol{\tau}}}}$ entering the definition $\mathcal{N} (\mathrm{Tr}\,\hat{\boldsymbol{\tau}})$, $\mathcal{N}(\mathrm{dev}\,\hat{\boldsymbol{\tau}})$ (see \eqref{eq:N}) should be understood in the sense of Radon-Nikodym derivative, which are well defined almost everywhere with respect to $\abs{\mathrm{Tr}\,\hat{\boldsymbol{\tau}}}$ and $\norm{\mathrm{dev}\,\hat{\boldsymbol{\tau}}}$, respectively. Accordingly, conditions (3.a), (3.b) should be enforced a.e. with respect to those measures. Meanwhile, since $\mbf{v}$ is  merely in the Sobolev space, the tensor $\boldsymbol{\eps}(\hat{\mbf{v}})$ is \textit{a priori} defined a.e. with respect to the Lebesgue measure. This is not enough when $\abs{\mathrm{Tr}\,\hat{\boldsymbol{\tau}}}$, $\norm{\mathrm{dev}\,\hat{\boldsymbol{\tau}}}$ charge lower-dimensional sets, for instance.

The foregoing issues can be addressed by employing the \textit{measure-tangential calculus} first proposed in \cite{bouchitte1997}. For the positive measure $\lambda = \abs{\mathrm{Tr}\,\hat{\boldsymbol{\tau}}}+\norm{\mathrm{dev}\,\hat{\boldsymbol{\tau}}}$ and any ${\mbf{v}}$ feasible in \eqref{eq:dual_Z1} one can define a $\lambda$-tangential strain $\boldsymbol{\eps}_\lambda(\mbf{v})$ that is an element of $L^\infty_\lambda(\Ob;E_s^2)$. In particular, $\boldsymbol{\eps}_\lambda(\mbf{v})$ is meaningful $\lambda$-a.e., which paves the way to the rigorous form of the inclusions (3.a), (3.b).

\smallskip

The short summary of the original IMD method demonstrates that it touches a number of subtle mathematical issues due to the inherent measure-theoretic setting. One of the premises for the current paper is to replace the problem (IMD) with the more accessible problems (vp-IMD) or (sp-IMD) with $p =1+\eps$ for small $\eps>0$. This way, $\eps$ serves as a regularizing parameter for the problem (IMD). It facilitates not only easier analysis but also the numerical methods: discretizing the space of measures is otherwise very difficult. Similar regularizations have been already studied in the literature, see \cite{golay2001,barrett2007}, for example. Therein, however, the regularization was introduced directly at the level of mutually dual problem of the type \eqref{eq:Z1}, \eqref{eq:dual_Z1}. One of the novelties of this work lies in recovering a similar type of regularization through modification of the cost in the original optimal design problem.

In order to justify treating solutions of (vp-IMD) or (sp-IMD) problem as an approximation of solutions to the problem (IMD), one would have to show convergence with $p \to 1$ (for instance, in the sense of $\Gamma$-convergence, see \cite{braides2002}). An analysis of this type has been successful in \cite{barrett2007}. Being rather technical, this issue falls out of the scope of this work. Thus, below we write down the statement as a conjecture. Note that the right notion of convergence in the space of measure is the weak-* convergence: we write that $k_n \,\weakstar\, k$ in $\Mes(\Ob;\R)$ if $\int_\Ob \varphi \,k_n(dx) \to \int_\Ob \varphi \,k(dx)$ for every continuous function $\varphi :\Ob \to \R$.

\begin{conjecture}
    \label{conj:p=1}
    Take a decreasing sequence $\{p_n\}_{n=1}^\infty$ such that $1 < p_n <\infty$ and $p_n \to 1$. Let $(\hat{k}_n,\hat{\mu}_n)$, $\hat{\boldsymbol{\tau}}_n$, and $\hat{\mbf{v}}_n$ be the unique solutions of, respectively, the (vp-IMD) problem, the stress-based problem \eqref{eq:primal_vp}, and the dual displacement-based problem \eqref{eq:dual_vp}.
    
    Then, there exist measures $(\hat{k},\hat{\mu}) \in (\Mes_+(\Ob))^2$, $\hat{\boldsymbol{\tau}} \in \Mes(\Ob;E_s^2)$ and a function $\hat{\mbf{v}} \in \mbf{V}^\infty(\Omega)$ such that the following assertions hold true:
    \begin{enumerate} [label={(\roman*)}]
        \item $(\hat{k},\hat{\mu})$, $\hat{\boldsymbol{\tau}}$, and $\hat{\mbf{v}}$ solve the problems \eqref{eq:IMD}, \eqref{eq:Z1}, and \eqref{eq:dual_Z1}, respectively;
        \item up to selecting a subsequence, we have the convergences:
        \begin{align}
            (\hat{k}_n dx,\hat{\mu}_n dx) \ & \, \weakstar \, \ (\hat{k},\hat{\mu}) \ \ \text{ in $(\Mes_+(\Ob))^2$},\\
            \hat{\boldsymbol{\tau}}_n  dx\ & \, \weakstar \, \ \hat{\boldsymbol{\tau}} \ \ \text{ in $\Mes(\Ob;E_s^2)$}, \\
            \hat{\mbf{v}}_n \ &\to \ \hat{\mbf{v}}  \ \ \text{uniformly on $\Ob$}, \\
            \hat{\mathcal{C}}_{vp_n} \ & \to \ \hat{\mathcal{C}}_{1},
        \end{align}
        where by uniform convergence we understand that $\sup_{x\in \Ob} \norm{\hat{\mbf{v}}_n(x) - \hat{\mbf{v}}(x)} \to 0$.
    \end{enumerate}
    Analogous conjecture can be postulated when $(\hat{k}_n,\hat{\mu}_n)$, $\hat{\boldsymbol{\tau}}_n$, and $\hat{\mbf{v}}_n$ solve (sp-IMD) and the related stress-based and displacement-based problems \eqref{eq:primal_sp} and \eqref{eq:dual_sp}. 

\end{conjecture}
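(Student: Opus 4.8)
\noindent The natural route toward Conjecture \ref{conj:p=1} is a variational‑convergence argument (in the spirit of $\Gamma$‑convergence, cf.\ \cite{braides2002,barrett2007}) that uses all three formulations simultaneously: the stress problems \eqref{eq:primal_vp}, their displacement‑based duals \eqref{eq:dual_vp}, and the cost condition. Throughout write $r_n=\frac{2p_n}{p_n+1}\downarrow 1$, $r'_n=\frac{2p_n}{p_n-1}\uparrow\infty$, and recall $\Lambda_{p_n}=\abs{\Omega}^{1/p_n}E_0\to\Lambda_1$. The plan is: (Step 1) extract limits $\hat{\boldsymbol{\tau}}$, $\hat{\mbf{v}}$, $(\hat k,\hat\mu)$ in the appropriate topologies; (Step 2) show they are admissible and optimal for \eqref{eq:Z1}, \eqref{eq:dual_Z1}, \eqref{eq:IMD}, and that $\hat{\mathcal{C}}_{vp_n}\to\hat{\mathcal{C}}_1$; the sp‑IMD case being identical up to replacing the norm \eqref{eq:norm_vp} by \eqref{eq:norm_sp} and bookkeeping for non‑uniqueness.

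\emph{Step 1 (compactness).} I would fix once and for all a reference field $\boldsymbol{\tau}_0\in\Sigma_f(\Omega)$ for the exponent $p_1$; since $r_n<r_1$ and $\Omega$ is bounded, $\boldsymbol{\tau}_0\in L^{r_n}(\Omega;E_s^2)$ equilibrates $f$ over the smaller space $\mbf{V}^{r'_n}\!(\Omega)$, hence testing \eqref{eq:primal_vp} gives $Z_{r_n}\le\widetilde{F}_{vp}(\boldsymbol{\tau}_0)\le C$ uniformly. Plugging this into $\widetilde{F}_{vp}(\hat{\boldsymbol{\tau}}_n)=Z_{r_n}$ and using Hölder's inequality (with exponent $1-1/r_n\to0$) yields $\norm{\hat{\boldsymbol{\tau}}_n}_{L^1}\le C$, so the measures $\hat{\boldsymbol{\tau}}_n\,dx$ are bounded in $\Mes(\Ob;E_s^2)$ and, up to a subsequence, $\hat{\boldsymbol{\tau}}_n\,dx\weakstar\hat{\boldsymbol{\tau}}$. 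For displacements, the duality identity $Z_{r_n}=f(\hat{\mbf{v}}_n)-\frac{1}{r'_n}\int_\Omega\opnorm{\boldsymbol{\eps}(\hat{\mbf{v}}_n)}_*^{\,r'_n}\dx$ together with $Z_{r_n}\ge0$, the norm equivalence of $\opnorm{\argu}_*$ with the Euclidean norm on $E_s^2$, and the uniform bound on $\norm{f}_{(W^{1,r'_n}(\Omega;\Rd))^*}$ (controlled via Hölder by $\norm{f}_{(W^{1,r'_1})^*}$) gives an estimate of the form $\norm{\boldsymbol{\eps}(\hat{\mbf{v}}_n)}_{L^{r'_n}}^{\,r'_n-1}\le c\,r'_n\,C_{\mathrm{Korn}}(r'_n)\,c_0^{-r'_n}$, where $C_{\mathrm{Korn}}(q)$ is the $W^{1,q}$‑Korn constant of $\Omega$. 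This is the one external quantitative input: since $C_{\mathrm{Korn}}(q)$ grows at most polynomially as $q\to\infty$ (it is governed by the $L^q$‑operator norm of a Calderón–Zygmund transform), the right‑hand side is sub‑exponential in $r'_n$, so $\limsup_n\norm{\boldsymbol{\eps}(\hat{\mbf{v}}_n)}_{L^{r'_n}}<\infty$. By Hölder this bounds $\boldsymbol{\eps}(\hat{\mbf{v}}_n)$ in $L^q(\Omega;E_s^2)$ for every fixed $q<\infty$; by Korn (fixed constant now), reflexivity, and a diagonal extraction, $\hat{\mbf{v}}_n\rightharpoonup\hat{\mbf{v}}$ in $W^{1,q}(\Omega;\Rd)$ for all $q$, and by Morrey's embedding $\hat{\mbf{v}}_n\to\hat{\mbf{v}}$ uniformly on $\Ob$ with $\hat{\mbf{v}}\in\mbf{V}^\infty(\Omega)$. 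Finally, the cost bound combined with the optimality formulae \eqref{eq:opt_kmu_tau_vp} yields $\int_\Omega(d\hat k_n+\beta^22\hat\mu_n)\dx\le E_0\abs{\Omega}\max\{1,\beta^{2-2/p_n}\}2^{1-1/p_n}$, whose prefactor tends to $1$; hence $\hat k_n\,dx,\hat\mu_n\,dx$ are bounded in $\Mes_+(\Ob)$, converge weakly‑$*$ (along a further subsequence) to positive measures $\hat k,\hat\mu$, and in the limit $\int_\Ob d\,\hat k(dx)+\int_\Ob\beta^22\hat\mu(dx)\le\Lambda_1$, i.e.\ $(\hat k,\hat\mu)\in M_1(\Omega)$.

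\emph{Step 2 (optimality of the limits).} The equilibrium constraint passes to the limit against smooth test fields by weak‑$*$ convergence of $\hat{\boldsymbol{\tau}}_n\,dx$, so $\hat{\boldsymbol{\tau}}\in\Sigma\Mes_f(\Omega)$; likewise, passing $\norm{\opnorm{\boldsymbol{\eps}(\hat{\mbf{v}}_n)}_*}_{L^{r'_n}}$‑type bounds (whose value tends to $1$) through weak lower semicontinuity of $L^q$‑norms and then $q\to\infty$ gives $\max\{\abs{\mathrm{Tr}\,\boldsymbol{\eps}(\hat{\mbf{v}})},\tfrac1\beta\norm{\mathrm{dev}\,\boldsymbol{\eps}(\hat{\mbf{v}})}\}\le1$ a.e., so $\hat{\mbf{v}}$ is admissible in \eqref{eq:dual_Z1}. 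On the one hand $Z_{r_n}\le f(\hat{\mbf{v}}_n)\to f(\hat{\mbf{v}})\le Z_1$ (weak convergence of $\hat{\mbf{v}}_n$ in the fixed space $W^{1,r'_1}$, plus admissibility); on the other, testing \eqref{eq:dual_vp} with a fixed maximizer $\mbf{w}\in\mbf{V}^\infty(\Omega)$ of \eqref{eq:dual_Z1} gives $Z_{r_n}\ge f(\mbf{w})-\abs{\Omega}/r'_n\to Z_1$. Hence $Z_{r_n}\to Z_1$ and $f(\hat{\mbf{v}})=Z_1$, so $\hat{\mbf{v}}$ solves \eqref{eq:dual_Z1}; and $\hat{\mathcal{C}}_{vp_n}=\Lambda_{p_n}^{-1}(r_nZ_{r_n})^{2/r_n}\to\Lambda_1^{-1}Z_1^2=\hat{\mathcal{C}}_1$. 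For the stress side, I would invoke lower semicontinuity of the convex one‑homogeneous functional on measures (Goffman–Serrin / Reshetnyak, as in \cite{goffman1964,bolbotowski2022b}) together with Jensen's inequality to pass from the power $r_n$ to power $1$, obtaining $\liminf_nZ_{r_n}\ge\int_\Ob(\abs{\mathrm{Tr}\,\hat{\boldsymbol{\tau}}(dx)}+\beta\norm{\mathrm{dev}\,\hat{\boldsymbol{\tau}}(dx)})\ge Z_1$; since $Z_{r_n}\to Z_1$ this is an equality, so $\hat{\boldsymbol{\tau}}$ solves \eqref{eq:Z1}. Lastly, the compliance $\mathcal{C}$ is weakly‑$*$ lower semicontinuous on $(\Mes_+(\Ob))^2$ (read the dual expression \eqref{eq:comp_disp} as a supremum over smooth fields, as in \cite{bolbotowski2022b}) and it agrees on absolutely continuous pairs with \eqref{eq:comp}; thus $\mathcal{C}(\hat k,\hat\mu)\le\liminf_n\mathcal{C}(\hat k_n\,dx,\hat\mu_n\,dx)=\liminf_n\hat{\mathcal{C}}_{vp_n}=\hat{\mathcal{C}}_1$, and since $(\hat k,\hat\mu)\in M_1(\Omega)$ equality holds, i.e.\ $(\hat k,\hat\mu)$ solves \eqref{eq:IMD}. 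All required convergences in assertion (ii) are now in hand along a common subsequence.

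\emph{Main obstacle.} The genuinely delicate ingredients are twofold. First, the quantitative control of the $W^{1,q}$‑Korn constant as $q\to\infty$: this is what allows the a priori estimate for $\hat{\mbf{v}}_n$ to survive the degeneration $r'_n\to\infty$, and it is precisely the place where the well‑known failure of Korn's inequality for $L^\infty$‑norms (cf.\ \cite{demengel1985}) is felt. Second, the measure‑theoretic bookkeeping: making the limiting constitutive relations \eqref{eq:non-linear_system_IMD}(3.a)–(3.b) rigorous requires the tangential‑strain calculus of \cite{bouchitte1997}, and the lower‑semicontinuity and representation facts used above rest on the framework of \cite{bolbotowski2022b}. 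A more robust organization would be to first establish the $\Gamma$‑convergence — with respect to weak‑$*$ convergence of moduli and stresses and uniform convergence of displacements — of the penalized stress/displacement pair to its $p=1$ counterpart, and then read off convergence of minimizers; this is the scheme successfully carried out in a related setting in \cite{barrett2007}, and adapting it, rather than the finitary argument sketched here, is likely the cleanest path. For these technical reasons the statement is left as a conjecture.
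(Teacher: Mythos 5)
The first thing to say is that the paper does not prove this statement: it is stated explicitly as a conjecture, with the authors writing that a convergence analysis of this type "falls out of the scope of this work" and pointing to \cite{barrett2007} for a related successful argument. So there is no proof in the paper to compare yours against, and your submission is rightly framed as a proof \emph{strategy} rather than a proof. Judged on its own terms, the strategy is the natural one and most of the individual steps are sound: the uniform bound $Z_{r_n}\le \widetilde{F}_{vp}(\boldsymbol{\tau}_0)$ from a fixed competitor, the $L^1$ bound on $\hat{\boldsymbol{\tau}}_n$ via H\"{o}lder, the two-sided squeeze $f(\mbf{w})-O(1/r'_n)\le Z_{r_n}\le f(\hat{\mbf{v}}_n)$ giving $Z_{r_n}\to Z_1$, and the consequent convergence $\hat{\mathcal{C}}_{vp_n}=\Lambda_{p_n}^{-1}(r_nZ_{r_n})^{2/r_n}\to Z_1^2/\Lambda_1=\hat{\mathcal{C}}_1$ are all correct in outline, and the trick of taking the $(r'_n-1)$-th root of the a priori estimate to tame the exponential factor $c_0^{-r'_n}$ is the right way to survive the degeneration $r'_n\to\infty$.

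That said, what you have is a sketch with genuine unfilled gaps, and you identify the two most serious ones yourself. First, the entire displacement compactness rests on the unproven quantitative claim that the $W^{1,q}$-Korn constant of $\Omega$ (with partial clamping on $\Gamma_0$, on a merely Lipschitz domain) grows sub-exponentially in $q$; given that Korn fails outright in $L^\infty$ (cf.\ \cite{demengel1985}), this is exactly the point where the argument could die, and it needs a precise reference or proof. Second, the lower-semicontinuity steps are stated for $n$-independent functionals but applied to $n$-dependent ones: the integrand $\frac{1}{r_n}\opnorm{\argu}^{r_n}$ changes with $n$ (so Reshetnyak/Goffman--Serrin must be combined with a uniform comparison to the $r=1$ integrand, not just invoked), the dual norm $\opnorm{\argu}_*$ changes with $n$ when you pass the locking constraint to the limit, and the identification of $\lim_n\mathcal{C}(\hat{k}_n,\hat{\mu}_n)$ with the measure-theoretic compliance of \cite{bolbotowski2022b} requires showing that the supremum in \eqref{eq:comp_disp} over $\mbf{V}^{r'_n}\!(\Omega)$ agrees with the supremum over smooth fields. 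None of these is obviously false, but each is precisely the kind of technicality that led the authors to leave the statement open, and your conclusion that the argument should be reorganized as a genuine $\Gamma$-convergence statement before extracting convergence of minimizers is the right assessment. As written, the proposal does not close the conjecture; it is a credible roadmap consistent with the route the paper itself gestures at.
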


\subsection{The limit case of $ p = \infty$}
\label{ssec:p=infty}
After studying the case of the exponent $p=1$, it is now natural to investigate the other limit scenario: when $p=\infty$. Since the unit ball in $L^\infty$ is weakly-* compact, there is no need to go beyond the mathematical framework that served its purpose herein for $p \in (1,\infty)$. In fact, for both methods vp-IMD and sp-IMD it suffices to adapt the above results by taking the limit $p \to \infty$ in the formulations and formulae. In particular, observe that we should operate with $r = \lim_{p\to\infty} \frac{2p}{p+1}=2$, and $r'=2$ also.

Below, we shall present the most crucial conclusions for the two methods in this limit case, starting from the sp-IMD method since, as it will unravel, the (vp-IMD) method becomes trivial for $p = \infty$.

\subsubsection{The sp-IMD method for $p=\infty$}
\label{sssec:p_infty_spIMD}
The set of admissible moduli for $p = \infty$ is as follows:
\begin{equation}
	M_{s\infty}(\Omega ) := \Big\{ (k,\mu) \in (L^\infty(\Omega;\R_+))^2 \ \Big\vert \ \norm{dk+\beta^2 
 2\mu}_\infty\leq \Lambda_\infty  \Big\},
\end{equation}
where $\Lambda_\infty = \lim_{p \to \infty} \abs{\Omega}^{1/p} E_0 = E_0$, and $\norm{\argu}_\infty$ is the $L^\infty$-norm being the essential supremum on $\O$.
This is a closed subset of a closed ball in the space $L^\infty(\Omega;\R^2)$ and, thus, it is a weakly-* compact set. Therefore, almost identical arguments to those put forth in this paper for $p\in (1,\infty)$ allow to conclude that the (sp-IMD) problem for $p=\infty$ is also well posed. More precisely, there exists a solution $(\hat{k},\hat{\mu})$, yet it is possible that it is non-unique.

Let us remark that the compliance functional is decreasing, in the sense that $\mathcal{C}(k_1,\mu_1) \geq \mathcal{C}(k_2,\mu_2)$ whenever $k_1 \leq k_2$, $\mu_1 \leq \mu_2$ pointwisely. This fact follows directly from the definition \eqref{eq:comp}. It reduces the (sp-IMD) problem to looking merely for the partitioning between the bulk and shear moduli at each point, which together give $dk+\beta^2 2\mu = E_0$ a.e. in $\O$.

To find this proportion one can again turn to the stressed-based auxiliary problem. It suffices to choose $r=2$ in the problem \eqref{eq:primal_sp}:
	\begin{align}
		{Y_2} =  \mathop {\min }\limits_{{\boldsymbol{\tau}} \in \Sigma_{{f}} \left( \Omega  \right)} \frac{1}{2} \int_\Omega  {{{\Big( {{{\left| {{\rm{Tr }}\,{{\boldsymbol{\tau}}}} \right|}^{}} + \beta {{\left\| {{\rm{dev}}\,{{\boldsymbol{\tau}} }} \right\|}^{}}} \Big)}^{2}}dx}.
	\end{align}	
This problem is again well-posed. Its solution yields the optimal partition at every point at which $\hat{\boldsymbol{\tau}}$ is non-zero:
\begin{equation}
    \label{eq:opt_kmu_tau_infty}
	d{\hat{k} } = E_0\, \frac{{{{\left| {{\rm{Tr }}\,{\hat{{\boldsymbol{\tau}}} }} \right|}^{}}}}{{{\left| {{\rm{Tr }}\,{\hat{{\boldsymbol{\tau}}} }} \right| + \beta \left\| {{\rm{dev}}\,{\hat{{\boldsymbol{\tau}}} }} \right\|} }},\qquad 2 \hat{\mu  } = E_0\,\frac{{\frac{1}{\beta} \left\| {{\rm{dev}}\,{\hat{{\boldsymbol{\tau}}} }} \right\|}}{{{{ {\left| {{\rm{Tr }}{\hat{{\boldsymbol{\tau}}}}} \right| + \beta \left\| {{\rm{dev}}\,{\hat{{\boldsymbol{\tau}}} }} \right\|} }}}}.
\end{equation}
At points $x$ where $\hat{\boldsymbol{\tau}}(x)=0$ any pair $\big(\hat{k}(x),\hat{\mu}(x)\big)$ satisfying $d\hat{k}(x)+\beta^2 
 2\hat\mu(x) \leq E $ will do; in particular, one can put $\big(\hat{k}(x),\hat{\mu}(x)\big) = (0,0)$.
 
Similarly, the dual displacement-based formulation is non other than \eqref{eq:dual_sp} with $r'=2$. For the system of non-linear inclusions one puts $r = 2$ in \eqref{eq:non-linear_system_sp}. This affects the stress-strain relations which now read
    \begin{equation}
	\begin{cases}
		(3.a) & \mathrm{Tr}\,\boldsymbol{\eps}(\hat{\mbf{v}}) \in \big(\abs{\mathrm{Tr}\,\hat{\boldsymbol{\tau}}}+\beta\,\norm{\mathrm{dev}\,\hat{\boldsymbol{\tau}}}\big) \mathcal{N} (\mathrm{Tr}\,\hat{\boldsymbol{\tau}}) \qquad \text{a.e. in $\Omega$},\\
		(3.b) & \mathrm{dev}\,\boldsymbol{\eps}(\hat{\mbf{v}}) \in  \big(\abs{\mathrm{Tr}\,\hat{\boldsymbol{\tau}}}+\beta\,\norm{\mathrm{dev}\,\hat{\boldsymbol{\tau}}}\big) \beta\, \mathcal{N}(\mathrm{dev}\,\hat{\boldsymbol{\tau}}) \qquad \text{a.e. in $\Omega$}.
	\end{cases}
\end{equation}

Let us remark that, similarly as for $p \to 1$, one can expect that the (sp-IMD) problem converges to the above formulation when $p \to \infty$. More precisely, we can conjecture convergences analogous to those in Conjecture \ref{conj:p=1}, with $(\hat{k}_n,\hat{\mu}_n)$ now converging weakly-* in $L^\infty$, $\hat{\boldsymbol{\tau}}_n$ weakly in $L^2$, and $\hat{\mbf{v}}_n$ weakly in $H^1 =W^{1,2}$.
\bigskip

\subsubsection{Trivialisation of the vp-IMD method for $p=\infty$}
\label{sssec:trivial}
Pushing $p$ to infinity in the vp-IMD method leads to the following admissible set of moduli
\begin{equation}
	M_{v\infty}(\Omega ) := \Big\{ (k,\mu) \in (L^\infty(\Omega;\R_+))^2 \ \Big\vert \ \norm{\max\{dk,2\mu\}}_\infty\leq \Lambda_\infty  \Big\}.
\end{equation}
where again $\Lambda_\infty = E_0$. Thanks to the aforementioned monotonicity of the compliance functional $(k,\mu) \mapsto \mathcal{C}(k,\mu)$, immediately we infer that
\begin{equation}
    \label{eq:trivial_sol}
    d\hat{k} = E_0, \qquad 2 \hat{\mu} =  E_0
\end{equation}
is the universal solution for all admissible loads, in this case $f$ that are continuous functionals on $W^{1,2}(\Omega;\R^d)$. Therefore, the (vp-IMD) problem for the limit case $p=\infty$ turns trivial, and it is not a meaningful optimal design problem.

One can also observe that for $r=2$ the stress-based formulation of (vp-IMD) reduces to minimizing the quadratic functional:
\begin{align}
	{Z_2} =  \mathop {\min }\limits_{{\boldsymbol{\tau}} \in \Sigma_{{f}} \left( \Omega  \right)} \frac{1}{2} \int_\O \boldsymbol{\tau} \cdot \boldsymbol{\tau}  \, dx.
	\end{align}	
It is exactly the stress-based formulation of the linear elasticity problem of the homogeneous body with the stiffness tensor being the identity. This is directly related to the fact that the "optimal" body according to  \eqref{eq:trivial_sol} is also homogeneous, and the stiffness tensor is identity multiplied by $E_0$.

The homogeneous moduli distribution \eqref{eq:trivial_sol} is a solution of the problem (vp-IMD) problem for $p=\infty$, but it is not the only one. The unique solution $\hat{\boldsymbol{\tau}}$ furnishes the full characterization of the set of optimal moduli. $(\hat{k},\hat{\mu})$ is optimal if and only if:
\begin{itemize}
    \item[(i)] $\hat{k}(x) =\frac{E_0}{d}$ for a.e. $x$ where $\mathrm{Tr}\,\hat{\boldsymbol{\tau}}(x) \neq 0$;
    \item[(ii)] $\hat{\mu}(x) = \frac{E_0}{2}$ for a.e. $x$ where $\mathrm{dev}\,\hat{\boldsymbol{\tau}}(x) \neq 0$.
\end{itemize}
At points where $\mathrm{Tr}\,\hat{\boldsymbol{\tau}}(x)$ vanishes one can choose any $\hat{k}(x) \in [0,\frac{E_0}{d}]$, and, similarly, whenever $\mathrm{dev}\,\hat{\boldsymbol{\tau}}(x) =0$, any $\hat{\mu}(x) \in [0,\frac{E_0}{2}]$ is optimal. These comments are not without value. Assuming again that the (vp-IMD) problem for $p <\infty$ converges to the trivial problem described here, one can expect that the solutions for large $p$ should approximately follow the rules (i), (ii), instead of being close to the trivial homogeneous solution \eqref{eq:trivial_sol}. This will be confirmed by the numerical solutions.

\bigskip

\section{Numerical simulations}

\subsection{Implementation}
\label{ssec:implementation}
The strategy for  the implementation of the vp-IMD and sp-IMD method is to discretize the stress-based formulations \eqref{eq:primal_vp} and \eqref{eq:primal_sp}. We will be thus tackling the convex minimization problem
\begin{equation}
	\label{eq:argmin}
 \mathop { \min }\limits_{{\boldsymbol{\tau }} \in \Sigma_f \left( \Omega  \right)} \Phi \left( {\boldsymbol{\tau }} \right)
\end{equation}
where, for the vp-IMD method, 
\begin{equation}
	\label{eq:Phi_1}
	\Phi ( {\boldsymbol{\tau }} ) = 	\widetilde{F}_{vp}( \boldsymbol{\tau}) =\frac{1}{r} \int_\Omega  {\Big( {{{\left| {{\rm{Tr }}\,{\boldsymbol{\tau}}} \right|}^{r}} + \beta^{2-r}{{\left\| {{\rm{dev}}\,{\boldsymbol{\tau}}} \right\|}^{r}}} \Big)dx},
\end{equation}
and, for the sp-IMD method,
\begin{equation}
	\label{eq:Phi_2}
	\Phi ( {\boldsymbol{\tau }} ) = 	\widetilde{F}_{sp}( \boldsymbol{\tau}) = \frac{1}{r} \int_\Omega  {{{\Big( {{{\left| {{\rm{Tr }}\,{{\boldsymbol{\tau}}}} \right|}^{}} + \beta {{\left\| {{\rm{dev}}\,{{\boldsymbol{\tau}} }} \right\|}^{}}} \Big)}^{r}}dx},
\end{equation}
where $r=\frac{2p}{p+1}$ and $ \beta  = {\left( {\frac{1}{2}d(d + 1) - 1} \right)^{\frac{1}{2}}}$. On having solved the problems above one can construct the
optimal distribution of elastic moduli $\hat{k},\hat{\mu} \in L^p(\Omega;\R_+)^2$ via the formulae in Theorems \ref{thm:stress_vp} and \ref{thm:stress_sp}, respectively.

The numerical method adopted follows the previous works by the second and third author, see e.g. \cite{czarnecki2015a,czarnecki2021isotropic,czarnecki2024}. The reader is encouraged to visit these papers for the details of the implementation, whilst below we shall convey the general ideas. We shall restrict ourselves to 2D problems, yet the method can be universally extended to three dimensions.

The core of the computational technique is the construction of the finite dimensional set ${\Sigma^h_f}( \Omega  )$ approximating the set of statically admissible stresses ${\Sigma_f}( \Omega )$.  The parameter $h$ indexes the mesh density. The design domain $\Omega  \cong \bigcup_{k} {{\Omega _k}} $ is meshed into triangular first-order Lagrange finite elements (note that there is no obstruction to use different elements, e.g., quadrilaterals, cf. \cite{czarnecki2021isotropic}). That is to say, the stress functions $\boldsymbol{\tau}$ are assumed continuous and element-wise affine. More formally, for an $n$-dimensional vector of nodal stresses $\mbf{T}$ we have:
\begin{equation}
    \label{eq:tau_interpol}
    \boldsymbol{\tau}(x) = \boldsymbol{\tau}_k\big( \boldsymbol{\psi}_k^{-1}(x)\big) \quad \forall\,x \in \O_k, \qquad \text{where} \qquad   \boldsymbol{\tau}_k(\xi) := \sum_{l=1}^{{\mathrm{dof}}} \boldsymbol{\eta}_l(\xi) \big(\mbf{A}(k) \mbf{T}\big)_l \quad \forall\,\xi \in \omega.
\end{equation}
Above, the 2D simplex $\omega \subset \R^2$ is the master element, and $\boldsymbol{\psi}_k: \xi \in \omega \mapsto  x \in\Omega_k$ is the affine isomorphism for the $k$-th finite element. For every $k$, $\mbf{A}(k)$ is the $\mathrm{dof} \times n$ allocation matrix consisting of ones and zeros, where $\mathrm{dof}$ is the number of degrees of freedom corresponding to a single element. Here, $\mathrm{dof} = 3 \times 3 = 9$, i.e., there are 3 stress components per each of the  nodes (for quadrilateral elements $ \mathrm{dof} = 3 \times 4$).
Finally, $\boldsymbol\eta_j:\omega \to E_s^2$ are the affine tensor-valued shape functions. 

To construct the set ${\Sigma^h_f}( \Omega)$, for stress functions of the form \eqref{eq:tau_interpol} the equilibrium equations \eqref{eq:stat_adm} must be enforced in an approximate sense. To that aim we introduce the $m$-dimensional set  $\mbf{V}_h(\Omega)$ of those virtual displacements $\mbf{v}:\Ob \to\R^2$ which are element-wise affine, continuous on $\Ob$ ($\mbf{v}$ are of the form similar to $\boldsymbol{\tau}$ in \eqref{eq:tau_interpol}), and are zero on $\Gamma_0$ (or on its approximation by polygonal chains). Testing \eqref{eq:stat_adm} against such displacements $\mbf{v}\in \mbf{V}_h(\O)$ leads to the linear system
\begin{equation}
	\label{eq:BTQ}
	\mbf{B} \mbf{T} = \mbf{Q}.
\end{equation}						
Above $\mbf{B}$ is a rectangular $m \times n$ matrix ($m < n$), while $\mbf{Q} \in\R^m$ is the vector whose entries are the virtual works $Q_j = f(\mbf{v}_j^h)$ on the displacements $\mbf{v}_j^h\in \mbf{V}_h(\O)$ that correspond to unit nodal displacements at the subsequent nodes of the mesh.
Readily, the set ${\Sigma^h_f}( \Omega)$ consists of $\boldsymbol{\tau}$ of the form \eqref{eq:tau_interpol} with $\mbf{T}$ satisfying \eqref{eq:BTQ}.

Our goal is to reach a finite dimensional unconstrained convex optimization problem. This is possible through the following representation of $\mbf{T}$ solving the linear system \eqref{eq:BTQ}:
\begin{equation}
    \label{eq:T_rep}
    {\mbf{T}} = \mathring{\mbf{T}}  +  \mbf{N}\boldsymbol{\alpha} ,\qquad {\boldsymbol{\alpha }} \in {\R^s},
\end{equation}
where $\mathring{\mbf{T}}$ is a fixed solution of the system \eqref{eq:BTQ}, while
the columns of the $n \times s$ matrix $\mbf{N}$ span the $s$-dimensional null space of the matrix $\mbf{B}$, see e.g. \cite{czarnecki2024}. It is thus justified to introduce the notations $\boldsymbol{\tau}(x,\boldsymbol{\alpha})$ and $ \boldsymbol{\tau}_k(\xi,\boldsymbol{\alpha})$ to be understood via the interpolation \eqref{eq:tau_interpol} with $\mbf{T}$ of the form \eqref{eq:T_rep}.

We are now in a position to pose the discretized variant of the formulation \eqref{eq:argmin} being an unconstrained convex optimization problem:
\begin{equation}
\label{eq:discrete}
\mathop {\min }\limits_{{\boldsymbol{\alpha }} \in {\R^s}} \Phi_h ( {\boldsymbol{\alpha }} ), 
\end{equation}
where the functional  $\Phi_h :{\R^s} \to \R_+$ is an approximation of $\Phi$ via numerical integration: 
\begin{equation}
	\label{eq:discrete_Phi}
	\Phi_h ( {\boldsymbol{\alpha }} ) = \sum\limits_k {\sum\limits_{e=1}^4 {\frac{w_e}{r} \, \opnorm{{{\boldsymbol{\tau }_k}( {\zeta_e,\boldsymbol{\alpha} } )} }^r \left| {\det {{\mbf{J}}_k}} \right|} }
\end{equation}
where $\opnorm{\argu}$ is the norm on $E_s^2$ introduced in \eqref{eq:norm_vp} or \eqref{eq:norm_sp}, for the vp-IMD or sp-IMD method, respectively.
The quantities $w_e$ are weights at the four Gauss points $\zeta_e  \in \omega $, whereas ${{\mbf{J}}_k}$ is the constant Jacobian matrix of the affine geometrical mapping $\boldsymbol{\psi}_k:\omega  \mapsto {\Omega _k}$.

Since the mappings $\boldsymbol{\alpha}\mapsto \boldsymbol{\tau}_k(\xi,\boldsymbol{\alpha})$ are affine, convexity of the functional $\Phi$ follows, and the well-posedness of the discrete problem \eqref{eq:discrete} with it.
Any solution $\hat{\boldsymbol{\alpha}}$ furnishes the interpolated stress function $\hat{\boldsymbol{\tau}}_h := \boldsymbol{\tau}(\argu,\hat{\boldsymbol{\alpha}}) \in \Sigma_f^h(\Omega)$. In turn, through the formulae in Theorems \ref{thm:stress_vp} and \ref{thm:stress_sp}, this transfers to an approximation $(\hat{k}_h,\hat{\mu}_h)$ of moduli solving the (vp-IMD) or (sp-IMD) problem. Let us stress, that convergence of the numerical method lies outside of the scope of this paper.

In addition, let us observe that in the case of the vp-IMD method the function $\opnorm{\argu}^r:E_s^2 \to \R_+$ is strictly convex when $p>1$ (then $r=\frac{2p}{p+1} >1$ as well), and so is the function $\Phi_h$ in this case. Accordingly, the discretized problem \eqref{eq:discrete} has a unique solution in the vp-IMD variant for $p>1$. This cannot be asserted for the sp-IMD method, or when $p=1$, for any of the methods.

Any algorithm of non-linear mathematical programming can be implemented to find a solution $\hat{\boldsymbol{\alpha}}$, e.g. a gradient-oriented BFGS method, see \cite{press1996}. To that aim, the gradient of $\Phi_h$ must be calculated:
\begin{equation}
\frac{\partial \Phi_h}{\partial \alpha_i} ( {\boldsymbol{\alpha }} ) = \sum\limits_k {\sum\limits_{e=1}^4 {w_e \, g_{k,i} ( {\zeta_{e} ,{\boldsymbol{\alpha }}}  )\left| {\det {{\mbf{J}}_k}} \right|} }, \qquad i = 1,2,...,s,
\end{equation}
where, for the vp-IMD method,
$$ g_{k,i}= {\left| {{\rm{Tr }}\,{{\boldsymbol{\tau }}_k}} \right|^{r - 2}}\,{\rm{Tr }}\,{{\boldsymbol{\tau }}_k}\;{\rm{Tr }}\,\tfrac{{\partial {{\boldsymbol{\tau }}_k}}}{{\partial {\alpha _i}}} + {\left\| {{\beta\,\rm{dev }}\,{{\boldsymbol{\tau }}_k}} \right\|^{r - 2}} \, \mathrm{dev}\, {\boldsymbol{\tau }}_k \cdot \mathrm{dev} \,\tfrac{\partial {\boldsymbol{\tau }}_k}{\partial \alpha_i},$$											
or, for the sp-IMD method,
$$ g_{k,i} ={{\big( {\left| \rm{Tr }\,{\boldsymbol{\tau }}_k \right|} + \beta \left\| \rm{dev }\,{\boldsymbol{\tau }}_k \right\| \big)}^{r-1}}\left( \frac{\rm{Tr }\,{\boldsymbol{\tau }}_k\, \rm{Tr }\frac{\partial {\boldsymbol{\tau }}_k}{\partial {{\alpha }_{i}}}}{\left| \rm{Tr }{\boldsymbol{\tau }}_k \right|}+\frac{\beta \,\mathrm{dev}\, {\boldsymbol{\tau }}_k \cdot \mathrm{dev} \,\tfrac{\partial {\boldsymbol{\tau }}_k}{\partial \alpha_i} }{\left\| \rm{dev }{\boldsymbol{\tau }}_k \right\|} \right).$$       
Note that, above, $g_{k,i} = g_{k,i}(\xi,\boldsymbol{\alpha})$ and $\boldsymbol{\tau}_k = \boldsymbol{\tau}_k(\xi,\boldsymbol{\alpha})$ for $\xi \in \omega$, $\boldsymbol{\alpha} \in \R^s$. Moreover,
\begin{equation}
    \frac{{\partial {{\boldsymbol{\tau }}_k}}}{{\partial {\alpha _i}}}(\xi,\boldsymbol{\alpha}) = \sum_{l=1}^9 \boldsymbol{\eta}_l(\xi) \big(\mbf{A}(k) \mbf{n}^{(i)}\big)_l \ \in \ E_s^2 \qquad \forall\, i ,
\end{equation}
where $\mbf{n}^{(i)}$ is the $i$-th column of the matrix $\mbf{N}$. 

To solve the problem \eqref{eq:discrete} the gradient-oriented \textit{frprmn}(\ldots) procedure implementing the Fletcher–Reeves–Polak–Ribiere algorithm of the unconstrained minimization (for parameter ftol = 1.0e-7) will be applied (see \cite{press1996}) within the own program written in C++ language. It should be noted that, in the case of the sp-IMD method (or for both methods when $p=1$), the functional $\Phi_h$ is non-smooth; this can be seen from the formulae for $g_{k,i}$. Notwithstanding this, the multiple numerical experiments carried out by the authors have not indicated the need for employing  non-smooth optimization algorithms, e.g., the sub-gradient methods. Further testing is warranted, and, ultimately, implementation of suitable regularization techniques may prove essential.

\subsection{Case studies}

The numerical scheme put forth above will be now demonstrated on two problems involving two different design domains $\Omega \subset \R^2$: the L-shaped domain with the rounded reentrant corner and the rectangular domain, see Fig. \ref{fig:dom}. The (vp-IMD) and (sp-IMD) problems will be solved for four values of the exponent: $p \in \{1,2,3,100\}$.
The loads will be assumed as a piece-wise constant traction $\mbf{t}$ on the boundary. Namely, the virtual work functional will be of the form ${f}(\mbf{v}) = \int_{\bO \backslash \Gamma_0} \mbf{t} \cdot \mbf{v}\, ds$, being a continuous functional on $W^{1,r'}\!(\O;\R^2)$ for any $p \in[1,\infty)$. Since we are engaging 2D elasticity, the moduli $k, \mu$ are, in fact, stiffnesses being the integrals of the moduli across the thickness of a thin plate, and therefore are of units N/m.
The value of the referential stiffness will be fixed as $E_0 = 216554$ [N/m].
Note that its value acts merely as the scaling factor for the final values of the optimal elastic moduli $\hat{k},\hat{\mu}$, see the formulae in Theorems \ref{thm:stress_vp}, \ref{thm:stress_sp} with $\Lambda= \abs{\Omega}^{\frac{1}{p}} E_0$.

\begin{figure}[h]
\captionsetup[subfloat]{labelformat=simple,farskip=3pt,captionskip=1pt}
	\centering
    \subfloat[(a)]{\includegraphics*[trim={4cm 3cm 20cm 4cm},clip,width=0.4\textwidth]{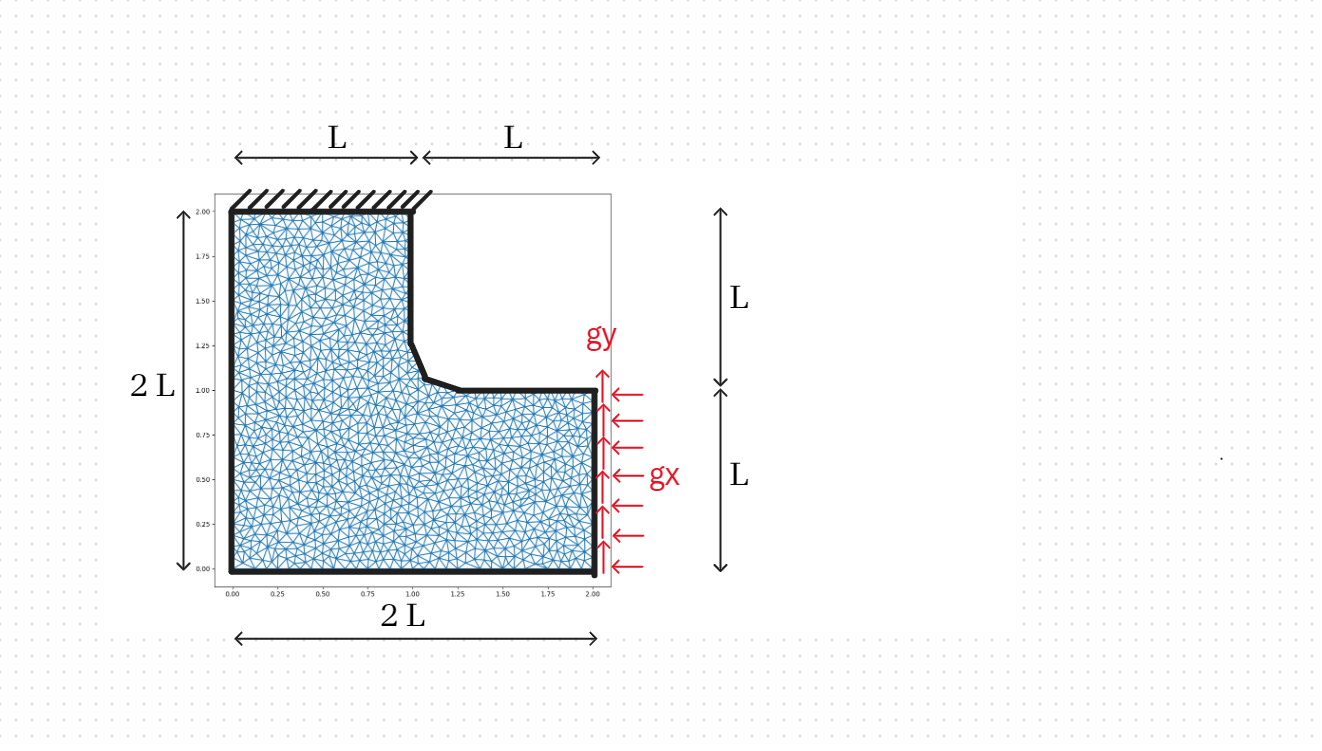}}\hspace{2cm}
	\subfloat[(b)]{\includegraphics*[trim={17cm 3cm 4cm 4cm},clip,width=0.31\textwidth]{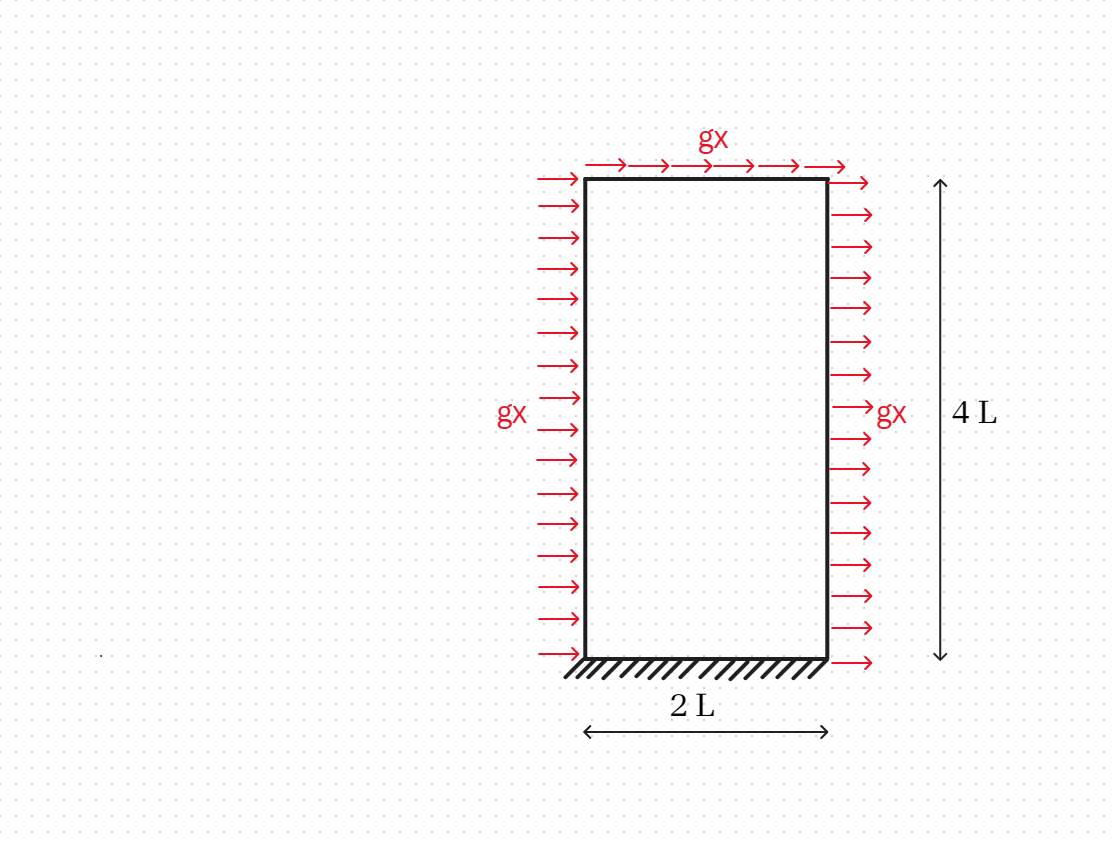}}
	\caption{The problem data: (a) the L-shaped domain problem; (b) the cantilever problem; $L=1.0\,\mathrm{m}$, $g_x = g_y= 5\, \mathrm{N/m} $.}
	\label{fig:dom}       
\end{figure}

We shall present the results in the form of the scatter plots of the numerical approximation of the optimal moduli $\hat{k},\hat{\mu}$: for both the methods (vp-IMD), (sp-IMD) and for the range of exponents $p$. The values of the minimal compliances will be given. In addition, for the L-shape domain problem, we will show the plots of the optimal Young modulus $\hat{E}$ and Poisson ratio $\hat{\nu}$.

Let us note that, although the case $p=1$ is not a part of the main stream of this paper, adding the results for this limit scenario facilitates a comparison to the solutions for the original (IMD) problem \cite{czarnecki2015a}. Although the theory of (IMD) is not a straightforward extension of the one expounded in this work, the numerical strategy put forth above works fine for $p=1$. Recall that the two methods (vp-IMD) and (sp-IMD) collapse to one for this choice of the exponent.

\begin{example}[\textbf{L-shaped domain problem}] The geometrical parameters of the L-shaped domain $\Omega$ are illustrated in Fig. \ref{fig:dom}(a) along with the boundary conditions and the triangulation into the finite elements. The resultant load applied to the lower right vertical contour line is directed left and up at an angle of $45\degree$. The minimal compliances are summarized in the Table \ref{tab:Ldomain}. The scatter plots of optimal bulk modulus $\hat{k}$ and shear modulus $\hat{\mu}$ are to  be found in Figs. \ref{fig:k} and \ref{fig:mu}, respectively. Using the formulae \eqref{eq:Enu2D}, the optimal Young modulus $\hat{E}$ and Poisson ratio $\hat{\nu}$ can be computed. These are presented in Figs. \ref{fig:E}, \ref{fig:nu}.

\begin{table}[h]
	\scriptsize
	\centering
	\caption{Optimal compliance in the L-shaped domain problem for various $p\in[1,\infty)$.}
	\begin{tabular}{l|ccccc}
		 & $ p=1$ (IMD)  & $ p=2$   & $ p=3$       & $ p=10^2$  & $p=10^6$\\
		\midrule[0.36mm]
		$\hat{\mathcal{C}}_{vp}$ [Nm]
		& 0.00293849 & 0.00250848 & 0.00238303 & 0.00217998 & 0.00217534 \\
		 $\hat{\mathcal{C}}_{sp}$ [Nm] & 0.00293849 & 0.00415270 & 0.00463301 & 0.00566942 & 0.00570388 \\
         $\hat{\mathcal{C}}_{vp}/(1+\beta^2)^{1/p}$ [Nm] &
         0.00097950 & 0.00144827 & 0.00165230 & 0.00215639 & 0.00217534
	\end{tabular}
	\label{tab:Ldomain}
\end{table}
\begin{figure}[h]
\captionsetup[subfloat]{labelformat=simple,farskip=3pt,captionskip=1pt}
	\centering
    \subfloat[IMD, $p=1$]{\includegraphics*[trim={16.3cm 6.2cm 5.4cm 4.7cm},clip,width=0.30\textwidth]{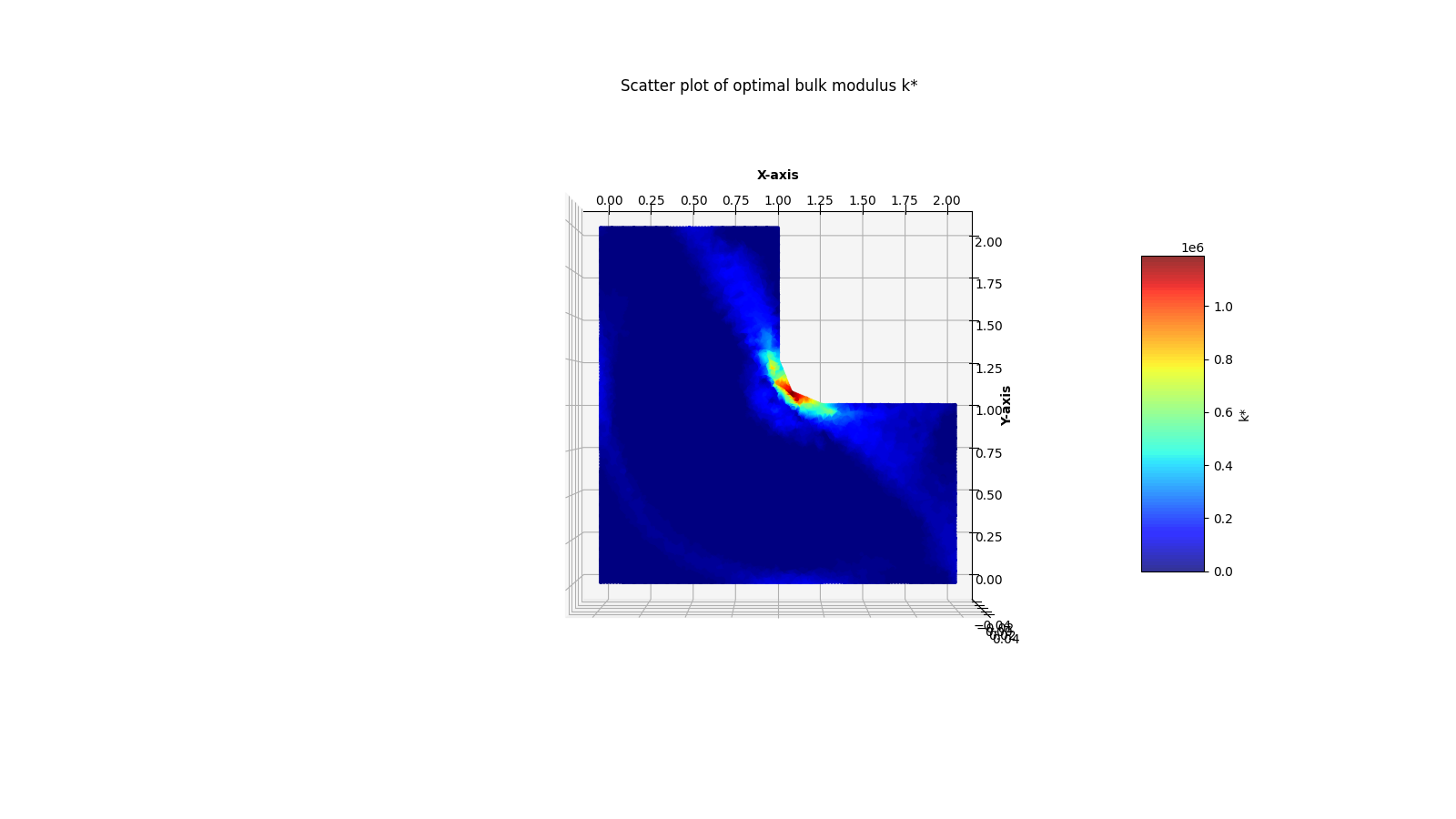}}\\
	\subfloat[vp-IMD, $p=2$]{\includegraphics*[trim={16.3cm 6.2cm 5.4cm 4.7cm},clip,width=0.30\textwidth]{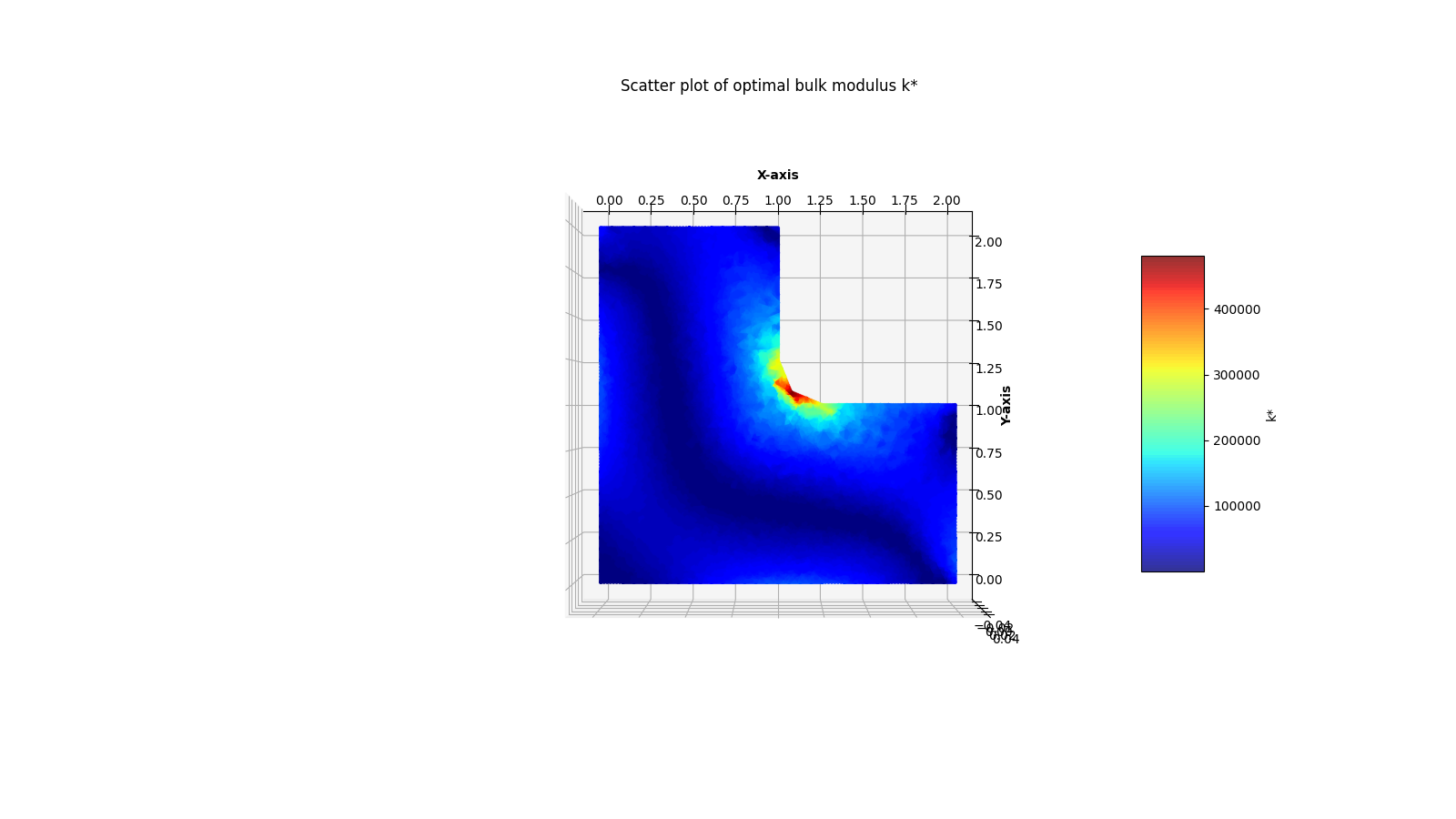}}\hspace{2cm}
	\subfloat[sp-IMD, $p=2$]{\includegraphics*[trim={16.3cm 6.2cm 5.4cm 4.7cm},clip,width=0.30\textwidth]{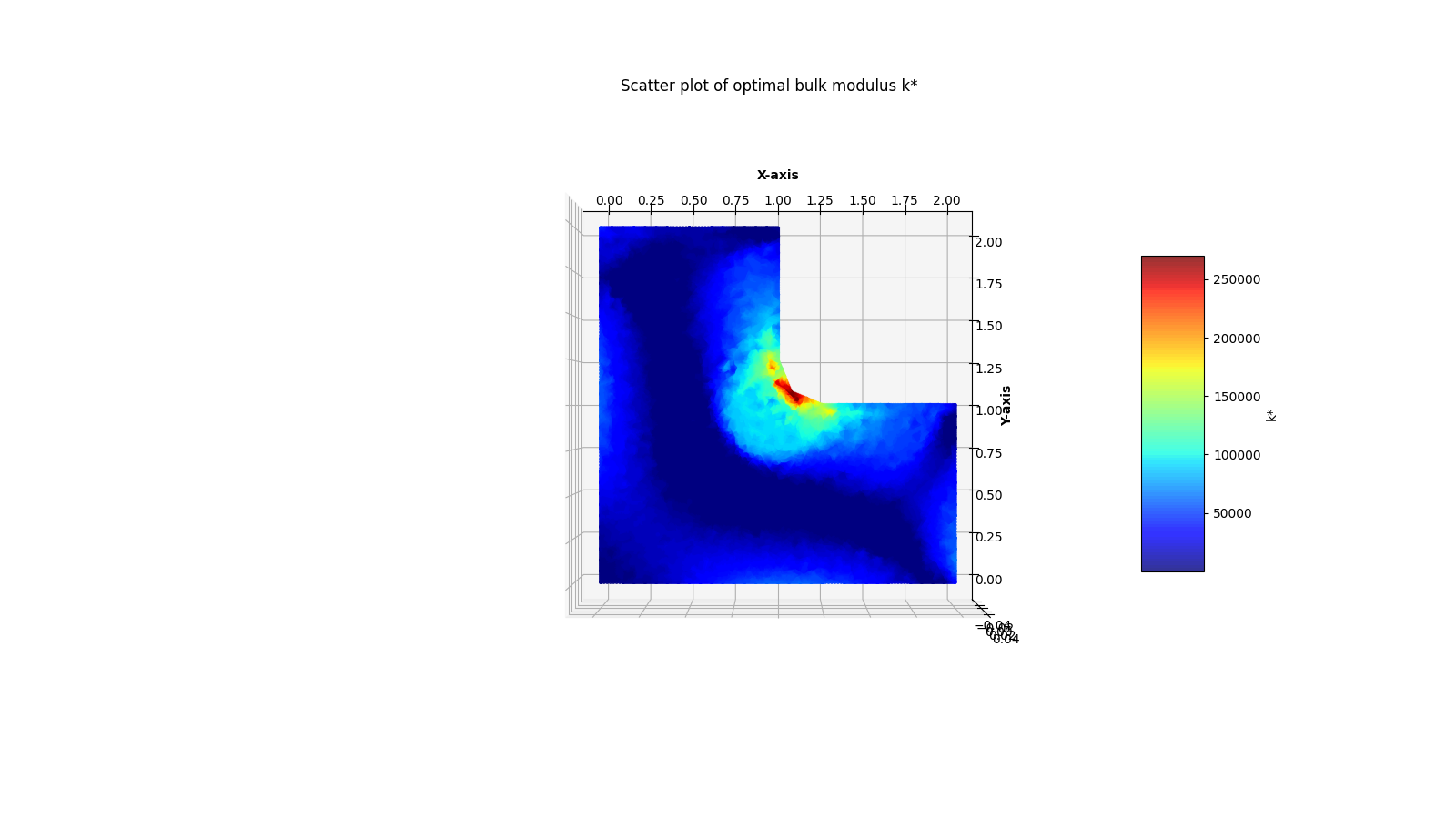}}\\
	\subfloat[vp-IMD, $p=3$]{\includegraphics*[trim={16.3cm 6.2cm 5.4cm 4.7cm},clip,width=0.30\textwidth]{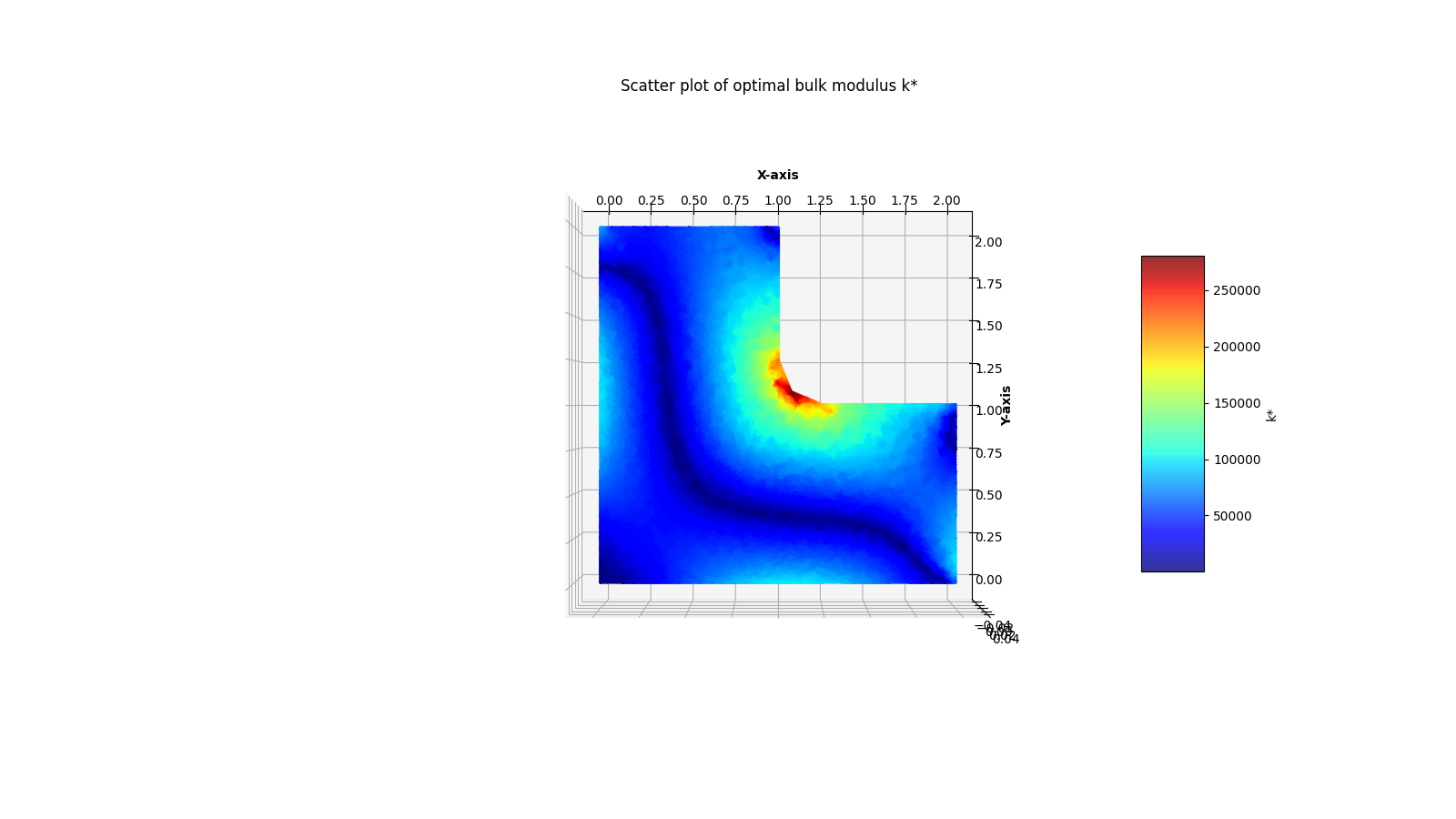}}\hspace{2cm}
	\subfloat[sp-IMD, $p=3$]{\includegraphics*[trim={16.3cm 6.2cm 5.4cm 4.7cm},clip,width=0.30\textwidth]{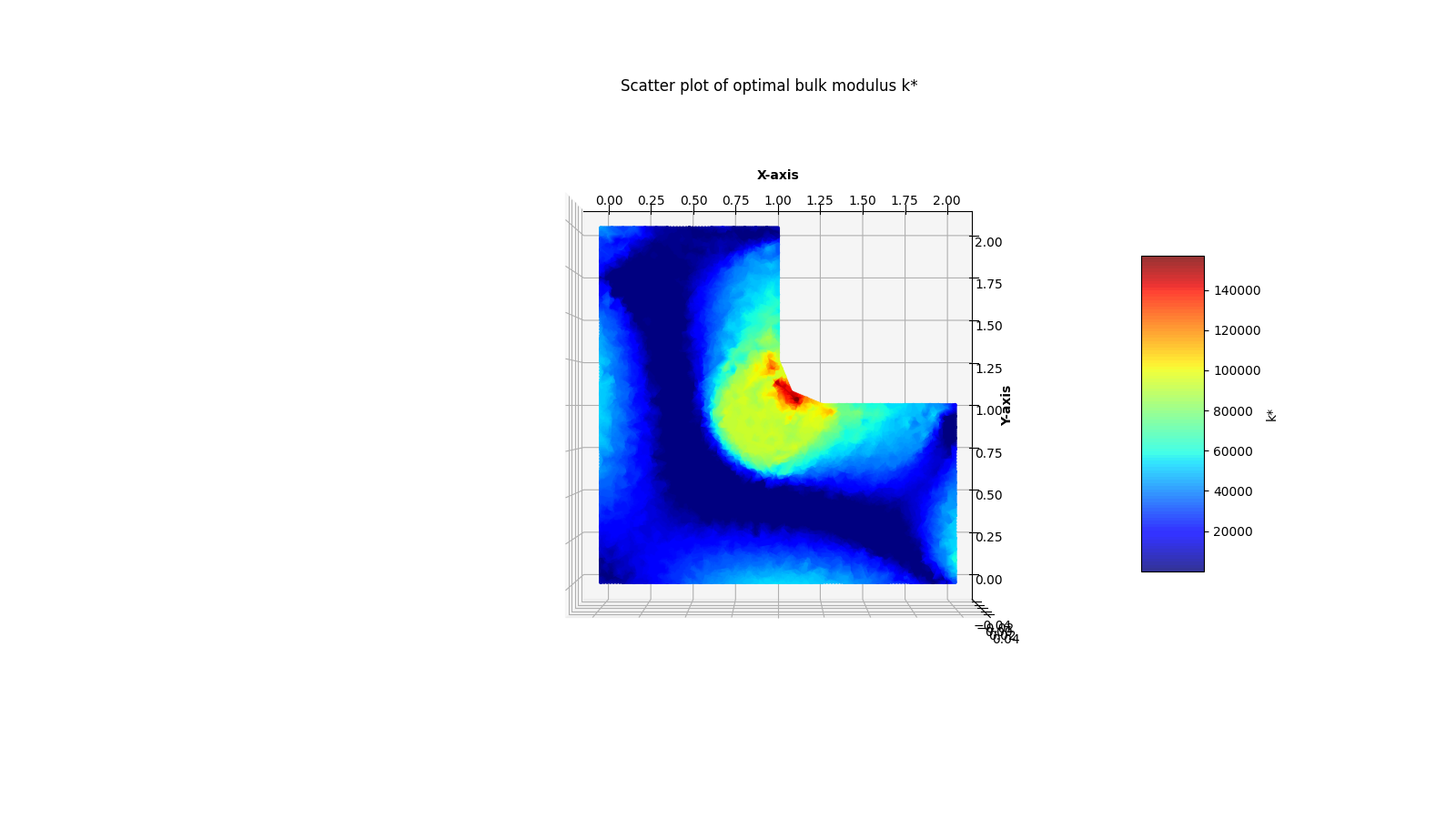}}\\
	\subfloat[vp-IMD, $p=100$]{\includegraphics*[trim={16.3cm 6.2cm 5.4cm 4.7cm},clip,width=0.30\textwidth]{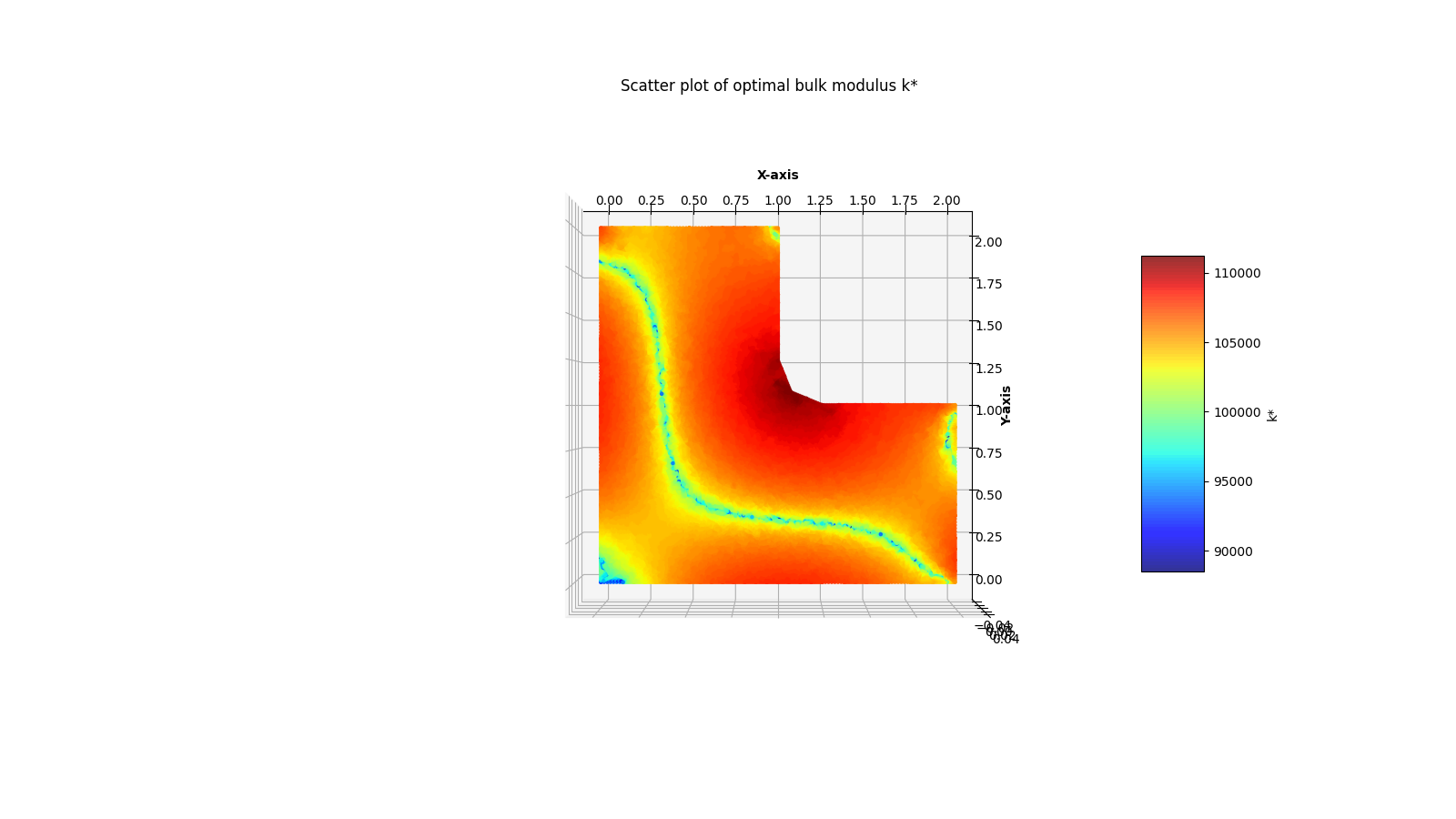}}\hspace{2cm}
	\subfloat[sp-IMD, $p=100$]{\includegraphics*[trim={16.3cm 6.2cm 5.4cm 4.7cm},clip,width=0.30\textwidth]{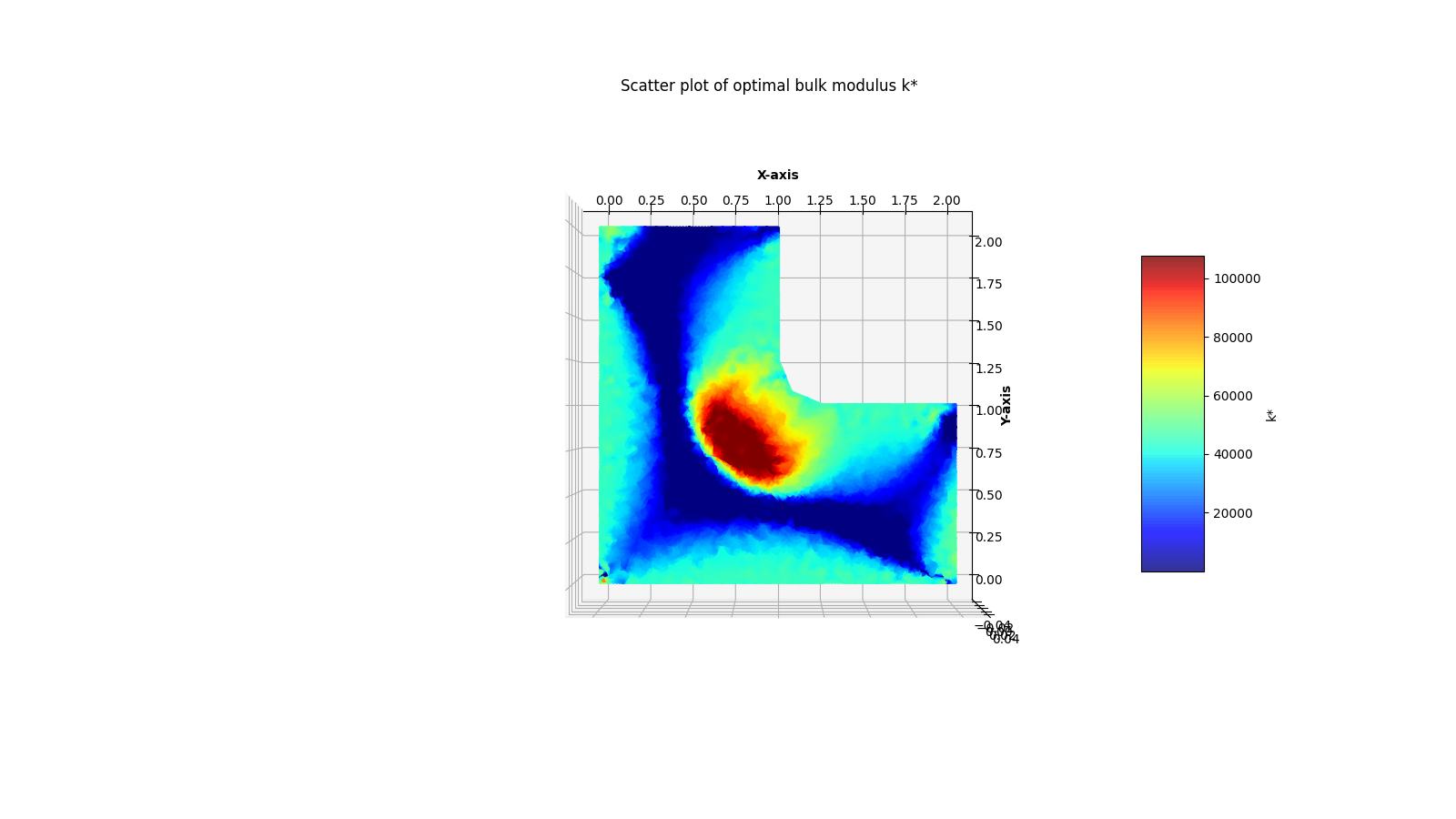}}
	\caption{L-shaped domain problem, optimal bulk modulus $\hat{k}$ for various $p\in[1,\infty)$.}
	\label{fig:k}       
\end{figure}

\begin{figure}[h]
\captionsetup[subfloat]{labelformat=simple,farskip=3pt,captionskip=1pt}
	\centering
    \subfloat[IMD, $p=1$]{\includegraphics*[trim={16.3cm 6.2cm 5.4cm 4.7cm},clip,width=0.30\textwidth]{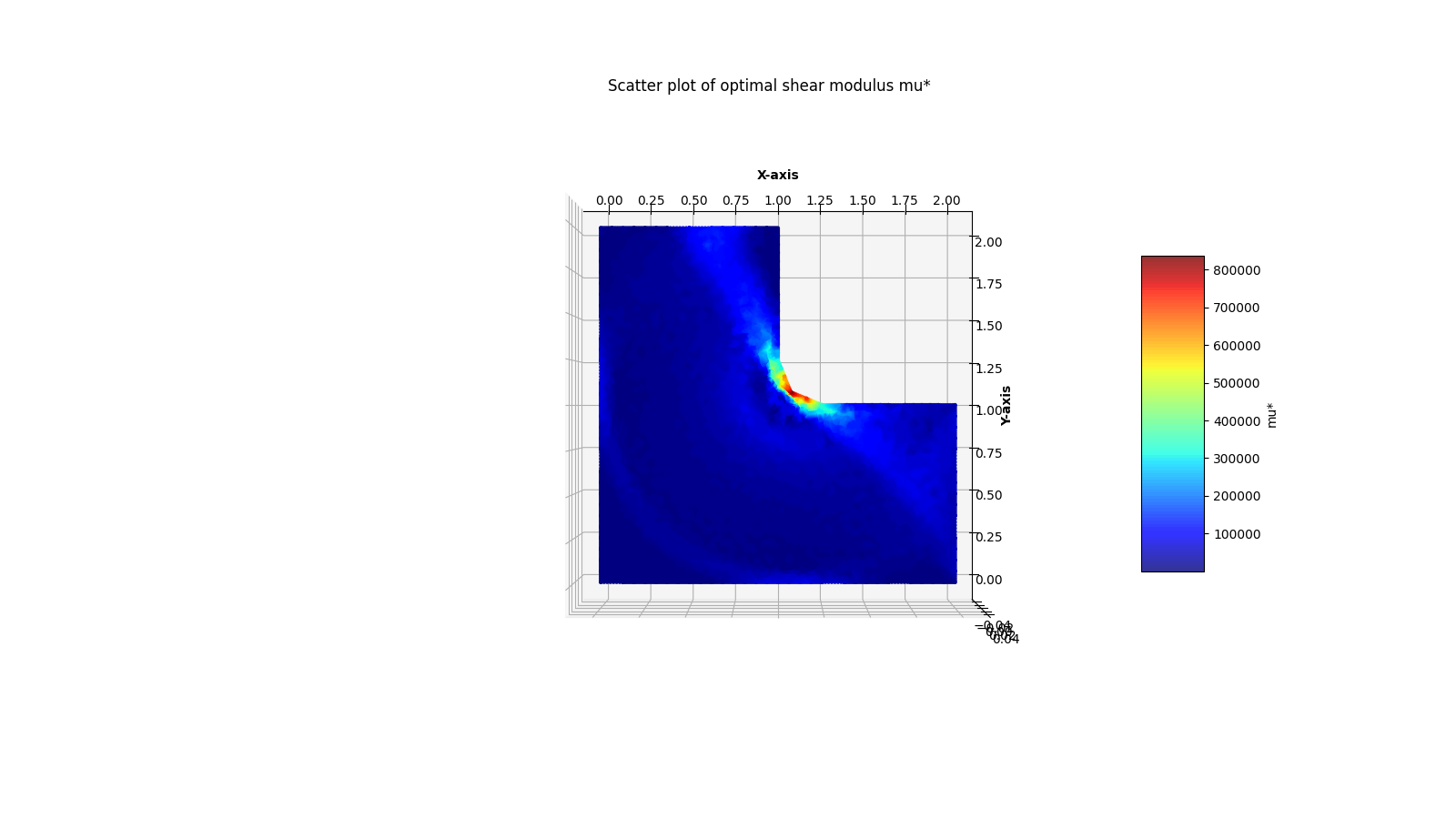}}\\
	\subfloat[vp-IMD, $p=2$]{\includegraphics*[trim={16.3cm 6.2cm 5.4cm 4.7cm},clip,width=0.30\textwidth]{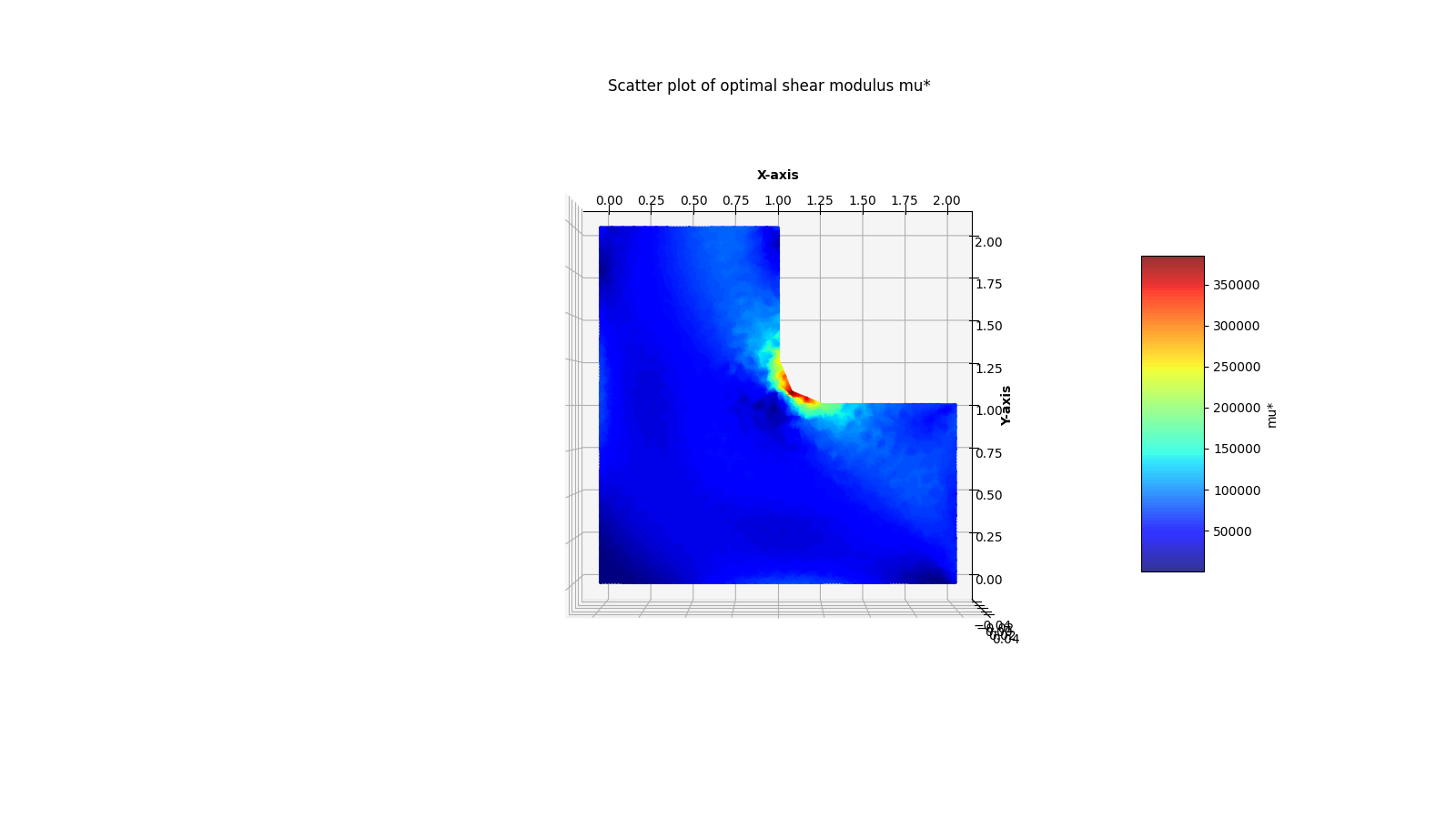}}\hspace{2cm}
	\subfloat[sp-IMD, $p=2$]{\includegraphics*[trim={16.3cm 6.2cm 5.4cm 4.7cm},clip,width=0.30\textwidth]{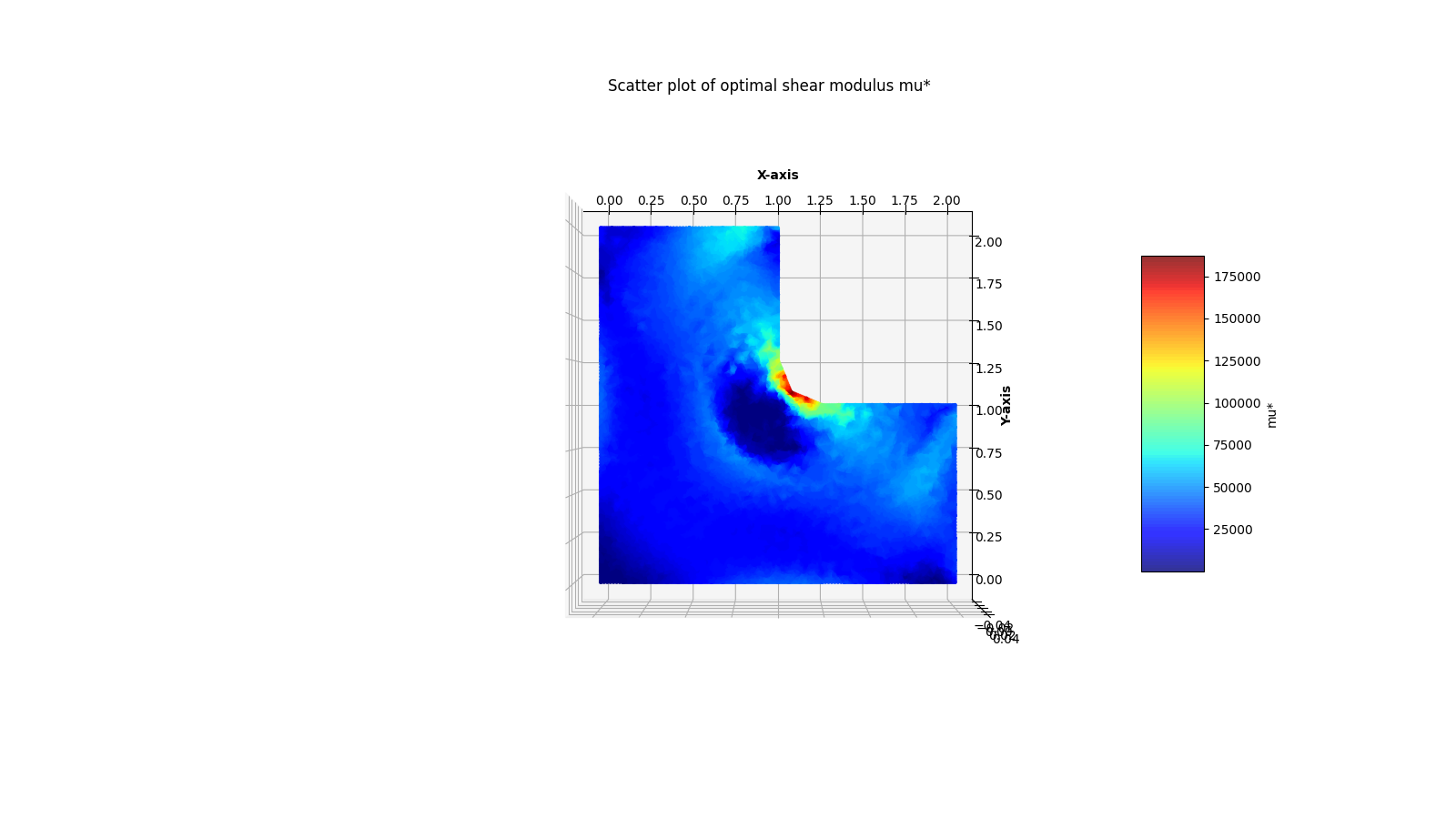}}\\
	\subfloat[vp-IMD, $p=3$]{\includegraphics*[trim={16.3cm 6.2cm 5.4cm 4.7cm},clip,width=0.30\textwidth]{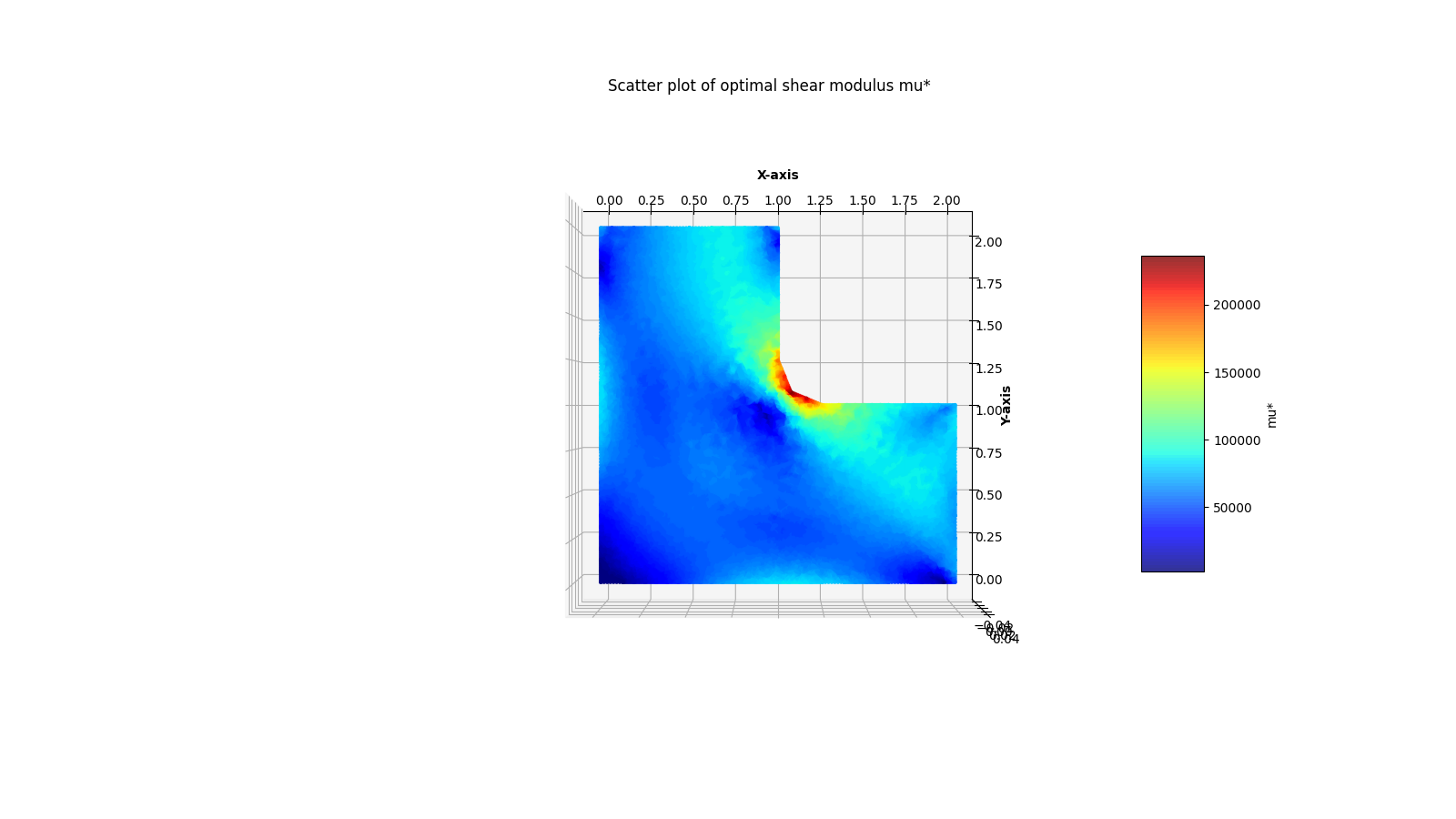}}\hspace{2cm}
	\subfloat[sp-IMD, $p=3$]{\includegraphics*[trim={16.3cm 6.2cm 5.4cm 4.7cm},clip,width=0.30\textwidth]{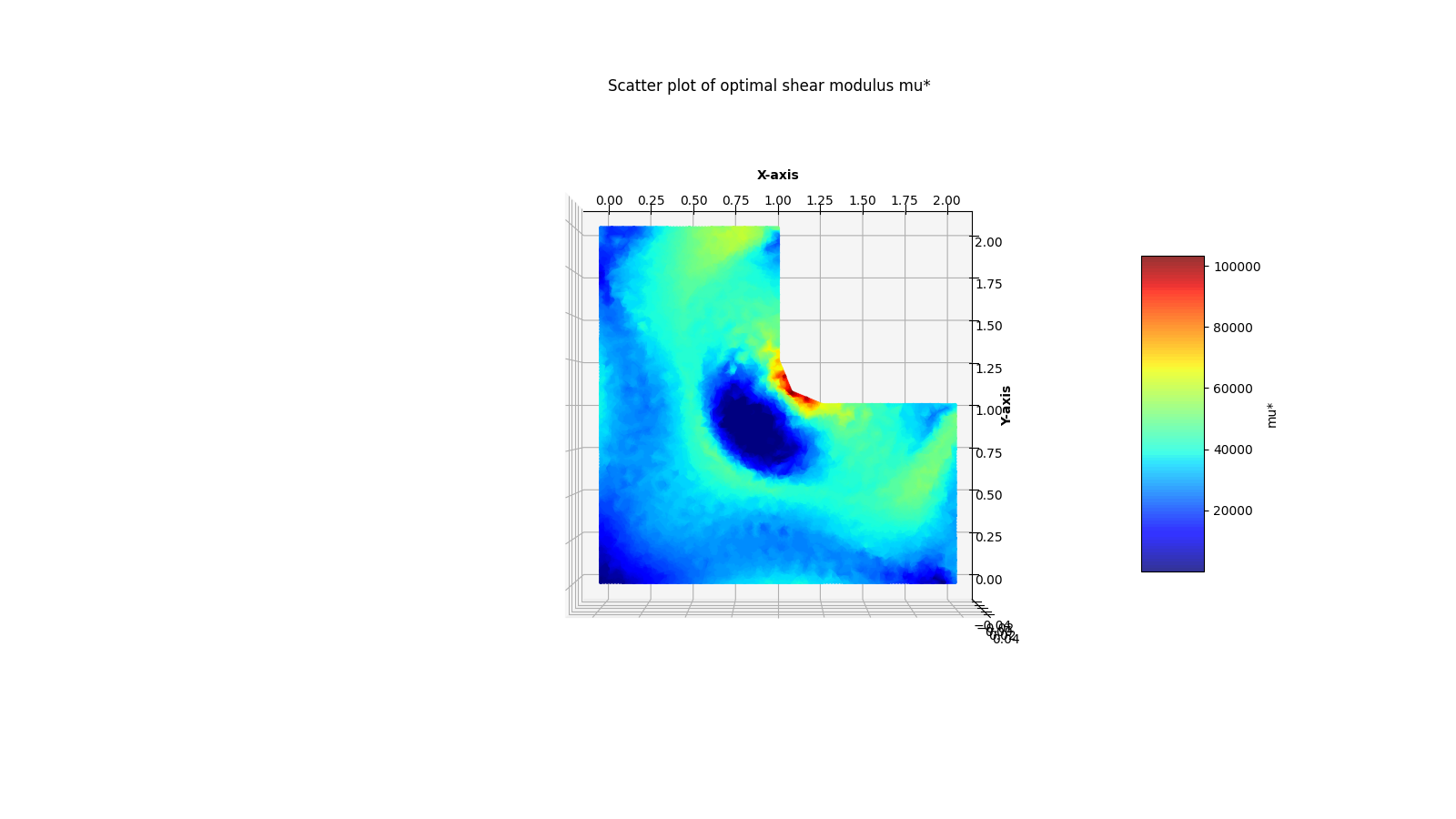}}\\
	\subfloat[vp-IMD, $p=100$]{\includegraphics*[trim={16.3cm 6.2cm 5.4cm 4.7cm},clip,width=0.30\textwidth]{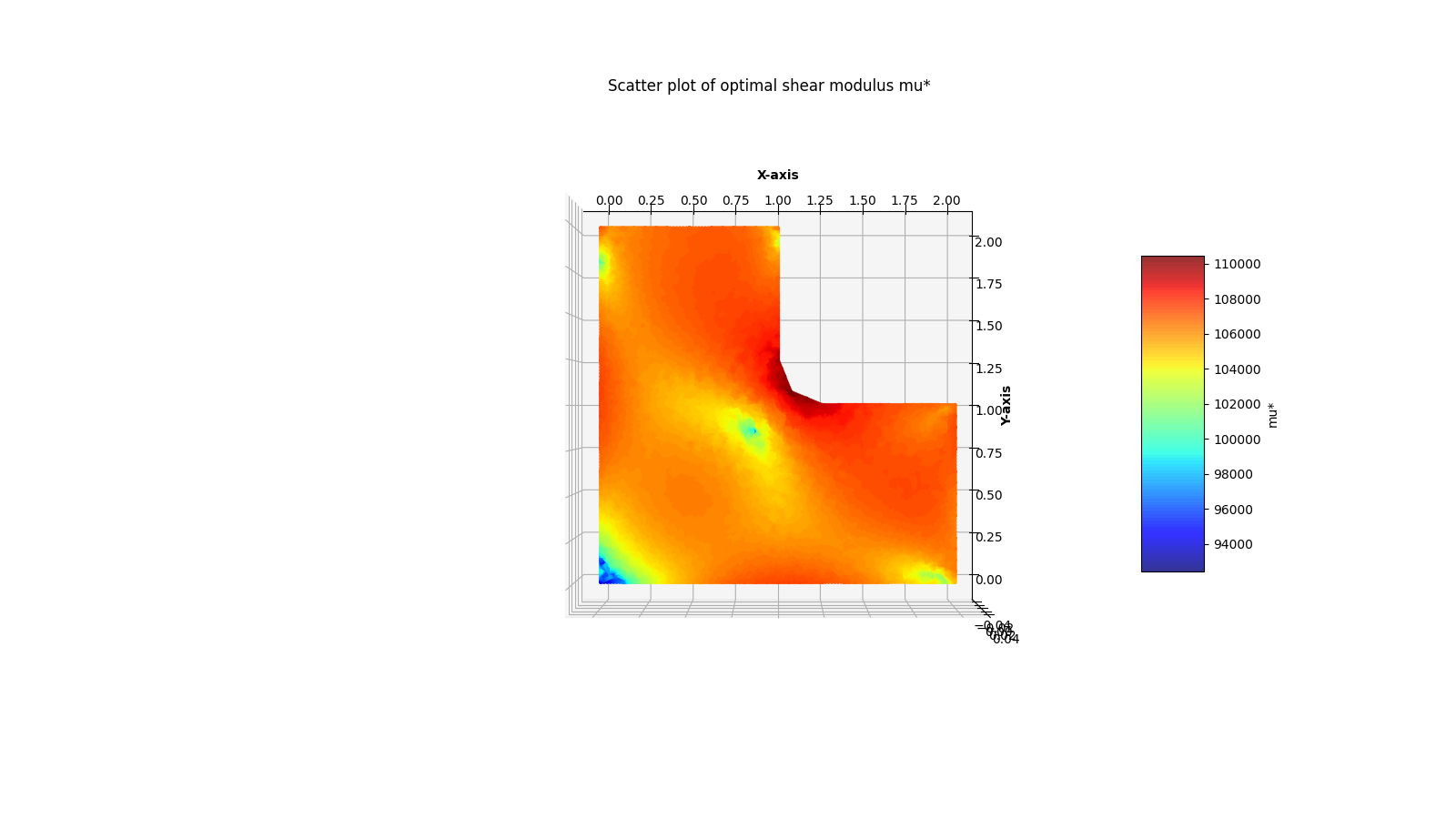}}\hspace{2cm}
	\subfloat[sp-IMD, $p=100$]{\includegraphics*[trim={16.3cm 6.2cm 5.4cm 4.7cm},clip,width=0.30\textwidth]{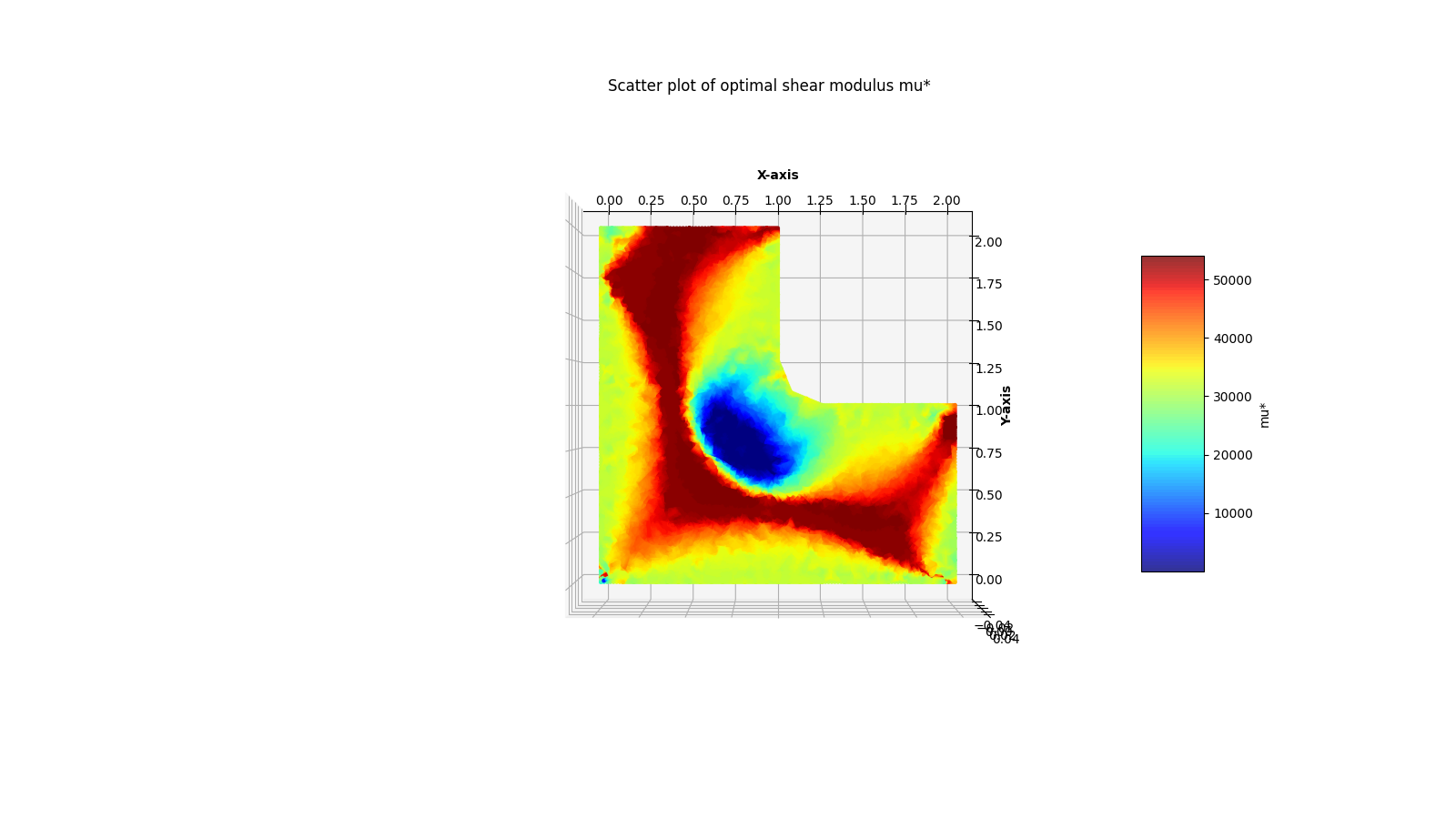}}
	\caption{L-shaped domain problem, optimal shear modulus $\hat{\mu}$ for various $p\in[1,\infty)$.}
	\label{fig:mu}       
\end{figure}

\begin{figure}[h]
\captionsetup[subfloat]{labelformat=simple,farskip=3pt,captionskip=1pt}
	\centering
    \subfloat[IMD, $p=1$]{\includegraphics*[trim={16.3cm 6.2cm 5.4cm 4.7cm},clip,width=0.30\textwidth]{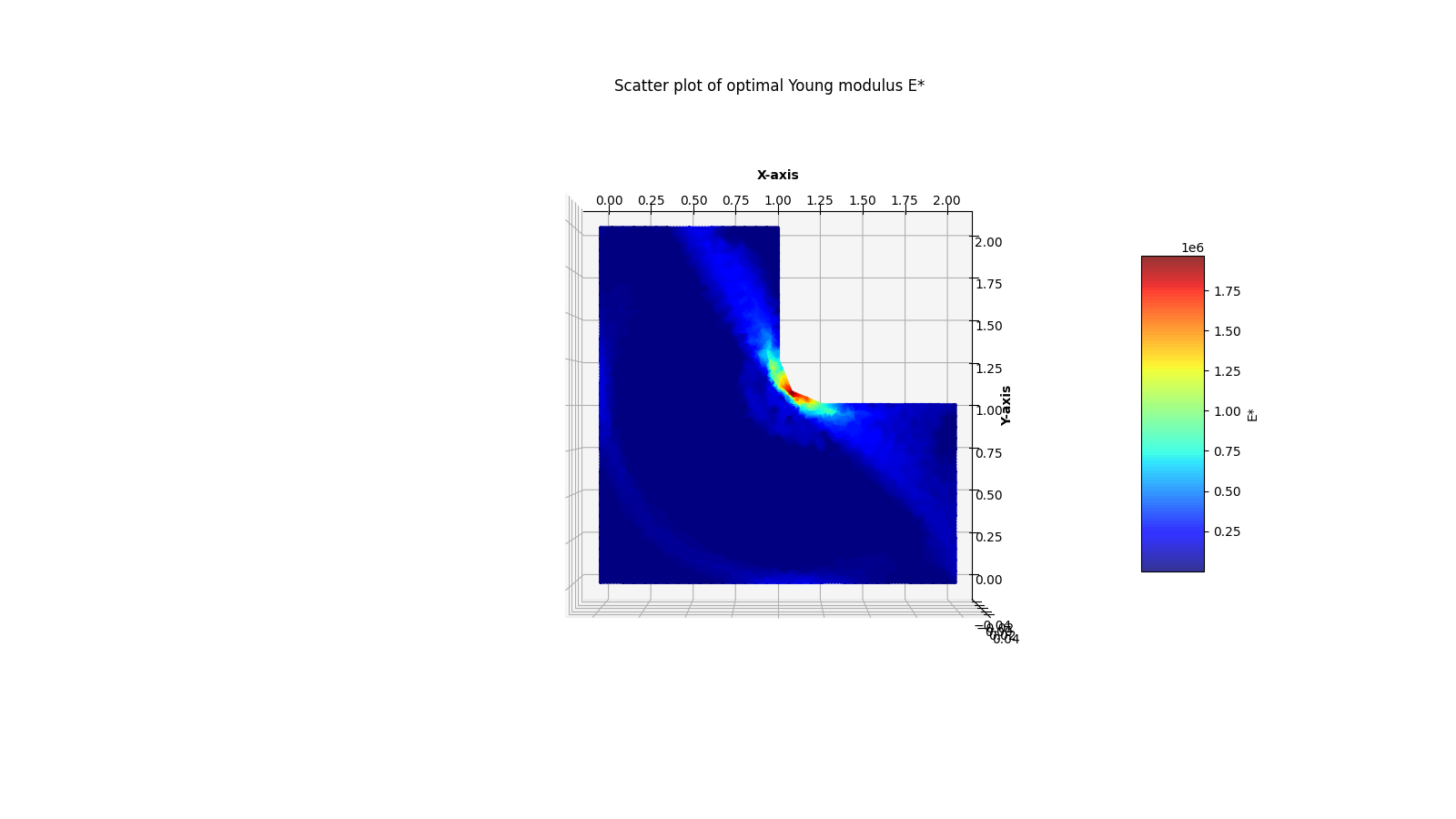}}\\
	\subfloat[vp-IMD, $p=2$]{\includegraphics*[trim={16.3cm 6.2cm 5.4cm 4.7cm},clip,width=0.30\textwidth]{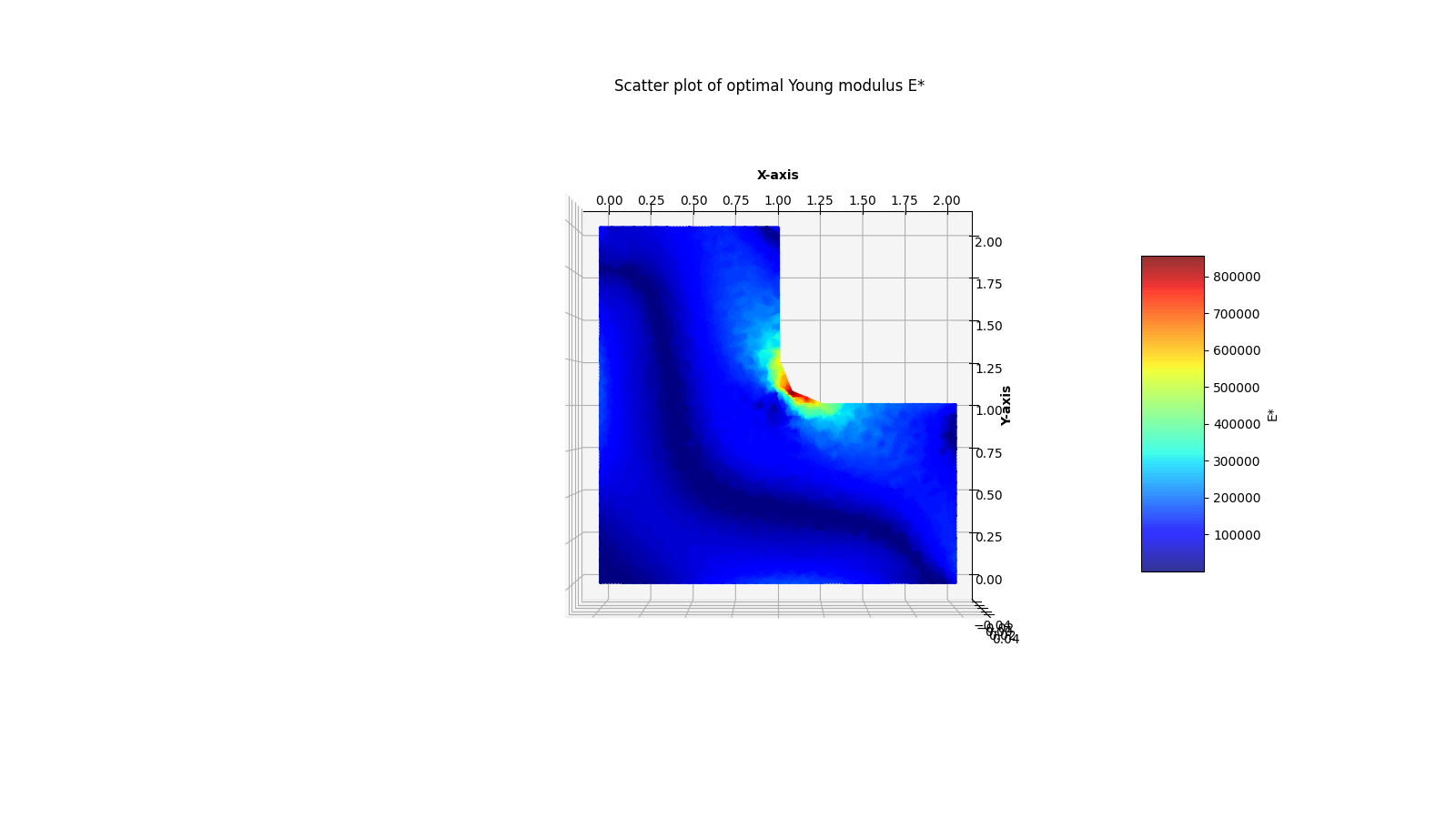}}\hspace{2cm}
	\subfloat[sp-IMD, $p=2$]{\includegraphics*[trim={16.3cm 6.2cm 5.4cm 4.7cm},clip,width=0.30\textwidth]{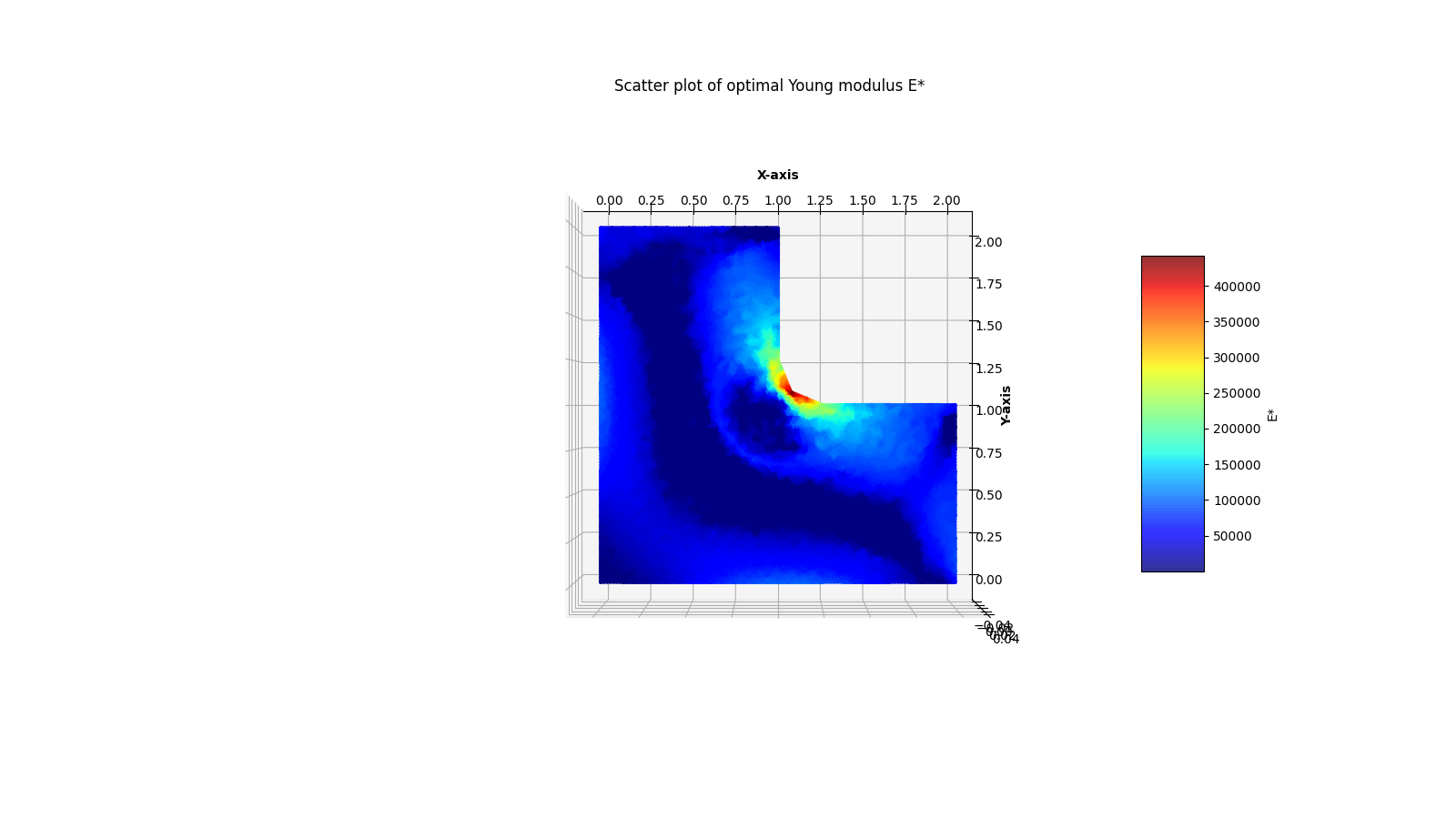}}\\
	\subfloat[vp-IMD, $p=3$]{\includegraphics*[trim={16.3cm 6.2cm 5.4cm 4.7cm},clip,width=0.30\textwidth]{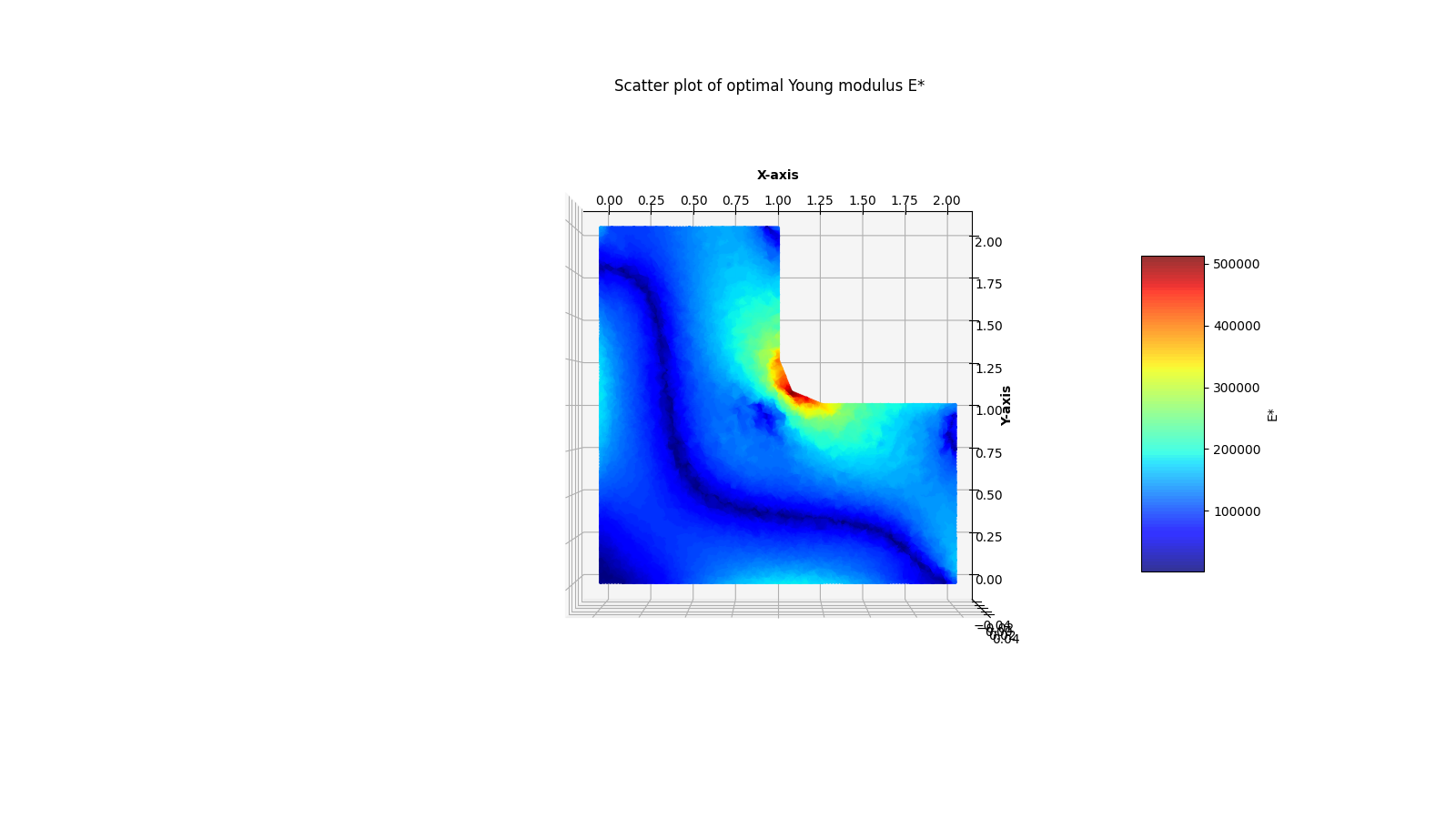}}\hspace{2cm}
	\subfloat[sp-IMD, $p=3$]{\includegraphics*[trim={16.3cm 6.2cm 5.4cm 4.7cm},clip,width=0.30\textwidth]{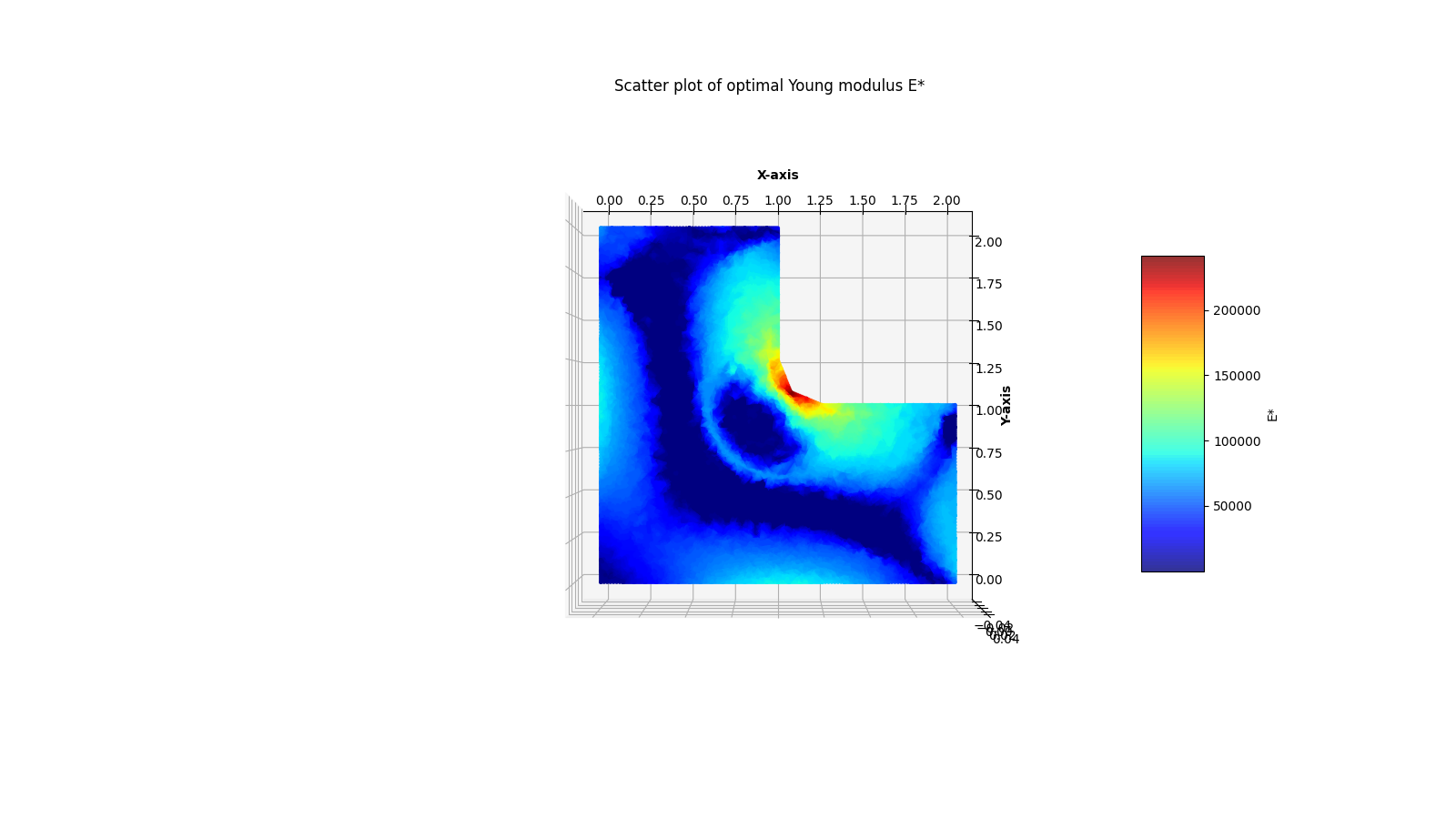}}\\
	\subfloat[vp-IMD, $p=100$]{\includegraphics*[trim={16.3cm 6.2cm 5.4cm 4.7cm},clip,width=0.30\textwidth]{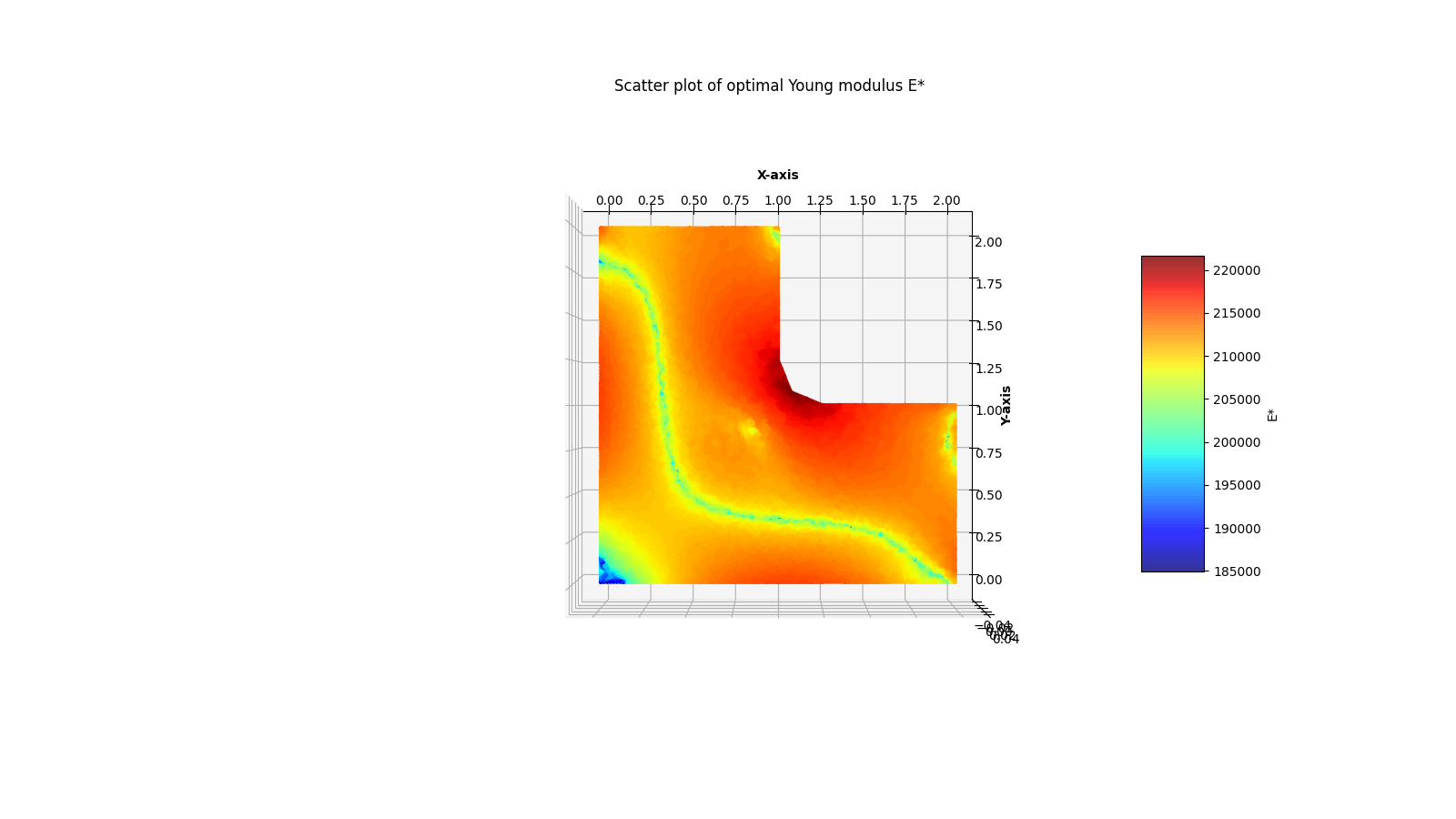}}\hspace{2cm}
	\subfloat[sp-IMD, $p=100$]{\includegraphics*[trim={16.3cm 6.2cm 5.4cm 4.7cm},clip,width=0.30\textwidth]{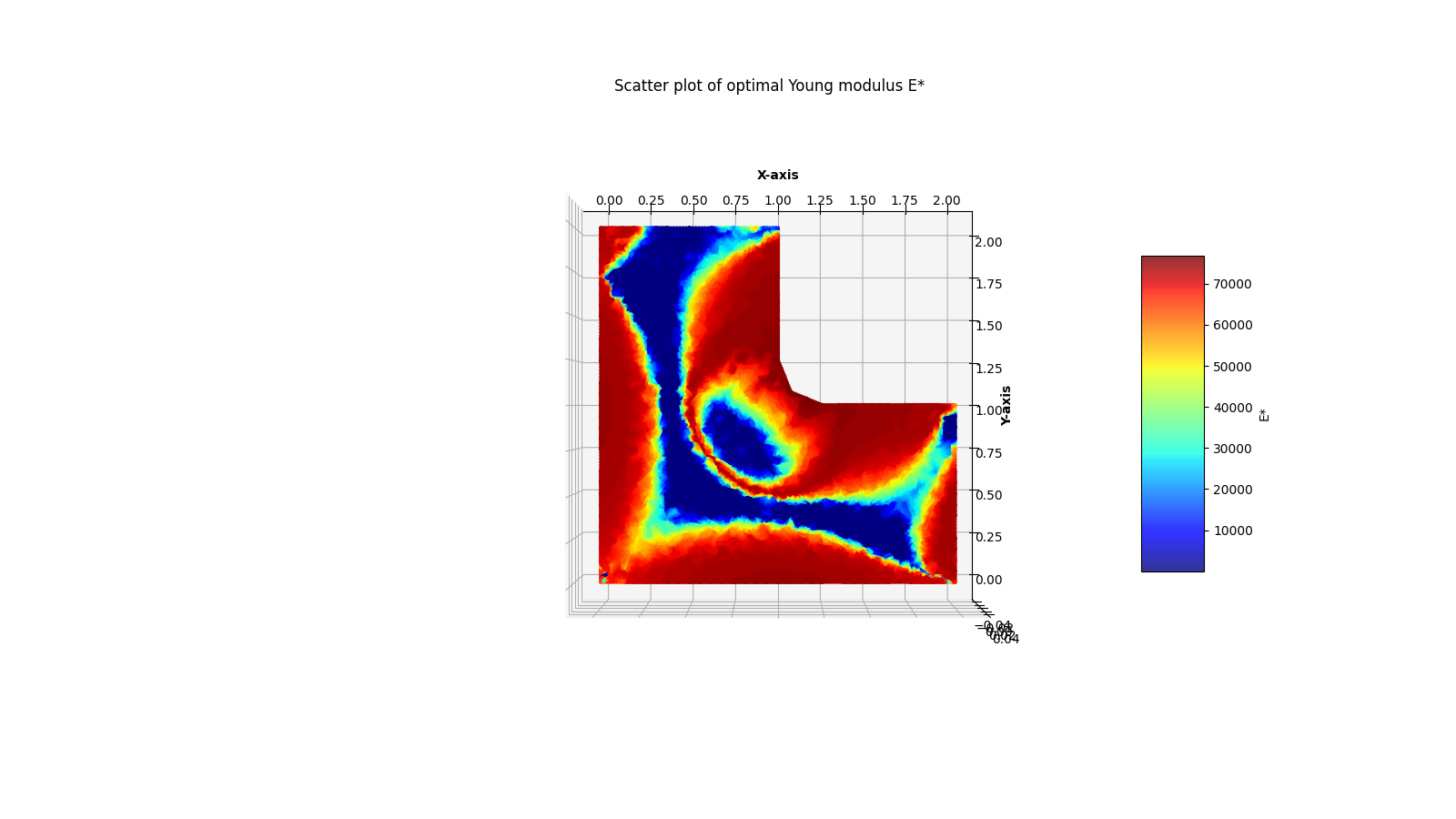}}
	\caption{L-shaped domain problem, optimal Young modulus $\hat{E}$ for various $p\in[1,\infty)$.}
	\label{fig:E}       
\end{figure}

\begin{figure}[h]
\captionsetup[subfloat]{labelformat=simple,farskip=3pt,captionskip=1pt}
	\centering
    \subfloat[IMD, $p=1$]{\includegraphics*[trim={16.3cm 6.2cm 5.4cm 4.7cm},clip,width=0.30\textwidth]{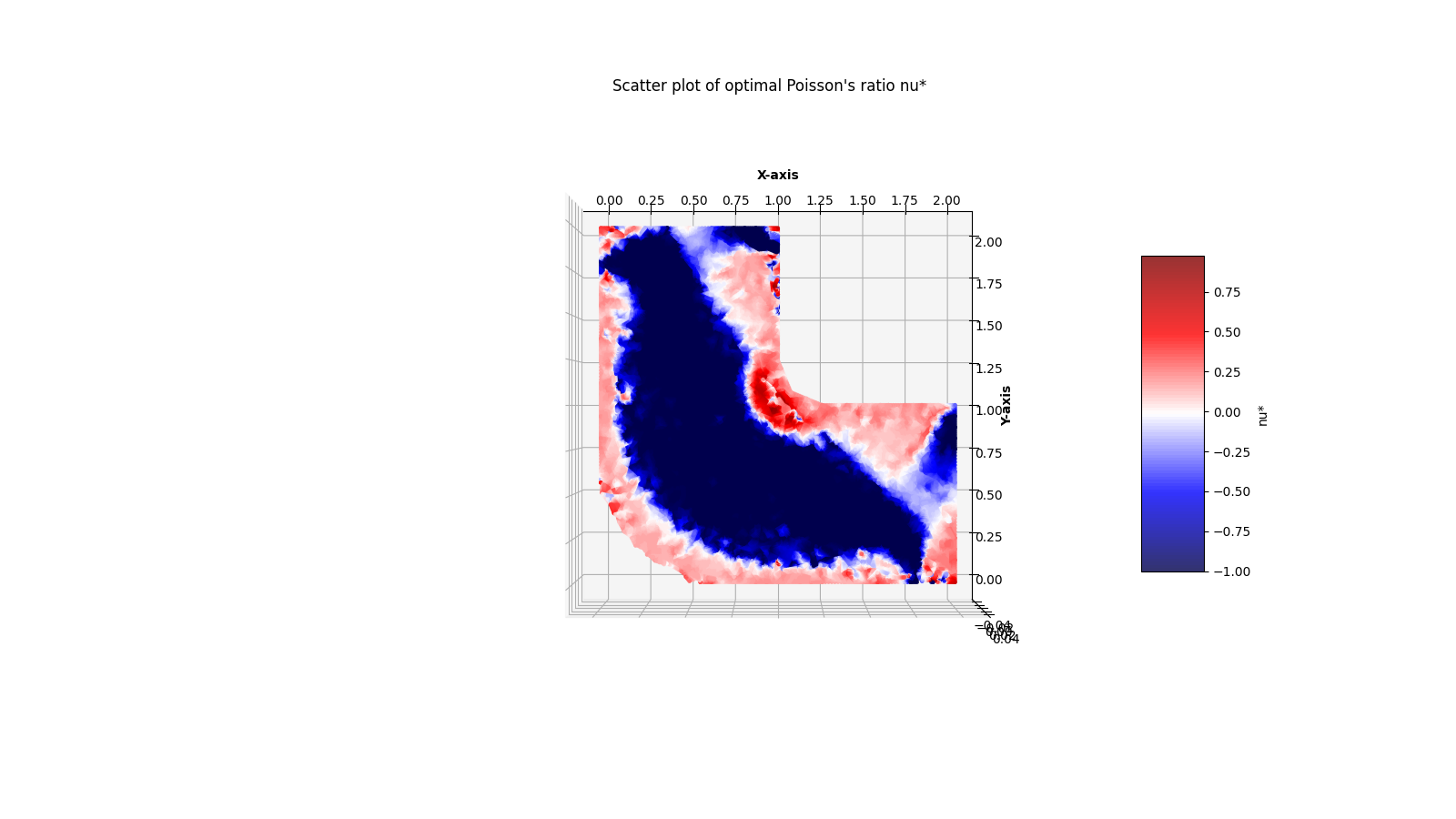}}\\
	\subfloat[vp-IMD, $p=2$]{\includegraphics*[trim={16.3cm 6.2cm 5.4cm 4.7cm},clip,width=0.30\textwidth]{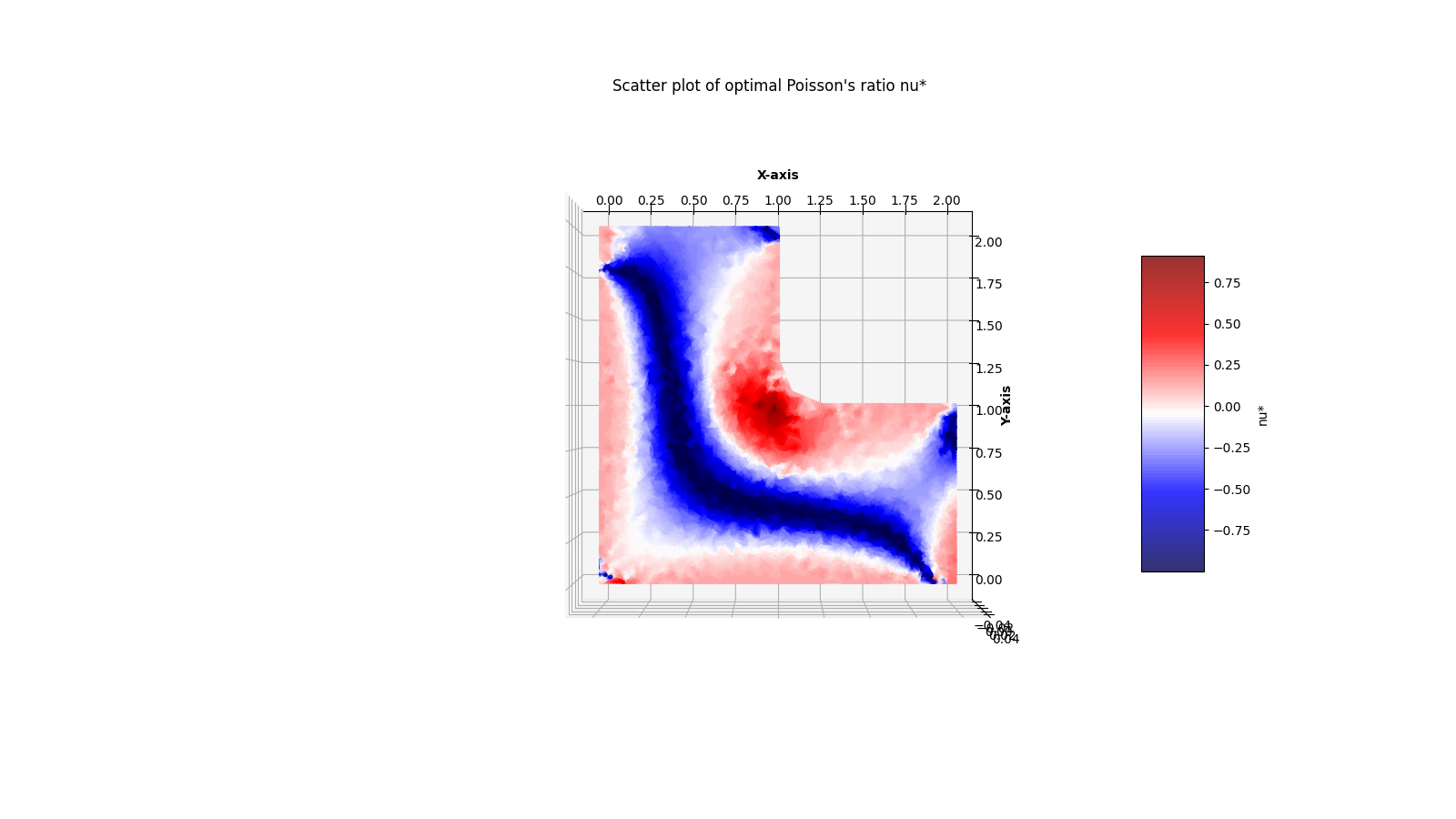}}\hspace{2cm}
	\subfloat[sp-IMD, $p=2$]{\includegraphics*[trim={16.3cm 6.2cm 5.4cm 4.7cm},clip,width=0.30\textwidth]{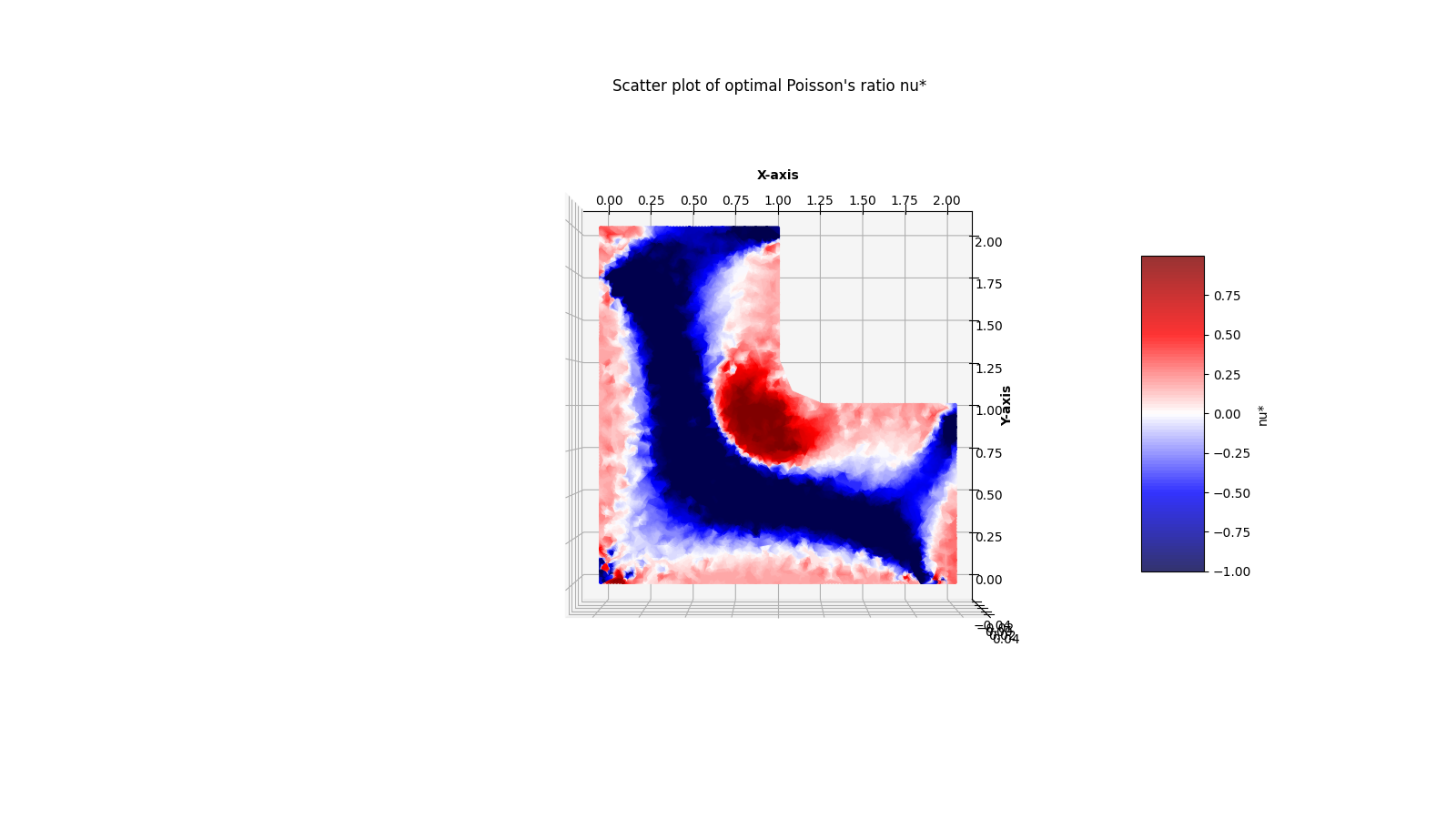}}\\
	\subfloat[vp-IMD, $p=3$]{\includegraphics*[trim={16.3cm 6.2cm 5.4cm 4.7cm},clip,width=0.30\textwidth]{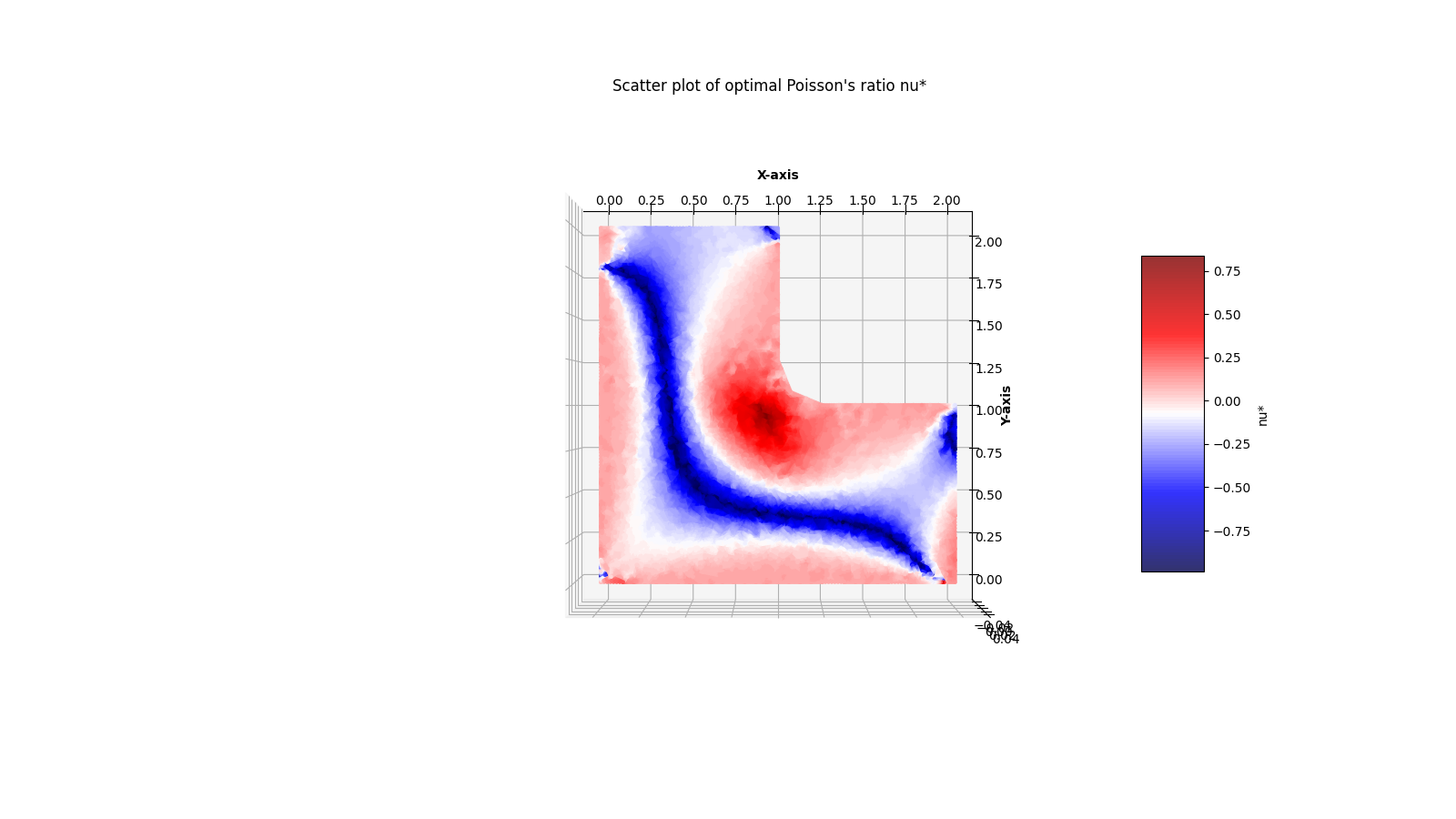}}\hspace{2cm}
	\subfloat[sp-IMD, $p=3$]{\includegraphics*[trim={16.3cm 6.2cm 5.4cm 4.7cm},clip,width=0.30\textwidth]{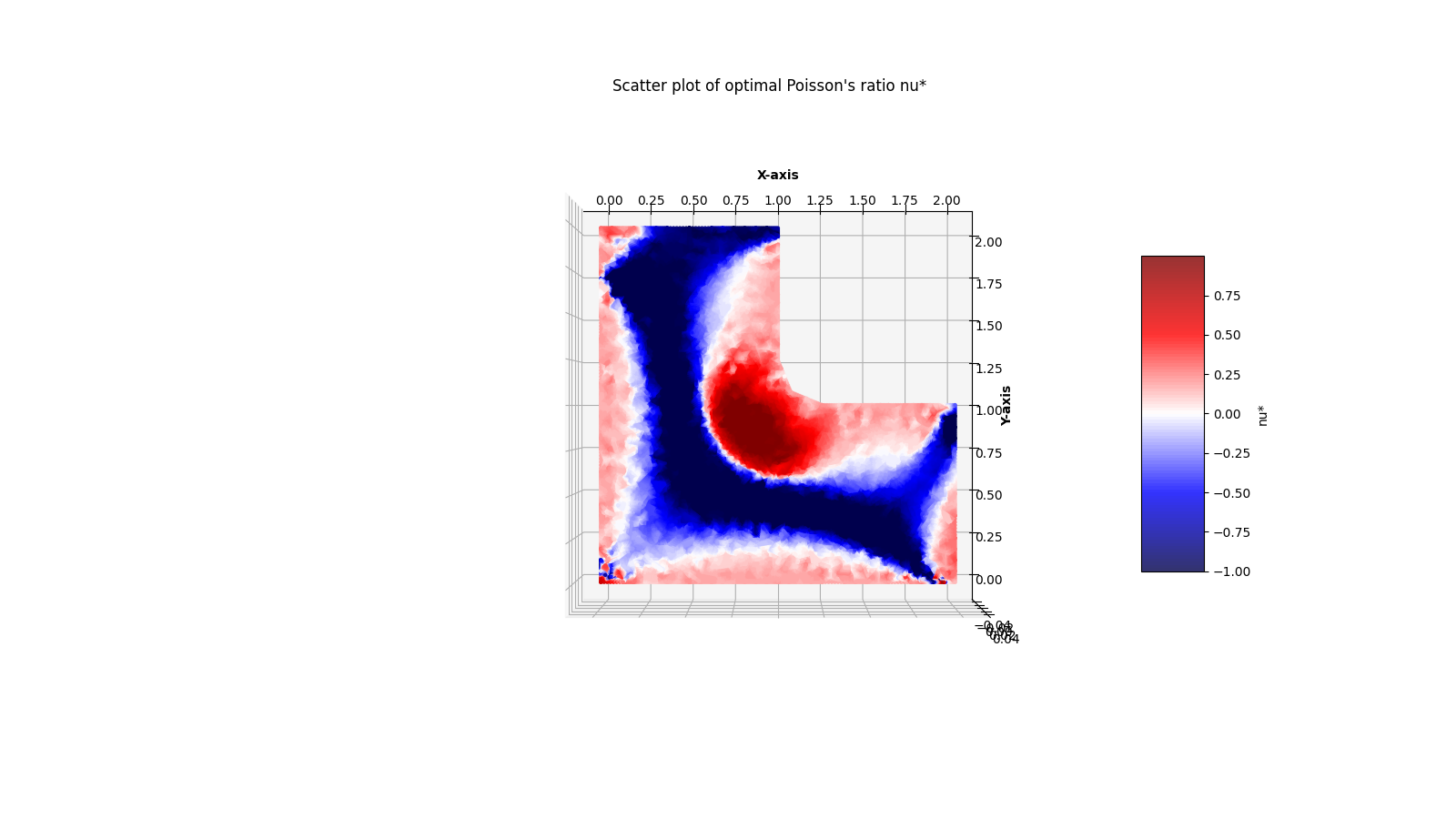}}\\
	\subfloat[vp-IMD, $p=100$]{\includegraphics*[trim={16.3cm 6.2cm 5.4cm 4.7cm},clip,width=0.30\textwidth]{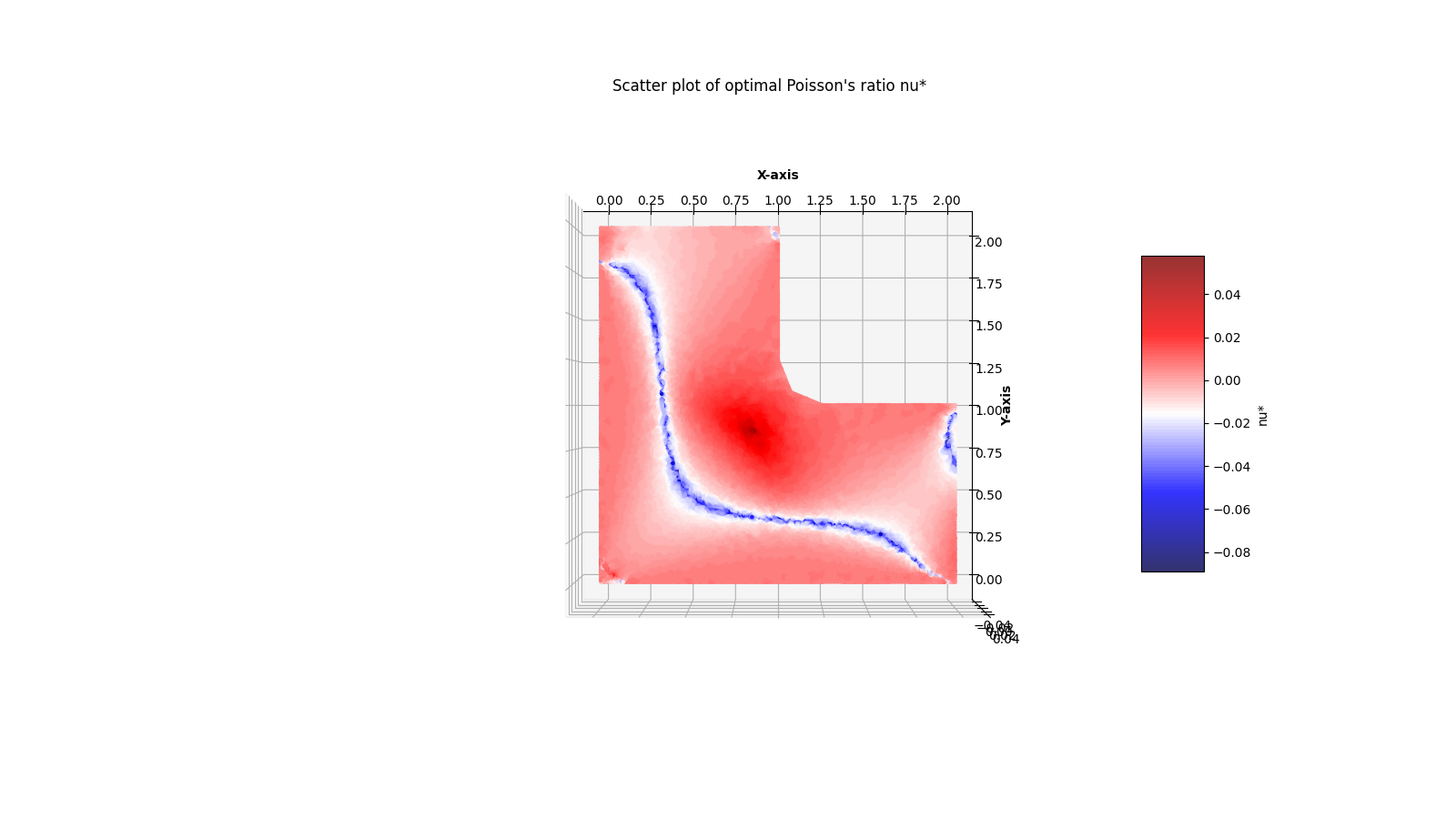}}\hspace{2cm}
	\subfloat[sp-IMD, $p=100$]{\includegraphics*[trim={16.3cm 6.2cm 5.4cm 4.7cm},clip,width=0.30\textwidth]{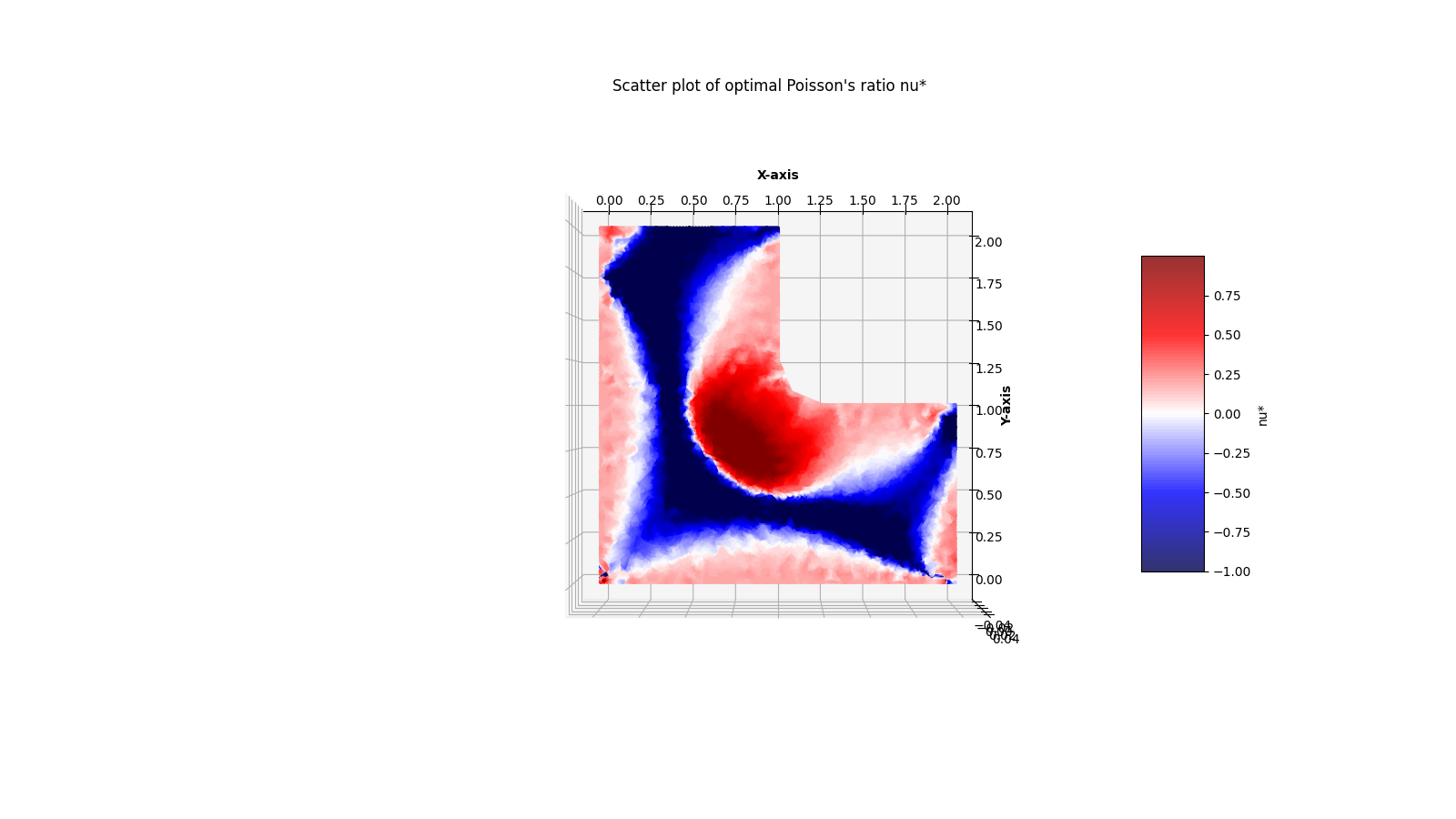}}
	\caption{L-shaped domain problem, optimal Poisson ratio $\hat{\nu}$ for various $p\in[1,\infty)$.}
	\label{fig:nu}       
\end{figure}

First, one can spot the high concentration of the moduli in the vicinity of the reentrant corner for small $p=1,2,3$. This is a manifestation of the fact that exact solutions $(\hat{k},\hat{\mu})$ are elements of $L^p(\O;\R_+)^2$, and no uniform local bounds on the moduli is present in the formulation. Thus, the concentrations occur naturally, since the density of the moduli follow the intensity of the optimal stresses (solving the problems \eqref{eq:primal_vp}, \eqref{eq:primal_sp} rather than the elasticity problem in the homogeneous body), see the formulae in Theorems \ref{thm:stress_vp}, \ref{thm:stress_sp}. We will elaborate on this topic in Section \ref{ssec:vanishing_mass} below.

Increasing the exponent $p$ phases out the foregoing concentrations. This, however, is at the expense of  diffusing the moduli within the whole design domain. Only for $p=1$ the optimization method cuts out a subregion of $\Omega$ around the lower left corner, where $(\hat{k},\hat{\mu})$ is flat zero. Already for small $p>1$ the support of the two functions becomes the whole $\O$. See Section \ref{ssec:cutting-out} for the further discussion.

On another note, for every $p$ one can discern subregions of $\O$ where either $\hat{k}$ or $\hat{\mu}$ vanish. In these regions the Young modulus $\hat{E}$ is zero, whilst the Poisson ratio $\hat{\nu}$ attains one of the extreme values $1$ or $-1$. In particular, it is typical that the optimal design involves subregions where the isotropic material is auxetic, see e.g. \cite{czarnecki2015b}.

Let us now confront the behaviour of the optimization method when $p$ is pushed to infinity with the theoretical predictions from Section \ref{ssec:p=infty}. First, for the (vp-IMD) method, we see that for $p = 100$ both $\hat{k}$ and $\hat{\mu}$ exhibit a relatively small oscillation between, roughly, $90\,000$ N/m and $11\,000$ N/m. Numerical tests have been conducted also for $p=10^6$ for which the optimal moduli $\hat{k}, \hat{\mu}$ were, numerically speaking, constant and equal to $E_0/2$. This confirms the predictions for the vp-IMD method in Section \ref{sssec:trivial}. For the sp-IMD method and $p=100$ one can observe that the plots of $\hat{k}$ and $\hat{\mu}$ are mutually complementary, in the sense that $d\hat{k} +2 \beta^2 \hat\mu = 2 \hat{k}+4 \hat\mu \approx E_0$, which once again tracks with the assertions made in Section \ref{sssec:p_infty_spIMD}.

Another remarkable feature of the sp-IMD variant of the method is worth noting. The point-wise variation of the contribution of $\hat{k}(x)$ and $\hat{\mu}(x)$ to the trace of the stiffness tensor $d\hat{k}(x) +2 \beta^2 \hat\mu(x)$ appears to be very stable with respect to $p$ unless we are not  close to the limit case $p=1$. This can be discerned from the plots of Poisson ratio $\hat{\nu}$, see the right column in Fig. \ref{fig:nu}. The plot barely differs for $p=2,3,100$.

Finally, let us study the variation of the minimal compliances for the two methods, cf. Table \ref{tab:Ldomain}. We can see them monotonincally changing with $p$: $\hat{\mathcal{C}}_{vp}$ decreases, and $\hat{\mathcal{C}}_{sp}$ increases. The second tendency is in line with Proposition \ref{prop:mono_sp}. The bottom line of Table \ref{tab:Ldomain}, which displays $\hat{\mathcal{C}}_{vp}/(1+\beta^2)^{1/p}$, allows to verify Proposition \ref{prop:mono_vp}.  The fact that $\hat{\mathcal{C}}_{vp}$ itself is decreasing reoccurs in the rest of the numerical examples that the authors investigated in their study, yet a theoretical explanation of this phenomenon is, at the moment, out of reach.

\end{example}

\begin{example}[\textbf{Cantilever problem}]
    For the second example we chose a rectangular cantilever subject to a bending load, see Fig. \ref{fig:dom}(b). This time around, a regular mesh of quadrilateral elements is employed, see \cite{czarnecki2021isotropic} for the details of the implementations in this variant of mesh.
    
    The scatter plots of $\hat{k}$ and $\hat{\mu}$ are showed in Figs \ref{fig:k_canti}, \ref{fig:mu_canti}. Table \ref{tab:cantilever} presents how the minimal compliances change with $p$.
    Vast majorities of the comments made in the previous examples smoothly translates here. Therefore, we conclude discussing this examples right away to avoid repetitions.

\begin{table}[h]
	\scriptsize
	\centering
	\caption{Optimal compliance in the cantilever problem for various $p\in[1,\infty)$.}
	\begin{tabular}{l|ccccccc}
		 & $ p=1$ (IMD)  & $ p=2$   & $ p=3$       & $ p=10^2$  & $p=10^6$\\
		\midrule[0.36mm]
		$\hat{\mathcal{C}}_{vp}$ [Nm]
		& 0.167899 & 0.135427 & 0.126563 & 0.11285 & 0.112529 \\
		 $\hat{\mathcal{C}}_{sp}$ [Nm] & 0.167899 & 0.229177 & 0.255042 & 0.321830 & 0.324407
         \\
		 $\hat{\mathcal{C}}_{vp}/(1+\beta^2)^{1/p}$ [Nm] & 0.055966 & 0.078189 & 0.087754 & 0.111629 & 0.112529
	\end{tabular}
	\label{tab:cantilever}
\end{table}

\begin{figure}[h]
\captionsetup[subfloat]{labelformat=simple,farskip=3pt,captionskip=1pt}
	\centering
    \subfloat[IMD, $p=1$]{\includegraphics*[trim={16.3cm 6.2cm 5.4cm 4.7cm},clip,width=0.30\textwidth]{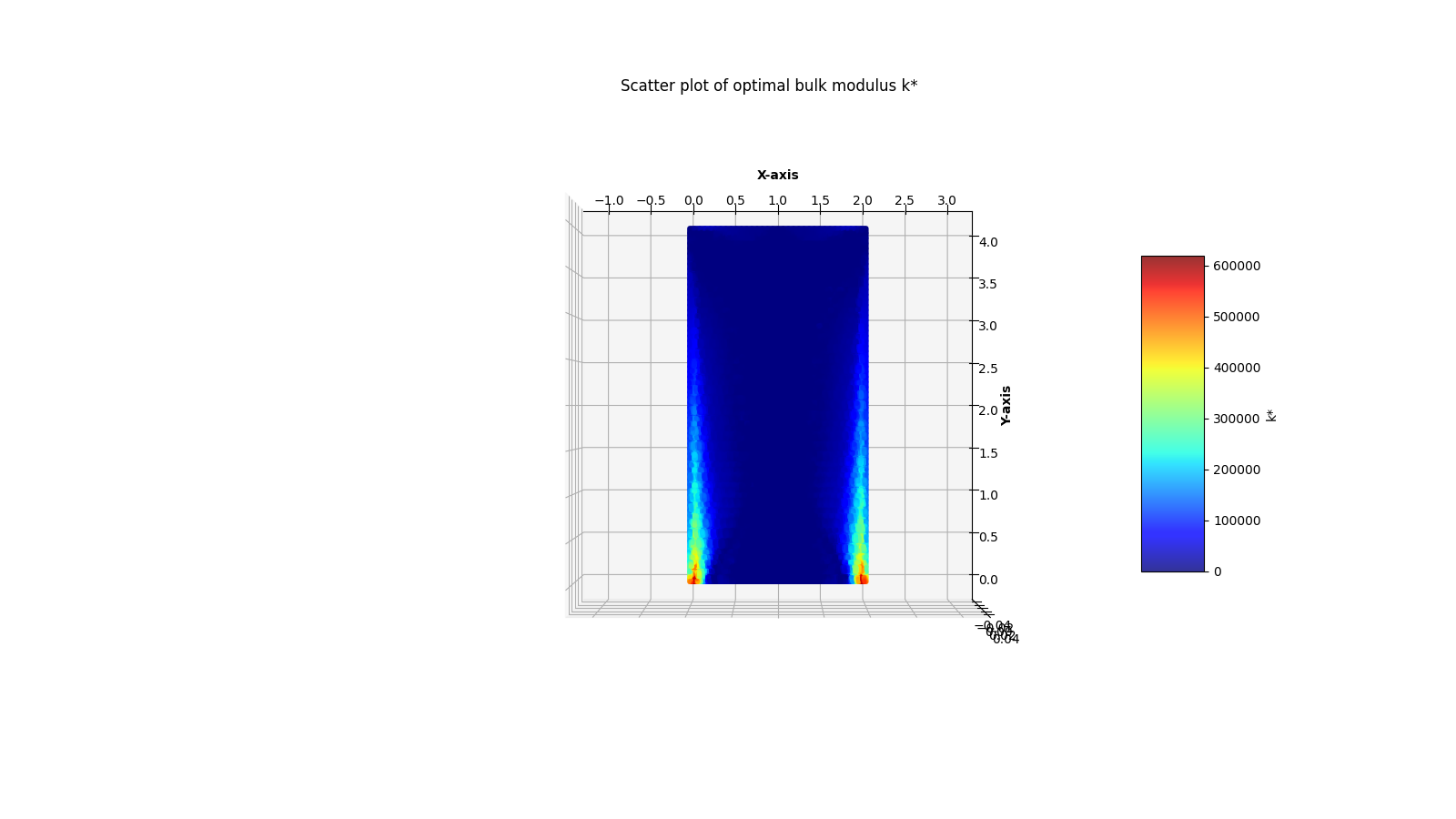}}\\
	\subfloat[vp-IMD, $p=2$]{\includegraphics*[trim={16.3cm 6.2cm 5.4cm 4.7cm},clip,width=0.30\textwidth]{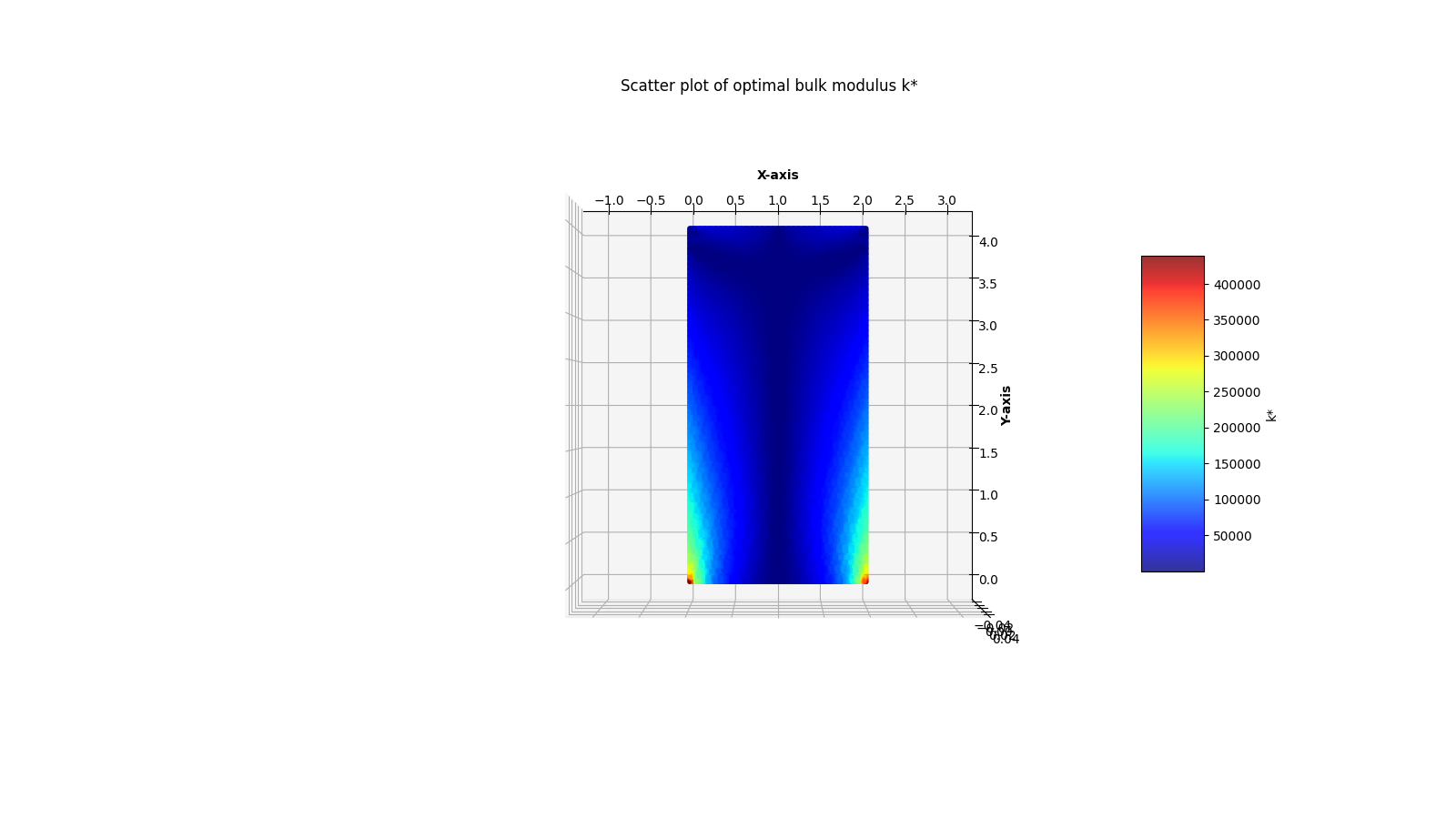}}\hspace{2cm}
	\subfloat[sp-IMD, $p=2$]{\includegraphics*[trim={16.3cm 6.2cm 5.4cm 4.7cm},clip,width=0.30\textwidth]{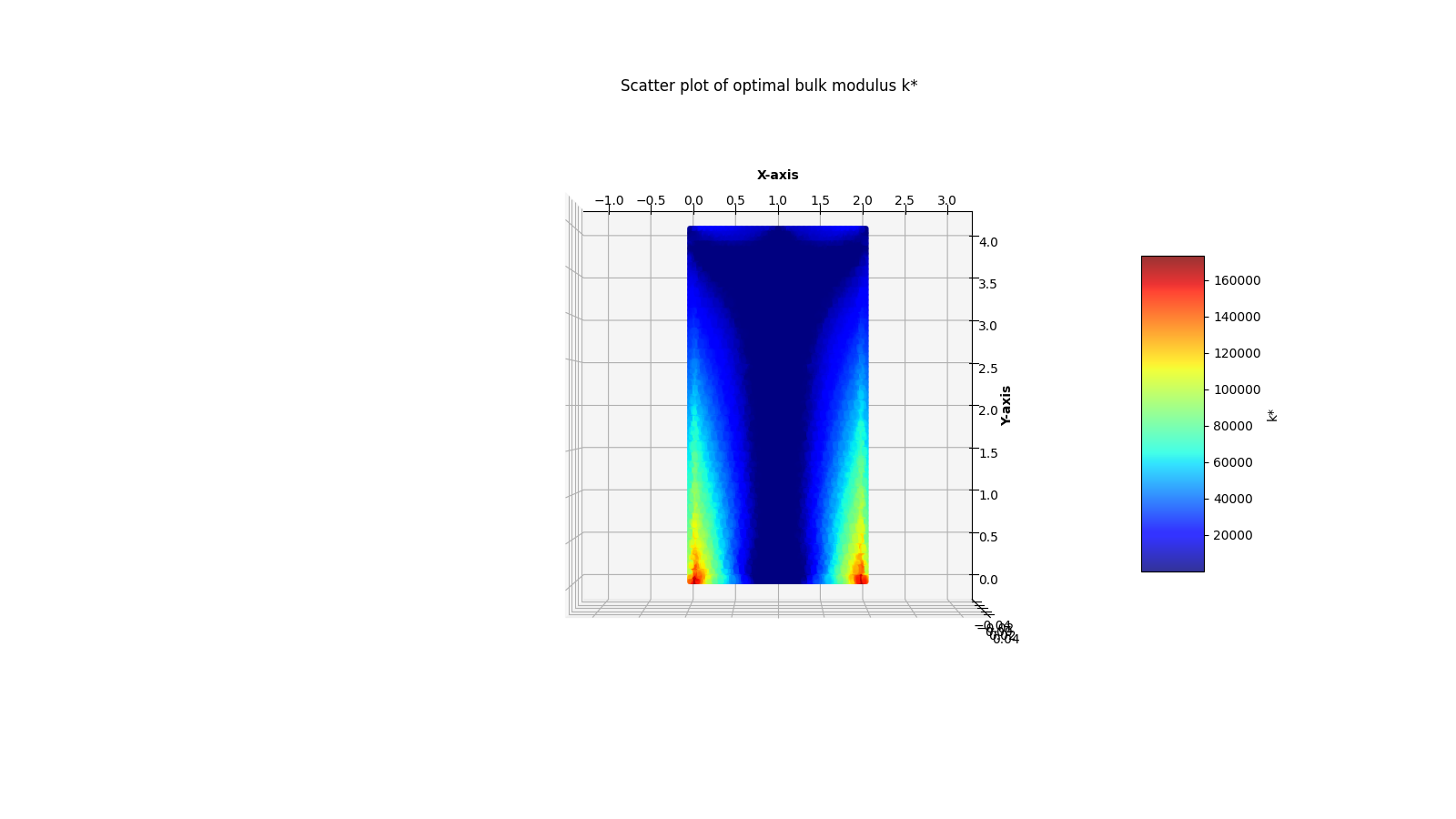}}\\
	\subfloat[vp-IMD, $p=3$]{\includegraphics*[trim={16.3cm 6.2cm 5.4cm 4.7cm},clip,width=0.30\textwidth]{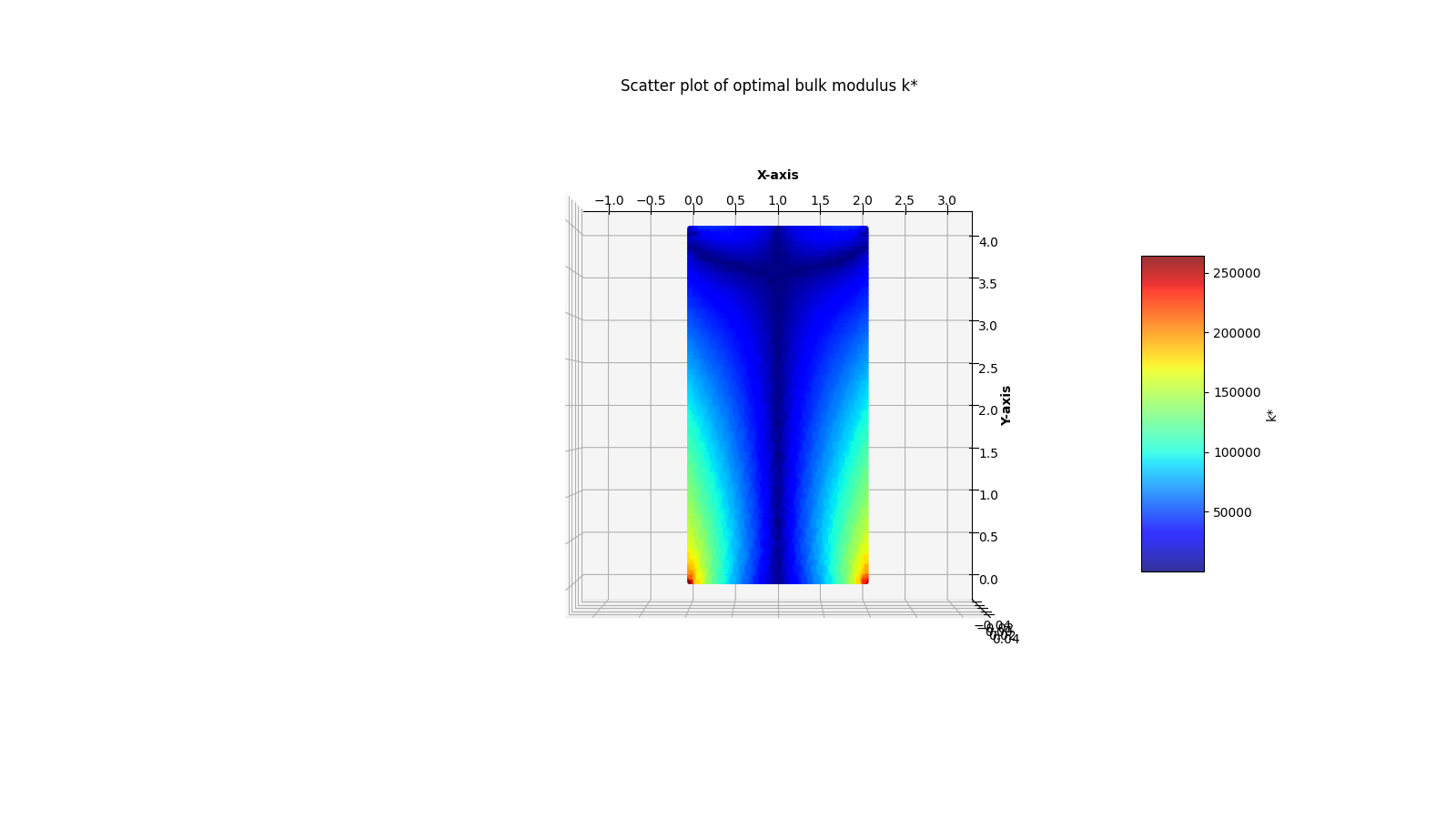}}\hspace{2cm}
	\subfloat[sp-IMD, $p=3$]{\includegraphics*[trim={16.3cm 6.2cm 5.4cm 4.7cm},clip,width=0.30\textwidth]{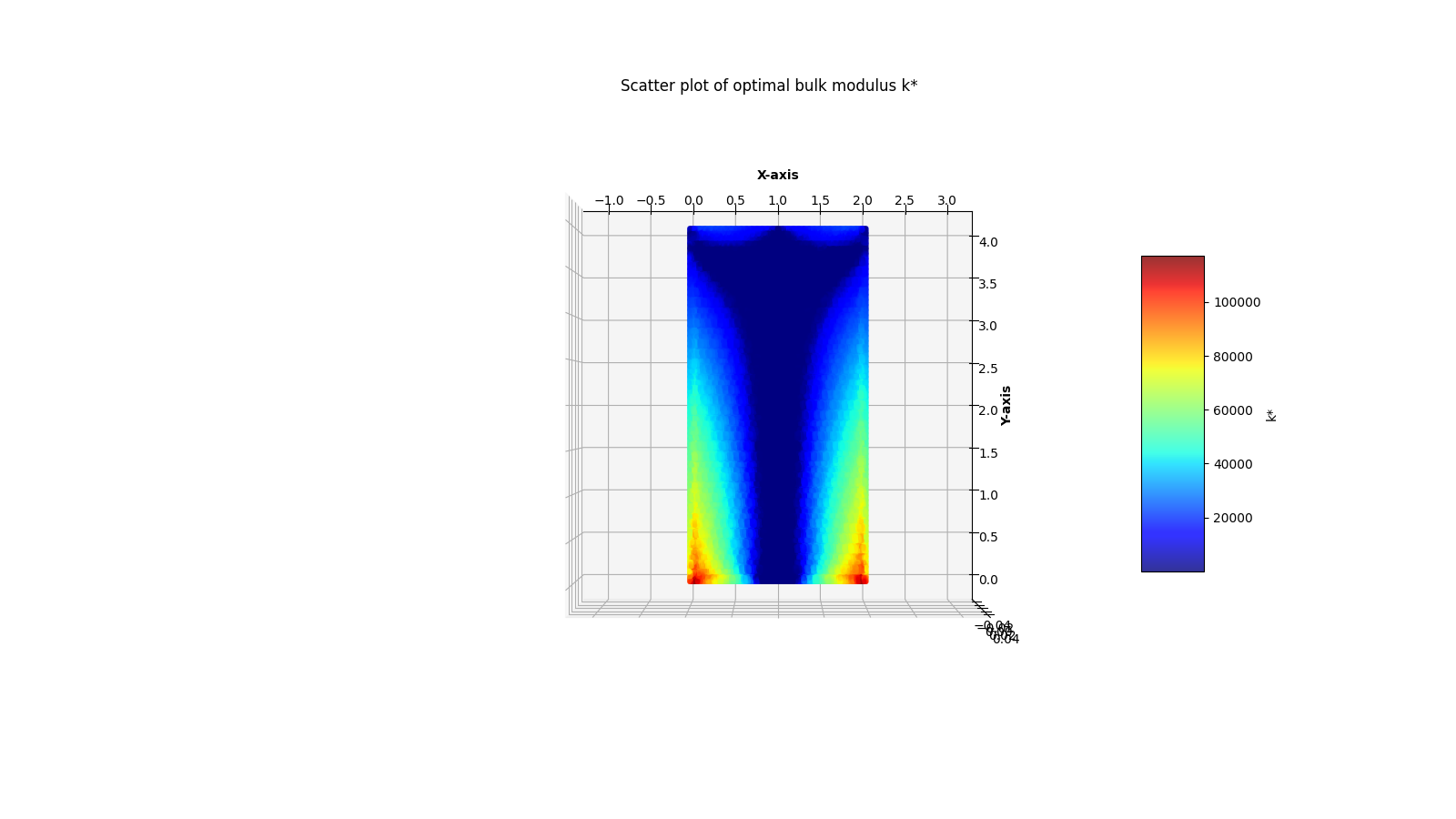}}\\
	\subfloat[vp-IMD, $p=100$]{\includegraphics*[trim={16.3cm 6.2cm 5.4cm 4.7cm},clip,width=0.30\textwidth]{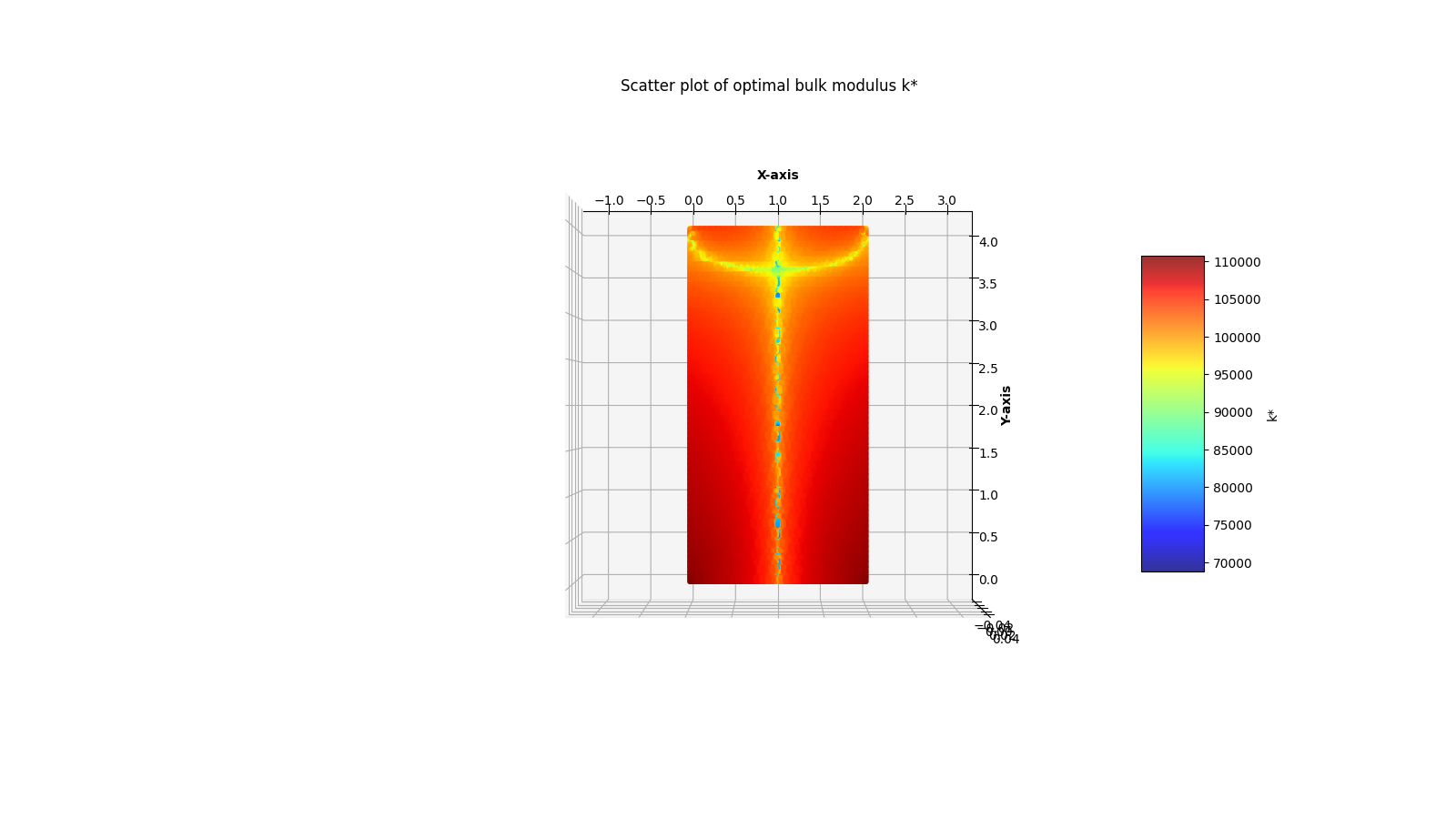}}\hspace{2cm}
	\subfloat[sp-IMD, $p=100$]{\includegraphics*[trim={16.3cm 6.2cm 5.4cm 4.7cm},clip,width=0.30\textwidth]{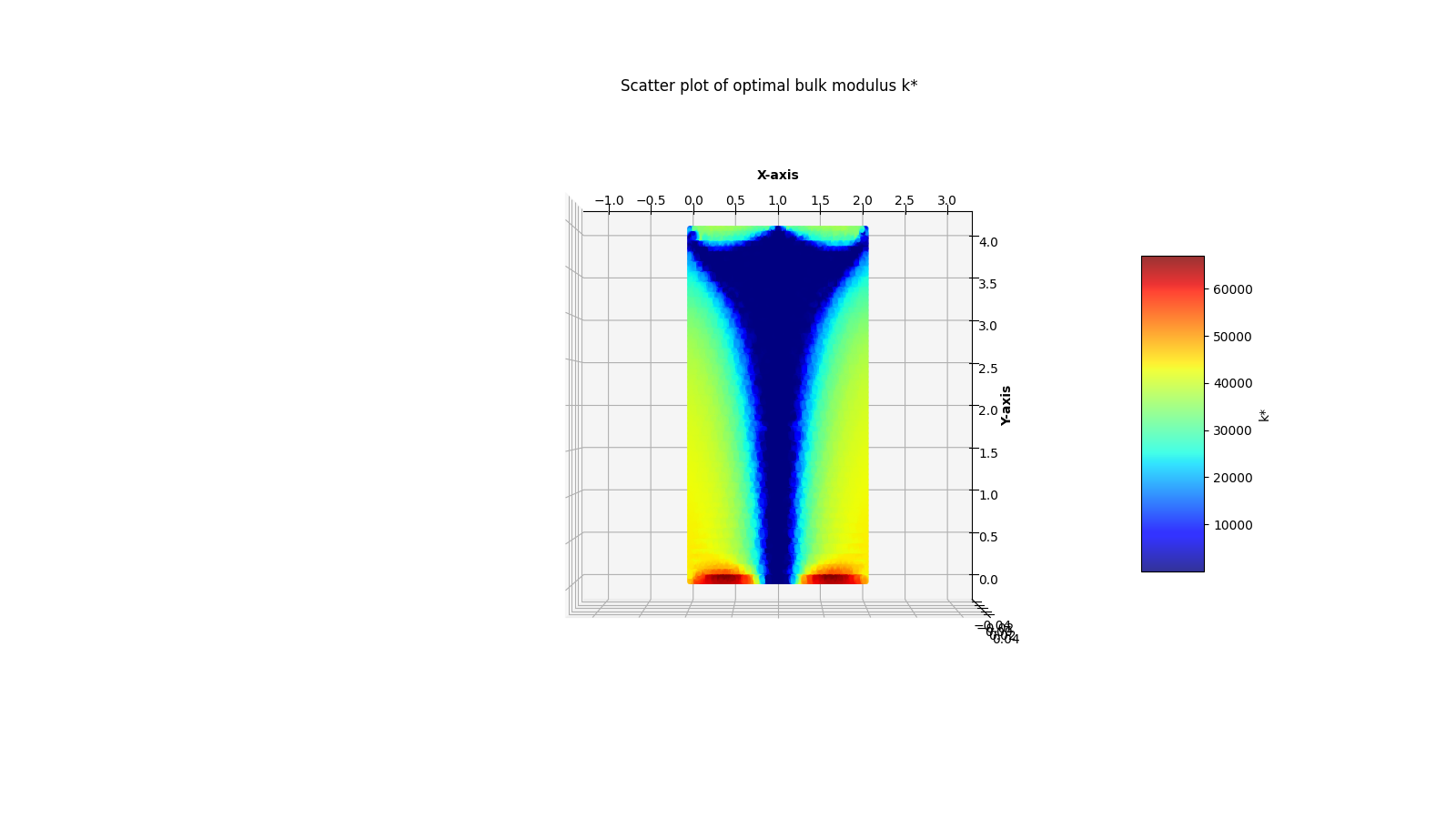}}
	\caption{Cantilever problem, optimal bulk modulus $\hat{k}$ for various $p\in[1,\infty)$.}
	\label{fig:k_canti}       
\end{figure}

\begin{figure}[h]
\captionsetup[subfloat]{labelformat=simple,farskip=3pt,captionskip=1pt}
	\centering
    \subfloat[IMD, $p=1$]{\includegraphics*[trim={16.3cm 6.2cm 5.4cm 4.7cm},clip,width=0.30\textwidth]{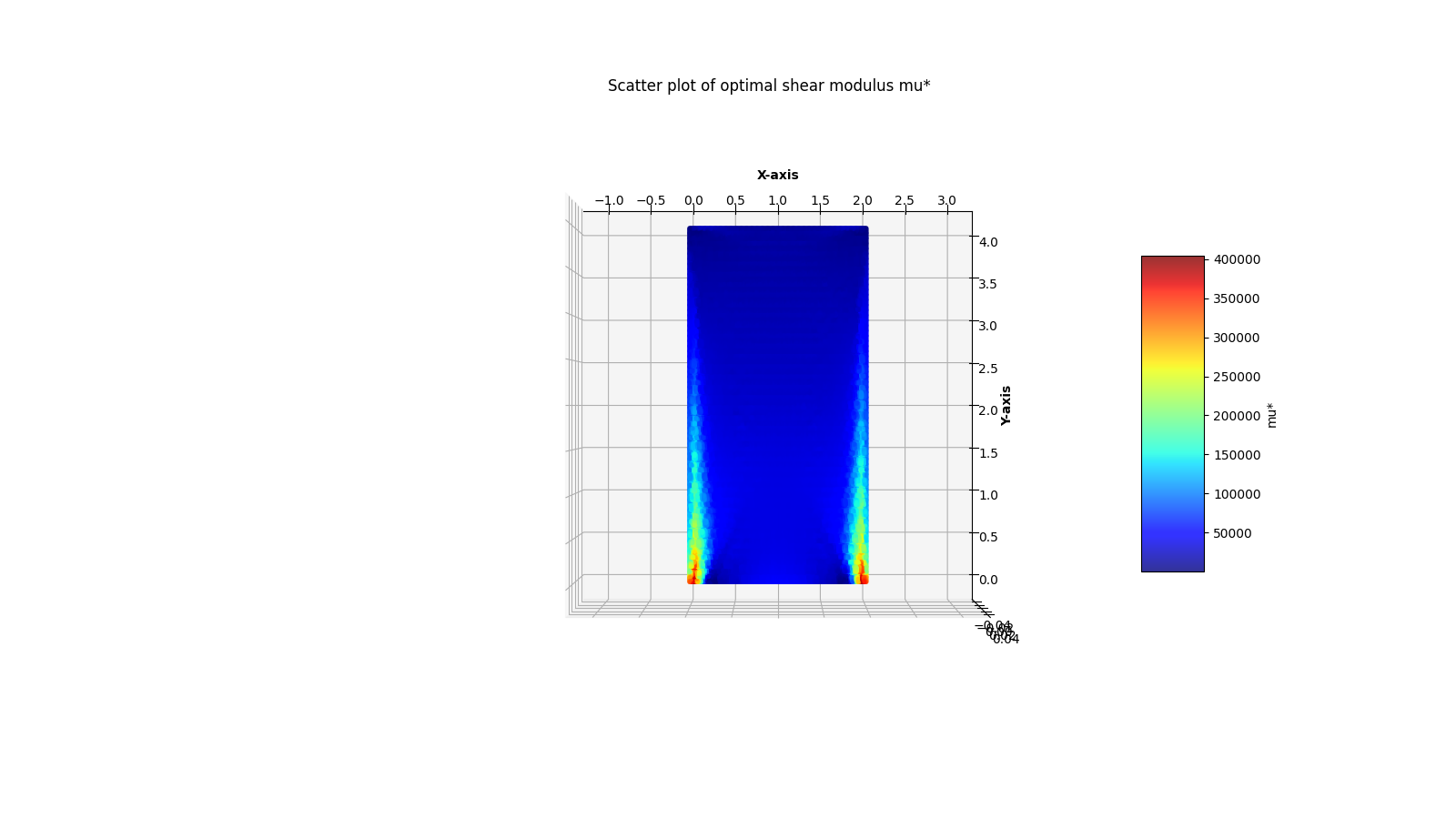}}\\
	\subfloat[vp-IMD, $p=2$]{\includegraphics*[trim={16.3cm 6.2cm 5.4cm 4.7cm},clip,width=0.30\textwidth]{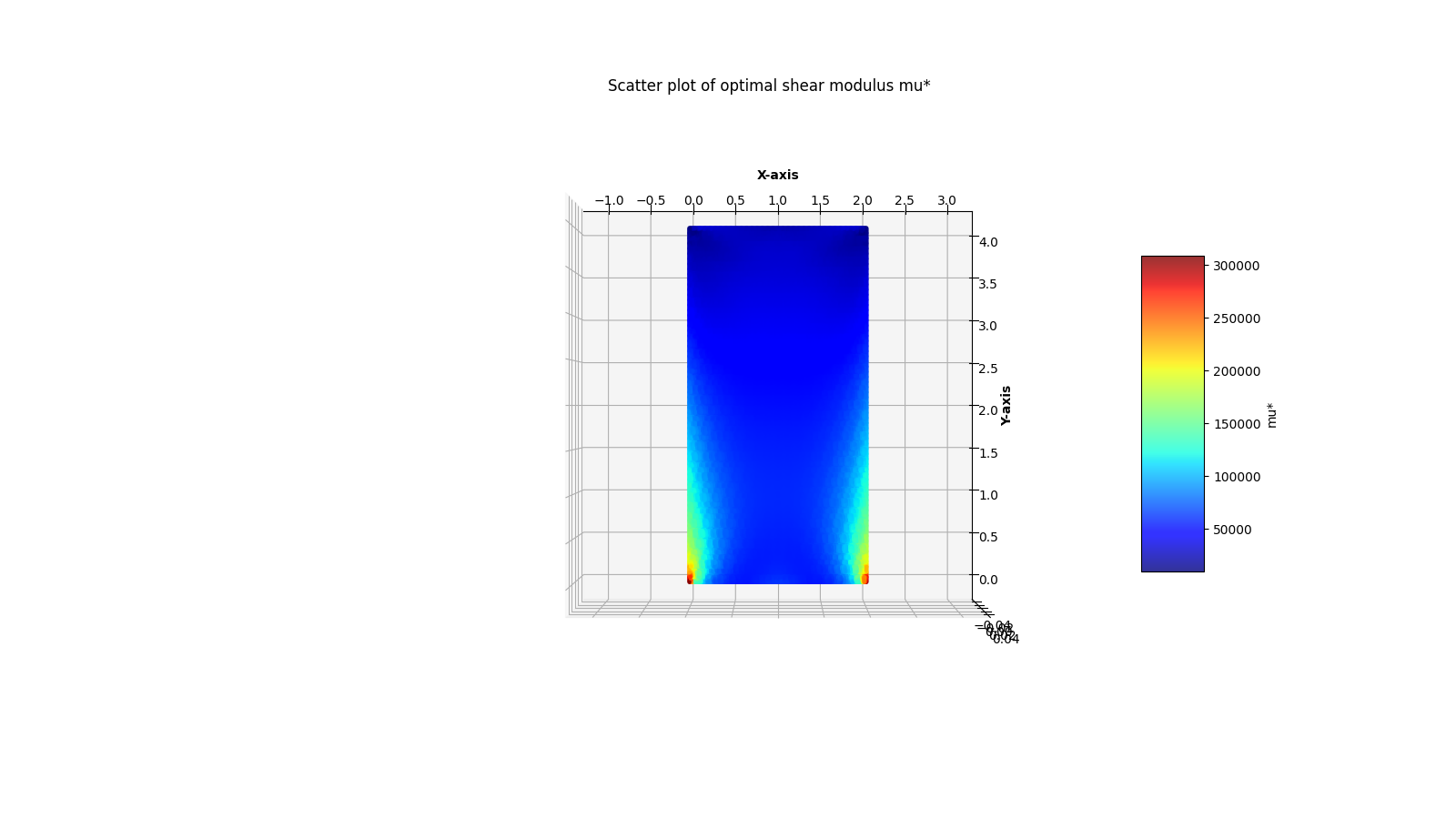}}\hspace{2cm}
	\subfloat[sp-IMD, $p=2$]{\includegraphics*[trim={16.3cm 6.2cm 5.4cm 4.7cm},clip,width=0.30\textwidth]{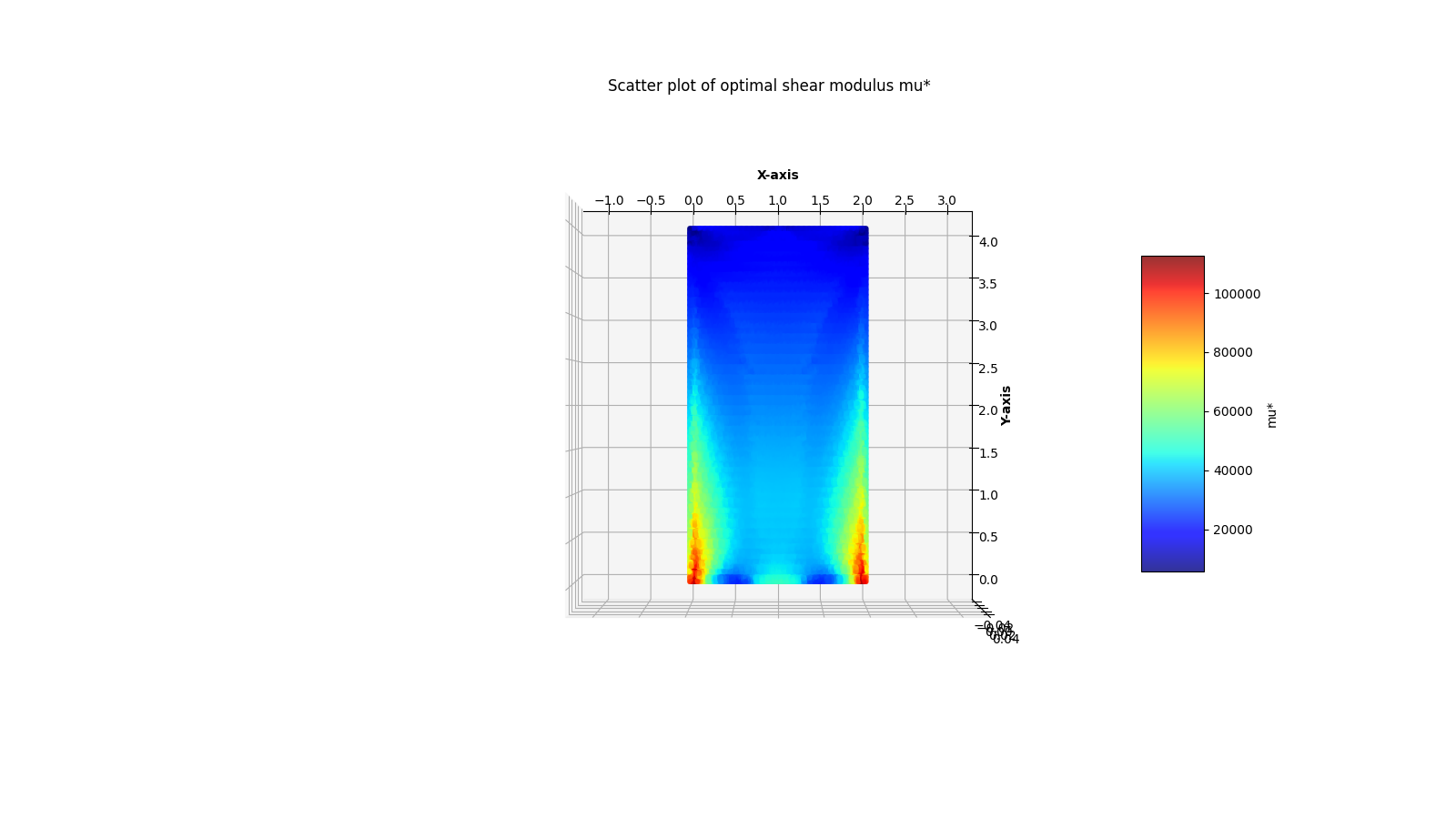}}\\
	\subfloat[vp-IMD, $p=3$]{\includegraphics*[trim={16.3cm 6.2cm 5.4cm 4.7cm},clip,width=0.30\textwidth]{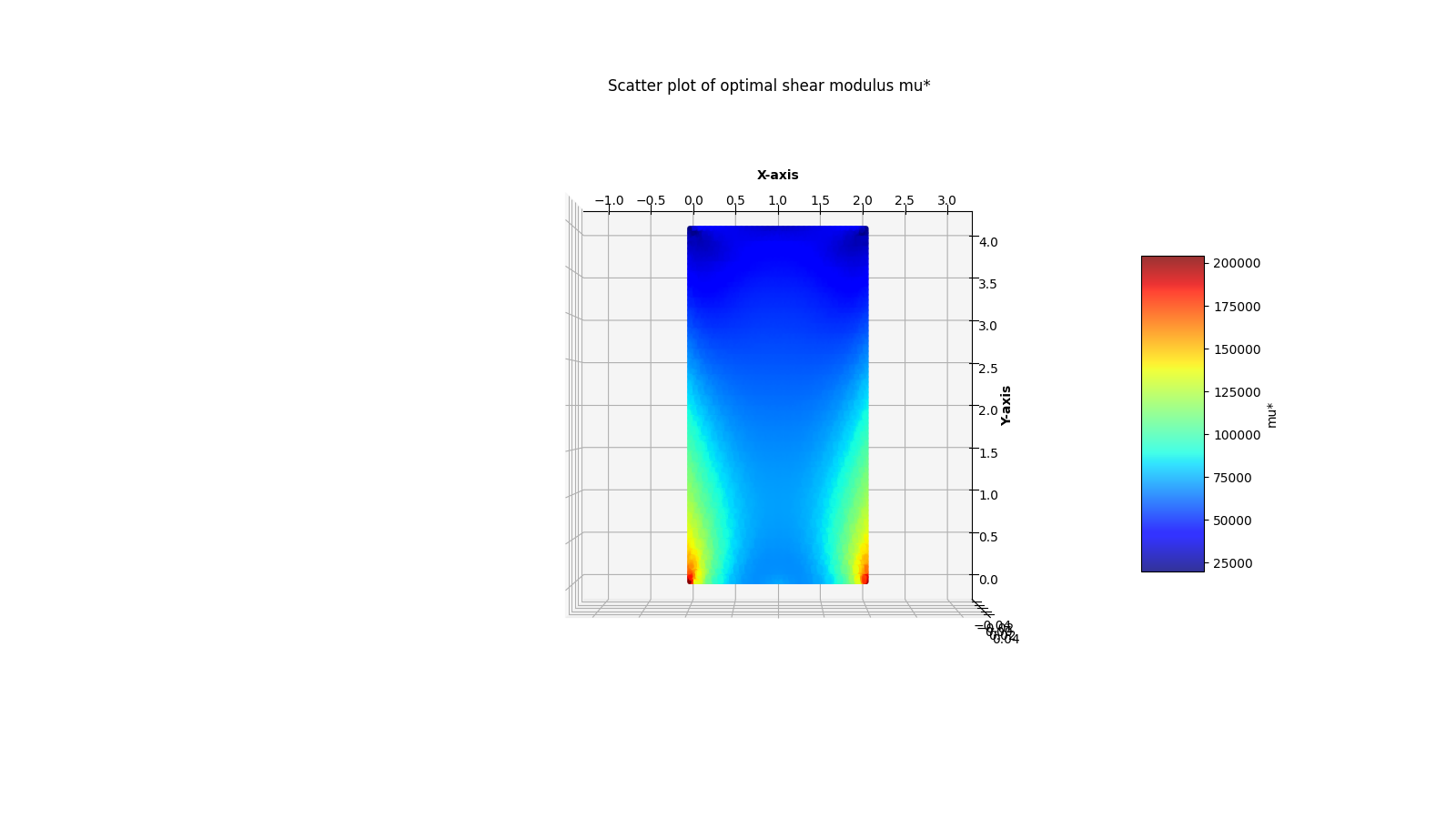}}\hspace{2cm}
	\subfloat[sp-IMD, $p=3$]{\includegraphics*[trim={16.3cm 6.2cm 5.4cm 4.7cm},clip,width=0.30\textwidth]{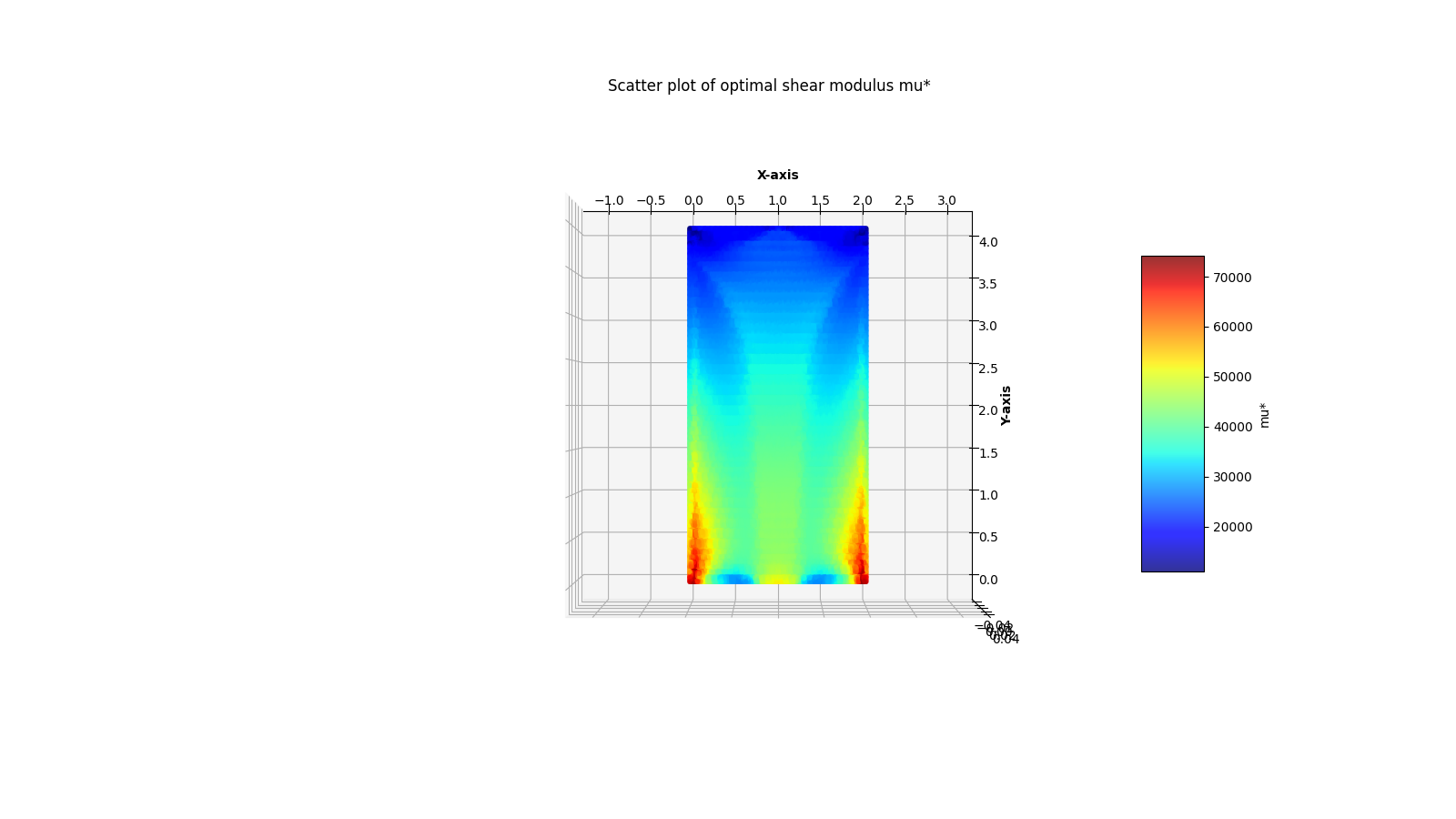}}\\
	\subfloat[vp-IMD, $p=100$]{\includegraphics*[trim={16.3cm 6.2cm 5.4cm 4.7cm},clip,width=0.30\textwidth]{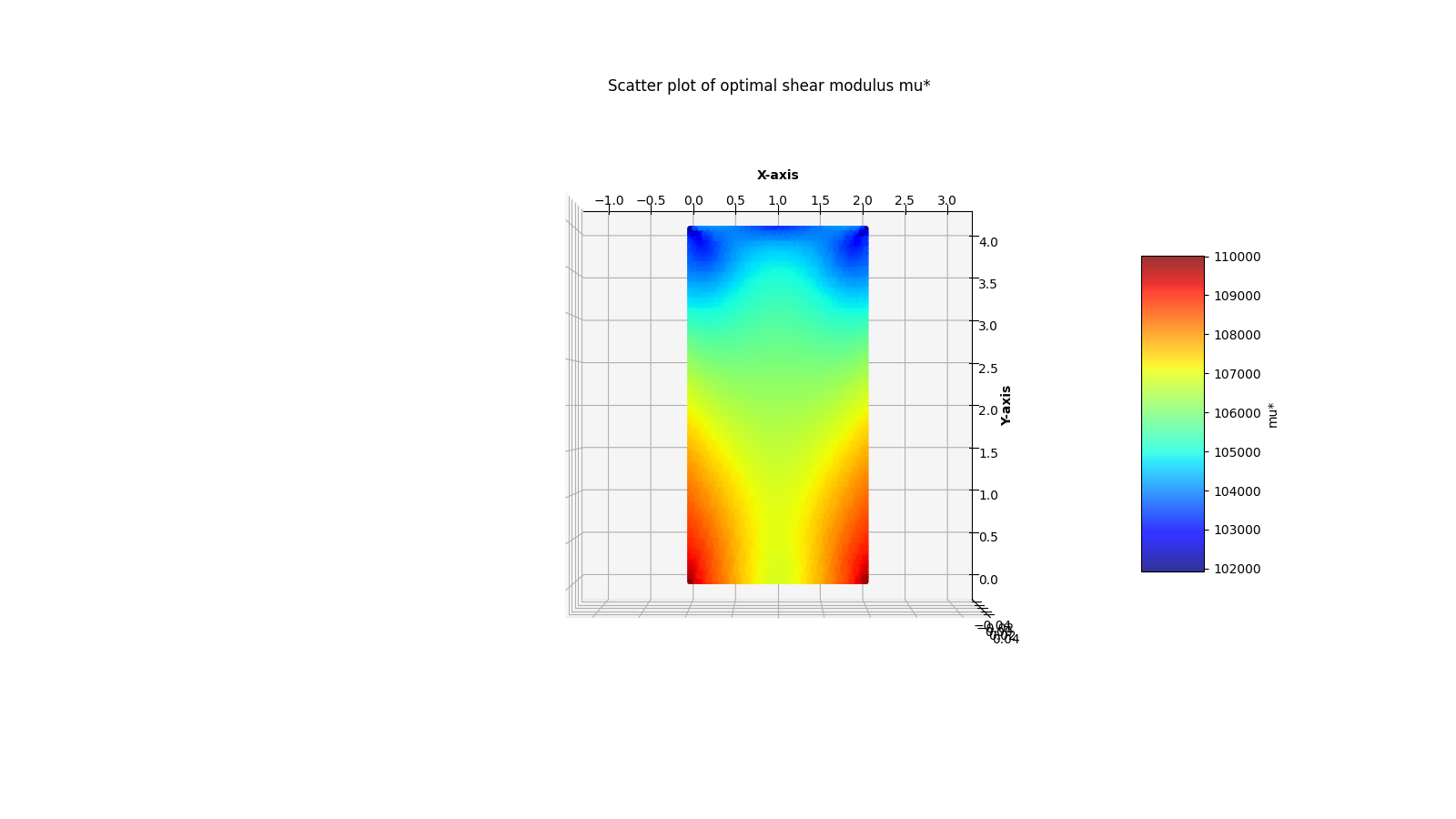}}\hspace{2cm}
	\subfloat[sp-IMD, $p=100$]{\includegraphics*[trim={16.3cm 6.2cm 5.4cm 4.7cm},clip,width=0.30\textwidth]{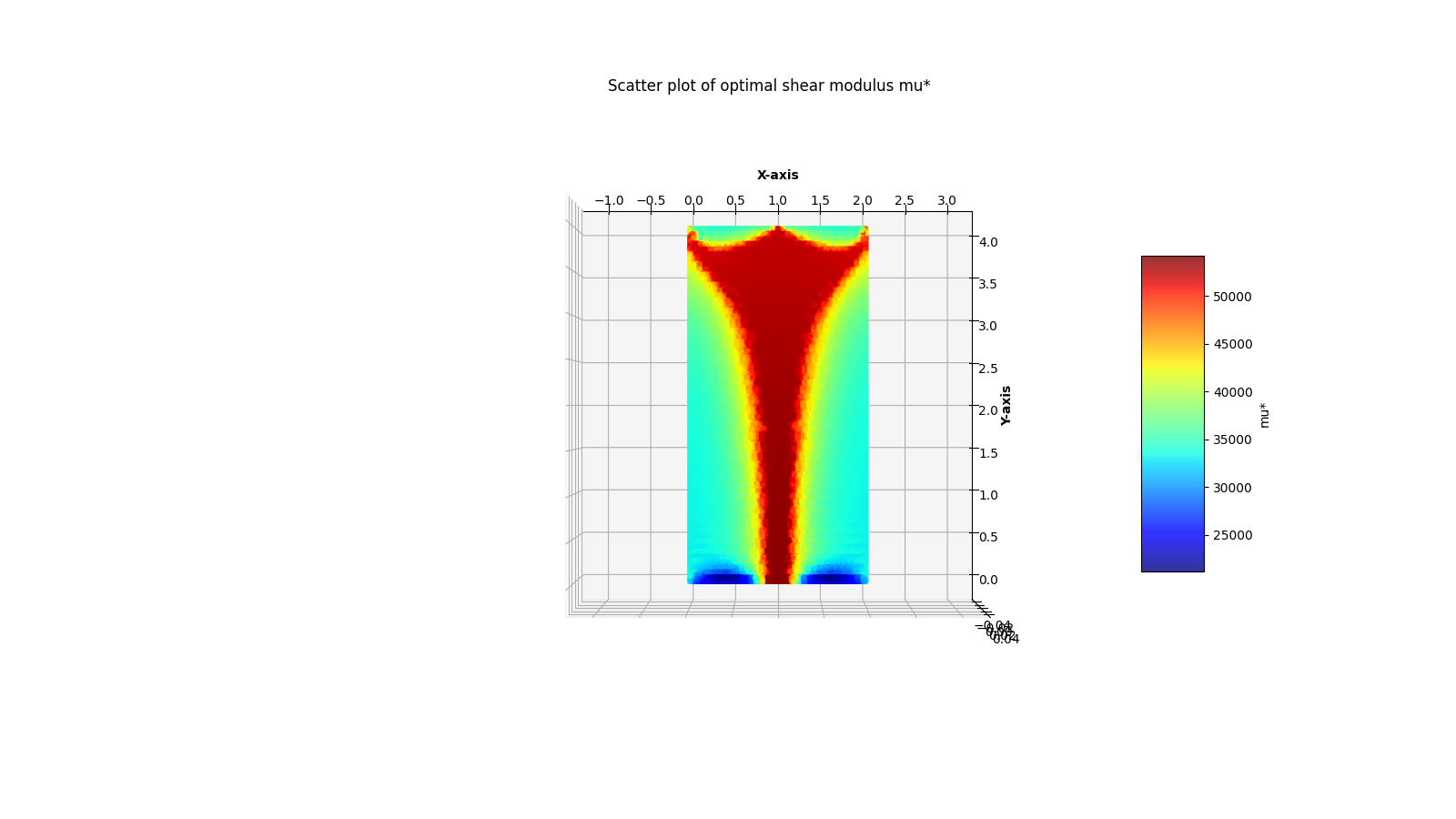}}
	\caption{Cantilever problem, optimal shear modulus $\hat{\mu}$ for various $p\in[1,\infty)$}
	\label{fig:mu_canti}       
\end{figure}

\end{example}

\section{Final remarks and discussion}

This paper puts forward a method of designing an isotropic 2D/3D body that minimizes compliance under the new class of constraints on the amount of the used material. The design variables are the bulk and shear moduli $k,\mu:\Omega \to \R_+$ whose cost is expressed by their $L^p$ norm for a chosen $p \in [1,\infty]$. Two variants of the methods are proposed, differing by the local norm of the pair $(k,\mu)\in \R^2$.

For both variants, a rigorous mathematical theory is developed for $p\in (1,\infty)$, with an easy extension to $p=\infty$. This involves reformulation to a pair of auxiliary convex variational problems -- stress-based formulation and its displacement-based dual -- together with the existence results. For the next step, another framework was found, which entails a system of equations (or inclusions) of non-linear elasticity. Finally, the numerical simulations have been carried out through tackling the stress-based formulations by using the original finite element approach leading to an unconstrained convex optimization problem.

To conclude this work, we shall reflect on: the significance of its contribution, the remaining challenging issues, and, finally, the possible developments to be potentially addressed in the future. 

\subsection{Advantages over the original IMD method}

The methods vp-IMD and sp-IMD are inspired by the Isotropic Material Design method introduced for the first time in \cite{czarnecki2015a}. The original IMD method poses a bunch of mathematical challenges that were summed up above in Section \ref{ssec:IMD_recovery}. Above all, to arrive at a well-posed problem, the search for both optimal moduli and the stresses has to be extended to the space of Radon measures, cf. \cite{bolbotowski2022b}.
It renders the analysis of the optimal design problem much more subtle. In particular, in order to write down the system of optimality conditions, the classical notion of Sobolev space does not suffice. A feasible approach is to employ the apparatus of the $\lambda$-tangential differentiation and the related Sobolev space with respect to measure $W^{1,q}_\lambda(\O;\Rd)$, see \cite{bouchitte1997}. This is a very technical topic that lies well outside the scope of the standard calculus of variations.

The present work presents the methods that are similar in spirit to the IMD method, yet they circumvent the delicate mathematical issues expounded above. Both optimal moduli and stresses are to be found in the space of integrable functions. Moreover, the optimal displacement functions $\hat{\mbf{v}}$ lie in the classical Sobolev space. Operating with the classical weak derivatives of $\hat{\mbf{v}}$ is sufficient for a rigorous formulation of the system of optimality conditions as in \eqref{eq:non-linear_system_vp} and \eqref{eq:non-linear_system_sp}. If one is to entrust Conjecture \ref{conj:p=1}, the solutions of the problems (vp-IMD) and (sp-IMD) found for $p$ close to $1$  should approximate well the solutions of the original (IMD) problem. Accordingly, the two new methods constitute a by-pass for the mathematical complexity of the original method.

\subsection{(vp-IMD) versus (sp-IMD)}

The mathematical advantages of the new methods can be taken even further in the case of the vp-IMD methods. As stated in Theorem \ref{thm:stress_vp} and Corollary \ref{cor:dual_vp}, the optimal moduli $\hat{k},\hat{\mu}$, the stress $\hat{\boldsymbol{\tau}}$, and the displacement $\hat{\mbf{v}}$ are all unique. This could not be asserted for the sp-IMD method. It is related to the non-smoothness of the cost condition for the latter method. This non-smoothness further translates to the auxiliary functional in the stress-based problem \eqref{eq:primal_sp}, which, in turn, results in a non-smooth optimization problem after the discretization. Its counterpart is smooth in the case of the vp-IMD method, see Section \ref{ssec:implementation}.

The non-smoothness of the cost in the sp-IMD method also manifests itself in the system of the optimality conditions. In \eqref{eq:non-linear_system_sp} we have seen that, for the (sp-IMD) problem, it involves the strain-stress relations:
\begin{equation}
	\label{eq:inclusions_p}
	\begin{cases}
		(3.a) & \mathrm{Tr}\,\boldsymbol{\eps}(\hat{\mbf{v}}) \in \big(\abs{\mathrm{Tr}\,\hat{\boldsymbol{\tau}}}+\beta\,\norm{\mathrm{dev}\,\hat{\boldsymbol{\tau}}}\big)^{r-1} \mathcal{N} (\mathrm{Tr}\,\hat{\boldsymbol{\tau}}),\\
		(3.b) & \mathrm{dev}\,\boldsymbol{\eps}(\hat{\mbf{v}}) \in  \big(\abs{\mathrm{Tr}\,\hat{\boldsymbol{\tau}}}+\beta\,\norm{\mathrm{dev}\,\hat{\boldsymbol{\tau}}}\big)^{r-1} \beta\, \mathcal{N}(\mathrm{dev}\,\hat{\boldsymbol{\tau}}).
	\end{cases}
\end{equation}
Above, $\mathcal{N}$ is a multi-valued operator, cf. \eqref{eq:N}, hence we are facing a pair of inclusions. As such, they are difficult to implement numerically. Contrarily, the smooth cost condition for the (vp-IMD) problems has lead us to the system \eqref{eq:non-linear_system_vp}, where these relations read:
\begin{equation}
	\label{eq:power-law}
	\begin{cases}
		(3.a) & \mathrm{Tr}\,\boldsymbol{\eps}(\hat{\mbf{v}}) = \abs{\mathrm{Tr}\,\hat{\boldsymbol{\tau}}}^{r-2} \mathrm{Tr}\,\hat{\boldsymbol{\tau}},\\
		(3.b) & \mathrm{dev}\,\boldsymbol{\eps}(\hat{\mbf{v}}) = \big(\beta\,\norm{\mathrm{dev}\,\hat{\boldsymbol{\tau}}}\big)^{r-2} \mathrm{dev}\,\hat{\boldsymbol{\tau}}.
	\end{cases}
\end{equation}
In the literature these are known as the \textit{power-law} constitutive equation, cf. \cite{castaneda1998}. Together with the equilibrium equation, the optimality conditions  \eqref{eq:non-linear_system_vp}  for the vp-IMD method is, virtually, a system of non-linear elasticity equations. This perspective opens up new alternative numerical methods. This system could be potentially tackled by an off-the-shelf finite element method software like, e.g., Abaqus\textsuperscript{\tiny\textregistered}. Accordingly, running the computations for $p \approx 1$ could furnish a method of solving the original (IMD) problem that would be widely available. Currently, however, no such simulations have been carried out by the present authors.

\subsection{Losing the cutting-out property}
\label{ssec:cutting-out}
By contrast, let us now discuss the drawbacks of the methods vp-IMD/sp-IMD (for $p>1$) comparing to IMD ($p=1$). First, since only the IMD method models the moduli $k,\mu$ as positive measures, this method exclusively provides us with the information on the dimensionality of the body at each point of the domain $\O$, at least on the theoretical level.  Namely, the IMD method tells the designer where lower dimensional stiffeners should be employed.

More importantly, the methods vp-IMD/sp-IMD lacks the crucial \textit{cutting-out property} when $p>1$. This is to say that, for a generic support and loading conditions, the optimal body has a full support in $\O$, i.e. there holds $(\hat{k},\hat{\mu}) \neq (0,0)$ a.e. in $\O$. This could be discerned in the conducted numerical simulations. Let us now convey an intuition behind this phenomenon. From Theorems \ref{thm:stress_vp}, \ref{thm:stress_sp} we can see that  $(\hat{k}(x),\hat{\mu}(x)) = (0,0)$ if and only if $\hat{\boldsymbol{\tau}}(x)=0$ for the unique optimal stress. In turn, from either of the stress-strain relations \eqref{eq:inclusions_p}, \eqref{eq:power-law}, we find that this can happen only at points where $\boldsymbol{\eps}(\hat{\mbf{v}})(x) =0$. Therefore, for the optimal moduli to vanish on a subregion of $\Omega$ with non-zero measure, the displacement $\hat{\mbf{v}}$ must be a rigid motion in that region, i.e. $\hat{\mbf{v}}(x) = \mbf{Q}x +\mbf{v}_0$, where $\mbf{v}_0\in \Rd$ and $\mbf{Q}$ is a skew-symmetric matrix. Next, from the displacement compatibility conditions on the interface between such region and a region where the material does occur, we deduce that the tangential component of the displacement also follows this formula. In general, this is a tough condition to meet. Take, for instance, the L-shaped domain problem and the left bottom corner where this interface should be a nearly circular arc, see the top plot in Fig. \ref{fig:nu}. The tangential strain would surely be non-zero in this case. In summary, cutting out a subdomain implies conditions that are, in general, difficult to satisfy when $p>1$.

Let us now explain how the IMD method ($p=1$) slips the foregoing limitations. The formulas \eqref{eq:opt_kmu_tau_IMD} again say that a subdomain with zero moduli implies that $\hat{\boldsymbol\tau} = 0$ in this subdomain. What differs, is the stress-strain relation, which (for $\hat{\mbf{v}} \in C^1$) read
\begin{equation}
	\begin{cases}
		(3.a) &  \mathrm{Tr}\,\boldsymbol{\eps}(\hat{\mbf{v}}) \in  \mathcal{N} (\mathrm{Tr}\,\hat{\boldsymbol{\tau}}),\\
		(3.b) &  \mathrm{dev}\,\boldsymbol{\eps}(\hat{\mbf{v}}) \in  \beta\, \mathcal{N}(\mathrm{dev}\,\hat{\boldsymbol{\tau}}),
	\end{cases}
\end{equation}
which is the relation \eqref{eq:inclusions_p} particularized for $r=1$, recall Section \ref{ssec:IMD_recovery}. From the definition \eqref{eq:N} of $\mathcal{N}$ we can see that if $\hat{\boldsymbol{\tau}}(x) =0$, then the above inclusions merely imply that: $\abs{\mathrm{Tr}\,\boldsymbol{\eps}(\hat{\mbf{v}})(x)} \leq 1$ and $\norm{\mathrm{dev}\,\boldsymbol{\eps}(\hat{\mbf{v}})(x)} \leq \beta$. This is, of course, nowhere near as stringent as $\boldsymbol{\eps}(\hat{\mbf{v}})(x)=0$. As a result, there is greater flexibility in forming $\hat{\mbf{v}}$ in the subregions where the material vanishes, thus paving a way to cutting the material out in the solutions of the (IMD) problem.

To sum up, the two new methods vp-IMD and sp-IMD both lack the feature of selecting a strict subdomain of $\Omega$ in which the material is concentrated when $p>1$ is chosen. In other words, they are neither the topology nor the shape optimization methods.

\subsection{Singularities by concentration of the optimal moduli}
\label{ssec:vanishing_mass}

This next issue is shared by both the original IMD method and the vp-IMD/sp-IMD method provided that $p<\infty$. As was observed for the numerical simulations, the optimal moduli $(\hat{k},\hat{\mu})$ have a tendency to blow up to infinity, e.g., in the 2D case, at reentrant corners and other places where e.g. the loadings are concentrated or where the boundary conditions change abruptly. Essentially, this arises from the mathematical formulation: the form of the cost condition enforces admitting the moduli functions in the $L^p$ space. On the other hand, such phenomenon seems to escape the realm of feasible materials: the stiffness tensor should be bounded. An easy fix to this issue could be to add a uniform local bound on the trace of the stiffness tensor, namely, for $t_{\max} >0$,
\begin{equation}
    \label{eq:local}
    dk(x)+\beta^2 2\mu(x) \leq t_{\max} \qquad \text{for a.e. $x \in\O$.}
\end{equation}
This strategy was undertaken for the free material design problem, e.g., in \cite{haslinger2010}. This extra local constraint would result in a well-posed problem with the optimal solutions $(k,\mu) \in L^\infty(\Omega;\R_+)^2$, whatever is $p\in[1,\infty]$. This modification, however, would spoil the passage to the pair of mutually dual auxiliary problems, which is the very core of this paper.

Below we will engage an informal discussion to show that our method does make physical sense after all, along with the singular solutions that it produces.  Assume that we do add the local constraint \eqref{eq:local} to our formulations vp-IMD/sp-IMD. Then, it is clear that the solutions will depend on the dimensionless ratio $\frac{E_0}{t_{\max}}$, recall the global constraints \eqref{eq:vpcost_E0}, \eqref{eq:spcost_E0}. With $t_{\max}$ fixed, let us consider a sequence of the modified (sp-IMD) (or (vp-IMD)) problems for $E_{0,n} = \frac{1}{n}E_{0} \to 0$, with $(\hat{k}_n,\hat{\mu}_n) \in L^\infty(\Omega;\R_+)^2$ being the respective solutions. Doing so, we simulate the \textit{vanishing mass regime}, well studied in the literature for different optimal design formulations, see e.g. 
\cite{bouchitte2020,bouchitte2007}. Naturally, the measure of the subset $\O_n \subset \O$ on which \eqref{eq:local} is saturated must converge to zero. In other words, in the limit the local bound $t_{\max}$ should be no longer binding. Clearly, however, $(\hat{k}_n,\hat{\mu}_n) \to 0$ in $L^p$. Thus, in order to have a meaningful asymptotic solution we must rescale the sequence of the moduli to $(n\hat{k}_n,n\hat{\mu}_n)$ which satisfy the global cost constraint with the bound $E_0$. In the limit of such a rescaled sequence we can expect to recover a solution to the original (vp-IMD) or (sp-IMD) problem, that is without the local constraint.

Naturally, the foregoing remarks are heuristic. In order to make them rigorous a $\Gamma$-convergence analysis similar to the one in \cite{bouchitte2007dimred} could be conducted. This is a delicate problem that once again goes beyond the scope of this paper. We leave the reader with the conjecture:
\begin{conjecture}
    Assume $p \in(1,\infty)$ and $t_{\max}>0$. For any natural $n$ consider $(\hat{k}_n,\hat{\mu}_n) \in L^\infty(\Omega;\R_+)^2$ to be solutions of the (vp-IMD) problem (resp. the (sp-IMD) problem) for the reference modulus $E_{0,n} = \frac{1}{n}E_0$ and with the additional local constraint \eqref{eq:local}.
    
    Then, up to choosing a subsequence, $(n\hat{k}_n,n\hat{\mu}_n) \rightharpoonup (\hat{k},\hat{\mu})$ weakly in $L^p(\O;\R^2)$ where $(\hat{k},\hat{\mu})$ solves the unmodified problem (vp-IMD) (resp. the problem (sp-IMD)).
\end{conjecture}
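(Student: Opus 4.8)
The plan is to read the assertion as a $\Gamma$-convergence statement combined with equi-coercivity, and to run the elementary version of that argument. Throughout I assume the load is such that the modified problem is non-degenerate, i.e. $Z_2<\infty$ (equivalently $Y_2<\infty$; equivalently $\Sigma_f(\Omega)$ contains a square-integrable field); otherwise every admissible pair for the modified problem has infinite compliance and the statement is vacuous. Write $\hat{\mathcal{C}}$ for the optimal compliance of the unmodified problem \eqref{eq:vp-IMD} (resp. \eqref{eq:sp-IMD}) with $\Lambda=\abs{\Omega}^{1/p}E_0$, attained at some $(\hat k^{*},\hat\mu^{*})$ by Theorem \ref{thm:stress_vp} (resp. Theorem \ref{thm:stress_sp}). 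The first reduction is a rescaling: by $\mathcal{C}(tk,t\mu)=\tfrac1t\mathcal{C}(k,\mu)$ and the degree-one homogeneity of both cost norms, the dilation $(k,\mu)\mapsto(nk,n\mu)$ maps the admissible set of the $n$-th modified problem (reference modulus $E_0/n$, local bound $t_{\max}$) bijectively onto $\widetilde A_n$, the set of nonnegative pairs satisfying the global cost bound with reference modulus $E_0$ together with the relaxed local bound $dk+\beta^{2}2\mu\le n\,t_{\max}$ a.e., while multiplying $\mathcal{C}$ by $1/n$. Hence $(n\hat k_n,n\hat\mu_n)$ minimizes $\mathcal{C}$ over $\widetilde A_n$ with minimum value $m_n:=\mathcal{C}(n\hat k_n,n\hat\mu_n)=\tfrac1n\mathcal{C}(\hat k_n,\hat\mu_n)$. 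The sets $\widetilde A_n$ increase with $n$ and are contained in $M_{vp}(\Omega)$ (resp. $M_{sp}(\Omega)$), so $m_n$ is non-increasing and $m_n\ge\hat{\mathcal{C}}$ for every $n$.

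Next I would address compactness and the lower bound. The uniform global cost bound makes $(n\hat k_n,n\hat\mu_n)$ bounded in $L^p(\Omega;\R^2)$, so, $p$ being in $(1,\infty)$, a subsequence satisfies $(n\hat k_n,n\hat\mu_n)\rightharpoonup(\hat k,\hat\mu)$. As $M_{vp}(\Omega)$ (resp. $M_{sp}(\Omega)$) is convex and norm-closed, hence weakly closed, $(\hat k,\hat\mu)$ is admissible for the unmodified problem, so $\hat{\mathcal{C}}\le\mathcal{C}(\hat k,\hat\mu)$. By weak lower semicontinuity of $\mathcal{C}$ (Proposition \ref{prop:functional}), $\mathcal{C}(\hat k,\hat\mu)\le\liminf_n\mathcal{C}(n\hat k_n,n\hat\mu_n)=\liminf_n m_n$.

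The crux — and, I expect, the main obstacle — is the matching upper bound $\limsup_n m_n\le\hat{\mathcal{C}}$: once secured, the chain $\hat{\mathcal{C}}\le\mathcal{C}(\hat k,\hat\mu)\le\liminf_n m_n\le\limsup_n m_n\le\hat{\mathcal{C}}$ collapses, giving $\mathcal{C}(\hat k,\hat\mu)=\hat{\mathcal{C}}$ and $m_n\to\hat{\mathcal{C}}$. To obtain it one exhibits, for each $n$, a competitor in $\widetilde A_n$ with compliance tending to $\hat{\mathcal{C}}$. The natural choice is the truncation $(\hat k^{*}_n,\hat\mu^{*}_n):=(\theta_n\hat k^{*},\theta_n\hat\mu^{*})$, with $\theta_n:=\min\{1,\,n\,t_{\max}/(d\hat k^{*}+\beta^{2}2\hat\mu^{*})\}\in(0,1]$: the local bound holds by construction; the global cost bound survives because the cost norm is monotone under the pointwise order on $(L^p(\Omega;\R_+))^2$ and $\theta_n\le1$; and $0\le\hat k^{*}_n\uparrow\hat k^{*}$, $0\le\hat\mu^{*}_n\uparrow\hat\mu^{*}$ a.e., so $(\hat k^{*}_n,\hat\mu^{*}_n)\to(\hat k^{*},\hat\mu^{*})$ strongly in $L^p$. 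The remaining task is $\limsup_n\mathcal{C}(\hat k^{*}_n,\hat\mu^{*}_n)\le\mathcal{C}(\hat k^{*},\hat\mu^{*})$. Since $\mathcal{C}(\hat k^{*}_n,\hat\mu^{*}_n)=\inf_{\boldsymbol{\tau}\in\Sigma_f(\Omega)}\int_\Omega\big(\tfrac{(\mathrm{Tr}\,\boldsymbol{\tau})^2}{d\hat k^{*}_n}+\tfrac{\norm{\mathrm{dev}\,\boldsymbol{\tau}}^2}{2\hat\mu^{*}_n}\big)dx$ and the integrand decreases pointwise in $n$, monotone convergence gives, for every $\boldsymbol{\tau}$ of finite energy against $(\hat k^{*}_1,\hat\mu^{*}_1)$, $\limsup_n\mathcal{C}(\hat k^{*}_n,\hat\mu^{*}_n)\le\int_\Omega\big(\tfrac{(\mathrm{Tr}\,\boldsymbol{\tau})^2}{d\hat k^{*}}+\tfrac{\norm{\mathrm{dev}\,\boldsymbol{\tau}}^2}{2\hat\mu^{*}}\big)dx$; one then minimizes over such $\boldsymbol{\tau}$. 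Here lies the difficulty: $\hat k^{*}_1,\hat\mu^{*}_1$ are bounded above (by constants proportional to $t_{\max}$), so finite energy forces $\boldsymbol{\tau}\in L^2(\Omega;E_s^2)$, whereas the optimal stress $\hat{\boldsymbol{\tau}}$ of \eqref{eq:primal_vp} (resp. \eqref{eq:primal_sp}) is, in general, only $L^r$-integrable with $r<2$ and need not be square-integrable (its singular behaviour for concentrated or irregular loads can genuinely exceed $L^2$). One must therefore prove a \emph{density-in-energy} property: that $\Sigma_f(\Omega)\cap L^2(\Omega;E_s^2)$ contains fields whose energy against $(\hat k^{*},\hat\mu^{*})$ is arbitrarily close to $\hat{\mathcal{C}}$. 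A plausible route is a cut-off-plus-correction scheme — replace $\hat{\boldsymbol{\tau}}$ by the truncation $\hat{\boldsymbol{\tau}}\,\chi_{\{\opnorm{\hat{\boldsymbol{\tau}}}\le t\}}\in L^2(\Omega;E_s^2)$ and add a field from the linear space of self-equilibrated stresses to restore static admissibility, controlling the corrector's energy by means of the optimality formulae \eqref{eq:opt_kmu_tau_vp} (resp. \eqref{eq:opt_kmu_tau_sp}), which show that $(\hat k^{*},\hat\mu^{*})$ degenerates exactly where $\opnorm{\hat{\boldsymbol{\tau}}}$ is large — and then let $t\to\infty$. Making this rigorous simultaneously with the passage $n\to\infty$ is the genuinely technical heart of the matter and is why the result is stated as a conjecture.

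Granting the upper bound, the conclusion follows: $(\hat k,\hat\mu)$ minimizes the unmodified (vp-IMD) (resp. (sp-IMD)) problem and $m_n=\mathcal{C}(n\hat k_n,n\hat\mu_n)\to\hat{\mathcal{C}}$. In the (vp-IMD) case this minimizer is unique (Theorem \ref{thm:stress_vp}), so the convergence $(n\hat k_n,n\hat\mu_n)\rightharpoonup(\hat k,\hat\mu)$ holds along the whole sequence, not merely a subsequence. The two cost norms are handled identically, the only structural ingredients used being convexity, norm-closedness, pointwise monotonicity and degree-one homogeneity of the cost functional, the homogeneity and weak lower semicontinuity of $\mathcal{C}$, and the truncation above.
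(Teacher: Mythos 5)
The paper does not prove this statement: it is stated explicitly as a conjecture, preceded only by a heuristic discussion of the vanishing-mass regime, and the authors defer the rigorous $\Gamma$-convergence analysis as "a delicate problem that goes beyond the scope of this paper." So there is no proof of the authors' to compare yours against; what can be assessed is whether your outline actually closes the question, and it does not. The parts you do carry out are correct and align with the paper's heuristic: the rescaling identifying $(n\hat{k}_n,n\hat{\mu}_n)$ as the minimizer of $\mathcal{C}$ over the increasing sets $\widetilde A_n$ (global cost bound $E_0$, local bound $n\,t_{\max}$), the resulting monotonicity $m_n \downarrow$ with $m_n \geq \hat{\mathcal{C}}$, the weak $L^p$ compactness, the weak closedness of the admissible set, and the liminf inequality via Proposition \ref{prop:functional}. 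These steps are routine and sound.

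The genuine gap is the one you yourself flag: the recovery (limsup) inequality. Since $\lim_n m_n = \inf\{\mathcal{C}(k,\mu) : (k,\mu) \text{ admissible}, \ dk+\beta^2 2\mu \in L^\infty\}$, the conjecture is equivalent to the assertion that restricting to essentially bounded moduli does not raise the optimal compliance. Your truncation $(\theta_n \hat{k}^*, \theta_n\hat{\mu}^*)$ reduces this, via monotone convergence, to showing that $\Sigma_f(\Omega)\cap L^2(\Omega;E_s^2)$ contains stress fields whose energy against $(\hat{k}^*,\hat{\mu}^*)$ approaches $\hat{\mathcal{C}}$ — and this is precisely where the argument stops. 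The optimal stress is in general only $L^r$-integrable with $r=\frac{2p}{p+1}<2$, and the cut-off-plus-correction scheme you sketch faces a real obstruction: the corrector restoring static admissibility must have finite energy against $(\hat{k}^*,\hat{\mu}^*)$, which by \eqref{eq:opt_kmu_tau_vp} (resp. \eqref{eq:opt_kmu_tau_sp}) forces its trace part to vanish wherever $\mathrm{Tr}\,\hat{\boldsymbol{\tau}}$ does and its deviatoric part to vanish wherever $\mathrm{dev}\,\hat{\boldsymbol{\tau}}$ does — a support constraint that a generic equilibrium corrector will not respect. Without this density-in-energy lemma the chain $\hat{\mathcal{C}} \leq \mathcal{C}(\hat{k},\hat{\mu}) \leq \liminf_n m_n \leq \limsup_n m_n \leq \hat{\mathcal{C}}$ does not close, so the proposal is an accurate road map (consistent with the paper's own heuristic) rather than a proof; the statement remains, as in the paper, a conjecture.
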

\noindent For $p =1$ a similar statement holds, yet with the weak-* convergence in the space of measures.

Assuming that the conjecture is true, the interpretation of the vp-IMD/sp-IMD method follows. It furnishes an optimal distribution of the moduli under the assumption that there is "very little material" to be utilized. In such a case, the material needs to be concentrated at the places where it is the most needed.
Such a premise is valid, for instance, in civil engineering, where usually only a small fraction of the design domain is occupied by the material. This observation resounds especially for the choice $p=1$, for which the present methods can predict 1D stiffeners. After all, it is typical to design the bearing system of the buildings as framed bar structures.

\smallskip

Nevertheless, there is a way to dispose of the material singularities in the optimal designs while maintaining the main ideas presented in this work. The paper  \cite{czarnecki2021isotropic} introduced the yield condition into the IMD modelling which alleviates all the components of statically admissible stress tensor and cuts extremes of all components of the stress field solving the problem (IMD) ($p=1$), hence making bounded all layouts of the optimal elastic moduli. In Fig. \ref{fig:mises}, the influence of introducing the yield condition at the yield stress value $\sigma _0 = 50\left[ {{\rm{MPa}}} \right]$				
on the distribution of optimal moduli is demonstrated; the L-shaped plate is supported and loaded as in Fig. \ref{fig:dom}(a).

\begin{figure}[h]
\captionsetup[subfloat]{labelformat=simple,farskip=3pt,captionskip=1pt}
	\centering
	\subfloat[(a)]{\includegraphics*[trim={16.3cm 6.2cm 5.4cm 4.7cm},clip,width=0.30\textwidth]{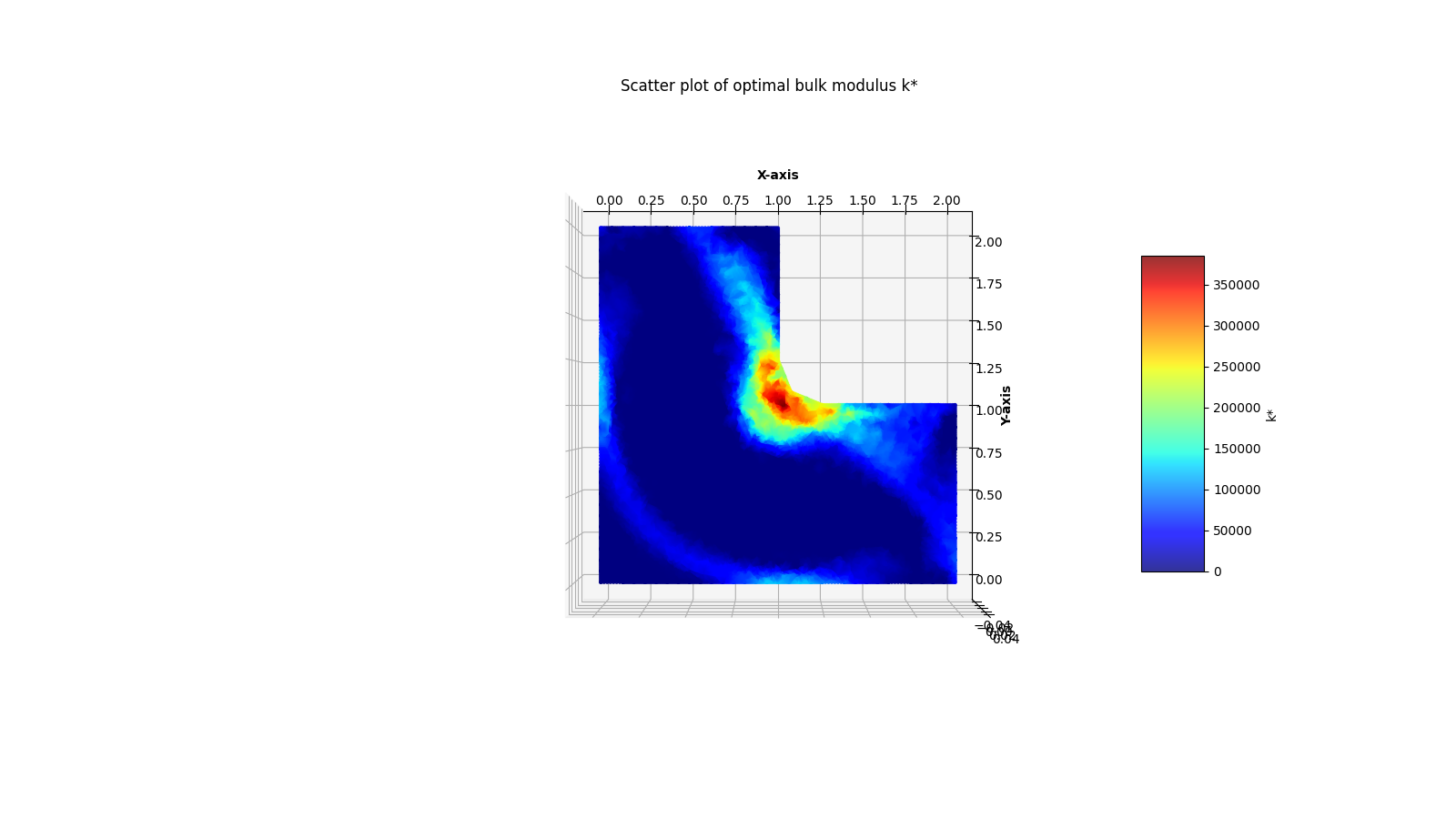}}\hspace{2cm}
	\subfloat[(b)]{\includegraphics*[trim={16.3cm 6.2cm 5.4cm 4.7cm},clip,width=0.30\textwidth]{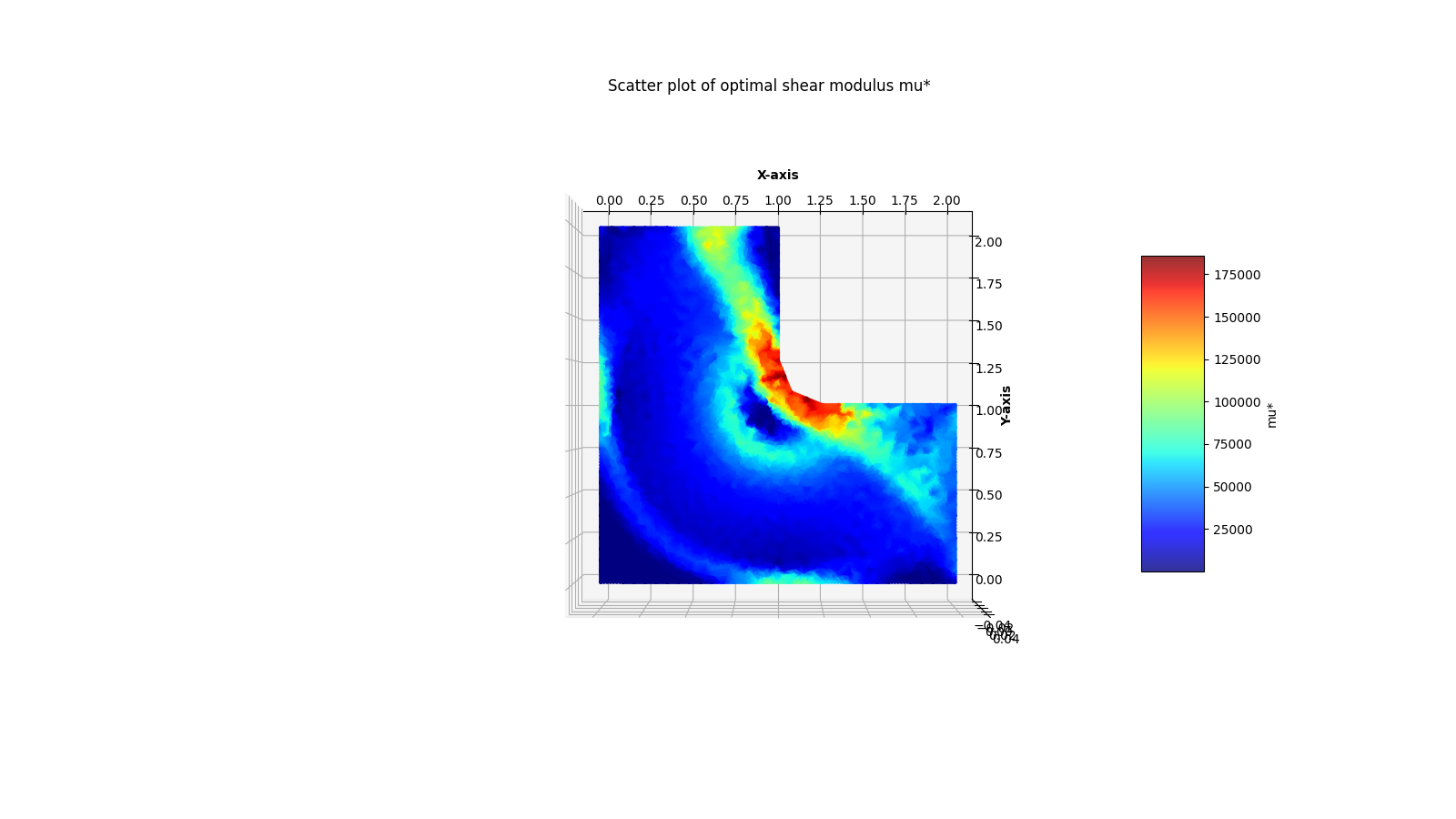}}\\
	\subfloat[(c)]{\includegraphics*[trim={16.3cm 6.2cm 5.4cm 4.7cm},clip,width=0.30\textwidth]{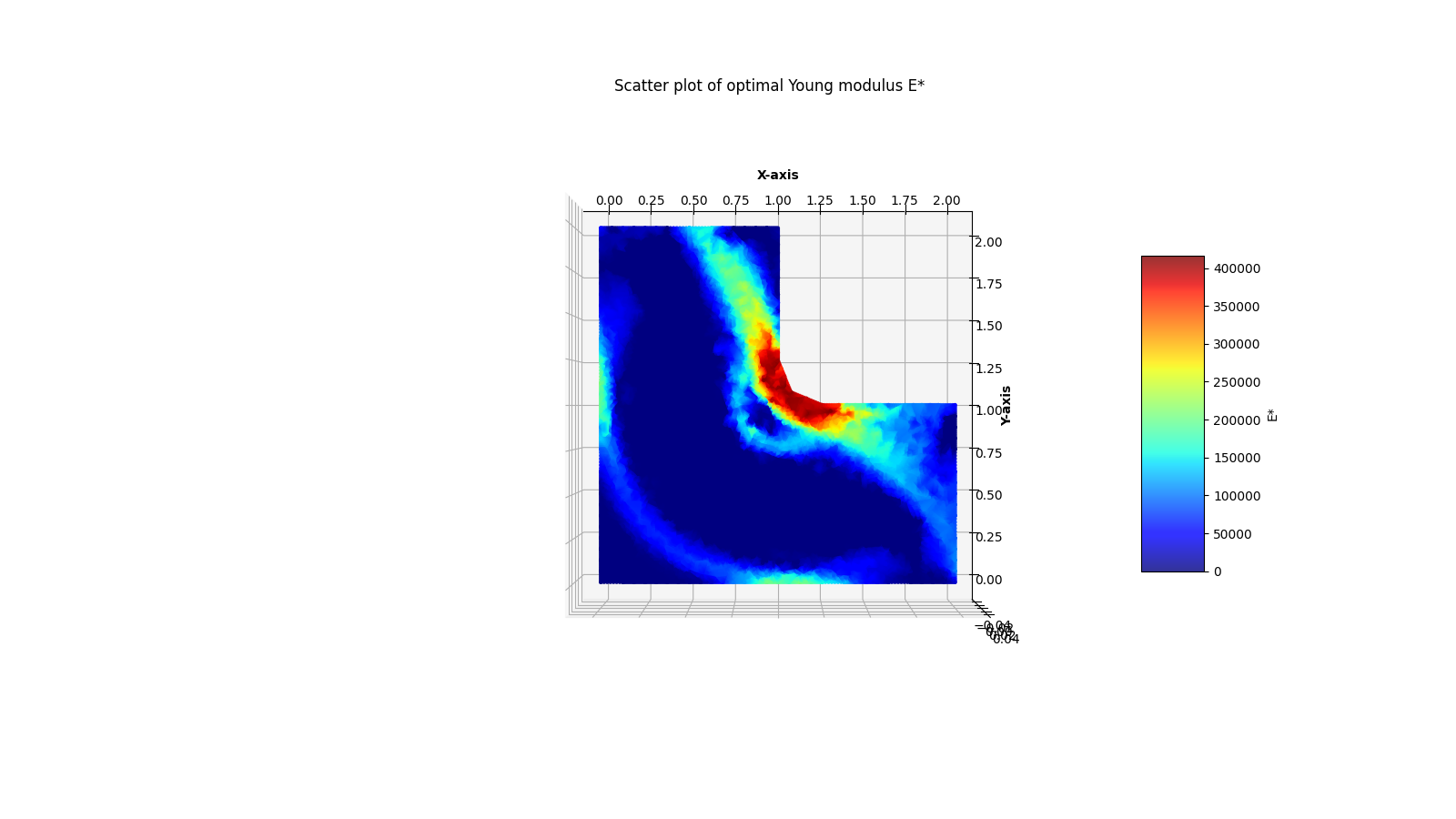}}\hspace{2cm}
	\subfloat[(d)]{\includegraphics*[trim={16.3cm 6.2cm 5.4cm 4.7cm},clip,width=0.30\textwidth]{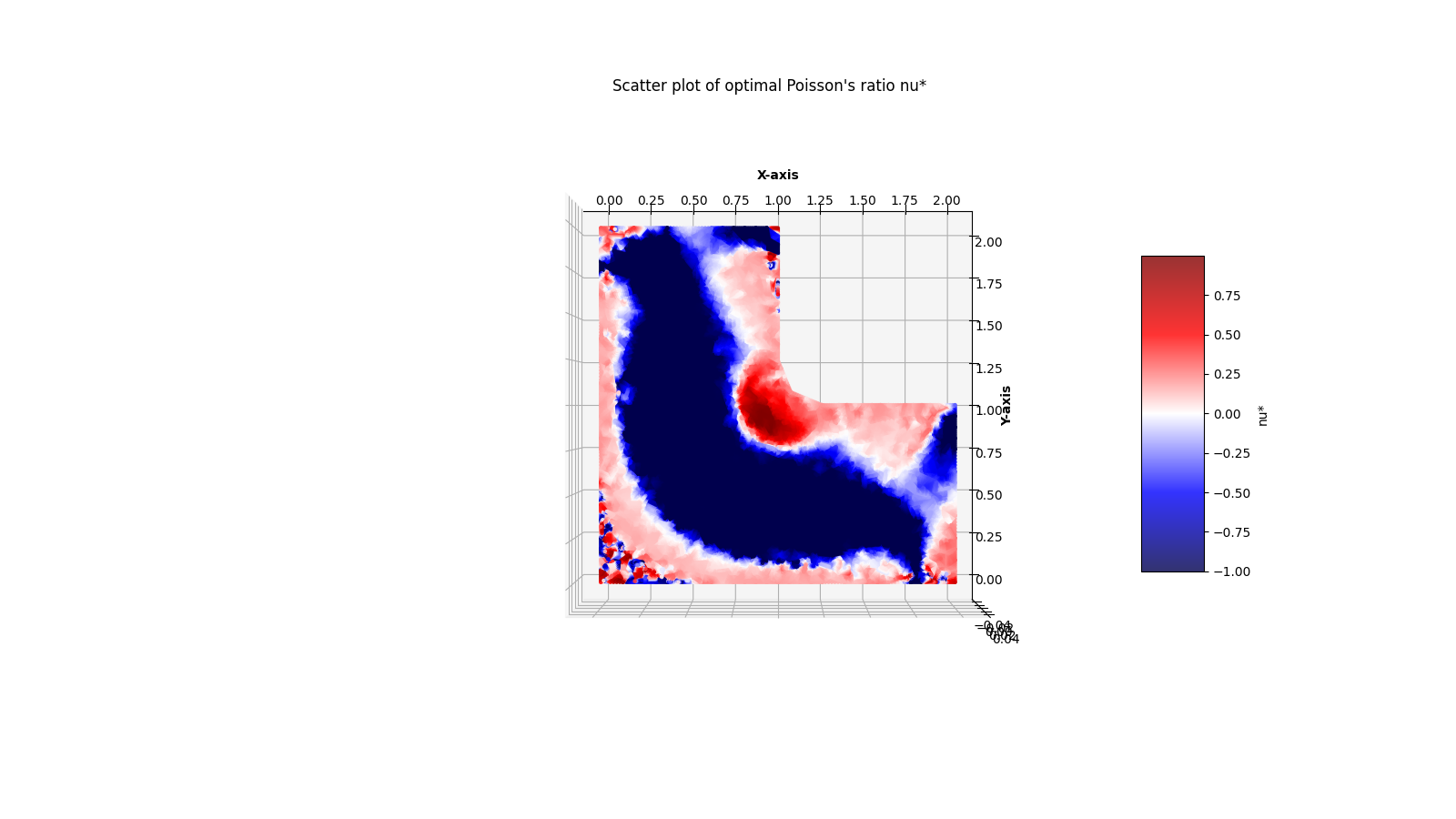}}
	\caption{L-shaped domain problem, solution of the (IMD) problem with the yield condition according to \cite{czarnecki2021isotropic}: (a) optimal bulk modulus $\hat{k}$; (b) optimal shear modulus $\hat{\mu}$; (c) optimal Young modulus $\hat{E}$; (d) optimal Poisson ratio $\hat{\nu}$.}
	\label{fig:mises}       
\end{figure}
The introduction of a similar procedure seems possible in the more general case of the cost conditions considered in this paper. On the other hand, maintaining a constant value of the maximum allowable yield stress $\sigma _0 = \mathrm{const}$ with spatially varying values of the optimal moduli $\hat{k},\hat{\mu}$ (especially when they reach a value equal to or close to zero in certain sub-domains) seems to be an incorrect assumption at the level of macro analysis. However, from the point of view of the analysis at the level of microstructure modelling, the yield condition in the adopted form can be considered correct.

\bigskip

\subsection*{Acknowledgment} This paper was  co-financed under the research
grant of the Warsaw University of Technology supporting the
scientific activity in the discipline of Civil Engineering, Geodesy
and Transport.

\bibliographystyle{plain}

\begin{thebibliography}{10}

\bibitem{adams2003}
R.A. Adams, J.J.F Fournier,
\newblock {\em Sobolev Spaces}.
\newblock Academic Press, 2003.

\bibitem{barrett2007}
J.W. Barrett, L. Prigozhin,
\newblock A mixed formulation of the Monge-Kantorovich equations.
\newblock {\em ESAIM: Math. Model. Numer. Anal.},
  41:1041--1060, 2007.

\bibitem{bendsoe1994}
M.P. Bendsoe, J.M. Guedes, R.B. Haber, P. Pedersen, J.E. Taylor,
\newblock An analytical model to predict optimal material properties in the
  context of optimal structural design.
\newblock {\em J. Appl. Mech.}, 61:930--937, 1994.

\bibitem{bolbotowski2021}
K. Bo{\l}botowski,
\newblock {\em Elastic Bodies and Structures of the Optimum Form, Material
  Distribution and Anisotropy}.
\newblock PhD Thesis, Warsaw University of Technology, 2021.

\bibitem{bolbotowski2022a}
K. Bo{\l}botowski, G. Bouchitt{\'e},
\newblock Optimal design versus maximal Monge--Kantorovich metrics.
\newblock {\em Arch. Ration. Mech. Anal.}, 243:1449--1524,
  2022.

\bibitem{bolbotowski2022b}
K. Bo{\l}botowski, T. Lewi{\'n}ski,
\newblock Setting the free material design problem through the methods of
  optimal mass distribution.
\newblock {\em Calc. Var. Partial Differ. Equ.},
  61:76, 2022.

\bibitem{bouchitte2020}
G. Bouchitt{\'e},
\newblock Optimization of light structures: the vanishing mass conjecture.
\newblock {\em arXiv preprint, arXiv:2001.02022}, 2020.

\bibitem{bouchitte1997}
G. Bouchitt\'{e}, G. Buttazzo, P. Seppecher,
\newblock Energies with respect to a measure and applications to low
  dimensional structures.
\newblock {\em Calc. Var. Partial Differ. Equ.},
  5:37--54, 1997.

\bibitem{bouchitte2007dimred}
G. Bouchitt{\'e}, I. Fragal{\`a},
\newblock Optimal design of thin plates by a dimension reduction for linear
  constrained problems.
\newblock {\em	SIAM J. Control Optim.}, 46:1664--1682,
  2007.

\bibitem{bouchitte2007}
G. Bouchitt{\'e}, I. Fragal{\`a},
\newblock Optimality conditions for mass design problems and applications to
  thin plates.
\newblock {\em Arch. Ration. Mech. Anal.}, 184:257--284,
  2007.

\bibitem{braides2002}
A. Braides,
\newblock {\em Gamma-Convergence for Beginners}.
\newblock Clarendon Press, 2002.

\bibitem{buttazzo2023}
G. Buttazzo, M. S. Gelli, D. Lu{\v{c}}i{\'c},
\newblock Mass optimization problem with convex cost.
\newblock {\em SIAM J. Math. Anal.}, 55:5617--5642, 2023.

\bibitem{castaneda1998}
P.P. Castaneda, P. Suquet,
\newblock non-linear composites.
\newblock In {\em Adv. Appl. Mech.}, 34:171--302.
  Elsevier, 1998.

\bibitem{christensen1986}
R.M. Christensen,
\newblock Mechanics of low density materials.
\newblock {\em J. Mech. Phys. Solids}, 34:563--578,
  1986.

\bibitem{ciarlet2000a}
P.G. Ciarlet,
\newblock {\em Mathematical Elasticity: Volume I: Three-Dimensional
  Elasticity}.
\newblock North-Holland, 2000.

\bibitem{czarnecki2015a}
S. Czarnecki,
\newblock Isotropic material design.
\newblock {\em Comput. Meth. Sci.Techn.}, 21:49--64, 2015.

\bibitem{czarnecki2017a}
S. Czarnecki, T. Lewi{\'n}ski,
\newblock On material design by the optimal choice of Young's modulus
  distribution.
\newblock {\em Int. J. Solids Struct.}, 110:315--331,
  2017.

\bibitem{czarnecki2015b}
S. Czarnecki, P. Wawruch,
\newblock The emergence of auxetic material as a result of optimal isotropic
  design.
\newblock {\em Phys. Status Solidi (b)}, 252:1620--1630, 2015.

\bibitem{czarnecki2017b}
S. Czarnecki, T. Lewi{\'n}ski,
\newblock Pareto optimal design of non-homogeneous isotropic material
  properties for the multiple loading conditions.
\newblock {\em Phys. Status Solidi (b)}, 254, article no. 1600821, 2017.

\bibitem{czarnecki2021isotropic}
S. Czarnecki, T. Lewi{\'n}ski,
\newblock The isotropic material design of in-plane loaded elasto-plastic
  plates.
\newblock {\em Materials}, 14:7430, 2021.

\bibitem{czarnecki2020recovery}
S. Czarnecki, T. {\L}ukasiak,
\newblock Recovery of the auxetic microstructures appearing in the least
  compliant continuum two-dimensional bodies.
\newblock {\em Phys. Status Solidi (b)}, 257, article no. 1900676, 2020.

\bibitem{czarnecki2024}
S. Czarnecki, T. {\L}ukasiak,
\newblock Auxetic properties of the stiffest elastic bodies as a result of
  topology optimization and microstructures recovery based on homogenization
  method.
\newblock {\em Phys. Status Solidi (b)}, 261, article no. 2300495, 2024.

\bibitem{czubacki2015}
R. Czubacki, T. Lewi{\'n}ski,
\newblock Topology optimization of spatial continuum structures made of
  nonhomogeneous material of cubic symmetry.
\newblock {\em J. Mech. Mater. Struct.},
  10:519--535, 2015.

\bibitem{demengel1985}
F. Demengel,
\newblock D{\'e}placements {\`a} d{\'e}formations born{\'e}es et champs de
  contrainte mesures.
\newblock {\em Annali della Scuola Normale Superiore di Pisa-Classe di
  Scienze}, 12:243--318, 1985.

\bibitem{duvant1976}
G. Duvaut, J.L. Lions,
\newblock {\em Inequalities in Mechanics and Physics}, volume 219.
\newblock Springer Verlag, Berlin, 1976.

\bibitem{ekeland1999}
I. Ekeland, R. Temam.
\newblock {\em Convex Analysis and Variational Problems}.
\newblock SIAM, Oxford, 1999.

\bibitem{feng2021}
J. Feng, B. Liu, Z. Lin, J. Fu,
\newblock Isotropic octet-truss lattice structure design and anisotropy control
  strategies for implant application.
\newblock {\em Mater. Des.}, 203, article no. 109595, 2021.

\bibitem{goffman1964}
C. Goffman, J. Serrin,
\newblock Sublinear functions of measures and variational integrals.
\newblock {\em Duke Math. J.}, 31:159--178, 1964.

\bibitem{golay2001}
F. Golay, P. Seppecher,
\newblock Locking materials and the topology of optimal shapes.
\newblock {\em Eur. J. Mech. A/Solids}, 20:631--644, 2001.

\bibitem{gurtner2014cor}
G. Gurtner, M. Durand,
\newblock Correction to 'Stiffest elastic networks'.
\newblock {\em Proc. R. Soc. A: Math. Phys. Eng. Sci.}, 474, article no. 20180637, 2014.

\bibitem{gurtner2014stiffest}
G. Gurtner, M. Durand,
\newblock Stiffest elastic networks.
\newblock {\em Proc. R. Soc. A: Math. Phys. Eng. Sci.}, 470, article no. 20130611, 2014.

\bibitem{haslinger2010}
J. Haslinger, M. Ko{\v{c}}vara, G. Leugering, M. Stingl,
\newblock Multidisciplinary free material optimization.
\newblock {\em SIAM J. Appl. Math.}, 70:2709--2728, 2010.

\bibitem{khan2024}
N. Khan, A. Riccio,
\newblock A systematic review of design for additive manufacturing of aerospace
  lattice structures: Current trends and future directions.
\newblock {\em Prog. Aerosp. Sci.}, 149, article no. 101021, 2024.

\bibitem{kocvara2008}
M. Ko{\v{c}}vara, M. Stingl, J. Zowe,
\newblock Free material optimization: recent progress.
\newblock {\em Optimization}, 57:79--100, 2008.

\bibitem{lewinski2021}
T. Lewi{\'n}ski,
\newblock Optimum design of elastic moduli for the multiple load problems.
\newblock {\em Arch. Mech.}, 73:27--66, 2021.

\bibitem{Lewinski2019}
T. Lewi{\'{n}}ski, T. Sok{\'o}{\l}, C. Graczykowski,
\newblock {\em Michell {S}tructures}.
\newblock Springer International Publishing, Cham, 2019.

\bibitem{montemurro2023}
M. Montemurro, G. Bertolino, E. Panettieri,
\newblock Topology optimisation of architected cellular materials from additive
  manufacturing: Analysis, design, and experiments.
\newblock {\em Structures}, 47:2220--2239, 2023.

\bibitem{press1996}
W.H. Press, A. Teukolsky, W.T. Vetterling, B.P. Flannery,
\newblock {\em Numerical Recipes in C. The Art of Scientific Computing}.
\newblock Cambridge University Press, 1996.

\bibitem{ringertz1993}
U.T. Ringertz,
\newblock On finding the optimal distribution of material properties.
\newblock {\em Struct. Optim.}, 5:265--267, 1993.

\bibitem{rockafellar1970}
R.T. Rockafellar,
\newblock {\em Convex Analysis}, volume 28.
\newblock Princeton University Press, 1970.

\bibitem{rockafellar1968}
R.T. Rockafellar,
\newblock Integrals which are convex functionals.
\newblock {\em Pac. J.  Math.}, 24:525--539, 1968.

\bibitem{rychlewski1984}
J. Rychlewski,
\newblock On Hooke's law.
\newblock {\em 	J. Appl. Math. Mech.}, 48:303--314,
  1984.

\bibitem{walpole1984}
L.J. Walpole,
\newblock Fourth-rank tensors of the thirty-two crystal classes: multiplication
  tables.
\newblock {\em Proc. R. Soc. A: Math. Phys. Eng. Sci.}, 391:149--179, 1984.

\bibitem{xu2020}
J. Xu, L. Gao, M. Xiao, J. Gao, H. Li,
\newblock Isogeometric topology optimization for rational design of
  ultra-lightweight architected materials.
\newblock {\em Int. J. Mech. Sci.}, 166:105103, 2020.

\bibitem{zhang2022}
L. Zhang, Q. Ma, J. Ding, S. Qu, J. Fu, M.W. Fu, X. Song, M.Y. Wang,
\newblock Design of elastically isotropic shell lattices from anisotropic
  constitutive materials for additive manufacturing.
\newblock {\em Addit. Manuf.}, 59, article no. 103185, 2022.

\bibitem{zowe1997}
J. Zowe, M. Ko{\v{c}}vara, M.P. Bends{\o}e,
\newblock Free material optimization via mathematical programming.
\newblock {\em Math. Program.}, 79:445--466, 1997.

\end{thebibliography}

\bigskip

\noindent
\small{Karol Bo{\l}botowski}:\\
Department of Structural Mechanics and Computer Aided Engineering\\
Faculty of Civil Engineering, Warsaw University of Technology\\
16 Armii Ludowej Street, 00-637 Warsaw - POLAND\\
and\\
Lagrange Mathematics and Computing Research Center\\
103 rue de Grenelle, Paris 75007 - FRANCE\\
{\tt karol.bolbotowski@pw.edu.pl}

\bigskip 
\noindent
S{\l}awomir Czarnecki:\\
Department of Structural Mechanics and Computer Aided Engineering\\
Faculty of Civil Engineering, Warsaw University of Technology\\
16 Armii Ludowej Street, 00-637 Warsaw - POLAND\\
{\tt slawomir.czarnecki@pw.edu.pl}

\bigskip 
\noindent
Tomasz Lewi\'{n}ski:\\
Department of Structural Mechanics and Computer Aided Engineering\\
Faculty of Civil Engineering, Warsaw University of Technology\\
16 Armii Ludowej Street, 00-637 Warsaw - POLAND\\
{\tt tomasz.lewinski@pw.edu.pl}

\end{document}